\newtheorem{theo}{Theorem}[section]
\newtheorem{lemma}[theo]{Lemma}
\newtheorem{defi}[theo]{Definition}
\newtheorem{prop}[theo]{Proposition}
\newtheorem{conj}[theo]{Conjecture}
\newtheorem{cor}[theo]{Corollary}
\newtheorem{remark}[theo]{Remark}
\newtheorem{example}[theo]{Example}
\numberwithin{equation}{section}
\def\bL{\mathbb{L}}
\def\R{\mathbb{R}}
\def\Z{\mathbb{Z}}
\def\Q{\mathbb{Q}}
\def\F{{\mathcal F}}
\def\bR{{\mathbf R}}
\def\bL{{\mathbf L}}
\def\pre-tr{\operatorname{pre-tr}}
\def\Hom{\operatorname{Hom}}
\def\End{\operatorname{End}}
\def\gr{\operatorname{gr}}
\def\hd{\operatorname{hd}}
\DeclareMathOperator*{\colim}{colim}
\newcommand{\Cone}{\operatorname{Cone}}
\newcommand{\Hoch}{\operatorname{Hoch}}
\newcommand{\Pol}{\operatorname{Pol}}
\newcommand{\bbF}{{\mathbb F}}
\newcommand{\mk}{\mathrm k}
\newcommand{\cO}{{\mathcal O}}
\newcommand{\cP}{{\mathcal P}}
\newcommand{\cL}{{\mathcal L}}
\newcommand{\cM}{{\mathcal M}}
\newcommand{\cD}{{\mathcal D}}
\newcommand{\cA}{{\mathcal A}}
\newcommand{\cB}{{\mathcal B}}
\newcommand{\cC}{{\mathcal C}}
\newcommand{\cK}{{\mathcal K}}
\newcommand{\la}{\langle}
\newcommand{\ra}{\rangle}
\newcommand{\cross}{\operatorname{cr}}
\newcommand{\Fun}{\operatorname{Fun}}
\newcommand{\Tot}{\operatorname{Tot}}
\newcommand{\Mix}{\operatorname{Mix}}
\newcommand{\Sets}{\operatorname{Sets}}
\newcommand{\Ab}{\operatorname{Ab}}
\newcommand{\Forg}{\operatorname{Forg}}
\newcommand{\Perf}{\operatorname{Perf}}
\newcommand{\supp}{\operatorname{Supp}}
\newcommand{\coker}{\operatorname{Coker}}
\newcommand{\im}{\operatorname{Im}}
\newcommand{\cha}{\operatorname{char}}
\newcommand{\Crit}{\operatorname{Crit}}
\newcommand{\Mat}{\operatorname{Mat}}
\newcommand{\Ext}{\operatorname{Ext}}
\newcommand{\Tor}{\operatorname{Tor}}
\newcommand{\Tr}{\operatorname{Tr}}
\newcommand{\ord}{\operatorname{ord}}
\newcommand{\Spec}{\operatorname{Spec}}
\newcommand{\sgn}{\rm sgn}
\newcommand{\Ho}{\operatorname{Ho}}
\newcommand{\id}{\operatorname{id}}
\newcommand{\Comp}{\operatorname{Comp}}
\newcommand{\Acycl}{\operatorname{Acycl}}
\newcommand{\T}{\mathcal{T}}
\newcommand{\mS}{\mathfrak{S}}
\newcommand{\m}{\mathfrak{m}}
\title[Mac Lane (co)homology of the second kind and Wieferich primes]
{Mac Lane (co)homology of the second kind and Wieferich primes}
\author{Alexander I. Efimov}
\address{Steklov Mathematical Institute of RAS, Gubkin str. 8, GSP-1, Moscow 119991, Russia}
\address{AG Laboratory, HSE, 7 Vavilova str., Moscow, Russia, 117312.}
\email{efimov@mccme.ru}
\thanks{MSC: 16E40, 18G40, 58K05, 11R04.}
\thanks{The author was partially supported by RFBR, grant 2998.2014.1, and RFBR, research project 15-51-50045.}
\begin{document}

\begin{abstract}In this paper we investigate the connection between the Mac Lane (co)homology and Wieferich primes in finite localizations of global number rings. Following the ideas of Polishchuk-Positselski \cite{PP}, we define the Mac Lane (co)homology of the second kind of an associative ring with a central element. We compute this invariants for finite localizations of global number rings with an element $w$ and obtain that the result is closely related with the Wieferich primes to the base $w.$ In particular, for a given non-zero integer $w,$ the infiniteness of Wieferich primes to the base $w$ turns out to be equivalent to the following: for any positive integer $n,$ we have $HML^{II,0}(\Z[\frac1{n!}],w)\ne\Q.$ 

As an application of our technique, we identify the ring structure on the Mac Lane cohomology of a global number ring and compute the Adams operations (introduced in this case by McCarthy \cite{McC}) on its Mac Lane homology. 
\end{abstract}

\keywords{}

\maketitle

\tableofcontents

\section{Introduction}
\label{sec:introduction}

In this paper we investigate the connection between the Mac Lane (co)homology and Wieferich primes in finite localizations of global number rings.

We first recall the motivating geometric picture. Let $X=\Spec B$ be an affine variety over a field $k$ of characteristic zero. By Hochschild-Kostant-Rosenberg theorem \cite{HKR},
we have isomorphisms
$$HH_n(B)\cong\Omega^n(X),\quad HH^n(B)\cong\Lambda^nT_X,\quad n\geq 0,$$
where $HH_{\bullet}$ and $HH^{\bullet}$ denote the Hochschild homology and cohomology, which are recalled in Section \ref{ssec:HH}.

In the paper \cite{PP}, Polishchuk and Positselski define and thoroughly study the notion of Hochschild homology $HH_{\bullet}^{II}(\cC,M)$ (resp. cohomology $HH^{II,\bullet}(\cC,M)$)  of the second kind of a curved DG category $\cC$ with coefficients in a curved DG bimodule $M.$ As usual, one writes simply $HH_{\bullet}^{II}(\cC)$ (resp. $HH^{II,\bullet}(\cC)$) when $M$ is the diagonal bimodule.

Within the above notation, any element $w\in B$ gives a $\Z/2$-graded curved DG algebra $B_w,$ which equals $B$ (concentrated in degree zero) as a $\Z/2$-graded algebra, has zero differential and curvature $w.$ It was shown in \cite{E1} that we have natural isomorphisms
$$HH_{0}^{II}(B_w)\cong \bigoplus\limits_i H^{2i}(\Omega^{\bullet}(X),dw\wedge),\quad HH_1^{II}(B_w)\cong\bigoplus\limits_i H^{2i-1}(\Omega^{\bullet}(X),dw\wedge).$$
Here the differential $dw\wedge$ is the wedge multiplication by $dw.$ Similarly one can show the analogous result for Hochschild cohomology of the second kind (which is also a corollary by duality):
$$HH^{II,0}(B_w)\cong \bigoplus\limits_i H^{2i}(\Lambda^{\bullet}T_X,\iota_{dw}),\quad HH^{II,1}(B_w)\cong \bigoplus\limits_i H^{2i-1}(\Lambda^{\bullet}T_X,\iota_{dw}).$$

We see that in both cases, the Hochschild (co)homology of the second kind of $B_w$ is a $\Z/2$-graded $B$-module supported exactly on the critical locus of $w$ in $X.$

The aim of this paper is to formulate and prove the analogous results when 

\smallskip

$\bullet$ the algebra $B$ is replaced by a finite localization $S^{-1}A$ of a global number ring $A$ (that is, $A$ is a ring of integers in a number field). More precisely, $B$ is replaced by the Eilenberg-Mac Lane spectrum $H(S^{-1}A).$ 

$\bullet$ the field $k$ is replaced by the sphere spectrum $S^{\bullet}.$

$\bullet$ the function $w$ on $X$ is still just an element of $S^{-1}A.$ 

\smallskip

We first introduce the notion of a critical point in this context. First consider the case $A=\Z,$ and $w\in\Z$ an integer number. Then the critical point of $w$ is a prime $p$ such that
$$w^p\equiv w\text{ mod }p^2.$$
We denote by $Crit(w)\subset\Spec_m(\Z)$ the set of critical points of $w.$ Here for any commutative ring $R,$ we denote by $\Spec_m(R)$ its maximal spectrum.

Recall that $p$ is a Wieferich prime to the base $w\ne 0$ if $p\nmid w$ and $$p^2\mid (w^{p-1}-1).$$
We see that for $w\ne 0$ we have
$$Crit(w)=\{\text{Weiferich primes }p\leq X\text{ to the base }w\}\cup\{\text{primes }p\text{ such that }p^2\mid w\}.$$

Statistical considerations (\cite{CDP}, Section 3, and \cite{Katz}, Section 2) motivate the following well known conjecture.

\begin{conj}\label{conj:infinitely_many_wieferich}For any integer $w\ne 0$ there are infinitely many Wieferich primes to the base $w.$ Moreover, if $w\not\in\{1,-1\},$ then
$$\#\{\text{Weiferich primes }p\leq X\text{ to the base w}\}\sim \log\log X$$
as $X\to\infty.$\end{conj}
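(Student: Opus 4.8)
The plan is necessarily heuristic, since the infinitude of Wieferich primes to a fixed base is a well-known open problem---it is not known even for $w=2$---and no unconditional proof exists. What one can do, and what the cited statistical references carry out, is to justify the predicted density through a standard probabilistic model. Fix $w\neq 0$, and let $p$ be a prime with $p\nmid w$. By Fermat's little theorem $w^{p-1}-1$ is always divisible by $p$, so I may write $w^{p-1}-1=p\,a_p$ with $a_p\in\Z$; then $p$ is Wieferich to the base $w$ exactly when $p\mid a_p$. The heuristic assumption is that the residue of $a_p$ modulo $p$ is equidistributed in $\Z/p\Z$, so that the ``Wieferich event'' $p\mid a_p$ occurs with probability $1/p$, independently across distinct primes.

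Granting the model, the expected number of Wieferich primes to the base $w$ with $p\leq X$ is
$$\sum_{\substack{p\leq X\\ p\nmid w}}\frac1p,$$
and Mertens' theorem gives $\sum_{p\leq X}\frac1p=\log\log X+O(1)$. Since this diverges as $X\to\infty$, a second Borel--Cantelli heuristic applied to the (assumed independent) events predicts infinitely many Wieferich primes and, more precisely, the asymptotic $\#\{\text{Wieferich }p\leq X\}\sim\log\log X$ asserted in the conjecture. The exclusion of $w\in\{1,-1\}$ in the refined statement is forced: for $w=1$ one has $w^{p-1}-1=0$ for every $p$, and for $w=-1$ one has $(-1)^{p-1}-1=0$ for every odd $p$, so in both cases the divisibility $p^2\mid w^{p-1}-1$ holds for all (odd) primes and the true count is $\pi(X)\sim X/\log X$ rather than $\log\log X$.

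The main obstacle is that this is not a proof at all, and upgrading it to one appears out of reach of current techniques. The equidistribution of $a_p$ modulo $p$ is an unproven hypothesis, and even the weaker qualitative claim of infinitude resists the standard tools: sieve methods produce upper bounds of roughly the expected order but cannot deliver a matching lower bound, and the condition $p\mid a_p$ is a genuinely $p$-adic, non-multiplicative constraint on $w^{p-1}$ for which no analytic handle (no suitable $L$-function or automorphic input) is known. Consequently I would present the statement exactly as it stands---as a conjecture supported by the density heuristic above---rather than attempt a derivation, and instead direct the paper's effort toward the homological reformulation of $Crit(w)$, where genuine theorems are available.
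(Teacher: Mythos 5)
This statement is a conjecture, not a theorem: the paper offers no proof, explicitly notes that infinitude of Wieferich primes is unknown for every integer $w\not\in\{-1,0,1\}$, and attributes the motivation to the statistical considerations in \cite{CDP} and \cite{Katz}. Your treatment --- declining to prove it, supplying the standard $1/p$-equidistribution heuristic with Mertens' theorem giving the $\log\log X$ density, and explaining why $w\in\{1,-1\}$ must be excluded --- is exactly the paper's own stance, so the proposal matches the paper's approach.
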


At the moment the infiniteness of Wieferich primes to the base $w$ is unknown for any integer $w\not\in\{-1,0,1\}.$ It is worth mentioning that the infiniteness of non-Wieferich primes to the base $w$ is known only to follow from the ABC-conjecture \cite{S}.

More generally, let $A$ be a global number ring and $K$ its field of fractions. For any (possibly infinite) subset $T\subset\Spec_m A$ we put
$$T^{-1}A=\{x\in K\mid x\in A_{\rho}\text{ for }\rho\in T\}.$$ 

From now on we fix a finite subset $S\subset\Spec_m A.$ Note that
$$S^{-1}A=\cO(\Spec A\setminus S).$$
Let now $w\in S^{-1}A$ be an element. For any maximal ideal $\rho\subset S^{-1}A$ we denote by $k(\rho):=S^{-1}A/\rho$ its residue field. The set of critical points of $w$ is defined by the formula
$$Crit(w)=\{\rho\in\Spec_m(S^{-1}A)\mid w^{|k(\rho)|}\equiv w\text{ mod }\rho^2\}\subset \Spec_m(S^{-1}A).$$

To justify this definition of a critical point, consider the above case $X=\Spec B,$ and $w\in B$ an element. Then a $k$-rational closed point $x\in X$ is a critical point of $w$ iff 
\begin{equation}\label{eq:crit_point_classical} w-w(x)\in\m_x^2,\end{equation}
where $\m_x\subset B$ is the corresponding maximal ideal. Here $k(x)=k$ is naturally embedded into $B,$ hence the LHS of \eqref{eq:crit_point_classical} is well defined. For a general closed point $x\in X,$ the residue field $k(x)$ is some finite extension of $k$ and there is no embedding $k(x)\hookrightarrow B.$ However, if we replace $B$ by the completed local ring $\widehat{B}_x,$ then we have a natural injective homomorphism $k(x)\hookrightarrow\widehat{B}_x.$ Thus, \eqref{eq:crit_point_classical} makes perfect sense in $\widehat{B}_x,$ and moreover this condition exactly defines the set of closed points in the critical locus of $w$ in $X.$

Returning to the number ring case, there is of course no homomorphism $k(\rho)\to \widehat{(S^{-1}A)}_{\rho}$ for a maximal ideal $\rho\subset S^{-1}A.$
But there is a unique multiplicative section
$$\omega:k(\rho)\to \widehat{(S^{-1}A)}_{\rho},$$ which sends an element $x\in k(\rho)$ to its Teichm\"uller representative.
Note that we have $$\omega(\bar{w})\equiv w^{|k(\rho)|}\text{ mod }\hat{\rho}^2.$$
It follows that $\rho$ is contained in $Crit(w)$ iff we have
$$w-\omega(\bar{w})\in\hat{\rho}^2,$$ which is exactly the analogue of \eqref{eq:crit_point_classical}.

We now turn to the analogues of Hochschild (co)homology (of the second kind) in this context.
In \cite{Bok}, B\"okstedt defines topological Hochschild homology of any symmetric ring spectrum with coefficients in a bimodule.
Given an associative ring $R$ and an $R\text{-}R$-bimodule $M,$ one writes $THH_{\bullet}(R,M)$ for the topological Hochschild homology of $HR$ with coefficients in $HM.$

It was proved in \cite{PW} that topological Hochschild homology is naturally isomorphic to another invariant which is called {\it Mac Lane homology}. It was originally introduced in \cite{ML} using cubical construction which we recall in Section \ref{sec:cubical_construction}. The cubical construction assigns to an abelian group $C$ a functorial chain complex $Q_{\bullet}(C)$ concentrated in non-negative homological degrees. We always have $H_0(Q_{\bullet}(C))=C$ and $H_1(Q_{\bullet}(C))=0,$ and $Q_n(C)$ is torsion for $n\geq 2.$ More generally, it was shown in \cite{EM2}, \cite{EM3} that
$$H_n(Q_{\bullet}(C))=H_{n+m}(K(C,m))\text{ for }m>n,$$
see also \cite{Pir} for a different proof. Here $K(C,m)$ are the Eilenberg and Mac Lane spaces.

If $R$ is an associative ring, then $Q_{\bullet}(R)$ is naturally a DG ring. The natural projection $Q_{\bullet}(R)\to R$ is a morphism of DG rings. For an $R\text{-}R$-bimodule $M,$ the Mac Lane (co)homology of $R$ with coefficients in $M$ is defined by
$$HML_{\bullet}(R,M)=HH_{\bullet}(Q_{\bullet}(R),M),\quad HML^{\bullet}(R,M)=HH^{\bullet}(Q_{\bullet}(R),M).$$
It is proved in \cite{PW} that $$THH_n(R,M)\cong HML_n(R,M).$$

We use the above-mentioned notion of the Hochschild (co)homology of the second kind to define the Mac Lane (co)homology of the second kind $$HML^{II}_{\bullet}(R,w;M),\quad HML^{II,\bullet}(R,w;M),$$
see Subsections \ref{ssec:HH_second_kind} and \ref{ssec:HML}. Here $w$ is a central element of $R,$ which is also central for $M,$ that is, left and right multiplications by $w$ coincide on $M.$ These invariants are $\Z/2$-graded abelian groups.

Recall that a different ideal $\cD_A\subset A$ can be defined by the formula
$$\cD_A^{-1}=\{x\in K\mid \Tr_{K/\Q}(x)\in\Z\}.$$ We put
$$\cD_{S^{-1}A}:=\cD_A\otimes_A S^{-1}A.$$ Our main result is the following theorem (the more precise version is Theorem \ref{th:HML^II_for_loc_of_number_rings}).

\begin{theo}\label{th:HML^II_number_rings_intro}Within the above notation, let $w\in S^{-1}A$ an element. We always have $HML^{II,1}(S^{-1}A,w)=0.$ Assume that $$Crit(w)\cap \supp(S^{-1}A/2\cD_{S^{-1}A})=\emptyset$$ and put $$T:=\Spec_m(S^{-1}A)\setminus Crit(w).$$ Then we have
$$HML^{II,0}(S^{-1}A,w)=(T\cup S)^{-1}A\oplus\bigoplus\limits_{\substack{\rho\in Crit(w);\\ n>0}}S^{-1}A/\rho^{\ord_p(n)}$$
\end{theo}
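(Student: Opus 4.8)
The plan is to reduce the second-kind invariant to the ordinary Mac Lane (co)homology of $S^{-1}A$, following the strategy by which the geometric formulas of \cite{E1} are obtained from the second-kind formalism of \cite{PP}. Concretely, I would first realise $HML^{II,\bullet}(S^{-1}A,w)$ as the cohomology of a $\Z/2$-graded complex built from the Mac Lane complex of $Q_\bullet(S^{-1}A)$ by adjoining the central curvature $w$, exactly as $B_w$ is treated in \cite{E1}. Filtering by Hochschild degree produces a spectral sequence whose input is the ordinary Mac Lane cohomology $HML^\bullet(S^{-1}A)$ and whose remaining differential is the arithmetic incarnation of the contraction $\iota_{dw}$, namely the operation induced by multiplication by the curvature $w$. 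Since $THH_\bullet\cong HML_\bullet$ by \cite{PW}, \cite{Bok}, the needed input is the topological Hochschild homology of $S^{-1}A$: away from the primes dividing $2\cD_{S^{-1}A}$ one has $HML_0(S^{-1}A)=S^{-1}A$, vanishing in positive even homological degree, and $HML_{2n-1}(S^{-1}A)\cong S^{-1}A/nS^{-1}A$. Dually, the cohomology $HML^\bullet(S^{-1}A)$ is concentrated in even cohomological degrees; this parity concentration is what forces $HML^{II,1}(S^{-1}A,w)=0$, and it is precisely to keep the groups $S^{-1}A/n$ free of ramification corrections (and of the anomalies at the prime $2$) that the disjointness hypothesis $Crit(w)\cap\supp(S^{-1}A/2\cD_{S^{-1}A})=\emptyset$ is imposed.

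The computation is then carried out one prime at a time. Both Mac Lane homology and the second-kind construction commute with the flat localisations $S^{-1}A\to A_\rho$, so the spectral sequence decomposes over $\Spec_m(S^{-1}A)$, and the $\rho$-primary part of $HML_{2n-1}(S^{-1}A)$ is $S^{-1}A/\rho^{\ord_p(n)}$, where $p$ is the rational prime below $\rho$. The heart of the argument is to identify, on this local piece, the curvature operation induced by $w$ in terms of the residue class of $w^{|k(\rho)|}-w$ modulo $\rho^2$. The Teichm\"uller description of the critical locus recalled in the introduction, namely $w-\omega(\bar w)\in\hat\rho^2$, is tailored exactly to make this identification meaningful: granting it, the curvature differential at $\rho$ is an isomorphism in every relevant degree when $\rho\notin Crit(w)$, and degenerates when $\rho\in Crit(w)$.

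With the local picture in hand the assembly is formal. At each non-critical prime the curvature differential is invertible, so the local complex collapses: the torsion groups all cancel and the degree-zero term $A_\rho$ is rationalised, i.e.\ $\rho$ is inverted. Collecting these contributions over all $\rho\notin Crit(w)$ inverts exactly the non-critical primes and leaves the localisation $(T\cup S)^{-1}A$, which is regular precisely along $Crit(w)$ since $T\cup S=\Spec_m(A)\setminus Crit(w)$. At each critical prime the differential vanishes on the relevant groups, so the full $\rho$-primary torsion survives, contributing $\bigoplus_{n>0}S^{-1}A/\rho^{\ord_p(n)}$; summing over $\rho\in Crit(w)$ gives the torsion part of the asserted formula, while the odd part vanishes by the parity observation above. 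This yields the stated equality and, in its refined form, Theorem \ref{th:HML^II_for_loc_of_number_rings}.

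The step I expect to be genuinely difficult is the local identification of the curvature operation with the congruence $w^{|k(\rho)|}\equiv w\bmod\rho^2$. This requires an explicit hold on the power operations and the circle action governing $THH(\Z)$ --- equivalently, on the multiplicative structure of the cubical complex $Q_\bullet$ --- together with an analysis of how multiplication by the curvature $w$ interacts with the Teichm\"uller lift $\omega$. Controlling the resulting error terms, which originate from the different $\cD_{S^{-1}A}$ and from the prime $2$, is exactly what makes the disjointness hypothesis necessary and constitutes the technical core of the proof.
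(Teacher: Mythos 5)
Your strategy is globally the same as the paper's: the mixed-complex spectral sequence with $E_1^{\bullet}=HML^{\bullet}(S^{-1}A)$, the even-degree concentration forcing $HML^{II,1}(S^{-1}A,w)=0$, a prime-by-prime reduction, and at each prime a dichotomy governed by $w^{|k(\rho)|}\equiv w\bmod\rho^2$, with explicit curvature cocycles as the technical core. But two of your steps contain genuine gaps. The first is the claim that ``the second-kind construction commutes with the flat localisations $S^{-1}A\to A_{\rho}$.'' It does not, and the paper never localises the second-kind complexes: the Mac Lane cochain complex is a product-total complex and the Laurent-type totalization $\Tot^L$ is built from infinite products, neither of which commutes with base change formally. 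What the paper does instead (Theorem \ref{th:HML^II_for_loc_of_number_rings}, part 1) is decompose the \emph{abutment}: since $F_0HML^{II,0}(S^{-1}A,w)=S^{-1}A$ and every $\gr_n^F$ is a finitely generated torsion module, the filtered ring splits as a tensor product of local factors $B(\rho)$; each factor is then identified after base change to the \emph{completion} $\widehat{A}_{\rho}$ via Lemma \ref{lem:lemma_on_completions} (resting on Proposition \ref{prop:HML_completions} and the known finiteness of the Mac Lane cohomology groups, not on any formal commutation). Completion rather than localisation is essential, since the local inputs --- Lindenstrauss--Madsen's computation and the explicit cocycles such as $\xi([x])=\frac{x-x^q}{\pi}\bmod\m$ --- are only available over complete discrete valuation rings.

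The second gap is at the critical primes, where you assert that ``the differential vanishes on the relevant groups, so the full $\rho$-primary torsion survives.'' Granting the congruence $w\equiv w^{q}\bmod\m^2$, degeneration of the spectral sequence does not follow by inspecting $\delta_{[w]}$ on $E_1$. The paper's proof (Lemma \ref{lem:sp_seq_degen_w^q=w}) writes $w=w'^{p}$ --- this is exactly where $p\neq 2$ and non-ramifiedness are used, which is the true role of your disjointness hypothesis, rather than ``normalising'' the groups $S^{-1}A/n$ --- or $w=p^2x$, then applies the Leibniz-type Lemma \ref{lem:q_is_for_delta_w_1...w_k} (which in turn requires the divided-power Lemma \ref{lem:q_is_divided_powers}) to replace $\delta_{[w]}$ by $p$ times another differential, and finally runs the $\m$-adic valuation count of Lemma \ref{lem:sp_seq_degen_for_p_delta}. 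Moreover, even after degeneration you only know the associated graded of $HML^{II,0}$; to obtain the direct sum $\bigoplus_{n>0}S^{-1}A/\rho^{\ord_p(n)}$ rather than a non-split extension (e.g.\ $\Z/p^2$ in place of $(\Z/p)^{\oplus 2}$), the paper proves degeneration with all coefficients $R/\m^l$ and invokes the splitting criterion of Lemma \ref{lem:extensions} (Theorem \ref{th:HML^II_for_discr_valuation}, part 2). Your sketch correctly isolates the local identification of the curvature differential as the hard point, but without these two assembly arguments the proof as written does not close.
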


We have the following remarkable corollary.

\begin{cor}Let $w\ne 0$ be an integer. The following are equivalent:

(i) there are infinitely many Wieferich primes $p$ to the base $w;$

(ii) For any positive integer $n\in\Z_{>0}$ we have $$HML^{II,0}\left(\Z\left[\frac1{n!}\right],w\right)\not\simeq \Q.$$\end{cor}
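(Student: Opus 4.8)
The plan is to apply Theorem~\ref{th:HML^II_number_rings_intro} with $A=\Z$ and $S$ the set of primes dividing $n!$, so that $S^{-1}A=\Z[\frac1{n!}]$ and $\Spec_m(S^{-1}A)=\{p>n\}$. The first step is to check the hypothesis. Since $\Tr_{\Q/\Q}=\id$ we have $\cD_\Z=\Z$, hence $\cD_{S^{-1}A}=S^{-1}A$, and $\supp(S^{-1}A/2\cD_{S^{-1}A})$ is $\{(2)\}$ when $2\nmid n!$ and empty otherwise. Thus for every $n\ge 2$ the prime $2$ is inverted, the hypothesis holds automatically, and the theorem applies.

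The second step is to read off the formula. Write $C_n:=Crit(w)\cap\Spec_m(\Z[\frac1{n!}])$ for the set of critical primes exceeding $n$. Theorem~\ref{th:HML^II_number_rings_intro} presents $HML^{II,0}(\Z[\frac1{n!}],w)$ as a free part, which it identifies with the ring of rationals integral at every prime of $C_n$ (so all of $\Q$ when $C_n=\emptyset$), together with the torsion part $\bigoplus_{p\in C_n}\bigoplus_{m\ge 1}\Z[\frac1{n!}]/(p^{\ord_p(m)})$, whose nonzero summands are the finite groups $\Z/p^{\ord_p(m)}$. The key observation is that this group is isomorphic to $\Q$ exactly when $C_n=\emptyset$: if $C_n=\emptyset$ both pieces collapse to $\Q$; and if some $p\in C_n$, then the index $m=p$ gives a summand $\Z/p\ne 0$, so the group acquires nonzero torsion and cannot be $\simeq\Q$.

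The third step is to translate $C_n$ into Wieferich primes and settle the quantifier over $n$. By the identity recorded in the introduction, $Crit(w)$ is the union of the Wieferich primes to the base $w$ with the finite set $\{p:p^2\mid w\}$; hence $Crit(w)$ is infinite if and only if (i) holds, and $C_n=\emptyset$ for some $n$ precisely when $Crit(w)$ is finite. Combining this with the second step, for each $n\ge 2$ we have $HML^{II,0}(\Z[\frac1{n!}],w)\not\simeq\Q$ if and only if $C_n\ne\emptyset$; therefore the clause ``$HML^{II,0}\not\simeq\Q$ for all $n\ge 2$'' is equivalent to ``$C_n\ne\emptyset$ for all $n$'', i.e.\ to $Crit(w)$ being infinite, i.e.\ to (i). For the converse (failure of (ii)) it suffices to choose any $n\ge 2$ exceeding every critical prime, so that $C_n=\emptyset$ and $HML^{II,0}(\Z[\frac1{n!}],w)=\Q$.

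The step I expect to be the main obstacle is the remaining case $n=1$, where $S^{-1}A=\Z$ and $2$ is not inverted: if $2\in Crit(w)$ — equivalently $4\mid w$ or $w\equiv 1\bmod 4$ — the hypothesis of Theorem~\ref{th:HML^II_number_rings_intro} fails, so the theorem does not apply directly. To close this gap I would invoke the compatibility of Mac Lane cohomology of the second kind with localization (flat base change along $\Z\to\Z[\frac12]$), in the form $HML^{II,0}(\Z[\frac12],w)\cong HML^{II,0}(\Z,w)[\frac12]$. Under (i) we have $HML^{II,0}(\Z[\frac12],w)\not\simeq\Q$ by the $n=2$ computation, so $HML^{II,0}(\Z,w)\simeq\Q$ is impossible; this yields the $n=1$ clause of (ii) and completes the equivalence (i)$\Leftrightarrow$(ii).
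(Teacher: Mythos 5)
Your proof is correct, and for $n\geq 2$ it is exactly the derivation the paper intends: the corollary is stated as an immediate consequence of Theorem \ref{th:HML^II_number_rings_intro} (precise form: Theorem \ref{th:HML^II_for_loc_of_number_rings} and Remark \ref{remark:formula_for_HML^II_of _loc_of_number_rings}), with the summands $S^{-1}A/\rho^{\ord_p(m)}$ supplying nonzero torsion exactly when some critical prime exceeds $n$, and finiteness of $\{p:\,p^2\mid w\}$ (using $w\neq 0$) translating infinitude of $Crit(w)$ into statement (i). The genuine point of divergence is your handling of $n=1$ when $2\in Crit(w)$: you are right that Theorem \ref{th:HML^II_number_rings_intro} is silent there, and the paper itself glosses over this case, so your patch is an addition rather than a reproduction. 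Note, however, that the base-change isomorphism $HML^{II,0}(\Z[\frac12],w)\cong HML^{II,0}(\Z,w)\otimes_{\Z}\Z[\frac12]$ you invoke appears nowhere in the paper and does require an argument: Proposition \ref{prop:HML_localizations} concerns only ordinary Mac Lane cohomology, and to pass to the second kind one must check that the relevant comparison maps of mixed complexes are quasi-isomorphisms on the $b$-cohomology (via Proposition \ref{prop:HML_localizations} together with a universal-coefficients argument using that $HML_n(\Z)$ is finitely generated) and that the totalization cooperates, which works because the complexes are bounded below, so $\Tot^L=\Tot^{\oplus}$ commutes with the flat tensor and Proposition \ref{prop:functoriality_of_Tot} applies; this mirrors how the paper proves the completion analogue, Lemma \ref{lem:lemma_on_completions}. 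Alternatively, you can close the $n=1$ case entirely with results already in the paper: under (i) choose an odd prime $p\in Crit(w)$; then the isomorphism \eqref{eq:HML^II(A)_completion} coming from Lemma \ref{lem:lemma_on_completions} (which needs no hypothesis at $2$) gives $HML^{II,0}(\Z,w)\otimes_{\Z}\widehat{\Z}_p\cong HML^{II,0}(\widehat{\Z}_p,w)$, and Theorem \ref{th:HML^II_for_discr_valuation} 2) shows the right-hand side contains a copy of $\Z/p$, whereas $\Q\otimes_{\Z}\widehat{\Z}_p\cong\Q_p$ is torsion-free; hence $HML^{II,0}(\Z,w)\not\simeq\Q$.
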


By duality, we obtain the similar result for the homology (see Theorem \ref{th:HML^II_dual_of_loc_of_number_rings}).

\begin{theo}\label{th:HML^II_dual_intro}Within the notation and assumptions of Theorem \ref{th:HML^II_number_rings_intro}, assume that $T\ne\emptyset.$ Then we have
$$HML^{II}_0(S^{-1}A,w)=0$$
$$HML^{II}_1(S^{-1}A,w)\cong(\prod\limits_{\rho\in T}\widehat{A}_{\rho})/S^{-1}A\oplus \prod\limits_{\substack{\rho\in Crit(w);\\ n>0}}S^{-1}A/\rho^{\ord_p(n)}.$$
\end{theo}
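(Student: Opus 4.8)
The plan is to derive the homology statement from the cohomology computation of Theorem~\ref{th:HML^II_number_rings_intro} by the duality between Hochschild invariants of the second kind. In the geometric model recalled in the introduction, the chain complex $(\Omega^\bullet(X),dw\wedge)$ computing $HH^{II}_\bullet(B_w)$ is the $B$-linear dual of the cochain complex $(\Lambda^\bullet T_X,\iota_{dw})$ computing $HH^{II,\bullet}(B_w)$. Over the sphere spectrum the $B$-linear dual is replaced by Pontryagin duality $\Hom_\Z(-,\Q/\Z)$ together with the arithmetic trace pairing $\Tr_{K/\Q}$, whose dualizing module is the inverse different $\cD^{-1}_{S^{-1}A}$. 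Thus the first step is to establish, from the general second-kind duality of \cite{PP} applied to the curved DG ring built from $Q_\bullet(S^{-1}A)$ with curvature $w$, an isomorphism of $\Z/2$-graded groups of the form
\[
HML^{II}_i(S^{-1}A,w)\cong \Hom_\Z\!\big(HML^{II,i+1}(S^{-1}A,w;\cD^{-1}_{S^{-1}A}),\,\Q/\Z\big).
\]
The shift by one in the $\Z/2$-grading reflects both the one-dimensional arithmetic duality over $\Z$ and the fact that the Mac Lane homology of $\Z$ — equivalently $\pi_\bullet THH(\Z)$ — is concentrated in odd degrees, so that the torsion governing these invariants sits in odd homological (even cohomological) degree.

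Granting such an isomorphism, $HML^{II}_0(S^{-1}A,w)=0$ is immediate: it is the Pontryagin dual of $HML^{II,1}(S^{-1}A,w;\cD^{-1}_{S^{-1}A})$, which vanishes by the $\cD^{-1}$-coefficient analogue of the first assertion of Theorem~\ref{th:HML^II_number_rings_intro}. For $HML^{II}_1$ I would dualize the two summands of $HML^{II,0}$ separately. The hypothesis $Crit(w)\cap\supp(S^{-1}A/2\cD_{S^{-1}A})=\emptyset$ ensures that every critical prime is unramified and lies over an odd rational prime; hence there $\cD$ is a unit (so the $\cD^{-1}$-twist is trivial and the torsion summands are unchanged), the local trace form is perfect, and the prime $2$, where the cubical construction carries additional torsion, is excluded. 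Each finite module $S^{-1}A/\rho^{\ord_p(n)}$ is then Pontryagin self-dual, and dualizing turns the direct sum into the direct product $\prod_{\rho\in Crit(w),\,n>0}S^{-1}A/\rho^{\ord_p(n)}$.

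It remains to dualize the free summand $(T\cup S)^{-1}A$, which in the $\cD^{-1}$-coefficient cohomology appears twisted as $(T\cup S)^{-1}\cD^{-1}_{S^{-1}A}$, and to identify its dual with $\big(\prod_{\rho\in T}\widehat{A}_\rho\big)/S^{-1}A$. For this I would Pontryagin dualize the arithmetic local--global sequence along $T$, whose cokernel records the principal parts $\bigoplus_{\rho\in T}K_\rho/(\cD^{-1})_\rho$ of $\cD^{-1}$-meromorphic functions regular outside $T$. Since the local trace pairing $\widehat{A}_\rho\times(\cD^{-1})_\rho\to\Z_p$ is perfect, dualizing each principal-part term returns $\widehat{A}_\rho$ and the different twist is precisely absorbed; assembling these contributions and recording the global sections as the denominator yields $\big(\prod_{\rho\in T}\widehat{A}_\rho\big)/S^{-1}A$. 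The assumption $T\ne\emptyset$ is exactly what makes this sequence exact and the quotient well defined; when $T=\emptyset$ the formula degenerates and a unit (constant) contribution survives in homological degree $0$.

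The step I expect to be the main obstacle is the first one: setting up the second-kind (Koszul-type) duality with the correct dualizing module $\cD^{-1}_{S^{-1}A}$ and the correct degree-one shift over the sphere spectrum rather than over a field, and controlling the analytic subtlety that $T$ may be infinite. Concretely, cohomology of the second kind is naturally a colimit/direct-sum object while homology of the second kind is a limit/direct-product object, and one must justify that Pontryagin duality interchanges these and that the relevant local--global sequence remains exact in the infinite setting. Once this duality is in place with the inverse different as dualizing twist, the summand-by-summand identifications above are routine arithmetic.
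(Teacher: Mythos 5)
Your overall strategy---obtain the homology statement by dualizing Theorem \ref{th:HML^II_number_rings_intro}---is the same as the paper's, but the duality you propose is not the one that holds, and the proof breaks at exactly the two points where you deviate from it. The paper (Theorem \ref{th:HML^II_dual_of_loc_of_number_rings}) uses the $S^{-1}A$-\emph{linear} derived duality of Lemma \ref{lem:duality_for_homology}: $\Hoch^{II}_{\bullet}(Q_{\bullet}(R),[w];R)\cong\bR\Hom_R(\Hoch^{II,\bullet}(Q_{\bullet}(R),w;R),R)$ with $R=S^{-1}A$, with no twist by the different and no degree shift built into the duality; the $\Z/2$-shift comes from $\Ext^1_R$, since $R$ has global dimension one. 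This duality is not formal: the unconditional statement (Lemma \ref{lem:duality_of_Hoch_complexes}) only says that the \emph{dual of the chain complex} is the cochain complex with dual coefficients, and inverting it requires biduality, which the paper obtains from regularity of $R$ and finite generation of the ordinary $HML_n(R)$ (hypotheses i)--iii) of Lemma \ref{lem:duality_for_homology}). Your Pontryagin duality $HML^{II}_i\cong\Hom_{\Z}(HML^{II,i+1}(\,\cdot\,;\cD^{-1}_{S^{-1}A}),\Q/\Z)$ suffers from the same inversion problem but without a remedy: dualizing the chain complex over $\Z$ expresses the \emph{dual of homology} as a cohomology with huge coefficients $\Hom_{\Z}(S^{-1}A,\Q/\Z)$, and Pontryagin biduality fails for the discrete infinite-rank groups occurring here (already $\Hom_{\Z}(\Hom_{\Z}(\Z,\Q/\Z),\Q/\Z)=\widehat{\Z}\ne\Z$). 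Your $\cD^{-1}$-twist intuition is correct only for \emph{finite} modules, where $R$-linear (Matlis) duality agrees with Pontryagin duality up to the different; it is precisely on the infinite-rank summand that the two notions part ways.

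That summand is the second, independent error. Pontryagin duality is exact, so dualizing your sequence $0\to S^{-1}\cD^{-1}\to(T\cup S)^{-1}\cD^{-1}\to\bigoplus_{\rho\in T}K/(\cD^{-1})_{\rho}\to0$ yields a \emph{short exact} sequence exhibiting $\Hom_{\Z}((T\cup S)^{-1}\cD^{-1},\Q/\Z)$ as an extension of the nonzero compact group $\Hom_{\Z}(S^{-1}\cD^{-1},\Q/\Z)$ by $\prod_{\rho\in T}(\text{local duals})$: it \emph{contains} the product as a subgroup with an enormous quotient, and nothing in an exact duality can ``record the global sections as the denominator''---there is no connecting map. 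The quotient structure of the correct answer $(\prod_{\rho\in T}\widehat{A}_{\rho})/S^{-1}A$ arises in the paper from the connecting homomorphism of the $\Ext_{S^{-1}A}(-,S^{-1}A)$ long exact sequence applied to $0\to S^{-1}A\to(T\cup S)^{-1}A\to\bigoplus_{\rho\in T}K/(S^{-1}A)_{\rho}\to0$, using the injective resolution $0\to S^{-1}A\to K\to\bigoplus_{\rho}K/(S^{-1}A)_{\rho}\to0$ to compute $\Ext^1_{S^{-1}A}(K/(S^{-1}A)_{\rho},S^{-1}A)\cong\widehat{A}_{\rho}$: the denominator $S^{-1}A$ is the image of $\Hom_{S^{-1}A}(S^{-1}A,S^{-1}A)$ under the connecting map, and this step is exactly where $T\ne\emptyset$ enters, via $\Hom_{S^{-1}A}((T\cup S)^{-1}A,S^{-1}A)=0$. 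Note also that under the correct duality the vanishing $HML^{II}_0(S^{-1}A,w)=0$ is this same $\Hom$ computation, not a consequence of $HML^{II,1}=0$ as in your first reduction.
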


As an easy application, we compute the graded ring structure on $HML^{\bullet}(A)$ (see Theorem \ref{th:ring_structure_on_HML_S^-1A}).

\begin{theo}\label{th:ring_structure_intro}We have an isomorphism of graded rings $$HML^{\bullet}(A)\cong \Gamma_{A}(x)/(\cD_A\cdot x),$$ where $\deg(x)=2$ and $\Gamma_A(x)$ denotes the algebra of divided powers on $x$ over $A.$\end{theo}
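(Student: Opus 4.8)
I want to compute the graded ring $HML^{\bullet}(A)=HH^{\bullet}(Q_\bullet(A),A)$ for a global number ring $A$. The strategy is to leverage the second-kind computations already in hand rather than attack the cubical construction directly. Indeed, setting $w=0$ in the curved setting collapses the periodicity into the genuine Mac Lane cohomology: the two-periodic complex $HH^{II,\bullet}(A_0,A)$ that computes $HML^{II,\bullet}$ degenerates to the ordinary graded object, so the additive structure $HML^{2i}(A)\cong S^{-1}A/\mathcal D_A$-type data should fall out of Theorem \ref{th:HML^II_number_rings_intro} specialized to $w=0$ (where $Crit(0)=\Spec_m A$). First I would establish the additive statement: $HML^{n}(A)=0$ for $n$ odd, $HML^0(A)=A$, and $HML^{2i}(A)\cong A/\mathcal D_A$ for $i>0$. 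This matches the claimed $\Gamma_A(x)/(\mathcal D_A\cdot x)$ on the nose, since the degree-$2i$ divided power $x^{[i]}$ generates a free rank-one $A$-module killed exactly by $\mathcal D_A$ for $i\geq 1$.

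**Key steps in order.** The heart is the multiplicative structure. Step one is to recall that $Q_\bullet(A)$ is a DG ring and that $HH^{\bullet}(Q_\bullet(A),A)$ carries the standard cup product making $HML^{\bullet}(A)$ a graded-commutative ring; over a number ring one checks the sign conventions give honest commutativity in even degrees. Step two is to pin down the generator in degree two. By the stable range result $H_n(Q_\bullet(C))=H_{n+m}(K(C,m))$ and the known computation of the mod-$p$ homology of Eilenberg--Mac Lane spaces (the Dyer--Lashof/admissible-monomial description), $HML^{\bullet}(\Z)$ is generated over $\Z$ by a single class $x$ in degree two, and $HML^{\bullet}(\mathbb F_p)$ is a divided-power algebra on a degree-two class. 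Step three is the base change: I would use the fact that $A$ is a finite flat (indeed projective) $\Z$-module, together with a Künneth/localization argument, to propagate the divided-power structure from $\Z$ and from the residue fields $A/\rho$ to $A$ itself. Concretely, the short exact sequences $0\to A\xrightarrow{\mathcal D_A} A\to A/\mathcal D_A\to 0$ and the identification of $HML^{2i}(A)$ with $A/\mathcal D_A$ for $i>0$ force the relation $x^{[i]}\cdot\mathcal D_A=0$, and the divided-power multiplication $x^{[i]}\cdot x^{[j]}=\binom{i+j}{i}x^{[i+j]}$ is inherited because the structure maps are compatible under $\Z\to A$.

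**The main obstacle.** The delicate point is identifying the correct torsion and the exact ideal $\mathcal D_A$ in the multiplicative relations, rather than some a priori different ideal. The different $\mathcal D_A$ enters because Mac Lane cohomology of $A$ is governed by $\Tor$ and $\Ext$ between $A$ and its residue fields, and the trace pairing controls how the local factors $A/\rho^{k}$ assemble; getting the precise exponents to reconstruct $\mathcal D_A=\prod_\rho \rho^{d_\rho}$ globally (as opposed to a local-at-$\rho$ statement) is where the arithmetic must be done carefully. I expect this to reduce, via localization at each $\rho$ and completion, to a computation of $HML^{\bullet}(\widehat A_\rho)$ in terms of the ramification, matching the local contribution of $\mathcal D_A$ at $\rho$; the global ring structure then follows by assembling these and invoking that $\Gamma_A(x)/(\mathcal D_A\cdot x)$ is the unique graded-commutative $A$-algebra with the prescribed graded pieces and divided-power multiplication. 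A secondary subtlety is justifying that the divided-power (rather than polynomial) structure survives over $A$: this is automatic at primes of residue characteristic $p$ from the $\mathbb F_p$ computation, but I must confirm no polynomial generators sneak in, which the torsion-ness of $Q_n(A)$ for $n\geq 2$ guarantees.
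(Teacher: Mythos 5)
Your proposal does not reproduce the paper's argument, and as written it contains gaps that I believe are fatal. The first is the opening move: specializing the second-kind computation at $w=0$ gives nothing. For $w=0$ every maximal ideal is critical ($0^{|k(\rho)|}\equiv 0$ mod $\rho^2$), so $Crit(0)=\Spec_m(A)$ and the hypothesis $Crit(w)\cap\supp(A/2\cD_A)=\emptyset$ of Theorem \ref{th:HML^II_number_rings_intro} fails identically; moreover the second-kind groups are $\Z/2$-graded totalizations (products of all even, respectively odd, cohomology groups), so even when known they cannot be "un-collapsed" to recover the individual graded pieces $HML^{2n}(A)$. The logic also runs in the wrong direction: in the paper the additive computation of $HML^{\bullet}(S^{-1}A)$ (Corollary \ref{cor:HML_for_loc_of_number_rings}, resting on Lindenstrauss--Madsen's computation of $THH$) is an \emph{input} to Theorem \ref{th:HML^II_for_loc_of_number_rings}, not a consequence of it. Finally, your stated additive answer is incorrect: for $n>0$ one has $HML^{2n}(A)\cong\cD_A^{-1}/nA$, which is (non-canonically) $A/n\cD_A$, not $A/\cD_A$; correspondingly, in $\Gamma_A(x)/(\cD_A\cdot x)$ the class $\gamma_n(x)$ is annihilated by $n\cD_A$ (since $x\cdot\gamma_{n-1}(x)=n\gamma_n(x)$ forces $n\cD_A\gamma_n(x)$ into the ideal), not by $\cD_A$. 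For the same reason your claim that $HML^{\bullet}(\Z)$ is generated as a ring by a degree-two class is false: in $\Gamma_{\Z}(x)/(x)$ one has $x^n=n!\,\gamma_n(x)$, which vanishes in $HML^{2n}(\Z)\cong\Z/n$ already for $n=2$.

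The second, deeper gap is the multiplicative "propagation" step. There is no natural graded ring homomorphism $HML^{\bullet}(\Z)\to HML^{\bullet}(A)$: a ring map $\Z\to A$ only produces maps $HML^{\bullet}(A,A)\to HML^{\bullet}(\Z,A)\leftarrow HML^{\bullet}(\Z,\Z)$, both landing in the middle term, and base change along $\Z\to A$ fails exactly at the ramified primes --- the appearance of $\cD_A$ in the answer is the measure of that failure --- so "compatibility under $\Z\to A$" cannot force the products. Nor do the graded pieces, graded commutativity, and a degree-two class determine the ring: your closing uniqueness claim presupposes the divided-power multiplication it is supposed to establish. The paper's mechanism, which your outline has no substitute for, is the opposite specialization: reduce by localization and completion to $R=\widehat{A}_{\rho}$ (Lemma \ref{lem:lemma_on_completions}), choose a \emph{non-critical} $w$ (i.e.\ $w-w^q\notin\m^2$, e.g.\ a uniformizer in the ramified case), and use Theorems \ref{th:HML^II_discr_valuation_ramified} and \ref{th:HML^II_for_discr_valuation}: then $HML^{II,0}(R,w)$ is the \emph{field} $L$ as a filtered algebra, with filtration $F_nL=\frac{1}{n!}\m^{-nl}$ (respectively $\frac{1}{n!}R$), and since by Proposition \ref{prop:product_on_HH_second_kind} the associated graded ring of this filtration is $HML^{\bullet}(R)$, the divided-power relations are read off from $\frac{\pi^{-il}}{i!}\cdot\frac{\pi^{-jl}}{j!}=\binom{i+j}{i}\frac{\pi^{-(i+j)l}}{(i+j)!}$ inside $L$ (Corollaries \ref{cor:product_on_HML_ramified} and \ref{cor:product_on_HML_non_ramified}). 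This "deform by a non-critical $w$, then take associated graded" trick is what actually produces the divided powers; your plan, which deforms by the one $w$ for which the method degenerates, cannot reach it.
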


In the case $A=\Z,$ the ring structure on $HML^{\bullet}(\Z)$ was computed in \cite{FP}.

Another application is the computation of the Adams operations on Mac Lane homology. Recall that McCarthy \cite{McC} defines for any commutative ring $R$ the sequence of operations
$$\psi^r:HML_{\bullet}(R)\to HML_{\bullet}(R),$$ such that $\psi^{rs}=\psi^r\psi^s$ and $\psi^1=\id.$ These generalize the well-known lambda operations on the Hochschild homology of a commutative algebra, which were defined independently by Loday in \cite{L2} and Gerstenhaber and Schack in \cite{GS}. We have the following result.

\begin{theo}\label{th:Adams_operations_intro}Let $A$ be a global number ring. The operation $\psi^r$ on $HML_0(A)$ is equal to identity and we have
$$\psi^r(x)=r^nx\text{ for }x\in HML_{2n-1}(A),\,n\geq 1.$$\end{theo}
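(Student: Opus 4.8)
The plan is to dualize Theorem~\ref{th:ring_structure_intro} to pin down $HML_*(A)$, and then to reinterpret the assertion as a determination of the \emph{weight} (the common eigenvalue of the family $\{\psi^r\}$) of each homogeneous piece. From $HML^{\bullet}(A) \cong \Gamma_A(x)/(\cD_A x)$ with $\deg x = 2$ one reads off, by universal coefficients, that $HML_*(A)$ is concentrated in degree $0$ and in the odd degrees $2n-1$, with $HML_0(A) = A$ and $HML_{2n-1}(A)$ a torsion $A$-module supported on the primes dividing $n\cD_A$ (in fact a cyclic $A$-module $\cong A/(n\cD_A)$), while $HML_{2n}(A) = 0$ for $n\ge 1$. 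Since $\psi^r$ is additive and multiplicative ($\psi^{rs}=\psi^r\psi^s$, $\psi^1=\id$), and since $HML_0(A)=A$ is the weight-$0$ piece, $\psi^r$ acts there as $r^0=\id$; this disposes of the first assertion. It remains to show that the generator $y_n$ of $HML_{2n-1}(A)$ is an eigenvector of weight $n$, i.e. $\psi^r(y_n)=r^n y_n$.

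Second, I would reduce to residue fields by naturality. Because $HML_{2n-1}(A)$ is finite and supported at a finite set of primes $\rho$, both its structure and the operations $\psi^r$ on it are detected $\rho$-locally, and by naturality of $\psi^r$ in the ring one may pass to the completions $\widehat{A}_\rho$ and their residue fields $k(\rho)=\bbF_q$. Bökstedt's computation gives $THH_*(\bbF_q)\cong\bbF_q[\mu]$ with $\deg\mu=2$, and the torsion classes in odd degree for $A$ arise, after reduction mod $\rho$, as Bockstein lifts of the even powers $\mu^n$. The $\psi^r$ are natural stable operations, hence commute with the Bockstein and with the reduction maps; therefore the weight of $y_n$ equals the weight of the class $\mu^n\in THH_{2n}(\bbF_q)$.

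The hard part will be the single computation $\psi^r(\mu)=r\mu$, i.e. that the Bökstedt class $\mu$ has weight exactly $1$; granting this, multiplicativity gives $\psi^r(\mu^n)=r^n\mu^n$, weight-preservation of the Bockstein transfers this to $y_n$, and we conclude $\psi^r(y_n)=r^n y_n$ as required. This weight-$1$ normalization is the analogue, in the present setting, of the fact that on the Hochschild homology of a smooth algebra Loday's operations act by $r^i$ on the Hodge weight-$i$ summand $HH_i=\Omega^i$; here $\mu$ plays the role of the weight-$1$ generator. I would extract $\psi^r(\mu)=r\mu$ directly from McCarthy's construction of the $\psi^r$ (\cite{McC}), or alternatively deduce it by comparison with the classical Adams operations through the cyclotomic trace, where $\mu$ corresponds to a weight-$1$ (Bott-type) class. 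Once the weight of $\mu$ is fixed, the remainder is the bookkeeping of naturality together with the relation $\psi^{rs}=\psi^r\psi^s$ already recorded above.
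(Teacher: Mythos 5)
Your structural reductions are mostly sound (localizing at the finitely many primes supporting $HML_{2n-1}(A)$, passing to $\widehat{A}_{\rho}$ by naturality), but the proposal has genuine gaps. First, everything rests on the normalization $\psi^r(\mu)=r\mu$ in $THH_2(\bbF_q)$, which you yourself call the hard part and defer to ``extract from McCarthy's construction'' or to a cyclotomic-trace comparison; neither is carried out, and neither is routine. This is precisely the computation the paper is engineered to avoid: its proof (Theorem \ref{th:operations_psi^r_number_rings}) never touches $THH(\bbF_q)$. Instead it chooses $w\in A$ with $Crit(w)\cap\supp(A/n\cD_A)=\emptyset$, invokes the surjectivity of the differential $d_n\colon E_n^0\to E_n^{-2n+1}=HML_{2n-1}(A)$ in the spectral sequence of the mixed complex $(\Hoch_{\bullet}(Q_{\bullet}(A),A),\delta_{[w]})$ (Corollary \ref{cor:surjective_diff_sp_seq_number_rings} --- this is where the whole second-kind/critical-point analysis enters), and then extracts the factor $r^n$ from the elementary chain-level identity $\Psi^r\circ\delta_{[w]}=r\,\delta_{[w]}\circ\Psi^r$ (Lemma \ref{lem:commutation_of_phi^r_and_delta_w}) applied to the $n$-step zig-zag of chains representing $d_n$.

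Second, and more structurally, even granting $\psi^r(\mu^n)=r^n\mu^n$ over $\bbF_q$, your Bockstein/reduction mechanism only controls $\psi^r$ modulo $\rho$. The boundary map of $0\to A\to A\to k(\rho)\to 0$ lands in the subgroup of $HML_{2n-1}(A)_{\rho}$ annihilated by $\rho$, and the coefficient reduction $HML_{2n-1}(A)\to HML_{2n-1}(A;k(\rho))$ kills $\rho\cdot HML_{2n-1}(A)$; so what this argument yields is $\psi^r(x)\equiv r^n x$ modulo $\rho$, not equality. Since $HML_{2n-1}(A)_{\rho}$ is a cyclic module whose exponent can exceed $p$ (for $A=\Z$ and $n=p^2$ it is $\Z/p^2$), an endomorphism is not determined by its reduction mod $p$: multiplication by $r^n+p$ is multiplicative in $r$ and has the same mod-$p$ reduction, so your constraints cannot distinguish it from $r^n$. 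Finally, a smaller unaddressed point: transferring the weight from $\mu^n\in HML_{2n}(k(\rho))$ back to its preimage in $HML_{2n}(A;k(\rho))$ requires the comparison map $HML_{2n}(A;k(\rho))\to HML_{2n}(k(\rho))$ to be injective in these degrees (the B\"okstedt-type statement that $\mu_1$ maps to a unit multiple of $\mu^p$); these two groups are not equal --- the source vanishes unless $p\mid n$ --- and you implicitly conflate them when asserting that the odd torsion classes are Bockstein lifts of the powers $\mu^n$.
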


As in the paper \cite{McC}, to deal with Adams operations we use the functor homology interpretation of Mac Lane homology which was obtained by Jibladze and Pirashvili \cite{JP}. We actually prove a more general statement for a generalized cubical construction, see Proposition \ref{prop:HML^II_via_Q^n}.

Finally, we would like to mention that Mac Lane homology $HML_{\bullet}(R,M)$ is also identified with the so-called stable K-theory $K_{\bullet}^s(R,M).$ For the definition, see for example \cite{L}, Section 13.3. Without going into details, the stable K-theory of $R$ with coefficients in a bimodule $N$ is defined as the homology of the Volodin space $X(R)$ with coefficients in a certain local coefficient system $\cM(N).$ It might be useful to interpret the Mac Lane homology of the second kind in this context.

The paper is organized as follows.

In Section \ref{sec:preliminaries_on_DG} we recall the basic notions and facts about DG categories, DG modules and derived categories. The material of this section is mostly covered by \cite{Ke1}, \cite{Ke2} and \cite{Dr}.

In Section \ref{sec:cross_effects_polynomial} we recall the cross-effects for maps of abelian groups and for functors between (pre-)additive categories. After that we recall the polynomial maps and functors. The material is essentially contained in \cite{EM}.

Section \ref{sec:cubical_construction} is devoted to the generalized cubical construction $Q^n_{\bullet}(-),$ which was introduced in \cite{JM}. It provides for any abelian group a functorial non-negative chain complex. For $n=1,$ one gets the Mac Lane cubical construction mentioned above. The functors $Q^n_{\bullet}(-)$ are lax monoidal, hence produce a DG ring from an associative ring and more generally produce a DG category from a pre-additive category (see Definition \ref{defi:Q^n_on_categories}).

In Section \ref{sec:DG_quotients_and_Q^n} we interpret the functor $Q^n_{\bullet}$ applied to a small additive category in terms of DG quotients. Namely, the DG category $Q^n_{\bullet}(\cC)$ is quasi-equivalent (up to Karoubi completion) to the DG quotient of the reduced additivization $\Z[\bar{\cC}]$ by the subcategory of $(n+1)$-th cross-effects of the tautological functor $\cC\to\Z[\bar{\cC}]$ (see Theorem \ref{th:exact_seq_Q^n_cr_n+1}).

Section \ref{sec:mixed complexes} is devoted to mixed complexes over a commutative ring. We define the Laurent-type totalization of a mixed complex which is a $\Z/2$-graded complex (Definition \ref{defi:totalizations}). This totalization is invariant under quasi-isomorphism (Proposition \ref{prop:functoriality_of_Tot}). We also list some basic properties of the associated $\Z$-graded spectral sequence. These are actually the same as bi-graded spectral sequences $\{(E_r^{\bullet,\bullet},d_r)\}_{r\geq r_0}$ with additional periodicity isomorphisms $E_r^{p+1,q+1}\cong E_r^{p,q}$ such that all the relevant diagrams commute. We also prove a useful technical result about mixed complexes coming from DG modules over strongly commutative DG algebras with divided powers (Lemma \ref{lem:q_is_divided_powers}).

Section \ref{sec:HH_and_HML_second_kind} is devoted to the Hochschild and Mac Lane (co)homology and their versions of the second kind. In Subsection \ref{ssec:HH} we recall the Hochschild chain and cochain complexes of DG categories with coefficients in a DG bimodule. We also prove here a technical result for gluing of DG categories.

In Subsection \ref{ssec:HH_second_kind} we introduce the structure of a mixed complex on the Hochschild (co)chain complex induced by a degree zero closed element $w$ of the center of the DG category, which is also central for the bimodule. For an element $w$ the additional differential is denoted by $\delta_w.$ We define the Hochschild (co)homology of the second kind to be the (co)homology of the totalization of this mixed complex. We prove some technical results including the form of a Leibniz rule for the differential $\delta_w$ when $w$ is a product $w_1\dots w_k$ (Lemma \ref{lem:q_is_for_delta_w_1...w_k}).

In Subsection \ref{ssec:HML} we recall the Mac Lane (co)homology and its version of the second kind. In particular, we show that it can be computed using any of the generalized cubical constructions, as an application of Theorem \ref{th:exact_seq_Q^n_cr_n+1}. We mention the well-known result about the bijection between $HML^2(R,M)$ and isomorphism classes of square-zero extensions of $R$ by $M$ (Theorem \ref{th:HML^2_sq_zero_ext}). Also, we prove some technical statements about duality between the (mixed) complexes computing Mac Lane (co)homology (of the second kind) of commutative rings (Lemmas \ref{lem:duality_of_Hoch_complexes} and \ref{lem:duality_for_homology}). 

In Section \ref{sec:change_of_rings_sp_seq} we recall the change of rings spectral sequence for Mac Lane (co)homology following \cite{L}, Section 13.4.22. We formulate its applications for localizations and completions of commutative rings (Propositions \ref{prop:HML_localizations} and \ref{prop:HML_completions}).

In Section \ref{sec:HML_finite_fields} we recall the Mac Lane (co)homology of finite fields. We also prove a certain technical result about the complex computing Mac Lane cohomology in this case (Lemma \ref{lem:reduced_for_HML_of_F_q}).

Section \ref{sec:HML_secomd_kind_discr_valuation} is devoted to the computation of Mac Lane (co)homology of the second kind for complete discrete valuation rings. More precisely, we consider the ring of integers $R=\cO_L$ in the finite extensions $L/\Q_p,$ with a maximal ideal $\m\subset R$ and the residue field $k(\m).$ 
We first recall the computation of $THH_{\bullet}(R)$ which was done by Lindenstrauss and Madsen \cite{LM}. As a direct corollary, we get the computation of Mac Lane (co)homology of $R$ (Corollary \ref{cor:HML_for_discr_valuation}).
 
In Subsection \ref{ssec:ramified} we compute the Mac Lane (co)homology of the second kind of $(R,w)$ when the element $w$ satisfies $w\not\equiv w^{|k(\m)|}\text{ mod }\m^2.$ We show that in this case $HML^{II,0}(R,w)$ is isomorphic to $L$ (Theorem \ref{th:HML^II_discr_valuation_ramified}). As an application, we identify the ring structure on $HML^{\bullet}(R)$ (Corollary \ref{cor:product_on_HML_ramified}). Also, by duality we show that $HML_{\bullet}^{II}(R,w)=0$ (Theorem \ref{th:HML^II_dual_for_discr_valuation_ramified}).

In Subsection \ref{ssec:nonramified} we obtain the analogous results when $R$ is non-ramified. Moreover, we also consider the case when $w\equiv w^{|k(\m)|}\text{ mod }\m^2.$ In this case, assuming that $\cha(k(\m))=p\ne 2,$ we show that $HML^{II,\bullet}(R,w)$ is non-canonically identified with the totalization of $HML^{\bullet}(R)$ (Theorem \ref{th:HML^II_for_discr_valuation} 2)), and similarly for the homology (Theorem \ref{th:HML^II_dual_for_discr_valuation} 2)).

Section \ref{sec:HML_loc_number_rings} is devoted to the main case of a finite localization of a global number ring $A.$ Again, we recall the computation of $THH_{\bullet}(A)$ from \cite{LM} and deduce the computation of Mac Lane (co)homology (Corollary \ref{cor:HML_for_loc_of_number_rings}). The main result here is Theorem \ref{th:HML^II_for_loc_of_number_rings} which is an extended version of Theorem \ref{th:HML^II_number_rings_intro}. The result essentially reduces to the case of discrete valuation ring considered in Section \ref{sec:HML_secomd_kind_discr_valuation}. As an application, we prove Theorem \ref{th:ring_structure_intro} (this is Theorem \ref{th:ring_structure_on_HML_S^-1A}). By duality ,we also compute the Mac Lane homology of the second kind (Theorem \ref{th:HML^II_dual_of_loc_of_number_rings}).

Section \ref{sec:Adams_on_HML} is devoted to Adams operations on Mac Lane homology. In Subsection \ref{ssec:operations_on_simplicial_sets} we follow \cite{McC} to define the notion of natural system of operators on a simplicial set. In Subsection \ref{ssec:operations_on_simplicial_abelian_groups} we consider the case of a simplicial abelian group and use the construction of \cite{McC}, Section 3, to construct from a system of natural operators a collection of chain maps on the associated chain complex. Finally, in Subsection \ref{ssec:computing_Adams_on_HML} we use the construction from the previous Subsection to define the operations on Mac Lane homology. We use our considerations with Mac Lane homology of the second kind to prove Theorem \ref{th:Adams_operations_intro} (this is Theorem \ref{th:operations_psi^r_number_rings}).

Appendix \ref{sec:appendix_on_sp_seq} is devoted to $\Z$-graded spectral sequences.

{\noindent{\bf Acknowledgements.}} I am grateful to V. Ginzburg, D. Kaledin and L. Positselski for useful discussions. The paper was prepared within the framework of a subsidy granted to the HSE by the Government of the Russian Federation for the implementation of the Global Competitiveness Program.

\section{Preliminaries on DG categories}
\label{sec:preliminaries_on_DG}

We refer to \cite{Ke1}, \cite{Ke2} for a general introduction to DG categories and DG modules. The DG quotients of DG categories are defined in \cite{Dr}.

We fix some basic commutative ring $k.$ All the categories in this section will be assumed to be $k$-linear.

\begin{defi}\label{defi:DG_category} A DG category $\cC$ over $k$ is given by the following data:

- a class of objects $Ob(\cC);$

- for any two objects $X,Y\in Ob(\cC),$ a complex of $k$-modules $\cC(X,Y)=\Hom_{\cC}(X,Y);$

- for any three objects $X,Y,Z\in Ob(\cC),$ a composition morphism of complexes
$$-\circ-:\cC(Y,Z)\otimes_k\cC(X,Y)\to \cC(X,Z).$$

It is required that the composition morphisms satisfy associativity property, and for any object $X\in Ob(\cC)$ there is a (unique) closed element $1_X\in\cC^0(X,X)$ of degree zero, such that for any morphism $f\in\cC(Y,Z)$ we have $1_Z\circ f=f=f\circ 1_Y.$ 
\end{defi}

We will usually write $fg$ instead of $f\circ g$ for composable morphisms in a DG category.

A (unital) DG algebra is the same as a DG category with one object. A DG algebra over $\Z$ is also called a DG ring.

With any DG category one can associate a graded $k$-linear category, as well as non-graded one.

\begin{defi}\label{defi:homotopy_category} Let $\cA$ be a DG category. Its graded homotopy category $\Ho^{\bullet}(\cA)$ is defined as follows. The class of objects of $\Ho^{\bullet}(\cA)$ is the same as for $\cA.$ The morphisms are defined by the formula
$$\Hom_{\Ho^{\bullet}(\cA)}(X,Y):=H^{\bullet}(\cA(X,Y)).$$
The composition in $\Ho^{\bullet}(\cA)$ is induced by the composition in $\cA.$

The (non-graded) homotopy category $\Ho(\cA)$ has the same class of objects as $\cA$ and $\Ho^{\bullet}(\cA),$ and the morphisms are defined by the formula $$\Hom_{\Ho(\cA)}(X,Y)=H^0(\cA(X,Y)).$$\end{defi}

Given a DG category $\cA,$ we will also denote by $\cA^{gr}$ the graded category which is obtained from $\cA$ by forgetting the differential on morphisms.

Next  we recall the notion of a DG functor between DG categories.

\begin{defi}Let $\cA$ and $\cB$ be DG categories. A DG functor $F:\cA\to\cB$ is given by the following data:

i) for each object $X\in Ob(\cA),$ an object $F(X)\in Ob(\cB);$

ii) for any two objects $X,Y\in Ob(\cA),$ a ($k$-linear) morphism of complexes $F(X,Y):\cA(X,Y)\to \cB(X,Y).$

The maps on morphisms are required to commute with the composition in $\cA$ and $\cB,$ and to preserve identity morphisms.
\end{defi}

It is clear that a DG functor $F:\cA\to \cB$ induces the graded functors $F^{gr}:\cA^{gr}\to\cB^{gr},$ $\Ho^{\bullet}(F):\Ho^{\bullet}(\cA)\to \Ho^{\bullet}(\cB),$ as well as the functor $\Ho(F):\Ho(\cA)\to \Ho(\cB).$

\begin{defi}A DG functor $F:\cA\to\cB$ is called a quasi-equivalence if the induced functor $\Ho^{\bullet}(F):\Ho^{\bullet}(\cA)\to \Ho^{\bullet}(\cB)$ is an equivalence of graded categories.\end{defi}

We recal that (up to set-theoretic issues) the DG functors form a DG category.

\begin{defi}Let $\cA$ be a small DG category, and $\cB$ any DG category. The DG category $\Fun(\cA,\cB)$ is defined as follows. Its objects are DG functors $F:\cA\to\cB.$ Given two DG functors $F,G:\cA\to\cB,$ the $k$-module $\Hom^n(F,G)$ is the set of all degree $n$ natural transformations $f:F^{gr}\to G^{gr}.$ The differential $d:\Hom^n(F,G)\to\Hom^{n+1}(F,G)$ is defined by the formula
$$d(f)_{X,Y}=d(f_{X,Y}),\quad X,Y\in Ob(\cA).$$
The composition is the same as for natural transformations.\end{defi}

\begin{example}The basic example of a $k$-linear DG category is the DG category $\Comp(k).$ Its objects are complexes of $k$-modules. Given two objects $M^{\bullet},N^{\bullet}\in \Comp(k),$ the complex of morphisms $\Hom(M^{\bullet},N^{\bullet})$ is defined by the formulas $$\Hom(M^{\bullet},N^{\bullet})^n=\prod\limits_{i\in\Z}\Hom_k(M^i,N^{i+n}),$$
$$d(f)(m)=d(f(m))-(-1)^{|f|}f(d(m)).$$
The composition is obvious.\end{example}

\begin{defi}Let $\cA$ be a DG category. The opposite DG category $\cA^{op}$ has the same class of objects as $\cA,$ and the morphisms are defined by the formula $\cA^{op}(X,Y):=\cA(Y,X).$ For each morphism $f\in\cA(X,Y),$ we denote by $f^{op}$ the corresponding morphism in $\cA^{op}(Y,X).$

Given homogeneous morphisms $f\in\cA(X,Y),$ $g\in\cA(Y,Z),$ the composition $f^{op}g^{op}$ is defined by the formula $$f^{op}g^{op}=(-1)^{|f|\cdot|g|}(gf)^{op}.$$\end{defi}

It is clear that $(\cA^{op})^{op}=\cA.$

Now we recall the homotopy and derived categories of (right) DG modules.

\begin{defi}1) Let $\cA$ be a small DG category. The DG category of right DG $\cA$-modules is defined by the formula $\text{Mod-}\cA:=\Fun(\cA^{op},\Comp(k)).$

The homotopy category of right DG $\cA$-modules is defined by the formula $K(\cA):=\Ho(\text{Mod-}\cA).$ This is a triangulated category.

2) The DG $\cA$-module $M$ is called acyclic if for any object $X\in Ob(\cA)$ the complex $M(X)$ is acyclic. We denote by $\Acycl(\cA)\subset K(\cA)$ the full triangulated subcategory of acyclic complexes.

3) The derived category of $\cA$ is defined to be the quotient triangulated category $D(\cA):=K(\cA)/\Acycl(\cA).$
\end{defi}

All DG modules are assumed to be right unless otherwise stated. We will often write just  ``$\cA$-module'' instead of ``DG $\cA$-module''

Let $\cA$ be a small DG category, and $M$ (resp. $N$) an $\cA$-module (resp. $\cA^{op}$-module). For the homogeneous $a\in \cA(X_1,X_2)$ and $m\in M(X_2)$ (resp. $n\in N(X_1)$) we put
$$ma:=(-1)^{|a||m|}M(a^{op})(m),\quad an=N(a)(n).$$

For $\cA$-modules $M,N\in\text{Mod-}\cA,$ we put $\Hom_{\cA}(M,N):=(\text{Mod-}\cA) (M,N).$

The $\cA$-module $P$ (resp. $I$) is called h-projective (resp. h-injective) if for any acyclic $\cA$-module $N$ the complex $\Hom_{\cA}(P,N)$ (resp. $\Hom_{\cA}(N,I)$) is acyclic. We denote by $\text{h-proj}(\cA)\subset K(\cA)$ (resp. $\text{h-inj}(\cA)\subset K(\cA)$) the full triangulated subcategory of h-projective (resp. h-injective) $\cA$-modules.

\begin{prop}\label{prop:h-proj,h-inj} We have semi-orthogonal decompositions $$K(\cA)=\langle\Acycl(\cA),\text{h-proj}(\cA)\rangle,\quad K(\cA)=\langle \text{h-inj}(\cA),\Acycl(\cA)\rangle.$$ In particular, we have natural equivalences of triangulated  categories $\text{h-proj}(\cA)\simeq D(\cA)\simeq \text{h-inj}(\cA).$\end{prop}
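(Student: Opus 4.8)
The plan is to verify directly the two defining properties of a semi-orthogonal decomposition—orthogonality and the existence of functorial resolution triangles—and then to read off the equivalences with $D(\cA)$ as a formal consequence. Recall that for $X,Y\in K(\cA)$ one has $\Hom_{K(\cA)}(X,Y[n])=H^n(\Hom_{\cA}(X,Y))$. Hence the \emph{orthogonality} conditions are immediate from the definitions: if $P$ is h-projective and $N$ is acyclic, then $\Hom_{\cA}(P,N)$ is acyclic, so $\Hom_{K(\cA)}(P,N[n])=0$ for all $n$, which gives $\Hom_{K(\cA)}(\text{h-proj}(\cA),\Acycl(\cA))=0$; dually $\Hom_{K(\cA)}(\Acycl(\cA),\text{h-inj}(\cA))=0$. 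These are exactly the orthogonality statements demanded by the two decompositions.

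The substantive point is the existence, for every DG module $M$, of the two resolution triangles. For the first decomposition I would produce a quasi-isomorphism $P\to M$ with $P$ h-projective; completing it to a triangle $P\to M\to C\to P[1]$ then yields $C=\Cone(P\to M)\in\Acycl(\cA)$, the triangle with h-projective and acyclic parts. To construct such a $P$ I would use semi-free modules. First, every representable $\cA(-,X)$ is h-projective by the DG Yoneda lemma, since $\Hom_{\cA}(\cA(-,X),N)=N(X)$; a coproduct of shifts of representables (a \emph{free} module) is then h-projective because $\Hom_{\cA}$ out of a coproduct is the product of the corresponding complexes, and a product of acyclic complexes is acyclic. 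A \emph{semi-free} module, one carrying an exhaustive increasing filtration with free subquotients, is h-projective by a telescope/Milnor-sequence argument applied to the filtration. Finally, every $M$ admits a semi-free resolution, constructed as the homotopy colimit of an inductive tower $F_0\to F_1\to\cdots$ in which at each stage one attaches free modules first to surject onto, and then to kill the kernel of, the induced map on cohomology; this is the standard construction of Keller \cite{Ke1}.

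For the second decomposition I need, dually, a quasi-isomorphism $M\to I$ with $I$ h-injective, whose shifted cone is acyclic and supplies the triangle $C'[-1]\to M\to I$. This can be obtained either by the order-dual co-semi-free construction, or more cleanly by observing that $K(\cA)$ is compactly generated by the representables $\{\cA(-,X)\}_{X\in Ob(\cA)}$ while $\Acycl(\cA)$ is a localizing subcategory, so that Brown representability furnishes the Bousfield colocalization triangle whose $\Acycl(\cA)^{\perp}=\text{h-inj}(\cA)$-component is the desired h-injective resolution. With orthogonality and the two resolution triangles in hand, both semi-orthogonal decompositions hold, and the concluding equivalences are formal: the composite $\text{h-proj}(\cA)\hookrightarrow K(\cA)\to K(\cA)/\Acycl(\cA)=D(\cA)$ is fully faithful because, for $P,P'$ h-projective, a morphism in $D(\cA)$ is a right roof $P\to W\xleftarrow{s}P'$ with $\Cone(s)\in\Acycl(\cA)$, and applying $\Hom_{K(\cA)}(P,-)$ to the triangle of $s$ together with h-projectivity of $P$ shows the roof factors uniquely through a genuine map $P\to P'$; it is essentially surjective since each $M$ is isomorphic in $D(\cA)$ to its h-projective resolution. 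The same argument with left roofs and h-injective resolutions gives $\text{h-inj}(\cA)\simeq D(\cA)$.

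I expect the main obstacle to be precisely the resolution step. Concretely, the delicate parts are proving that semi-free modules are h-projective and that the inductive construction of a semi-free resolution converges for \emph{unbounded} DG modules (the homotopy-colimit/telescope bookkeeping, where one must control $\Hom_{\cA}(F,N)$ along the filtration via the Milnor exact sequence), together with the h-injective side, where the honest clean route is the appeal to compact generation and Brown representability rather than a hands-on dual construction. Everything else—orthogonality and the passage from the decompositions to the equivalences—is formal.
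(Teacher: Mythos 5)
Your h-projective half is sound and is essentially the standard argument (the paper itself proves nothing here: it simply cites \cite{Ke1}, Sections 3.1 and 3.2, for both decompositions): orthogonality is immediate from $\Hom_{K(\cA)}(X,Y[n])=H^n(\Hom_{\cA}(X,Y))$, representables are h-projective by Yoneda, semi-free modules are h-projective by the Milnor-sequence argument, semi-free resolutions exist, and the passage from the decompositions to the equivalences with $D(\cA)$ is formal. The genuine gap is in the h-injective half, precisely in the route you designate as the ``honest clean'' one. The claim that $K(\cA)$ is compactly generated by the representables $\{\cA(-,X)\}$ is false. The representables are indeed compact in $K(\cA)$, but they do not generate it: by Yoneda, an object of $K(\cA)$ receiving no nonzero maps from shifts of representables is exactly a DG module $N$ with $H^n(N(X))=0$ for all $X$ and $n$, i.e.\ an acyclic module. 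Thus the right orthogonal of the representables inside $K(\cA)$ is precisely $\Acycl(\cA)$, and compact generation would force $\Acycl(\cA)=0$, i.e.\ $K(\cA)=D(\cA)$. This already fails for $\cA=\Z/4$: the unbounded complex with every term $\Z/4$ and every differential multiplication by $2$ is acyclic but not null-homotopic. What is true is that the representables compactly generate $D(\cA)$ (Proposition \ref{prop:compact_are_perfect}), and that statement is of no direct use for producing a colocalization triangle inside $K(\cA)$.

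Consequently Brown representability cannot be invoked as you do: to get the triangle $C'\to M\to I$ with $C'\in\Acycl(\cA)$ and $I\in\Acycl(\cA)^{\perp}=\text{h-inj}(\cA)$ one would need well-generation of $K(\cA)$ or of $\Acycl(\cA)$, which neither follows from the compact generation of $D(\cA)$ nor holds in general; the existence of h-injective resolutions is a genuine theorem, not a formal consequence. The step that actually needs proof is the one you relegate to a parenthetical: every DG module admits a quasi-isomorphism into an h-injective module. This is done by the hands-on dual construction -- map into ``cofree'' modules of the form $\Hom_k(\cA(X,-),E)$ with $E$ an injective $k$-module (these are h-injective because $\Hom_{\cA}(N,\Hom_k(\cA(X,-),E))\cong\Hom_k(N(X),E)$ and $\Hom_k(-,E)$ is exact), and build $I$ as the limit of a tower of degreewise split surjections, using exactness of products and the dual Milnor sequence -- or by citing \cite{Ke1}, Section 3.2 (for modules over a ring this is Spaltenstein's theorem). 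As written, your proof establishes the first semi-orthogonal decomposition but not the second.
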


\begin{proof}This is proved in \cite{Ke1}, Sections 3.1 and 3.2.\end{proof}

We recall the notion of tensor product of DG categories.

\begin{defi}\label{defi:tensor_product_of_DG_cat}1) Let $\cB$ and $\cC$ be DG categories over $k.$ Their tensor product $\cB\otimes_k \cC$ is defined as follows.
The objects are defined by the formula $Ob(\cB\otimes_k\cC):=Ob(\cB)\times Ob(\cC).$
The morphisms are defined by the formula $$(\cB\otimes\cC)((X_1,Y_1),(X_2,Y_2)):=\cB(X_1,X_2)\otimes_k\cC(Y_1,Y_2),\quad X_1,X_2\in Ob(\cB),\, Y_1,Y_2\in Ob(\cC).$$
The definition of composition is obvious.

2) For a pair of DG functors $F_1:\cA_1\to\cB_1,$ $F_2:\cA_2\to\cB_2,$ we denote by
$F_1\otimes F_2:\cA_1\otimes\cA_2\to\cB_1\otimes\cB_2$ their tensor product, given by
$$(F_1\otimes F_2)(X_1,X_2)=(F_1(X_1),F_2(X_2)),\quad X_1\in Ob(\cA_1),\,X_2\in Ob(\cA_2);$$ 
$$(F_1\otimes F_2)((X_1,X_2),(Y_1,Y_2))=F_1(X_1,Y_1)\otimes F_2(X_2,Y_2),\quad X_1,Y_1\in Ob(\cA_1),\,X_2,Y_2\in Ob(\cA_2).$$\end{defi}

Given a small DG category $\cA,$ we have the natural tensor product DG functor
\begin{equation}\label{eq:tensor_product_DG}-\otimes_{\cA}-:\text{Mod-}\cA\otimes\text{Mod-}\cA^{op}\to\text{Mod-}k.\end{equation}
On objects, it is given by the formula
$$M\otimes_{\cA}N=\coker(\bigoplus\limits_{X_1,X_2\in Ob(\cA)}M(X_2)\otimes\cA(X_1,X_2)\otimes N(X_1)\stackrel{\phi}{\to}\bigoplus\limits_{X\in Ob(\cA)}M(X)\otimes N(X)),$$
where $\phi(m\otimes a\otimes n)=ma\otimes n-m\otimes an,$ $m\in M(X_2),$ $n\in N(X_1),$ $a\in \cA(X_1,X_2).$

The DG functor \eqref{eq:tensor_product_DG} induces the biexact bifunctor $-\otimes_{\cA}-:K(\cA)\times K(\cA^{op})\to K(k).$ Replacing one of the arguments by an h-projective resolution, we get a biexact bifunctor
$-\stackrel{\bL}{\otimes}_{\cA}-:D(\cA)\times D(\cA^{op})\to D(k).$ 

Given a DG functor $F;\cA\to\cB$ between small DG categories, we have the restriction of scalars DG functor $F_*:\text{Mod-}\cB\to\text{Mod-}\cA,$ $F_*(M)(X)=M(F(X)).$ It has a left adjoint $F^*:\text{Mod-}\cA\to\text{Mod-}\cB,$ given by the formula
$$F^*(N)(Y)=N\otimes_{\cA}\cB(Y,F(-))$$
(extension of scalars).
These DG functors induce the adjoint pair of exact functors $F^*:K(\cA)\to K(\cB),$ $F_*:K(\cB)\to K(\cA).$ Similarly, we have an adjoint pair of functors on derived categories: $\bL F^*:D(\cA)\to D(\cB),$ $F_*:D(\cB)\to D(\cA).$  

\begin{defi}\label{defi:h-projective_DGcat} A DG category $\cA$ is called h-projective over $k$ if for any two objects $X,Y\in Ob(\cA)$ the complex of $k$-modules $\cA(X,Y)$ is h-projective.\end{defi}

We will assume the $k$-linear DG categories to be h-projective, unless otherwise stated.

For any small DG category $\cA,$ we have a diagonal bimodule $I_{\cA}\in\text{Mod-}(\cA\otimes\cA^{op}),$ given by
$$I_{\cA}(X,Y)=\cA(X,Y).$$
For any DG functor $F:\cA\to\cB$ between small DG categories we have a natural morphism of bimodules
\begin{equation}\label{eq:morphism_for_diagonals}f:I_{\cA}\to (F\otimes F^{op})_*I_{\cB},\quad f_{(X,Y)}(a)=F_{X,Y}(a)\text{ for }a\in\cA(X,Y).\end{equation}
By adjunction, the morphism \eqref{eq:morphism_for_diagonals} gives a natural morphism $\bL(F\otimes F^{op})^*I_{\cA}\to I_{\cB}.$

\begin{prop}\label{prop:homological_epi_equiv_cond} Let $F:\cA\to\cB$ be a DG functor between small h-projective DG categories. The following are equivalent.

(i) The functor $\bL F^*:D(\cA)\to D(\cB)$ is a localization.

(ii) The functor $F_*:D(\cB)\to D(\cA)$ is fully faithful.

(iii) The morphism $\bL(F\otimes F^{op})^*I_{\cA}\to I_{\cB}$ is an isomorphism in $D(\cB\otimes\cB^{op}).$\end{prop}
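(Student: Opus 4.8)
The plan is to show that each of the three conditions is equivalent to the single statement that the counit of adjunction
$$\varepsilon\colon \bL F^*F_*\longrightarrow \id_{D(\cB)}$$
is an isomorphism of functors, and then to translate this reformulation into the bimodule condition (iii). I would first dispose of (i)$\Leftrightarrow$(ii) by pure adjunction theory. Since $\bL F^*\dashv F_*$, it is a general fact that the right adjoint $F_*$ is fully faithful precisely when $\varepsilon$ is an isomorphism; by the Yoneda lemma and the adjunction $\Hom_{D(\cA)}(F_*M,F_*N)\cong\Hom_{D(\cB)}(\bL F^*F_*M,N)$, fullness and faithfulness of $F_*$ amount to $\varepsilon_M$ inducing a bijection $\Hom(M,N)\to\Hom(\bL F^*F_*M,N)$ for all $N$, i.e. to $\varepsilon$ being an isomorphism. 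To connect this with (i), I would note that $\ker(\bL F^*)$ is a localizing subcategory of $D(\cA)$, and, using the semi-orthogonal decompositions of Proposition \ref{prop:h-proj,h-inj} to model $D(\cA)$ and $D(\cB)$ by h-projectives, the essential image of $F_*$ supplies a complement: when $\varepsilon$ is an isomorphism one obtains a semi-orthogonal decomposition $D(\cA)=\langle \ker(\bL F^*),\,F_*D(\cB)\rangle$, so $\bL F^*$ is essentially surjective and induces an equivalence $D(\cA)/\ker(\bL F^*)\isomoto D(\cB)$, which is what it means to be a localization; conversely a localization functor admitting a right adjoint automatically has that adjoint fully faithful.

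For (ii)$\Leftrightarrow$(iii), the key step is to identify the $\cB$-bimodule $\bL(F\otimes F^{op})^*I_{\cA}$ with the endofunctor $\bL F^*F_*$ of $D(\cB)$. Concretely, for any $M\in D(\cB)$ one has a natural isomorphism
$$M\stackrel{\bL}{\otimes}_{\cB}\bL(F\otimes F^{op})^*I_{\cA}\cong \bL F^*F_*M,$$
which I would check by unwinding the definitions: tensoring over $\cB$ against the left $\cB$-factor $\cB(-,F(-))$ computes the restriction $F_*$, and tensoring the result over $\cA$ against $\cB(F(-),-)$ computes the extension $\bL F^*$. Under this identification the diagonal bimodule $I_{\cB}$ corresponds to $\id_{D(\cB)}$, and the canonical morphism $\bL(F\otimes F^{op})^*I_{\cA}\to I_{\cB}$ adjoint to \eqref{eq:morphism_for_diagonals} induces precisely the counit $\varepsilon$.

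It remains to observe that a morphism of bimodules is an isomorphism in $D(\cB\otimes\cB^{op})$ if and only if the induced natural transformation of the associated tensoring endofunctors is an isomorphism. Indeed, evaluating such an endofunctor on the representable modules $\cB(-,Y)$ recovers the bimodule, since $\cB(-,Y)\stackrel{\bL}{\otimes}_{\cB}X\cong X(Y,-)$; hence an isomorphism of endofunctors forces an isomorphism of the bimodules objectwise, i.e. in $D(\cB\otimes\cB^{op})$. Conversely, the representables form a set of compact generators of $D(\cB)$, and both endofunctors preserve coproducts and exact triangles, so an isomorphism of bimodules induces a natural transformation that is an isomorphism on a generating class and therefore on all of $D(\cB)$. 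Combining these observations, condition (iii) holds if and only if $\varepsilon$ is an isomorphism, which closes the circle with (i) and (ii).

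I expect the main obstacle to be the careful bookkeeping in the middle step: verifying, with correct signs and naturality, that the bimodule morphism $\bL(F\otimes F^{op})^*I_{\cA}\to I_{\cB}$ induces exactly the counit $\varepsilon$ under the tensor-functor dictionary, together with the passage back from endofunctors to bimodules via evaluation on representables. The two outer equivalences are formal consequences of adjunction and of the semi-orthogonal decompositions already established, so the technical weight lies entirely in this identification.
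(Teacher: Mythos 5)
Your argument is correct, but note that the paper does not prove this proposition internally at all: its ``proof'' is a citation to \cite{E}, Proposition 3.5, so your write-up supplies the actual content. Your route is the standard one, and it is organized well: reduce all three conditions to the single assertion that the counit $\varepsilon\colon \bL F^*F_*\to\id_{D(\cB)}$ is an isomorphism. The equivalence of (ii) with this assertion is pure adjunction theory; (i) follows from the induced decomposition of $D(\cA)$ into $\ker(\bL F^*)$ and the essential image of $F_*$ (in the paper's convention for semi-orthogonal decompositions, as in Proposition \ref{prop:h-proj,h-inj}, this would be written $\langle F_*D(\cB),\ker(\bL F^*)\rangle$ rather than in the order you wrote, but that is only a convention of which side is which), together with the standard fact that a localization with a right adjoint has that adjoint fully faithful; and (iii) follows from the dictionary $M\stackrel{\bL}{\otimes}_{\cB}\bL(F\otimes F^{op})^*I_{\cA}\cong \bL F^*F_*M$, under which the canonical bimodule morphism induces $\varepsilon$, plus evaluation on representables in one direction and compact generation in the other. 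Two points deserve emphasis if this were to be written out in full. First, the hypothesis that $\cA$ and $\cB$ are h-projective over $k$ is not decorative: it is exactly what makes the derived tensor products in your dictionary associate correctly (e.g.\ so that $\bL(F\otimes F^{op})^*I_{\cA}$ is computed by $\cB(-,F(-))\stackrel{\bL}{\otimes}_{\cA}\cB(F(-),-)$ and the two one-sided tensorings can be performed in either order), so it should be invoked explicitly at that step. Second, in the evaluation-on-representables direction one should observe that the natural transformation of endofunctors, evaluated at $h_{Y_1}$ and at the object $Y_2$, literally recovers the component $\bL(F\otimes F^{op})^*I_{\cA}(Y_1,Y_2)\to I_{\cB}(Y_1,Y_2)$ of the original bimodule map, which is what lets you conclude the isomorphism in $D(\cB\otimes\cB^{op})$ componentwise.
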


\begin{proof}This is proved in \cite{E}, Proposition 3.5.\end{proof}

\begin{defi}\label{defi:homological_epi} A DG functor $F:\cA\to\cB$ between small DG categories is said to be a homological epimorphism if the equivalent conditions of Proposition \ref{prop:homological_epi_equiv_cond} hold.\end{defi}

Let $\cA$ be a small DG category. For any object $X\in Ob(\cA),$ we have a representable DG module $h_X\in\text{Mod-}\cA,$ given by the formula
$$h_X(Y)=\cA(Y,X),\quad Y\in Ob(\cA).$$
By Yoneda, for any DG module $M\in\text{Mod-}\cA,$ we have $$\Hom_{\cA}(h_X,M)=M(X).$$
In particular, the DG module $h_X$ is h-projective.

\begin{defi}\label{defi:perfect_complexes} Let $\cA$ be a small DG category. The subcategory $\Perf(\cA)\subset D(\cA)$ of perfect complexes is defined to be the full triangulated subcategory, classically generated by the objects $h_X,$ $X\in Ob(\cA).$ That is, $\Perf(\cA)$ is the smallest strictly full subcategory, which contains the objects $h_X,$ and is closed under taking cones, shifts and direct summands.\end{defi}

\begin{prop}(\cite{Ke1}, Section 5.3) \label{prop:compact_are_perfect} Let $\cA$ be a small DG category. The triangulated category $D(\cA)$ is compactly generated, and its subcategory of compact objects $D(\cA)^c$ coincides with $\Perf(\cA).$\end{prop}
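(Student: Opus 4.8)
The plan is to follow the classical argument of Keller, whose one nontrivial input is Neeman's description of the compact objects in a compactly generated triangulated category. The cornerstone is the Yoneda identification recalled just before the statement: for any $M\in\text{Mod-}\cA$ and any $X\in Ob(\cA)$ one has $\Hom_{\cA}(h_X,M)=M(X)$ as complexes, and since $h_X$ is h-projective this descends to $\Hom_{D(\cA)}(h_X,M[n])=H^n(M(X))$ for all $n\in\Z$. Note also that coproducts in $D(\cA)$ are computed objectwise, as in $\text{Mod-}\cA$, since a coproduct of acyclic modules is acyclic.

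First I would verify that the set $\{h_X\}_{X\in Ob(\cA)}$ consists of compact generators. Compactness is immediate from the Yoneda formula: evaluation at $X$ commutes with coproducts, and $H^n$ commutes with coproducts of complexes, so
$$\Hom_{D(\cA)}\Bigl(h_X,\bigoplus_i M_i[n]\Bigr)=H^n\Bigl(\bigoplus_i M_i(X)\Bigr)=\bigoplus_i H^n(M_i(X))=\bigoplus_i\Hom_{D(\cA)}(h_X,M_i[n]).$$
Generation is equally direct: if $M\in D(\cA)$ satisfies $\Hom_{D(\cA)}(h_X,M[n])=0$ for all $X$ and all $n$, then $H^n(M(X))=0$ for all $X,n$, so $M$ is acyclic and hence zero in $D(\cA)$. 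Together these show that $D(\cA)$ is compactly generated by $\{h_X\}$.

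It then remains to identify $D(\cA)^c$ with $\Perf(\cA)$. The inclusion $\Perf(\cA)\subseteq D(\cA)^c$ is formal: the compact objects form a strictly full subcategory closed under shifts, cones and direct summands, and by the previous paragraph it contains each $h_X$, so it contains the smallest such subcategory generated by the $h_X$, which is exactly $\Perf(\cA)$ by Definition \ref{defi:perfect_complexes}.

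The reverse inclusion $D(\cA)^c\subseteq\Perf(\cA)$ is the real content and where I expect the main difficulty. It is an instance of Neeman's theorem that in a triangulated category compactly generated by a set $\cS$ of compact objects, the compact objects coincide with the smallest thick subcategory containing $\cS$. The argument goes through the Milnor (homotopy) colimit machine: given a compact object $C$, one writes it as $C\simeq\hocolim_j C_j$ along a sequence $C_0\to C_1\to\cdots$ whose successive cones are coproducts of shifts of the $h_X$, so each $C_j$ is built from the generators. Compactness forces the class of $\id_C$ in $\Hom_{D(\cA)}(C,\hocolim_j C_j)=\colim_j\Hom_{D(\cA)}(C,C_j)$ to factor through some finite stage $C_j$, exhibiting $C$ as a retract of $C_j$; a standard retract-and-truncation argument then places $C$ in the thick closure of $\{h_X\}$, i.e. in $\Perf(\cA)$. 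The genuinely nontrivial ingredient is precisely this Milnor-colimit/Brown-representability formalism, which simultaneously delivers compact generation and the splitting; everything else is a formal consequence of the Yoneda lemma. This is carried out in \cite{Ke1}, Section 5.3.
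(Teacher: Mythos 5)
Your proof is correct and takes essentially the same route as the paper, which offers no argument of its own but simply defers to \cite{Ke1}, Section 5.3: there, compact generation follows from the Yoneda identification $\Hom_{D(\cA)}(h_X,M[n])\cong H^n(M(X))$ exactly as you argue, and the identification $D(\cA)^c=\Perf(\cA)$ rests on Neeman's thick-subcategory theorem via the Milnor-colimit argument you sketch. You have correctly isolated the only nontrivial input and reproduced the standard proof, so there is nothing to add.
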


Given a DG functor $F:\cA\to\cB,$ for each object $X\in Ob(\cA)$ we have the isomorphism of $\cB$-modules $F^*(h_X)\cong h_{F(X)}.$ Since $h_X$ is h-projective, we also have the isomorphism $\bL F^*(h_X)\cong h_Y$ in $D(\cB).$ In particular, it follows that the DG functor $\bL F^*$ takes $\Perf(\cA)$ to $\Perf(\cB).$ 

We recall the notion of DG quotient, introduced by Drinfeld.

\begin{defi}\label{defi:DG_quotient} Let $\cA$ be a small h-projective DG category, and $\cB\subset\cA$ a full small DG subcategory. The DG quotient category $\cA/\cB$ has the same objects as $\cA,$ and the morphisms in $\cA/\cB$ are obtained from the morphisms in $\cA$ by freely adding for each object $X\in\cB$ the morphism $h(X):X\to X$ of degree $-1,$ with the only relation $d(h(X))=\id_X.$ In particular, the graded $k$-modules of morphisms in $\cA/\cB$ are given by the formula
$$(\cA/\cB)(X,Y)=\cA(X,Y)\oplus\bigoplus\limits_{\substack{n\geq 1;\\ Z_1,\dots,Z_n\in Ob(\cB)}}\cA(Z_n,Y)\otimes\cA(Z_{n-1},Z_n)\otimes\dots\otimes\cA(X,Z_1)[n].$$\end{defi}

Clearly, we have a natural projection DG functor $\pi:\cA\to\cA/\cB.$

\begin{prop}\label{prop:DG_quotient_localization} Let $\cA$ be a small h-projective DG category, and $\cB\subset\cA$ its full DG subcategory. Then the extension of scalars functor $\bL\pi^*:\Perf(\cA)\to\Perf(\cA/\cB)$ is a localization, up to direct summands.\end{prop}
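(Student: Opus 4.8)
The plan is to deduce the statement from the behaviour of $\bL\pi^*$ on the \emph{full} derived categories, together with the localization theorem of Thomason and Neeman for compactly generated triangulated categories. Write $\cL\subseteq D(\cA)$ for the localizing subcategory generated by the representable modules $h_X$ with $X\in Ob(\cB)$. First I would show that
$$\bL\pi^*\colon D(\cA)\lto D(\cA/\cB)$$
is a Verdier localization functor with kernel exactly $\cL$. One inclusion is immediate: for $X\in Ob(\cB)$ the relation $d(h(X))=\id_X$ of Definition \ref{defi:DG_quotient} makes $\id_{\pi(X)}$ null-homotopic, so $h_{\pi(X)}\simeq 0$ in $D(\cA/\cB)$; since $\bL\pi^*(h_X)\cong h_{\pi(X)}$, we get $\cL\subseteq\ker(\bL\pi^*)$.

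For the converse and for the localization property the cleanest route is to verify that $\pi$ is a homological epimorphism, i.e.\ to check condition (iii) of Proposition \ref{prop:homological_epi_equiv_cond}: that $\bL(\pi\otimes\pi^{op})^*I_{\cA}\to I_{\cA/\cB}$ is an isomorphism in $D((\cA/\cB)\otimes(\cA/\cB)^{op})$. Here I would use the explicit bar-type description of the hom-complexes of $\cA/\cB$ from Definition \ref{defi:DG_quotient}: evaluated on a pair $(Y_1,Y_2)$ it is the total complex of the two-sided bar construction of $\cA$ relative to $\cB$, which is precisely the complex computing $\Hom_{D(\cA)/\cL}(h_{Y_1},h_{Y_2})$. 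This identification both yields condition (iii) (hence, by Proposition \ref{prop:homological_epi_equiv_cond}, that $\bL\pi^*$ is a localization) and shows that the induced exact functor $\bar\pi\colon D(\cA)/\cL\to D(\cA/\cB)$ is fully faithful on the generators $h_Y$; since $\bar\pi$ preserves coproducts and carries the generating set $\{h_Y\}$ of $D(\cA)/\cL$ onto the generating set $\{h_{\pi(Y)}\}$ of $D(\cA/\cB)$, it is an equivalence, so $\ker(\bL\pi^*)=\cL$. (Alternatively this step may be quoted directly from Drinfeld's treatment of DG quotients in \cite{Dr}.)

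With the big-category statement in hand, I would invoke Proposition \ref{prop:compact_are_perfect}: $D(\cA)$ is compactly generated with $D(\cA)^c=\Perf(\cA)$, the generators $h_X$ are compact, and $\cL$ is generated by the \emph{compact} objects $\{h_X:X\in Ob(\cB)\}$. The Thomason--Neeman localization theorem then applies to the Verdier quotient $D(\cA)/\cL\simeq D(\cA/\cB)$: it is again compactly generated, its subcategory of compact objects is $\Perf(\cA/\cB)$, and the functor on compact objects
$$\Perf(\cA)\big/\la h_X:X\in Ob(\cB)\ra\lto\Perf(\cA/\cB)$$
induced by $\bL\pi^*$ is fully faithful with idempotent-dense image, the left-hand side being the Verdier quotient of $\Perf(\cA)$ by the thick subcategory generated by the $h_X$. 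This is exactly the assertion that $\bL\pi^*\colon\Perf(\cA)\to\Perf(\cA/\cB)$ is a localization, up to direct summands.

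The main obstacle is the middle step: identifying the hom-complexes of $\cA/\cB$ with the complexes computing morphisms in the Verdier quotient $D(\cA)/\cL$ — equivalently, verifying the homological-epimorphism condition (iii) through the relative bar resolution. Once this $\Hom$-comparison is established, the descent to perfect complexes is a formal application of the compactly generated localization theorem, using only Proposition \ref{prop:compact_are_perfect}.
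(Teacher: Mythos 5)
Your proof is correct and in substance coincides with the paper's: the paper's entire proof is the one-line citation ``This follows from \cite{Dr}, Theorem 1.6.2,'' and your argument is exactly an unpacking of that result --- the identification of the hom-complexes of $\cA/\cB$ with morphisms in the Verdier quotient $D(\cA)/\cL$ (the step you rightly flag as the main obstacle and allow yourself to quote from \cite{Dr}), followed by the formal Neeman--Thomason descent from the compactly generated localization $D(\cA)\to D(\cA)/\cL\simeq D(\cA/\cB)$ to compact objects via Proposition \ref{prop:compact_are_perfect}. Since your only nontrivial input is the very theorem the paper cites, and the remaining steps are the standard machinery surrounding it, there is no genuine divergence and no gap.
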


\begin{proof}This follows from \cite{Dr}, Theorem 1.6.2.\end{proof}

\begin{prop}\label{prop:DG_quotient_is_homological_epi} Let $F:\cA\to\cB$ be a DG functor, and suppose that the functor $\bL F^*:\Perf(\cA)\to\Perf(\cB)$ is a localization, up to direct summands. Then $F$ is a homological epimorphism.

In particular, if $\cC\subset\cA$ is a full DG subcategory, then the DG functor $\pi:\cA\to\cA/\cC$ is a homological epimorphism.\end{prop}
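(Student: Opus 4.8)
The plan is to verify condition (i) of Proposition \ref{prop:homological_epi_equiv_cond}, namely that $\bL F^*\colon D(\cA)\to D(\cB)$ is a localization; the ``in particular'' part then follows immediately by combining the first assertion with Proposition \ref{prop:DG_quotient_localization} applied to the full subcategory $\cC\subset\cA$, which guarantees that $\bL\pi^*\colon\Perf(\cA)\to\Perf(\cA/\cC)$ is a localization up to direct summands.

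First I would set up the compactly generated framework. By Proposition \ref{prop:compact_are_perfect}, both $D(\cA)$ and $D(\cB)$ are compactly generated with compact objects $\Perf(\cA)$ and $\Perf(\cB)$ respectively, and the functor $\bL F^*$ preserves arbitrary coproducts (being a left adjoint) and carries $\Perf(\cA)$ into $\Perf(\cB)$. Let $\cK\subset\Perf(\cA)$ be the thick subcategory of perfect complexes $P$ with $\bL F^*(P)\cong 0$, i.e.\ the kernel of $\bL F^*|_{\Perf(\cA)}$, and let $\langle\cK\rangle\subset D(\cA)$ be the localizing subcategory it generates. Since the full subcategory of objects annihilated by $\bL F^*$ is triangulated and closed under coproducts, it contains $\langle\cK\rangle$; hence $\bL F^*$ factors through the Verdier quotient $q\colon D(\cA)\to D(\cA)/\langle\cK\rangle$, giving a coproduct-preserving triangulated functor $\bar G\colon D(\cA)/\langle\cK\rangle\to D(\cB)$ with $\bL F^*=\bar G\circ q$.

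Next I would invoke Neeman's localization theorem: since $\langle\cK\rangle$ is generated by a set of compact objects of $D(\cA)$, the quotient $D(\cA)/\langle\cK\rangle$ is again compactly generated and its subcategory of compact objects is the idempotent completion of the Verdier quotient $\Perf(\cA)/\cK$. Under this identification, the functor induced by $\bar G$ on compact objects is the idempotent completion of the functor $\Perf(\cA)/\cK\to\Perf(\cB)$ coming from $\bL F^*|_{\Perf(\cA)}$. By hypothesis $\bL F^*|_{\Perf(\cA)}$ is a localization up to direct summands, which means precisely that $\Perf(\cA)/\cK\to\Perf(\cB)$ is fully faithful with every object of $\Perf(\cB)$ a direct summand of one in its image; since $\Perf(\cB)=D(\cB)^c$ is already idempotent complete, the functor induced by $\bar G$ is an equivalence on compact objects.

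Finally, a coproduct-preserving triangulated functor between compactly generated categories that restricts to an equivalence on compact objects is itself an equivalence: full faithfulness propagates from the compact generators to all objects because both the source and target Hom-functors commute with the relevant (co)products and are homological, and essential surjectivity holds because the essential image is a localizing subcategory containing a generating set of compacts. Applying this to $\bar G$ shows that $\bar G$ is an equivalence, so $\bL F^*=\bar G\circ q$ is a localization, establishing condition (i) and hence the proposition. The main technical input — and the step I would expect to require the most care — is Neeman's identification of the compact objects of $D(\cA)/\langle\cK\rangle$ with the idempotent completion of $\Perf(\cA)/\cK$, since it is exactly this idempotent completion that makes the hypothesis ``localization up to direct summands'' line up with the already idempotent-complete target $\Perf(\cB)$.
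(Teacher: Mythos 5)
Your proof is correct. The paper itself gives no argument for this proposition --- it is disposed of by a one-line citation to \cite{E}, Proposition 3.7 --- so there is nothing internal to compare against; your write-up supplies the standard dévissage that such a citation hides. Concretely, you reduce condition (i) of Proposition \ref{prop:homological_epi_equiv_cond} to Neeman's localization theorem: the kernel $\cK=\ker(\bL F^*|_{\Perf(\cA)})$ is a thick subcategory of compact objects, so $D(\cA)/\langle\cK\rangle$ is compactly generated with compacts the idempotent completion of $\Perf(\cA)/\cK$, the hypothesis ``localization up to direct summands'' says exactly that $\bar G$ is an equivalence on compacts (using that $\Perf(\cB)=D(\cB)^c$ is idempotent complete), and a coproduct-preserving exact functor between compactly generated categories that is an equivalence on compacts is an equivalence. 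All the individual steps you invoke (that $\langle\cK\rangle\cap\Perf(\cA)=\cK$ since $\cK$ is already thick, that $\bL F^*$ preserves coproducts as a left adjoint, and the propagation of full faithfulness from compact generators) are used correctly, and this is in substance the argument behind the cited result.
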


\begin{proof}This follows from \cite{E}, Proposition 3.7.\end{proof}

\begin{prop}\label{prop:criterion_for_quotient}(\cite{Dr}, Proposition 1.4) Let $\cA$ and $\cC$ be small DG categories, and assume that $\cA$ is h-projective. Let $\cB\subset\cA$ a full DG subcategory and $\iota:\cB\to\cA$ the tautological embedding. Let $F:\cA\to\cC$ be a DG functor such that the functor $\Ho(F):\Ho(\cA)\to\Ho(\cC)$ is essentially surjective and vanishes on the subcategory $\Ho(\cB)\subset\Ho(\cA).$ Then the DG functor $F$ induces a (non-unique) DG functor $\bar{F}:\cA/\cB\to\cC.$ The following are equivalent.

(i) The DG functor $\bar{F}:\cA/\cB\to\cC$ is a quasi-equivalence.

(ii) For any object $X\in Ob(\cA),$ the object
$$Cone(h_X\to F_*(\bL F^* h_X))\in D(\cA)$$ is contained in the localizing subcategory generated by $\bL\iota^*(\Perf(\cB)).$
\end{prop}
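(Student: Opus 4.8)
The plan is to test $\bar F$ for quasi-full-faithfulness, since essential surjectivity is automatic: $\cA/\cB$ has the same objects as $\cA$, and $\Ho(F)$ is essentially surjective, so $\Ho^{\bullet}(\bar F)$ is too. By Proposition~\ref{prop:DG_quotient_is_homological_epi} the projection $\pi\colon\cA\to\cA/\cB$ is a homological epimorphism, so by Proposition~\ref{prop:homological_epi_equiv_cond} the functor $\pi_*$ is fully faithful and $\bL\pi^*$ is a localization; hence $L:=\pi_*\bL\pi^*$ is an idempotent (Bousfield) localization of $D(\cA)$. Write $\cN\subset D(\cA)$ for the localizing subcategory generated by $\bL\iota^*\Perf(\cB)$, equivalently by the representables $h_Z$, $Z\in Ob(\cB)$; this is the subcategory in (ii). The key input I will use is that $\cN=\Ker(\bL\pi^*)$, so that $L$ is localization away from $\cN$: the local objects form $\cN^{\perp}$ and every unit $M\to LM$ has cone in $\cN$. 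The easy inclusion is immediate, since $h(Z)$ trivialises $\id_Z$ in $\cA/\cB$, whence $\bL\pi^*h_Z\simeq0$, and $\bL\pi^*$, being exact and coproduct-preserving, then kills all of $\cN$.

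Next I rewrite the map $\bar F$ induces on Hom-complexes. For $X,Y\in Ob(\cA)$, Yoneda together with $\bL\pi^*h_X=h_X^{\cA/\cB}$, $\bL F^*h_X\simeq h_{FX}$ and the adjunctions $(\bL\pi^*,\pi_*)$, $(\bL F^*,F_*)$ give identifications $(\cA/\cB)(X,Y)=(Lh_Y)(X)$ and $\cC(FX,FY)\simeq (F_*\bL F^*h_Y)(X)$. Using $F=\bar F\circ\pi$, the map induced by $\bar F$ is evaluation at $X$ of a single natural morphism $g_Y\colon Lh_Y\to F_*\bL F^*h_Y$ in $D(\cA)$, and the unit $\eta_Y\colon h_Y\to F_*\bL F^*h_Y$ of $(\bL F^*,F_*)$ factors as $h_Y\to Lh_Y\xrightarrow{g_Y}F_*\bL F^*h_Y$. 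Since a morphism of $\cA$-modules is invertible in $D(\cA)$ iff it is an objectwise quasi-isomorphism, $\bar F$ is a quasi-equivalence if and only if $g_Y$ is an isomorphism for every $Y$.

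Now I run the comparison via the octahedral axiom applied to $h_Y\to Lh_Y\xrightarrow{g_Y}F_*\bL F^*h_Y$, producing a triangle $C'_Y\to C_Y\to D_Y\to C'_Y[1]$ with $C'_Y=\Cone(h_Y\to Lh_Y)$, $C_Y=\Cone(\eta_Y)$ the object of (ii), and $D_Y=\Cone(g_Y)$. By the localization property $C'_Y\in\cN$ and $Lh_Y\in\cN^{\perp}$; moreover $F_*\bL F^*h_Y\in\cN^{\perp}$, since for $N\in\cN$ adjunction gives $\Hom_{D(\cA)}(N,F_*\bL F^*h_Y)\cong\Hom_{D(\cC)}(\bL F^*N,\bL F^*h_Y)$ and $\bL F^*N=0$, because $\bL F^*h_Z\simeq h_{FZ}\simeq0$ for $Z\in Ob(\cB)$ (as $F$ vanishes on $\Ho(\cB)$) and $\bL F^*$ kills the localizing subcategory these generate. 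Hence $D_Y\in\cN^{\perp}$. If (i) holds then $D_Y=0$, so $C_Y\cong C'_Y\in\cN$ and (ii) follows. Conversely, if (ii) holds then $C_Y\in\cN$, and with $C'_Y\in\cN$ the triangle forces $D_Y\in\cN$; combined with $D_Y\in\cN^{\perp}$ and $\cN\cap\cN^{\perp}=0$ this gives $D_Y=0$, i.e. $g_Y$ is an isomorphism and (i) holds.

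The main obstacle is the reverse inclusion $\Ker(\bL\pi^*)\subseteq\cN$ asserted above. I would obtain it from compactly generated localization theory: $D(\cA)$ and $D(\cA/\cB)$ are compactly generated with compacts $\Perf$ (Proposition~\ref{prop:compact_are_perfect}), and $\bL\pi^*$ preserves compactness and is a localization, so by Neeman's theorem $\Ker(\bL\pi^*)$ is generated as a localizing subcategory by $\Ker(\bL\pi^*)\cap\Perf(\cA)=\Ker(\bL\pi^*|_{\Perf(\cA)})$. By Proposition~\ref{prop:DG_quotient_localization} (Drinfeld) this kernel is the thick subcategory generated by $\{h_Z:Z\in Ob(\cB)\}$, whose localizing closure is precisely $\cN$. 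Everything else is formal; the only points needing care are the well-definedness and essential surjectivity of $\bar F$ (supplied by the hypotheses) and the sign-and-shift bookkeeping in the octahedron.
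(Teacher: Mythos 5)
The paper offers no proof of this proposition at all --- it simply cites \cite{Dr}, Proposition 1.4 --- so your proposal has to be judged on its own merits rather than against an argument in the text. Its formal skeleton is correct and well organized: identifying the Hom-complex maps induced by $\bar F$ with evaluations of $g_Y\colon \pi_*\bL\pi^*h_Y\to F_*\bL F^*h_Y$, observing that $\bL F^*$ kills $\cN$ because each $h_{F(Z)}$ is acyclic, placing $\pi_*\bL\pi^*h_Y$ and $F_*\bL F^*h_Y$ in $\cN^{\perp}$, and then playing the octahedron triangle $C'_Y\to C_Y\to D_Y$ against $\cN\cap\cN^{\perp}=0$. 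All of this is fine, and it proves the proposition \emph{once} one knows that $C'_Y=\Cone(h_Y\to\pi_*\bL\pi^*h_Y)$ lies in $\cN$.

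That point --- the one you yourself flag as ``the main obstacle,'' i.e.\ $\Ker(\bL\pi^*)\subseteq\cN$ --- is where the proof has a genuine gap, and your justification fails twice. First, the ``Neeman's theorem'' you invoke (every compact-preserving Bousfield localization of a compactly generated category has kernel generated by the compact objects it contains) is not a theorem: it is the generalized smashing (telescope) conjecture, and Keller constructed smashing localizations of derived categories of rings whose kernels contain no nonzero compact objects. Second, and independently, Proposition~\ref{prop:DG_quotient_localization} does not say what you attribute to it: it asserts only that $\bL\pi^*\colon\Perf(\cA)\to\Perf(\cA/\cB)$ is a localization up to direct summands, and identifies its kernel with nothing; the claim that this kernel equals $\mathrm{thick}\langle h_Z\colon Z\in Ob(\cB)\rangle$ is essentially the statement one is trying to prove, not an available input. (One can repair the first defect: combining Proposition~\ref{prop:DG_quotient_localization} with the Neeman--Thomason localization theorem applied to $\cS=\langle\Ker(\bL\pi^*)\cap\Perf(\cA)\rangle_{\mathrm{loc}}$ and comparing the two quotients on compacts does show $\Ker(\bL\pi^*)=\cS$; but this still leaves you needing $\Ker(\bL\pi^*)\cap\Perf(\cA)\subseteq\cN$, which is the second defect again.) The honest ways to close the gap are: quote Drinfeld's theorem in its full strength, namely that $\bL\pi^*$ induces an equivalence $D(\cA)/\cN\simeq D(\cA/\cB)$, so that $\Ker(\bL\pi^*)=\cN$; or prove the unit-cone statement directly from Definition~\ref{defi:DG_quotient}: $\pi_*h_{\pi Y}$ carries the exhaustive, termwise split filtration by the number of inserted arrows $h(Z)$, with associated graded pieces $\cA(Z_n,Y)\otimes\cdots\otimes\cA(Z_1,Z_2)\otimes h_{Z_1}[n]$; the h-projectivity of $\cA$ makes the tensor factors h-projective over $k$, so each piece lies in $\cN$, and closure of $\cN$ under extensions and sequential homotopy colimits gives $C'_Y\in\cN$. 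With that lemma supplied, the rest of your argument goes through as written.
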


We recall the notion of gluing of small DG categories.

\begin{defi}\label{defi:gluing} Let $\cA$ and $\cB$ be small DG categories, and $M\in\text{Mod-}(\cA\otimes\cB^{op})$ a DG bimodule.
The glued DG category $\cC=\cA\sqcup_M\cB$ is defined as follows:

1) The objects are given by $Ob(\cC)=Ob(\cA)\sqcup Ob(\cB).$

2) The morphisms are given by $$\cC(X,Y)=\begin{cases}\cA(X,Y) & \text{if }X,Y\in Ob(\cA);\\
                                                     \cB(X,Y) & \text{if }X,Y\in Ob(\cB);\\
                                                     M(X,Y) & \text{if }X\in Ob(\cA),\,Y\in Ob(\cB);\\
                                                     0 & \text{if }X\in Ob(\cB),\,Y\in Ob(\cA).
                                                      \end{cases}$$
                                                      
3) The composition in $\cC$ comes from the compositions in $\cA$ and $\cB,$ and from the bimodule structure on $M.$
\end{defi}

In the notation of Definition \ref{defi:gluing}, let us denote by $\iota_{\cA}:\cA\to\cC,$ $\iota_{\cB}:\cB\to\cC$ the natural inclusions. Then we have a natural exact triangle in $D(\cC\otimes\cC^{op})$ (see e.g. \cite{LS})
\begin{equation}\label{eq:diagonal_of_gluing}\bL(\iota_{\cA}\otimes\iota_{\cB}^{op})^*M\to\bL(\iota_{\cA}\otimes\iota_{\cA}^{op})^*I_{\cA}\oplus \bL(\iota_{\cB}\otimes\iota_{\cB}^{op})^*I_{\cB}\to I_{\cA\sqcup_M\cB}.\end{equation}

We will need some results on h-projective DG modules. 

\begin{defi}\label{defi:semi-free}Let  $\cA$ be a small DG category. A DG $\cA$-module $M$ is called semi-free if it has an exhaustive filtration $$0=F_0M\subset F_1M\subset\dots$$ by DG submodules such that for each $n>0$ the DG module $\gr_n^F M$ is a (possibly infinite) direct sum of shifts of representable DG modules $h_X,$ $X\in Ob(\cA).$\end{defi}

\begin{prop}\label{prop:semi-free_h_proj}(\cite{Dr}, Appendix C.8) Any semi-free DG $\cA$-module is h-projective. Moreover, any h-projective DG module is isomorphic to some semi-free $\cA$-module in $K(\cA).$\end{prop}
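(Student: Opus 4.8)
The plan is to prove the two assertions in turn: first, that every semi-free module is h-projective, using closure properties of the class of h-projective modules together with a telescope argument for the defining colimit; and second, that every h-projective module admits a semi-free resolution which, in the h-projective case, is automatically a homotopy equivalence.

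For the first assertion I would build up from the generators. Each representable $h_X$ is h-projective, as already observed via Yoneda ($\Hom_{\cA}(h_X, N) = N(X)$). The class of h-projective modules is closed under shifts, under arbitrary direct sums (because $\Hom_{\cA}(\bigoplus_i P_i, N) = \prod_i \Hom_{\cA}(P_i, N)$ and a product of acyclic complexes is acyclic), and under cones (because $\Hom_{\cA}(-, N)$ carries an exact triangle to an exact triangle). Hence each graded piece $\gr_n^F M$, being a direct sum of shifts of representables, is h-projective. Since the inclusions $F_{n-1}M \hookrightarrow F_n M$ are termwise-split with quotient $\gr_n^F M$, induction shows each $F_n M$ is h-projective. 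To reach $M = \colim_n F_n M$ I would invoke the telescope: the termwise-split short exact sequence $0 \to \bigoplus_n F_n M \stackrel{1-s}{\to} \bigoplus_n F_n M \to M \to 0$, with $s$ induced by the filtration inclusions, gives an exact triangle in $K(\cA)$ whose first two terms are h-projective, so $M$ is h-projective.

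For the second assertion I would first construct, for an arbitrary DG $\cA$-module $M$, a semi-free module $P$ with a quasi-isomorphism $p : P \to M$. This is the standard killing-homology construction: one defines an exhaustive filtration $F_\bullet P$ inductively, at each stage adjoining a direct sum of shifted representables $h_X[m]$ so as to make $H^\bullet(p)$ surjective and to annihilate the classes in $H^\bullet(\Cone(p))$ created at the previous stage; in the colimit $p$ becomes a quasi-isomorphism, and $P$ is semi-free by construction. Now assume $M$ is h-projective. By the first assertion $P$ is h-projective, hence so is $C := \Cone(p)$, which is in addition acyclic. For an acyclic h-projective $C$ the complex $\Hom_{\cA}(C, C)$ is acyclic, so $\id_C$ vanishes in $\Hom_{K(\cA)}(C, C) = H^0(\Hom_{\cA}(C, C))$; thus $C$ is contractible and $p$ is an isomorphism in $K(\cA)$, identifying $M$ with the semi-free module $P$.

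The step I expect to be the main obstacle is the semi-free resolution in the second assertion: making the inductive cell-attachment precise — choosing at each stage enough shifted representables to cover all cohomology and to kill the relations while keeping the filtration exhaustive and the partial maps compatible — is where the genuine work lies. Everything else is formal, resting on the closure properties of h-projectives, the telescope sequence, and the fact that a quasi-isomorphism between h-projective modules is a homotopy equivalence.
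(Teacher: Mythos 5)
Your proof is correct: both halves --- the closure-properties-plus-telescope argument showing semi-free modules are h-projective, and the semi-free resolution combined with the observation that an acyclic h-projective module $C$ has $\Hom_{K(\cA)}(C,C)=H^0(\Hom_{\cA}(C,C))=0$ and is therefore contractible --- are sound, including the key point that the filtration steps split as graded modules so that each inclusion yields an exact triangle in $K(\cA)$. The paper itself gives no proof, citing Drinfeld's Appendix C.8, and your argument is essentially the standard one found in that reference.
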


Let us consider the case when $\cA=R$ is a $k$-algebra (considered as a DG category with a single object). Let $u$ be a formal variable of cohomological degree $2.$ Then we can  identify $\Z/2$-graded complexes of $R$-modules with DG $R[u^{\pm 1}]$-modules. By abuse of notation, we denote by $\text{Mod-}R$ the abelian category of $R$-modules. Clearly, DG $R$-modules are just complexes of $R$-modules. Note that any bounded above complex of projective $R$-modules is a direct summand of a semi-free complex, hence h-projective. 

\begin{prop}\label{prop:h-proj_unbounded} Assume that the abelian category $\text{Mod-}R$ has finite homological dimension. 

1) Let $P^{\bullet}$ be a complex of projective $R$-modules. Then $P^{\bullet}$ is h-projective as a DG $R$-module.

2) Let $P^{\bullet}$ be a $\Z/2$-graded complex of projective $R$-modules. Then $P^{\bullet}$ is h-projective as a DG $R[u^{\pm 1}]$-module.\end{prop}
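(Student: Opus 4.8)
The whole statement unwinds to an acyclicity claim for a Hom-complex, and the role of the finiteness hypothesis is isolated to a single ``completion'' step. I will describe part 1) in full and indicate at the end why part 2) follows by the same argument read modulo $2$. By definition $P^{\bullet}$ is h-projective iff the Hom-complex $\Hom_R(P^{\bullet},N^{\bullet})$ is acyclic for every $N^{\bullet}\in\Acycl(R)$, so this is what I would prove. The first move is to reduce to the bounded-below case. Fixing $m$, the brutal truncations give a degreewise-split short exact sequence of complexes
$$0\to\sigma^{\geq m}P^{\bullet}\to P^{\bullet}\to\sigma^{<m}P^{\bullet}\to 0,$$
and applying $\Hom_R(-,N^{\bullet})$ yields again a degreewise-split short exact sequence of Hom-complexes. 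Since $\sigma^{<m}P^{\bullet}$ is a bounded-above complex of projectives it is h-projective (as recalled just before the Proposition), so $\Hom_R(\sigma^{<m}P^{\bullet},N^{\bullet})$ is acyclic; the long exact sequence then gives $H^{\bullet}\Hom_R(P^{\bullet},N^{\bullet})\cong H^{\bullet}\Hom_R(\sigma^{\geq m}P^{\bullet},N^{\bullet})$. Hence it suffices to treat a bounded-below complex $Q^{\bullet}:=\sigma^{\geq m}P^{\bullet}$.

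For such $Q^{\bullet}$ I would view $\Hom_R(Q^{\bullet},N^{\bullet})$ as the product total complex of the double complex $\Hom_R(Q^{s},N^{t})$. Its columns (fixed $s$) are acyclic, because $Q^{s}$ is projective and $N^{\bullet}$ is acyclic, and its column index $s$ is bounded below by $m$. Inside this product total complex sits the direct-sum total subcomplex $\bigoplus_{s}\Hom_R(Q^{s},N^{s+\bullet})$, which is the filtered colimit over $k$ of the Hom-complexes $\Hom_R(\sigma^{\leq k}Q^{\bullet},N^{\bullet})$ attached to the \emph{bounded} complexes of projectives $\sigma^{\leq k}Q^{\bullet}$. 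Each of the latter is h-projective, hence each $\Hom_R(\sigma^{\leq k}Q^{\bullet},N^{\bullet})$ is acyclic, and a filtered colimit of acyclic complexes is acyclic. Thus the direct-sum totalization is acyclic, and writing $C^{\bullet}$ for the product totalization one gets $H^{\bullet}(C^{\bullet})\cong H^{\bullet}(C^{\bullet}/C^{\oplus}_{\bullet})$. The entire remaining problem is to show that replacing the finitely supported cochains $C^{\oplus}_{\bullet}=\bigoplus_{s}\Hom_R(Q^{s},N^{s+\bullet})$ by all cochains does not create cohomology.

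\textbf{The main obstacle} is exactly this passage from the direct sum to the product, and it is where the hypothesis $\hd(\text{Mod-}R)=d<\infty$ is indispensable. Without finiteness the Proposition is false: over $R=k[x]/(x^{2})$ the bounded-below complex $R\xrightarrow{\,x\,}R\xrightarrow{\,x\,}R\to\cdots$ is a complex of projectives that is \emph{not} h-projective (its canonical map into the acyclic two-periodic complex is a non-null-homotopic cocycle in the Hom-complex), so $C^{\bullet}$ can genuinely carry cohomology concentrated ``at infinity.'' The obstruction to acyclicity of $C^{\bullet}/C^{\oplus}_{\bullet}$ is a derived-inverse-limit ($\lim^{1}$-type) phenomenon, and I would kill it using that the cocycle modules $Z^{t}(N)=\ker(d_N^{t})$ now have projective dimension $\leq d$. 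Concretely, for a fixed total degree $n$ one uses the brutal truncations of $N^{\bullet}$, noting that $\sigma^{<l}N^{\bullet}$ is quasi-isomorphic to the single module $Z^{l}(N)$ of projective dimension $\leq d$; feeding a finite projective resolution of $Z^{l}(N)$ into the computation shows that only a band of width $\leq d+1$ in the $N$-direction contributes to $H^{n}(C^{\bullet})$, so that for $l\to\infty$ the relevant piece of the Hom-complex becomes a \emph{bounded} total complex of a double complex with acyclic columns, and is therefore acyclic. This forces $H^{n}(C^{\bullet})=0$ for all $n$, proving $Q^{\bullet}$, and hence $P^{\bullet}$, h-projective. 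For part 2) the same double-complex argument applies verbatim to the $\Z/2$-graded Hom-complex of DG $R[u^{\pm1}]$-modules: all cohomological gradings are simply read modulo $2$, the bounded-above reduction and the finite width-$(d+1)$ band both survive because $\hd(\text{Mod-}R)=d$ bounds the projective dimensions of the cocycle modules exactly as before.
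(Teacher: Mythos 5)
The first half of your argument is correct: the reduction to a bounded-below complex $Q^{\bullet}$ of projectives, the acyclicity of the direct-sum totalization $C^{\oplus}_{\bullet}$, and the $k[x]/(x^{2})$ example correctly locate where the finiteness hypothesis must enter. But the final paragraph — the only place where finite homological dimension is actually used — is not a proof, and its key assertions are wrong as stated. The complexes $\Hom_R(Q^{\bullet},\sigma^{<l}N^{\bullet})$ are \emph{not} acyclic: up to shift they compute $\Hom_{K(R)}(Q^{\bullet},Z^{l}(N)[\ast])$, which has no reason to vanish; and truncating in the $N$-direction destroys the acyclicity of the columns $\Hom_R(Q^{s},N^{\bullet})$, so no ``bounded total complex of a double complex with acyclic columns'' ever appears, and the claim that ``only a band of width $\leq d+1$ contributes to $H^{n}(C^{\bullet})$'' is unsupported. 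What your named ingredients actually yield is a statement about \emph{maps}, not objects. One writes $C^{\bullet}=\varprojlim_{l}\Hom_R(Q^{\bullet},\sigma^{<l}N^{\bullet})$, a tower with degreewise surjective transition maps, so the Milnor sequence gives $0\to{\varprojlim}^{1}H^{n-1}\to H^{n}(C^{\bullet})\to\varprojlim H^{n}\to 0$; one must then show the tower $\{H^{n}\Hom_R(Q^{\bullet},\sigma^{<l}N^{\bullet})\}_{l}$ is pro-zero. This holds because for $k>d$ the composite $\sigma^{<l+k}N^{\bullet}\to\sigma^{<l}N^{\bullet}$ lies in $\Hom_{D^{-}(R)}(\sigma^{<l+k}N^{\bullet},\sigma^{<l}N^{\bullet})\cong\Ext^{k}_{R}(Z^{l+k}(N),Z^{l}(N))=0$, hence factors in $K^{-}(R)$ through a bounded-above \emph{acyclic} complex, and $\Hom_R(Q^{\bullet},-)$ annihilates such complexes (by the same colimit argument you used for $C^{\oplus}_{\bullet}$, since for a bounded-above target the product and sum totalizations agree). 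Therefore all $(d+1)$-fold transition maps on cohomology vanish, and both $\varprojlim$ and ${\varprojlim}^{1}$ are zero. None of this chain — the Milnor sequence, the pro-vanishing of transition maps rather than of the groups themselves, the factorization of derived-zero maps through acyclics — appears in your sketch, and without it the passage from $C^{\oplus}_{\bullet}$ to $C^{\bullet}$ does not close. So the proposal has a genuine gap at its central step, although the approach is salvageable along the lines above.

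Two further remarks. First, your claim that part 2) follows ``verbatim, read modulo $2$'' cannot be right for this method: $\Z/2$-graded complexes admit no brutal truncations and no notion of bounded-below, so both your opening reduction and the tower over $l$ disappear; part 2) requires a different argument. Second, for comparison, the paper's proof is far shorter and avoids all of this: it chooses a semi-free resolution $f:P'^{\bullet}\to P^{\bullet}$ (Propositions \ref{prop:h-proj,h-inj} and \ref{prop:semi-free_h_proj}) and reduces to showing that the acyclic complex of projectives $\Cone(f)$ is null-homotopic; finite homological dimension $m$ makes every cycle module $Z^{n}$ projective, since it is an $(m+1)$-st syzygy of $Z^{n+m+1}$ via $0\to Z^{n}\to P^{n}\to\dots\to P^{n+m}\to Z^{n+m+1}\to 0$, whence $P^{n}=Z^{n}\oplus Z^{n+1}$ and the splittings assemble into a contracting homotopy. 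That argument uses the hypothesis exactly once, needs no limits, and adapts immediately to the $\Z/2$-graded case.
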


\begin{proof}We will prove 1), and the proof of 2) is analogous.

By Propositions \ref{prop:h-proj,h-inj} and \ref{prop:semi-free_h_proj}, we can find a semi-free complex of $R$-modules $P'^{\bullet}$ and a quasi-isomorphism $f:P'^{\bullet}\to P^{\bullet}.$ It suffices to prove that the acyclic complex $Cone(f)$ is h-rpojective. By definition, $P'^{\bullet}$ is a complex of projectives, hence so is $Cone(f).$ It follows that we may replace $P^{\bullet}$ by $Cone(f).$ Thus, we may and will assume that $P^{\bullet}$ is acyclic.

We need to show that $P^{\bullet}$ is null-homotopic. Let us put $Z^n=\ker(d^n)=\im(d^{n-1})\subset P^n,$ where $d^i:P^i\to P^{i+1}$ are the components of $d.$ Let us put $m:=\hd(\text{Mod-}R).$ Then the exact sequence
$$0\to Z^n\to P^n\to P^{n+1}\to\dots\to P^{n+m}\to Z^{n+m+1}\to 0$$ implies that $Z^n$ is projective. Hence, the surjection $P^{n-1}\to Z^n$ splits, and by choosing the splittings we obtain the direct sum decompositions $P^n=Z^n\oplus Z^{n+1}$ for all $n\in\Z.$ Define the homotopy $h_n:P^n\to P^{n-1}$ to be the composition $P^n\to Z^n\to P^{n-1}.$ It is easy to see that $dh+hd=\id,$ hence $P^{\bullet}$ is null-homotopic.
\end{proof}

\section{Cross-effects and polynomial functors}
\label{sec:cross_effects_polynomial}

The cross-effects of functors between abelian categories were introduced in \cite{EM}. In this section we recall the definitions and basic properties of cross-effects and polynomial functors. 

\begin{defi}\label{defi:polynomial_map}Let $A$ and $B$ be abelian groups and $f:A\to B$ a set-theoretic map.

1) The cross-effects $(a_1\mid\dots\mid a_n)_f,$ $a_i\in A,$ $n\geq 0,$ are defined as follows. First, we put $()_f:=f(0)$ for $n=0.$ 

In the case $f(0)=0$ we put  $(a_1)_f:=f(a_1)$ for $n=1,$ and 
$$(a_1\mid\dots\mid a_n)_f:=(a_1\mid\dots \mid a_{n-2}\mid a_{n-1}+a_n)_f-(a_1\mid\dots\mid a_{n-1})_f-(a_1\mid\dots\mid a_{n-2}\mid a_n)_f$$
for $n\geq 2.$

In the case $f(0)\ne 0,$ we put $$(a_1\mid\dots\mid a_n)_f:=(a_1\mid\dots\mid a_n)_{f'}$$ for $n\geq 1,$
where $f'(a)=f(a)-f(0).$

2) The map $f$ is said to be polynomial of degree $\leq n$ (where $n\geq 0$) if for any collection of $(n+1)$ elements $a_1,\dots,a_{n+1}\in A$ we have
$$(a_1\mid\dots\mid a_{n+1})_f=0.$$

3) The map $f$ is said to be polynomial if for some $n\geq 0$ the map $f$ is polynomial of degree $\leq n.$
\end{defi}

In particular, a polynomial map of degree $\leq 0$ is just the constant map. Polynomial map of degree $\leq 1$ is a sum of a constant map and a homomorphism of abelian groups.

It is easy to check by induction that we have
$$(a_1\mid\dots\mid a_n)_f=\sum_{\substack{0\leq k\leq n\\ 1\leq i_1<\dots<i_k\leq n}}(-1)^{n-k}f(a_{i_1}+\dots+a_{i_k}).$$
Also, we have an equality
$$f(a_1+\dots+a_n)=\sum_{\substack{0\leq k\leq n\\ 1\leq i_1<\dots<i_k\leq n}}(a_{i_1}\mid\dots\mid a_{i_k})_f.$$

\begin{defi}Let $\cC$ and $\cD$ be additive categories, and $F:\cC\to \cD$ an abstract functor (not necessarily additive).

1) The functor $F$ is said to be polynomial of degree $\leq n$ if for any two objects $X,Y$ of $\cC$ the map $$F(X,Y):\Hom_{\cC}(X,Y)\to\Hom_{\cD}(F(X),F(Y))$$ is polynomial of degree $\leq n.$

2) The functor $F$ is said to be polynomial if for some $n\geq 0$ the functor $F$ is polynomial of degree $\leq n.$\end{defi}

A polynomial functor of degree $\leq 0$ is just a constant functor. A polynomial functor of degree $\leq 1$ is a direct sum of a constant functor and an additive functor.

Now we define the cross-effect functors.

\begin{defi}\label{defi:cross_effect} Let $\cC$ and $\cD$ be additive categories, and $F:\cC\to\cD$ an abstract functor. We assume that $\cD$ is Karoubi complete.

Let $X_1,\dots,X_n\in\cC$ be a collection of objects. For any sequence $1\leq i_1<\dots< i_k\leq n,$ we denote by $$\pi_{(i_1,\dots,i_k)}:\bigoplus\limits_{j=1}^{n}X_j\to \bigoplus\limits_{j=1}^{n}X_j$$ the projector onto the direct summand $\bigoplus\limits_{j=1}^k X_{i_j}.$

We define the cross-effect $\cross_{n}F(X_1,\dots,X_n)\in\widetilde{\cD}$ to be the image of the idempotent $$e:F(\bigoplus\limits_{j=1}^{n}X_j)\to F(\bigoplus\limits_{j=1}^{n}X_j),$$
given by the formula
$$e=\sum_{\substack{0\leq k\leq n\\ 1\leq i_1<\dots<i_k\leq n}} (-1)^{n-k}F(\pi_{(i_1,\dots,i_k)}).$$\end{defi}

\begin{remark}In the notation of Definition \ref{defi:cross_effect}, we always have a direct sum decomposition $F=F(0)\oplus F',$ where $F(0)$ is the constant functor, and the functor $F':\cC\to\cD$ satisfies $F'(0)=0.$ It is clear that for $n\geq 1$ we have a natural isomorphism $\cross_n F\cong \cross_n F'.$\end{remark}

\begin{prop}\label{prop:polynomiality_cross_effects}Let $\cC,$ $\cD$ and $F$ be as in Definition \ref{defi:cross_effect}.

1) The following are equivalent:

(i) $F$ is polynomial of degree $\leq n;$
 
(ii) for any objects $X_1,\dots,X_{n+1}\in\cC,$ we have $\cross_{n+1}F(X_1,\dots,X_{n+1})=0.$

2) For any objects $X_1,\dots,X_n\in\cC,$ we have a natural isomorphism
\begin{equation}\label{eq:decomposition_cross}F(\bigoplus\limits_{j=1}^n X_j)\cong\bigoplus_{\substack{0\leq k\leq n\\ 1\leq i_1<\dots<i_k\leq n}}\cross_k F(X_{i_1},\dots,X_{i_k}).\end{equation}
\end{prop}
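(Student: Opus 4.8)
The plan is to route all the combinatorics through one structural observation: applying $F$ to the projectors turns the Boolean lattice of index sets into a commuting system of idempotents. Write $X=\bigoplus_{j}X_j$ for the relevant direct sum, let $p_i\in\End(X)$ be the projector onto $X_i$, and for a subset $T$ of the index set put $\pi_T=\sum_{i\in T}p_i$, so that $\pi_{(i_1,\dots,i_k)}=\pi_{\{i_1,\dots,i_k\}}$. Since $\pi_T\pi_{T'}=\pi_{T\cap T'}$ and $F$ is a functor, the elements $q_T:=F(\pi_T)\in\End_{\cD}(F(X))$ satisfy $q_Tq_{T'}=q_{T\cap T'}$; in particular they are commuting idempotents with $q_{\{1,\dots,m\}}=\id_{F(X)}$. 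This single relation drives both parts, and it is also what guarantees that the endomorphisms defining the cross-effects in Definition \ref{defi:cross_effect} are genuine idempotents.

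For part 1, the key remark is that the idempotent $e=\sum_{T}(-1)^{n+1-|T|}q_T$ defining $\cross_{n+1}F(X_1,\dots,X_{n+1})$ is literally the cross-effect $(p_1\mid\dots\mid p_{n+1})_{F(X,X)}$ of the map $F(X,X)\colon\Hom(X,X)\to\Hom(F(X),F(X))$ evaluated at the projectors, by the closed formula for cross-effects recalled after Definition \ref{defi:polynomial_map}. Hence if $F$ is polynomial of degree $\leq n$ this cross-effect vanishes, so $e=0$ and $\cross_{n+1}F=\im(e)=0$, giving (i)$\Rightarrow$(ii). Conversely, to test polynomiality of $F(Y,Z)$ at morphisms $f_1,\dots,f_{n+1}\colon Y\to Z$, I would introduce the diagonal $\delta\colon Y\to Y^{\oplus(n+1)}$ and the map $g\colon Y^{\oplus(n+1)}\to Z$ restricting to $f_i$ on the $i$-th summand; then $\sum_{i\in T}f_i=g\pi_T\delta$, and functoriality yields
\begin{equation*}(f_1\mid\dots\mid f_{n+1})_{F(Y,Z)}=\sum_{T}(-1)^{n+1-|T|}F(g)\,q_T\,F(\delta)=F(g)\,e\,F(\delta),\end{equation*}
with $e$ the cross-effect idempotent for $\cross_{n+1}F(Y,\dots,Y)$. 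By (ii) we have $\im(e)=0$, which forces the idempotent $e$ to vanish, proving (ii)$\Rightarrow$(i).

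For part 2, I set $e_S:=\sum_{T\subseteq S}(-1)^{|S|-|T|}q_T$ for each subset $S\subseteq\{1,\dots,n\}$. Möbius inversion on the Boolean lattice gives $q_T=\sum_{S\subseteq T}e_S$, and in particular $\sum_S e_S=q_{\{1,\dots,n\}}=\id_{F(X)}$. Writing $a_i:=q_{\{1,\dots,n\}\setminus\{i\}}$ and using $q_T=\prod_{i\notin T}a_i$, a short manipulation rewrites $e_S=\prod_{i\in S}(\id-a_i)\prod_{i\notin S}a_i$; since the $a_i$ are commuting idempotents, the $e_S$ are then manifestly orthogonal idempotents (for $S\neq S'$ the product $e_Se_{S'}$ acquires a factor $(\id-a_{i_0})a_{i_0}=0$) summing to $\id$. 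This produces the decomposition $F(X)\cong\bigoplus_S\im(e_S)$ in $\widetilde{\cD}$.

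It remains to identify $\im(e_S)$ with $\cross_{|S|}F(X_i:i\in S)$, and this is the step I expect to require the most care, since $e_S$ lives on $F(X)$ whereas the cross-effect is the image of an idempotent $e'_S$ on the smaller object $F(\bigoplus_{i\in S}X_i)$. I would use the inclusion $\iota_S$ and projection $\rho_S$ between $\bigoplus_{i\in S}X_i$ and $X$, with $\rho_S\iota_S=\id$ and $\iota_S\rho_S=\pi_S$. From $\iota_S\pi^S_T=\pi_T\iota_S$ one derives the intertwining relations $F(\iota_S)e'_S=e_SF(\iota_S)$ and $F(\rho_S)e_S=e'_SF(\rho_S)$, so $F(\iota_S)$ and $F(\rho_S)$ restrict to maps between $\im(e'_S)$ and $\im(e_S)$; these are mutually inverse because $F(\rho_S)F(\iota_S)=\id$ and $F(\iota_S)F(\rho_S)|_{\im(e_S)}=q_S|_{\im(e_S)}=\id$, the last equality following from $q_Se_S=e_S$ (itself a consequence of $q_S=\sum_{S'\subseteq S}e_{S'}$ together with orthogonality). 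As all the structure maps $\iota_S,\rho_S,\pi_T$ are natural in the $X_i$, the resulting isomorphism \eqref{eq:decomposition_cross} is natural, completing the argument.
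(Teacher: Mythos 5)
Your proof is correct. For part 1 your argument coincides with the paper's: both identify the idempotent defining $\cross_{n+1}F(X_1,\dots,X_{n+1})$ with the cross-effect $(p_1\mid\dots\mid p_{n+1})_{F(X,X)}$ of the map $F(X,X)$ evaluated at the projectors (via the closed formula for cross-effects), giving (i)$\Rightarrow$(ii), and both prove the converse by factoring $(f_1\mid\dots\mid f_{n+1})_{F(Y,Z)}$ as $F(g)\,e\,F(\delta)$ through the diagonal and the fold map. The genuine difference is in part 2: the paper simply cites Eilenberg--Mac Lane (\cite{EM}, Theorem 9.1), whereas you give a self-contained proof. Your route — observing that the elements $q_T=F(\pi_T)$ satisfy $q_Tq_{T'}=q_{T\cap T'}$, so that the M\"obius-inverted elements $e_S=\sum_{T\subseteq S}(-1)^{|S|-|T|}q_T=\prod_{i\in S}(\id-a_i)\prod_{i\notin S}a_i$ form a complete family of orthogonal idempotents, and then identifying $\im(e_S)\cong\cross_{|S|}F(X_{i}\,;\,i\in S)$ by intertwining with $F(\iota_S)$ and $F(\rho_S)$ — is a clean lattice-theoretic argument which also explains, as a byproduct, why the endomorphism $e$ in Definition \ref{defi:cross_effect} is actually idempotent, something the paper asserts without comment. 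What the citation buys the paper is brevity; what your argument buys is a proof that works verbatim in any Karoubi complete additive target category and makes the naturality of \eqref{eq:decomposition_cross} transparent.
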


\begin{proof}1) $(i)\Rightarrow(ii).$ Let $X_1,\dots,X_{n+1}\in\cC$ be a collection of objects. Put $X:=\bigoplus\limits_{j=1}^{n+1}X_i,$ and denote by $e_i:X\to X,$ $i=1,\dots,n+1,$ the projector onto the direct summand $X_i.$ By definition, the object $\cross_{n+1}F(X_1,\dots,X_{n+1})$ is the image of the idempotent $$(e_1\mid\dots\mid e_{n+1})_{F(X,X)}:F(X)\to F(X).$$ But by our assumption the map $F(X,X)$ is polynomial of degree $\leq n,$ hence we have $$\cross_{n+1}F(X_1,\dots,X_{n+1})=0.$$

$(ii)\Rightarrow(i).$ Let $X,Y\in\cC$ be a pair of objects, and $a_1,\dots,a_{n+1}\in\Hom(X,Y)$ a collection of morphisms. Let us denote by $A:X^{\oplus n+1}\to Y$ the morphism with components $a_i,$ and by $\Delta:X\to X^{\oplus n+1}$ the diagonal morphism. Also, we denote by $e_i:X^{\oplus n+1}\to X^{\oplus n+1},$ $i=1,\dots,n+1,$ the projection onto the $i$-th summand. 
The vanishing of $\cross_{n+1}F(X,\dots,X)$ implies that $(e_1\mid\dots\mid e_{n+1})_{F(X^{\oplus n+1},X^{\oplus n+1})}=0.$

Therefere, we have
\begin{multline*}(a_1\mid\dots\mid a_{n+1})_{F(X,Y)}=(A\circ e_1\circ \Delta\mid\dots\mid A\circ e_{n+1}\circ \Delta)_{F(X,Y)}\\
=F(A)\circ (e_1\mid\dots\mid e_{n+1})_{F(X^{\oplus n+1},X^{\oplus n+1})}\circ F(\Delta)=0,\end{multline*}

2) is proved in \cite{EM}, Theorem 9.1. 
\end{proof}

\section{Generalized cubical construction}
\label{sec:cubical_construction}

In this section we define the $n$-th cubical construction of an abelian group, following \cite{JM}, Section 1.

For a set $S,$ we denote by $\Z[S]$ the free abelian group generated by the elements of $S.$ Clearly, the functor $\Z[-]:\Sets\to \Ab$ from sets to abelian groups is left adjoint to the forgetful functor $\Forg:\Ab\to\Sets.$. We will apply the definitions of \cite{JM}, Section 1, to the functor $\Z[-]\circ\Forg:\Ab\to\Ab.$ 

Let $A$ be an abelian group, and $n\geq 1$ an integer. We first define the non-negative chain complex $Q'^n_{\bullet}(A).$ We put
$$Q'^n_k(A):=\Z[A^{[n]^k}],$$
where $[n]=\{0,1,\dots,n\}.$

The differential on $Q'^n_{\bullet}(A)$ is defined as follows.
For any $n_1,\dots,n_l\in\Z_{>0},$ let us treat the elements of $A^{[n_1]\times\dots\times[n_l]}$ as functions from $[n_1]\times\dots\times[n_l]$ to $A.$ For any $1\leq j\leq l,$ we define the maps
$$\alpha_j,\beta_j,\gamma_j:A^{[n_1]\times\dots\times[n_l]}\to A^{[n_1]\times\dots[n_j-1]\dots\times[n_l]}$$
by the formulas
$$\alpha_j(f)(i_1,\dots,i_l)=f(i_1,\dots,i_l);$$
$$\beta_j(f)(i_1,\dots,i_l)=\begin{cases}f(i_1,\dots,i_l) & \text{if }i_j<n_j-1;\\
f(i_1,\dots,n_j,\dots,i_l) & \text{if }i_j=n_j-1;\end{cases}$$
$$\gamma_j(f)(i_1,\dots,i_l)=\begin{cases}f(i_1,\dots,i_l) & \text{if }i_j<n_j-1;\\
f(i_1,\dots,n_j-1,\dots,i_l)+f(i_1,\dots,n_j,\dots,i_l) & \text{if }i_j=n_j-1.\end{cases}$$
We define the map
$$\delta_j:\Z[A^{[n_1]\times\dots\times[n_l]}]\to \Z[A^{[n_1]\times\dots[n_j-1]\dots\times[n_l]}]$$
by the formula
$$\delta_j:=\Z[\gamma_j]-\Z[\alpha_j]-\Z[\beta_j].$$
Note that if for some $j$ we have $n_j=0,$ then we have an identification $$[n_1]\times\dots\times[n_l]=[n_1]\times\dots[n_{j-1}]\times[n_{j+1}]\times\dots\times [n_l].$$
The differential
$$\delta':Q'^n_k(A)=\Z[A^{[n]^k}]\to \Z[A^{[n]^{k-1}}] =Q'^n_{k-1}(A)$$
is defined by the formula
$$\delta':=\sum\limits_{j=1}^k (-1)^j\delta_j^n.$$ It is shown in \cite{JM}, Proposition 1.4, that we have $\delta'^2=0,$ so $\delta'$ is indeed a differential.

The basis element $[f]$ of $Q'^n_k(A)$ is called slab if $f=0$ for $k=0,$ and an $i$-slab, $i=1,\dots,k,$ for $k\geq 1,$ if for some $j\in[n]$ we have that $$f(x_1,\dots,x_k)=0\quad \text{if}\quad x_i=j.$$
 Further, $[f]$ is called an $i$-diagonal, $i=1,\dots,k-1,$ if we have that
$$f(x_1,\dots,x_k)=0\quad\text{if}\quad x_i\ne x_{i+1}.$$  We denote by $N^n_k(A)\subset Q'^n_k(A)$ the abelian subgroup generated by all slabs and diagonals. By \cite{JM}, Lemma 1.14, $N^n_{\bullet}(A)\subset Q'^n_{\bullet}(A)$ is a subcomplex. We define the $n$-th cubical construction of $A$ by the formula
\begin{equation}\label{eq:cubical_construction}Q^n_{\bullet}(A):=Q'^n_{\bullet}(A)/N^n_{\bullet}(A).\end{equation}

 For each $k\geq 1,$ we have the map
$$q_n:Q^n_k(A)\to Q^{n-1}_k(A),\quad q_n=\delta_1\circ\delta_2\dots\circ\delta_k.$$
By \cite{JM}, Lemmas 1.7 and 1.15, these maps
define a functorial morphism of complexes $q_n:Q^n_{\bullet}(A)\to Q^{n-1}_{\bullet}(A).$

Given two abelian groups $A$ and $B,$ for any $k,l\geq 0$ we have the following morphism:
\begin{equation}\label{eq:mu_A,B,k,l}\mu_{A,B}:Q^n_k(A)\otimes Q^n_l(B)\to Q^n_{k+l}(A\otimes B),\quad \mu_{A,B}([f]\otimes [g])=[f\cdot g],\end{equation}
where $$f\cdot g(x_1,\dots,x_k,y_1,\dots,y_l):=f(x_1,\dots,x_k)\otimes g(y_1,\dots,y_l).$$ It is clear that the maps \eqref{eq:mu_A,B,k,l} are well-defined, and altogether they define a morphism of complexes
\begin{equation}\label{eq:mu_A,B}\mu_{A,B}:Q^n_{\bullet}(A)\otimes Q^n_{\bullet}(B)\to Q^n_{\bullet}(A\otimes B).\end{equation}

We also have an obvious morphism of complexes
\begin{equation}\label{eq:epsilon}\epsilon:\Z\to Q^n_{\bullet}(\Z),\quad \epsilon(1):=[1]\in Q^n_0(\Z).\end{equation}

\begin{prop}\label{prop:Q^n_lax_monoidal}1) The maps \eqref{eq:mu_A,B} and \eqref{eq:epsilon} define on $Q^n_{\bullet}(-)$ the structure of a lax monoidal functor from the category of abelian groups to the category of non-negative chain complexes of abelian groups.

2) The natural transformations $q_n:Q^n_{\bullet}(-)\to Q^{n-1}_{\bullet}(-)$ are compatible with the structures of lax monoidal functors.\end{prop}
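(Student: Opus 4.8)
The plan is to treat both parts by reducing the required coherences to elementary identities for the concatenation operation $f\cdot g$ of \eqref{eq:mu_A,B,k,l}, working with representatives at the level of $Q'^n_\bullet$ and then passing to the quotient $Q^n_\bullet=Q'^n_\bullet/N^n_\bullet$. Since the excerpt already records that $\mu_{A,B}$ and $\epsilon$ are morphisms of complexes and that $q_n$ descends to a morphism $Q^n_\bullet\to Q^{n-1}_\bullet$, no further chain-map verifications are needed; the content of the proposition is only the lax monoidal axioms for $(\mu_{A,B},\epsilon)$ and the compatibility of $q_n$ with them.

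For part 1, I would first observe that $\mu_{A,B}$ is natural in both $A$ and $B$: under homomorphisms $A\to A'$ and $B\to B'$ the value $f(x)\otimes g(y)$ transforms functorially, so naturality is immediate, and $Q^n_\bullet(-)$ is a functor into non-negative chain complexes. The associativity coherence follows because concatenation is strictly associative: for $[f]\in Q^n_k(A)$, $[g]\in Q^n_l(B)$, $[h]\in Q^n_m(C)$ both routes around the associativity square produce the function $(x,y,z)\mapsto f(x)\otimes g(y)\otimes h(z)$ on $[n]^{k+l+m}$, and the only discrepancy between them is the associator of $\otimes_\Z$ applied to values, which is precisely the coherence isomorphism inserted into the diagram. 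The unit coherences are equally direct: $\epsilon(1)=[1]\in Q^n_0(\Z)$ is the constant function $1$ on the one-point set $[n]^0$, and $\mu_{\Z,A}([1]\otimes[f])$ is the function $x\mapsto 1\otimes f(x)$ in $(\Z\otimes A)^{[n]^k}$, which is $f$ under the unit isomorphism $\Z\otimes A\cong A$. Thus every lax monoidal diagram reduces to the coherence of $(\Ab,\otimes_\Z,\Z)$.

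For part 2, the heart of the matter is the single identity
\[
q_n(f\cdot g)=(q_n f)\cdot(q_n g)
\]
(valid at the $Q'^n$-level, hence in $Q^n$), from which compatibility of $q_n$ with $\mu_{A,B}$ follows by bilinearity, while compatibility with $\epsilon$ follows from the fact that $q_n$ is the identity in degree $0$, so $q_n(\epsilon(1))=q_n([1])=[1]=\epsilon(1)$. To prove the displayed identity I would first establish the \emph{locality} of the coordinate operations: each of $\alpha_j,\beta_j,\gamma_j$ alters only the $j$-th coordinate, and since $(f\cdot g)(x,y)=f(x)\otimes g(y)$ is $\Z$-bilinear in its two blocks of values, one obtains $\delta_j(f\cdot g)=(\delta_j f)\cdot g$ for $1\le j\le k$ and $\delta_j(f\cdot g)=f\cdot(\delta_{j-k}g)$ for $k<j\le k+l$; the decisive point in the $\gamma_j$ case is that the sum $f(\dots,n{-}1,\dots)+f(\dots,n,\dots)$ factors through bilinearity as $(\gamma_j f)(x)\otimes g(y)$. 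Because in $q_n=\delta_1\circ\cdots\circ\delta_{k+l}$ each $\delta_j$ merely lowers the size of coordinate $j$ (from $n$ to $n-1$) without removing it, the coordinate indices never shift, so the operators acting on the $g$-block commute with those on the $f$-block; peeling $\delta_{k+1}\circ\cdots\circ\delta_{k+l}$ off first yields $f\cdot(q_n g)$, and the remaining $\delta_1\circ\cdots\circ\delta_k$ then yields $(q_n f)\cdot(q_n g)$.

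The main obstacle — indeed the only nontrivial bookkeeping — is this locality computation for $\delta_j$: one must check carefully that $\gamma_j$ respects the concatenation product, which is exactly where bilinearity of $\otimes_\Z$ enters, and that $q_n$ genuinely splits into an $f$-part and a $g$-part. I expect no sign subtleties, since $q_n$ is degree-preserving and is a sign-free composition of the $\delta_j$ (in contrast to the differential $\delta'=\sum_j(-1)^j\delta_j^n$), so the monoidal naturality square for $q_n$ carries no Koszul signs.
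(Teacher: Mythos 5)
Your proof is correct, and it follows the same route the paper takes: the paper's own proof is just ``Straightforward checking,'' and your argument is precisely that checking carried out in detail (strict associativity/unitality of concatenation for part 1, and the locality identity $\delta_j(f\cdot g)=(\delta_j f)\cdot g$, resp.\ $f\cdot(\delta_{j-k}g)$, giving $q_n(f\cdot g)=(q_nf)\cdot(q_ng)$ for part 2). Nothing is missing; in particular you are right that no Koszul signs can arise since $q_n$ is a sign-free composition of the $\delta_j$.
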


\begin{proof}Straightforward checking.\end{proof}

Proposition \ref{prop:Q^n_lax_monoidal} allows us to extend the functor $Q^n_{\bullet}(-)$ to pre-additive categories.

\begin{defi}\label{defi:Q^n_on_categories}Let $\cC$ be a pre-additive category. We define the DG category $Q^n_{\bullet}(\cC)$ (over $\Z$) as follows:

- $Ob(Q^n_{\bullet}(\cC))=Ob(\cC);$

- $Q^n_{\bullet}(\cC)(X,Y)=Q^n_{\bullet}(\Hom_{\cC}(X,Y))$ for $X,Y\in Ob(\cC);$

- the composition maps $Q^n_{\bullet}(\cC)(Y,Z)\otimes Q^n_{\bullet}(\cC)(X,Y)\to Q^n_{\bullet}(\cC)(X,Z)$ are obtained from the lax monoidal structure on the functor $Q^n_{\bullet}(A).$\end{defi}

An associative (unital) ring $R$ can be considered as a pre-additive category with one object. In this case, $Q^n_{\bullet}(R)$ is a DG ring.

\begin{prop}For any pre-additive category $\cC,$ we have a functor $q_n(\cC):Q^n_{\bullet}(\cC)\to Q^{n-1}_{\bullet}(\cC),$ which is identity on objects, and on morphisms is given $q_n:Q^n_{\bullet}(-)\to Q^{n-1}_{\bullet}(-).$\end{prop}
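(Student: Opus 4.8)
The plan is to observe that the assertion is a purely formal consequence of the fact, established in Proposition \ref{prop:Q^n_lax_monoidal} 2), that $q_n$ is a \emph{monoidal} natural transformation. Recall from Definition \ref{defi:Q^n_on_categories} that $Q^n_{\bullet}(\cC)$ and $Q^{n-1}_{\bullet}(\cC)$ share the objects of $\cC$, carry the Hom-complexes $Q^n_{\bullet}(\Hom_{\cC}(X,Y))$ and $Q^{n-1}_{\bullet}(\Hom_{\cC}(X,Y))$, and have composition and units assembled from the lax monoidal structure maps $\mu$ of \eqref{eq:mu_A,B} and $\epsilon$ of \eqref{eq:epsilon}. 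I would define $q_n(\cC)$ to be the identity on objects and, on the complex of morphisms from $X$ to $Y$, to be the chain map $q_n\colon Q^n_{\bullet}(\Hom_{\cC}(X,Y))\to Q^{n-1}_{\bullet}(\Hom_{\cC}(X,Y))$. It then remains to verify the DG functor axioms for this data.

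First, $q_n(\cC)$ is identity on objects by construction, so nothing is needed there; and on each Hom-complex $q_n$ is a morphism of complexes (this is exactly the functoriality recalled just before Proposition \ref{prop:Q^n_lax_monoidal}), so $q_n(\cC)$ automatically commutes with the differentials. The two substantive points are compatibility with composition and preservation of identities. For composition, the product in $Q^n_{\bullet}(\cC)$ factors as
$$Q^n_{\bullet}(\Hom(Y,Z))\otimes Q^n_{\bullet}(\Hom(X,Y))\xrightarrow{\mu}Q^n_{\bullet}(\Hom(Y,Z)\otimes\Hom(X,Y))\xrightarrow{Q^n_{\bullet}(c)}Q^n_{\bullet}(\Hom(X,Z)),$$
where $c$ denotes the composition in $\cC$, and similarly for $Q^{n-1}_{\bullet}$. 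The required identity decomposes into two commuting squares pasted together: the $\mu$-compatibility square for $q_n$, supplied by Proposition \ref{prop:Q^n_lax_monoidal} 2), followed by the naturality square for $q_n$ along the map $c$, supplied by functoriality of $q_n$. For identities, the element $1_X\in Q^n_0(\cC)(X,X)$ is the image of $1\in\Z$ under $\epsilon$ followed by $Q^n_{\bullet}$ applied to the unit map $\Z\to\Hom_{\cC}(X,X)$ sending $1\mapsto 1_X$; compatibility of $q_n$ with $\epsilon$ (again Proposition \ref{prop:Q^n_lax_monoidal} 2)) together with naturality of $q_n$ carries $1_X$ to the corresponding identity in $Q^{n-1}_{\bullet}(\cC)$.

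I do not anticipate a genuine obstacle here: the entire content has been packaged into the monoidality of $q_n$, so the argument is a short diagram chase. The only place demanding care is the composition check, where one must keep the $\mu$-square and the naturality square cleanly separated before pasting them, and the unit check, where one must remember that $1_X$ is built from $\epsilon$ and not from an ad hoc device. Both are immediate once Proposition \ref{prop:Q^n_lax_monoidal} 2) is invoked.
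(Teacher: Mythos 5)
Your proposal is correct and follows exactly the paper's route: the paper's entire proof is the one-line observation that the statement is a direct consequence of Proposition \ref{prop:Q^n_lax_monoidal}, 2), and your argument simply spells out the diagram chase (the $\mu$-compatibility square pasted with the naturality square for composition, and the $\epsilon$-compatibility plus naturality for units) that this invocation implicitly contains. Nothing is missing; your write-up is just a more explicit version of the same proof.
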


\begin{proof} This is a direct consequence of Proposition \ref{prop:Q^n_lax_monoidal}, 2).\end{proof}

For any $1\leq i\leq k,$ $0\leq j\leq n,$ we put
$$S_j^i:=\{(x_1,\dots,x_k)\in[n]^k\mid x_i\ne j\}\subset[n]^k.$$
Also, for $1\leq i\leq k-1,$ we put $$D_i:=\{(x_1,\dots,x_k)\in[n]^k\mid x_i=x_{i+1}\}\subset[n]^k.$$

We call a subset $S\subset[n]^k$ {\it non-degenerate} if it is not contained in any of the subsets $S_j^i$ and $D_i.$

\begin{prop}\label{prop:Q^n_cross_effect}1) For any collection of abelian groups $A_1,\dots,A_{n+1},$ the complex
$$\cross_{n+1}Q^n_{\bullet}(A_1,\dots,A_{n+1})$$
is acyclic.

2) For an abelian group $A,$ we have a natural direct sum decomposition
$$Q^n_k(A)\cong\bigoplus\limits_{S\subset[n]^k}\cross_{|S|}\Z[-](A,\dots,A),$$
where the sum is taken over all non-degenerate subsets $S.$\end{prop}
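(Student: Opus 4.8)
The plan is to establish part 2) first by an explicit book-keeping of bases, and then to use the resulting description to isolate the combinatorial content of part 1).

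For part 2), I would start from the tautological basis of $Q'^n_k(A)=\Z[A^{[n]^k}]$ given by functions $f\colon[n]^k\to A$, and record each such $f$ by its support $S_f:=\{x\in[n]^k\mid f(x)\ne 0\}$ together with the induced map $f|_{S_f}\colon S_f\to A\setminus\{0\}$. Grouping the basis by support gives a natural isomorphism $Q'^n_k(A)\cong\bigoplus_{S\subseteq[n]^k}\Z[\Hom_{\Sets}(S,A\setminus\{0\})]$. Next I would compute the cross-effects of the functor $\Z[-]\circ\Forg\colon\Ab\to\Ab$: the image of the defining idempotent of Definition \ref{defi:cross_effect} is spanned by the classes of tuples all of whose entries are nonzero, so $\cross_m\Z[-](A,\dots,A)\cong\Z[(A\setminus\{0\})^m]$, and hence for $|S|=m$ the summand $\Z[\Hom_{\Sets}(S,A\setminus\{0\})]$ is naturally isomorphic to $\cross_{|S|}\Z[-](A,\dots,A)$. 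Finally, a basis element $[f]$ is a slab or a diagonal precisely when $S_f$ is contained in some $S_j^i$ or in some $D_i$, i.e. exactly when $S_f$ is degenerate; therefore $N^n_k(A)$ is the span of the basis elements with degenerate support, and the quotient $Q^n_k=Q'^n_k/N^n_k$ keeps exactly the summands indexed by non-degenerate $S$. This is the decomposition asserted in part 2).

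For part 1) I would first reduce to a statement about homology functors. Since $\cross_{n+1}$ is the image of an idempotent that is a chain map (it is $Q^n_\bullet$ applied to a combination of projectors), it is exact and commutes with homology, so $\cross_{n+1}Q^n_\bullet(A_1,\dots,A_{n+1})$ is acyclic for all $(A_i)$ if and only if each functor $A\mapsto H_i(Q^n_\bullet(A))$ is polynomial of degree $\le n$, by Proposition \ref{prop:polynomiality_cross_effects}(1). A useful consequence of part 2) is that for $k\ge 1$ a non-degenerate $S$ cannot lie in any $S_j^i$, so its projection to each coordinate is all of $[n]$, whence $|S|\ge n+1$; thus for $k\ge 1$ only cross-effects of order $\ge n+1$ occur at chain level, which is exactly what pins down $n+1$ (matching $|[n]|=n+1$) as the critical order. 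To make the complex explicit I would use the presentation $Q'^n_k\cong(\Z[-])^{\otimes[n]^k}$ together with the decomposition \eqref{eq:decomposition_cross} applied in each tensor factor: each factor $x\in[n]^k$ contributes a subset $T_x\subseteq\{1,\dots,n+1\}$, and the $(n+1)$-st cross-effect is the summand over families with $\bigcup_x T_x=\{1,\dots,n+1\}$. Combined with part 2), this yields a concrete basis of $\cross_{n+1}Q^n_k(A_1,\dots,A_{n+1})$: a non-degenerate support $S$, a nonempty colour set $T_s\subseteq\{1,\dots,n+1\}$ at each $s\in S$ with $\bigcup_s T_s=\{1,\dots,n+1\}$, and nonzero elements $a_{s,i}\in A_i$ for $i\in T_s$.

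It then remains to prove that this explicit complex is acyclic, and my plan is to construct a contracting homotopy $h$ from an extra degeneracy that inserts a new cubical coordinate: one adjoins a coordinate along which the decorated configuration is constant, raising $k$ by one and splitting off a free direction, and one verifies $\delta'h+h\delta'=\pm\,\id$. The point of the top cross-effect is exactly that all $n+1$ colours are already present among the support points, which is what makes such an insertion available without leaving the $(n+1)$-st cross-effect. \textbf{The main obstacle} is compatibility with the quotient by slabs and diagonals: the naive coordinate-insertion does not by itself respect $N^n_\bullet$, so $h$ must be corrected to descend to $Q^n_\bullet=Q'^n_\bullet/N^n_\bullet$ and to preserve non-degeneracy of supports, and the homotopy identity must then be checked against the combinatorics of $\gamma_j,\alpha_j,\beta_j$. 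This is precisely where the structure of the generalized cubical construction of \cite{JM} is used. As a conceptual sanity check, for $n=1$ the statement reduces to additivity of $A\mapsto H_\ast(Q^1_\bullet(A))$, which matches the vanishing of Künneth cross-terms for $K(A,m)\times K(B,m)$ in the stable range.
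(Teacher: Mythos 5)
The decisive issue is part 1), where your proposal has a genuine gap. The paper's own proof of 1) is a citation: this is Theorem 3.1 of \cite{JM}, and no argument is given in the text. Your proposal instead attempts a direct proof, but every step you actually carry out is a reformulation rather than progress toward acyclicity: commuting $\cross_{n+1}$ with homology and invoking Proposition \ref{prop:polynomiality_cross_effects} 1) merely restates the claim as polynomiality of degree $\leq n$ of the functors $H_i(Q^n_{\bullet}(-))$, and the pigeonhole bound $|S|\geq n+1$ for non-degenerate supports (which the paper uses in the proof of Theorem \ref{th:exact_seq_Q^n_cr_n+1}, not here) produces no cancellation in the complex. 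Everything then hinges on the contracting homotopy $h$ with $\delta' h+h\delta'=\pm\id$, which you never construct; you yourself concede that the naive coordinate-insertion neither descends to the quotient $Q^n_{\bullet}=Q'^n_{\bullet}/N^n_{\bullet}$ nor interacts correctly with the maps $\alpha_j,\beta_j,\gamma_j$, and you defer exactly this point to ``the structure of \cite{JM}.'' But that compatibility is the entire mathematical content of the statement --- it is what \cite{JM}, Theorem 3.1 establishes --- so the proposal does not prove 1). To match the paper you should either cite \cite{JM}, Theorem 3.1 outright, or genuinely carry out the homotopy construction on the quotient complex.

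Part 2) is essentially the paper's argument: the paper applies Proposition \ref{prop:polynomiality_cross_effects} 2) (the Eilenberg--Mac Lane cross-effect decomposition) to $Q'^n_k(A)=\Z[A^{[n]^k}]$ and observes, as you do, that $N^n_k(A)$ is precisely the sum of the summands indexed by degenerate subsets; your identification of slabs and diagonals with degenerate supports is correct. One caveat: the isomorphism you describe --- grouping the tautological basis of $\Z[A^{[n]^k}]$ by support --- is an isomorphism of abelian groups but is \emph{not} natural in $A$, because a homomorphism $\phi:A\to B$ can shrink supports; already the span $\Z[A\setminus\{0\}]\subset\Z[A]$ is not preserved by maps killing elements. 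The natural decomposition is the one by images of the cross-effect idempotents, related to yours by the inclusion--exclusion change of basis $[f]=\sum_{T\subseteq S_f}e_T([f])$; this is exactly what Proposition \ref{prop:polynomiality_cross_effects} 2) supplies, and naturality is what the application in Theorem \ref{th:exact_seq_Q^n_cr_n+1} requires. This is a fixable blemish, not a gap, but as written your map does not have the asserted naturality.
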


\begin{proof} 1)  is proved in \cite{JM}, Theorem 3.1.

2) By Proposition \ref{prop:polynomiality_cross_effects} 2), we have a similar decomposition for $Q'^n_k(A),$ where the sum is taken over all subsets $S\subset[n]^k.$ Further, the subgroup $N^n_k(A)\subset Q'^n_k(A)$ by definition is exactly the direct sum of those factors which correspond to degenerate subsets $S\subset[n]^k.$ This implies the desired decomposition of $Q^n_k(A).$\end{proof}

                                                                                                                                                                               \section{DG quotients and the functors $Q^n_{\bullet}(-).$}
\label{sec:DG_quotients_and_Q^n}                                                                                                                                                                               
                                                                                                                                                                               For a pre-additive category, $\cD,$ we denote by $\widetilde{\cD}$ its Karoubian envelope. We also use similar notation for DG categories. Here is the precise definition.
                                                                                                                                                                               
                                                                                                                                                                               \begin{defi}\label{defi:Karoubian_for_DGcat}Let $\cD$ be a DG category. The DG category $\widetilde{\cD}$ is defined as follows. Its objects are pairs $(X,e),$ where $X\in Ob(\cD)$ and $e\in \cD^0(X,X)$ is a closed idempotent, i.e. we have $d(e)=0$ and $e^2=e.$
                                                                                                                                                                               
The morphisms in $\widetilde{\cD}$ are defined by the formula
$$\widetilde{\cD}((X,e),(Y,f))=f\cD(X,Y)e,$$
and the composition in $\widetilde{\cD}$ is induced by the composition in $\cD.$ 
\end{defi}                                                                                                                                                                               
                                                                                                                                                                        Note that Karoubian envelope of DG categories is not invariant under quasi-equivalence (since we take images of actual idempotents, not homotopy idempotents). If $\cD$ is small, then the natural functor $\cD\to\widetilde{\cD},$ $X\mapsto (X,\id_X)$ is a Morita equivalence of DG categories.

                                                                                                                                                                        \begin{defi}1) For any category $\cC,$ the pre-additive category $\Z[\cC]$ is defined as follows. It has the same objects as $\cC,$ and the morphisms are defined by the formula
$$\Z[\cC](X,Y)=\Z[\cC(X,Y)],\quad X,Y\in Ob(\cC).$$
The composition in $\Z[\cC]$ is induced by the composition in $\cC.$

2) For a pre-additive category $\cC,$ the category $\Z[\bar{\cC}]$ is defined as follows. Again, it has the same objects as $\cC,$ and the morphisms are defined by $$\Z[\bar{\cC}](X,Y)=\Z[\cC(X,Y)]/\Z[0].$$
The composition in $\Z[\bar{\cC}]$ is induced by the composition in $\cC.$\end{defi}

To avoid confusion, in some cases we will write $[X]$ for the object of $\Z[\cC]$ corresponding to $X\in Ob(\cC).$
    
Let $\cC$ be a small additive category. We have a natural non-additive functor $\eta:\cC\to\Z[\cC].$ For a collection of objects $X_1,\dots,X_n\in\cC,$ we put $$\cross_n(X_1,\dots,X_n):=\cross_n \eta(X_1,\dots,X_n)\in\widetilde{\Z[\cC]}.$$
Further, we denote by $\cross_n\cC\subset\widetilde{\Z[\cC]}$ the Karoubi closed additive subcategory generated by the objects $\cross_n(X_1,\dots,X_n).$
We have an obvious functor
$$F_n:\Z[\cC]\to Q^n_{\bullet}(\cC),$$
which is identity on objects.

Note that we have a natural equivalence $\cross_1\cC\simeq\Z[\bar{\cC}].$ Further, for the object $[0]\in\Z[\cC]$ we have $\End_{\Z[\cC]}([0])=\Z.$ For each $X\in Ob(\Z[\cC]),$ we have 
\begin{equation}\label{eq:decomp_of_Z[C]}X\cong [0]\oplus\cross_1(X),\quad \Hom([0],\cross_1(X))=\Hom(\cross_1(X),[0])=0.\end{equation}
It follows that we have an equivalence of additive categories
$$\widetilde{\Z[\cC]}\simeq P(\Z)\oplus\widetilde{\Z[\bar{\cC}]},$$ where for any associative ring $R$ we denote by $P(R)$ the category of finitely generated projective (right) $R$-modules. In particular, we have an equivalence of triangulated categories
\begin{equation}\label{eq:decomp_of_Perf_Z[C]}\Perf(\Z[\cC])\simeq \Perf(\Z)\oplus\Perf(\Z[\bar{\cC}]).\end{equation}

Note that for $n>m$ we have an inclusion $\cross_n\cC\subset\cross_m\cC,$ hence for $n\geq 1$ we can treat $\cross_n\cC$ as a subcategory of $\widetilde{\Z[\bar{\cC}]}\simeq\cross_1\cC.$

We have a natural functor
$$F_n:\Z[\cC]\to Q^n_{\bullet}(\cC),$$
which is identity on objects. By definition of the functors $Q^n_{\bullet}(-),$ we have $F_n([0])=0,$ hence we have a functor $\overline{F_n}:\Z[\bar{\cC}]\to Q^n_{\bullet}(\cC),$ which we denote by the same symbol.

\begin{theo}\label{th:exact_seq_Q^n_cr_n+1} 
The functor $\overline{F_n}:\Z[\bar{\cC}]\to Q^n_{\bullet}(\cC)$ induces a quasi-equivalence
$$\widetilde{\Z[\bar{\cC}]}/\cross_{n+1}\cC\simeq \widetilde{Q^n_{\bullet}(\cC)}.$$
Equivalently, we have a short exact sequence of triangulated categories, up to direct summands:
\begin{equation}\label{eq:exact_seq_Q^n_cr_n+1}0\to \Perf(\cross_{n+1}\cC)\to\Perf(\Z[\bar{\cC}])\stackrel{\bL \overline{F_n}^*}{\to}\Perf(Q^n_{\bullet}(\cC))\to 0.\end{equation}
\end{theo}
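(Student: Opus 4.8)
\emph{Overview of the strategy.} The plan is to prove the quasi-equivalence $\widetilde{\Z[\bar{\cC}]}/\cross_{n+1}\cC\simeq\widetilde{Q^n_{\bullet}(\cC)}$ and deduce \eqref{eq:exact_seq_Q^n_cr_n+1} formally. Indeed, $\pi\colon\widetilde{\Z[\bar{\cC}]}\to\widetilde{\Z[\bar{\cC}]}/\cross_{n+1}\cC$ is a DG quotient, so by Proposition \ref{prop:DG_quotient_localization} the functor $\bL\pi^*$ is a localization up to direct summands whose kernel is the thick subcategory generated by $\bL\iota^*\Perf(\cross_{n+1}\cC)$. Composing with the equivalence induced by $\bar{G}$ and using the Morita invariances $\Perf(\widetilde{\Z[\bar{\cC}]})\simeq\Perf(\Z[\bar{\cC}])$ and $\Perf(\widetilde{Q^n_{\bullet}(\cC)})\simeq\Perf(Q^n_{\bullet}(\cC))$ together with $\bL\overline{F_n}^*=\bL\bar{G}^*\circ\bL\pi^*$ yields \eqref{eq:exact_seq_Q^n_cr_n+1}. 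So it suffices to produce $\bar{G}$ and show it is a quasi-equivalence.

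\emph{Constructing the comparison functor.} First I would note that $\widetilde{\Z[\bar{\cC}]}$ is h-projective over $\Z$ (its Hom-complexes are degree-$0$ projective $\Z$-modules) and that $\cross_{n+1}\cC$ is a full DG subcategory by construction. The extension $\widetilde{\overline{F_n}}$ of $\overline{F_n}$ to Karoubi envelopes is the identity on objects, so $\Ho(\widetilde{\overline{F_n}})$ is essentially surjective. Moreover each generator $\cross_{n+1}(X_1,\dots,X_{n+1})$ is sent to an object whose endomorphism complex is $\cross_{n+1}Q^n_{\bullet}(X_1,\dots,X_{n+1})$, which is acyclic by Proposition \ref{prop:Q^n_cross_effect}(1); hence these objects become contractible and $\Ho(\widetilde{\overline{F_n}})$ vanishes on $\Ho(\cross_{n+1}\cC)$. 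Proposition \ref{prop:criterion_for_quotient} then produces the induced DG functor $\bar{G}\colon\widetilde{\Z[\bar{\cC}]}/\cross_{n+1}\cC\to\widetilde{Q^n_{\bullet}(\cC)}$.

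\emph{Checking condition (ii).} To see $\bar{G}$ is a quasi-equivalence I would verify condition (ii) of Proposition \ref{prop:criterion_for_quotient}. Writing $F=\widetilde{\overline{F_n}}$ and $\iota$ for the inclusion of $\cross_{n+1}\cC$, it is enough to treat objects $X$ coming from $\cC$, since every object of $\widetilde{\Z[\bar{\cC}]}$ is a summand of such an $X$ and both the cone and the target localizing subcategory are closed under summands. As $\bL F^* h_X\cong h_{F(X)}$ and $F_*$ is exact, the object $F_*\bL F^* h_X$ is represented by the honest complex of $\widetilde{\Z[\bar{\cC}]}$-modules $Y\mapsto Q^n_{\bullet}(\Hom_{\cC}(Y,X))$. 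In homological degree $0$ this equals $Q^n_0(\Hom_{\cC}(-,X))=\Z[\overline{\Hom_{\cC}(-,X)}]=h_X$, and the structure map $h_X\to F_*\bL F^* h_X$ is exactly the inclusion of this degree-$0$ term; hence $\Cone(h_X\to F_*\bL F^* h_X)$ is isomorphic in $D(\widetilde{\Z[\bar{\cC}]})$ to the brutal truncation $\sigma_{\geq 1}Q^n_{\bullet}(\Hom_{\cC}(-,X))$.

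\emph{Identifying the terms, and the main difficulty.} Finally I would analyze this truncated complex via Proposition \ref{prop:Q^n_cross_effect}(2): naturally in $Y$, one has $Q^n_k(\Hom_{\cC}(Y,X))\cong\bigoplus_S\cross_{|S|}\Z[-](\Hom_{\cC}(Y,X),\dots,\Hom_{\cC}(Y,X))$ over the non-degenerate $S\subset[n]^k$, and each summand is the value at $Y$ of the representable module $h_{\cross_{|S|}(X,\dots,X)}$, because the idempotent defining $\cross_{|S|}\Z[-]$ on $\Hom_{\cC}(Y,X)^{\oplus|S|}=\Hom_{\cC}(Y,X^{\oplus|S|})$ is the post-composition action of the idempotent cutting out $\cross_{|S|}(X,\dots,X)$. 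The combinatorial core is that any non-degenerate $S\subset[n]^k$ with $k\geq 1$ must realize all $n+1$ values in each coordinate, forcing $|S|\geq n+1$; thus every term of $\sigma_{\geq 1}Q^n_{\bullet}(\Hom_{\cC}(-,X))$ is a coproduct of representables $h_{\cross_m(X,\dots,X)}$ with $m\geq n+1$, and $\cross_m(X,\dots,X)\in\cross_{n+1}\cC$ (by definition when $m=n+1$, and by the inclusion $\cross_m\cC\subset\cross_{n+1}\cC$ when $m>n+1$). A bounded-below complex built from such representables lies in the localizing subcategory generated by $\bL\iota^*\Perf(\cross_{n+1}\cC)$, which verifies (ii). The main obstacle is not a single hard estimate but the bookkeeping of this last step: matching the cross-effect summands of the cubical complex with representables of $\cross_{n+1}\cC$ naturally in $Y$, and confirming that the degree-$0$ part cancels against $h_X$ so that only cross-effects of order $\geq n+1$ survive in the cone.
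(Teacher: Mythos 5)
Your proposal is correct and follows essentially the same route as the paper: both apply Proposition \ref{prop:criterion_for_quotient} to $\widetilde{\overline{F_n}}$ and the full subcategory $\cross_{n+1}\cC\subset\widetilde{\Z[\bar{\cC}]}$, use Proposition \ref{prop:Q^n_cross_effect} 1) to show the functor kills the cross-effect objects, and verify condition (ii) via the decomposition of Proposition \ref{prop:Q^n_cross_effect} 2) together with the pigeonhole bound $|S|\geq n+1$. The only cosmetic difference is that you establish the vanishing through acyclicity of the endomorphism complexes of the image objects (strictly speaking these are two-sided cuts, hence direct summands of the cross-effect complexes $\cross_{n+1}Q^n_{\bullet}(\Hom_{\cC}(X_i,\oplus_j X_j))$, and therefore still acyclic), whereas the paper computes $\bR\Hom_{Q^n_{\bullet}(\cC)}(\bL\overline{F_n}^*(\cross_{n+1}(X_1,\dots,X_{n+1})),Y)$ into arbitrary objects $Y$ and identifies it with a cross-effect directly.
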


\begin{proof}For any collection of objects $[X_1],\dots,[X_{n+1}]\in\Z[\bar{\cC}],$ and any object $Y\in Q^n_{\bullet}(\cC),$ we have an isomorphism
$$\bR\Hom_{Q^n_{\bullet}(\cC)}(\bL \overline{F_n}^*(\cross_{n+1}(X_1,\dots,X_{n+1})),Y)\cong \cross_{n+1}Q^n_{\bullet}(\Hom_{\cC}(X_1,Y),\dots,\Hom_{\cC}(X_{n+1},Y)).$$
By Proposition \ref{prop:Q^n_cross_effect} 1), this complex is acyclic. Hence, the functor $$\bL \overline{F_n}^*:\Perf(\Z[\cC])\to\Perf(Q^n_{\bullet}(\cC))$$
vanishes on the subcategory $\Perf(\cross_{n+1}\cC)\subset \Perf(\Z[\bar{\cC}]).$

It follows that we can apply Proposition \ref{prop:criterion_for_quotient} to the functor $\widetilde{\overline{F_n}}:\widetilde{\Z[\bar{\cC}]}\to \widetilde{Q^n_{\bullet}(\cC)}$ and to the full subcategory $\cross_{n+1}(\cC)\subset\widetilde{\Z[\bar{\cC}]}.$

By Proposition \ref{prop:Q^n_cross_effect} 2), for any $X\in\cC$ we have a direct sum decomposition
$$Q^n_k(\Hom_{\cC}(-,X))\cong \bigoplus\limits_{S\subset[n]^k}\Hom_{\widetilde{\Z[\cC]}}(-,\cross_{|S|}(X,\dots,X)),$$
where $S$ runs over all non-degenerate subsets. By the pigeonhole principle, all non-degenerate subsets $S\subset[n]^k$ have cardinality at least $n+1.$ It follows that for $k>0$ the object $Q^n_k(\Hom_{\cC}(-,X))\in D(\Z[\bar{\cC}])$ is contained in $\Perf(\cross_{n+1}\cC)\subset\Perf(\Z[\bar{\cC}]).$ Therefore, the object $$\Cone(h_X\to \overline{F_n}_*(\bL \overline{F_n}^*(h_X)))\in D(\Z[\bar{\cC}])$$ is contained in the localizing subcategory generated by $\Perf(\cross_{n+1}\cC).$ By Proposition \ref{prop:criterion_for_quotient}, we obtain a quasi-equivalence $\widetilde{\Z[\bar{\cC}]}/\cross_{n+1}(\cC)\simeq \widetilde{Q^n_{\bullet}(\cC)}.$ This proves the theorem.
\end{proof}

\begin{cor}\label{cor:F_n_homological_epi}1) For a small additive category $\cC,$ the functors $F_n:\Z[\cC]\to Q^n_{\bullet}(\cC)$ and $\overline{F_n}:\Z[\bar{\cC}]\to Q^n_{\bullet}(\cC)$ are homological epimorphisms.

2) For $n\geq 1$ we have $F_n=q_{n+1}(\cC)F_{n+1}$ and similarly for $\overline{F_n}.$ In particular, $q_n(\cC)$ is also a homological epimorphism for $n\geq 2.$\end{cor}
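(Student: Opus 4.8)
The plan is to deduce part 1) directly from Theorem \ref{th:exact_seq_Q^n_cr_n+1} together with Proposition \ref{prop:DG_quotient_is_homological_epi}, and then to bootstrap part 2) from part 1) by a two-out-of-three argument for fully faithful restriction functors. For $\overline{F_n}$, the short exact sequence \eqref{eq:exact_seq_Q^n_cr_n+1} asserts precisely that $\bL\overline{F_n}^*\colon\Perf(\Z[\bar{\cC}])\to\Perf(Q^n_{\bullet}(\cC))$ is a localization up to direct summands, with kernel $\Perf(\cross_{n+1}\cC)$. Hence Proposition \ref{prop:DG_quotient_is_homological_epi} immediately yields that $\overline{F_n}$ is a homological epimorphism. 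To pass to $F_n$, I would use the decomposition \eqref{eq:decomp_of_Perf_Z[C]}, namely $\Perf(\Z[\cC])\simeq\Perf(\Z)\oplus\Perf(\Z[\bar{\cC}])$, in which the first summand is generated by $h_{[0]}$. The key observation is that $\End_{Q^n_{\bullet}(\cC)}([0])=Q^n_{\bullet}(0)=0$, since every basis element of the cubical construction of the trivial group is a slab; consequently $h_{[0]}=0$ in $\text{Mod-}Q^n_{\bullet}(\cC)$, so $\bL F_n^*$ annihilates the $\Perf(\Z)$-summand and coincides with $\bL\overline{F_n}^*$ on the $\Perf(\Z[\bar{\cC}])$-summand. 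Thus $\bL F_n^*$ is still a localization up to direct summands, and Proposition \ref{prop:DG_quotient_is_homological_epi} applies.

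For part 2), the identity $F_n=q_{n+1}(\cC)F_{n+1}$ is a direct check: both sides are the identity on objects, and on a morphism group $\Z[\Hom_{\cC}(X,Y)]$ the functors $F_n,F_{n+1}$ land in homological degree $0$, where $q_{n+1}(\cC)$ restricts to the empty composition of the maps $\delta_j$, i.e.\ the identity; the same check applies to $\overline{F_n}$. To conclude that $q_n(\cC)$ is a homological epimorphism for $n\geq 2$, I would apply restriction of scalars to the factorization $F_n=q_{n+1}(\cC)F_{n+1}$ for $n\geq 1$, obtaining $(F_n)_*=(F_{n+1})_*\circ(q_{n+1}(\cC))_*$ on derived categories. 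By part 1) and Proposition \ref{prop:homological_epi_equiv_cond}(ii), both $(F_n)_*$ and $(F_{n+1})_*$ are fully faithful. Since $(F_{n+1})_*$ is in particular faithful and $(F_n)_*=(F_{n+1})_*\circ(q_{n+1}(\cC))_*$ is fully faithful, the general categorical fact that $G\circ H$ fully faithful together with $G$ faithful forces $H$ fully faithful shows that $(q_{n+1}(\cC))_*$ is fully faithful; hence $q_{n+1}(\cC)$ is a homological epimorphism by Proposition \ref{prop:homological_epi_equiv_cond}.

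The routine parts are the factorization check and the vanishing $Q^n_{\bullet}(0)=0$. The main subtlety I expect is bookkeeping the correct direction of the two-out-of-three argument: restriction of scalars is contravariant, so in the composite $(F_n)_*=(F_{n+1})_*\circ(q_{n+1}(\cC))_*$ the functor $(q_{n+1}(\cC))_*$ is the inner one and $(F_{n+1})_*$ is the outer one, and it is crucial that the outer functor $(F_{n+1})_*$ is the faithful one, so that both fullness and faithfulness of the inner functor can be recovered. Verifying that $\bL F_n^*$ genuinely remains a localization after merely enlarging its kernel by the orthogonal summand $\Perf(\Z)$, rather than altering its essential image, is the only other point that requires a line of care.
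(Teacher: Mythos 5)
Your proof is correct, and its core coincides with the paper's: part 1) for $\overline{F_n}$ is obtained, exactly as in the paper, by combining Theorem \ref{th:exact_seq_Q^n_cr_n+1} with Proposition \ref{prop:DG_quotient_is_homological_epi}. The divergences are in the auxiliary steps. For $F_n$, the paper simply factors $F_n=\overline{F_n}\circ\pi$, where $\pi:\Z[\cC]\to\Z[\bar{\cC}]$ is the projection, notes that $\pi$ is a homological epimorphism by the decomposition \eqref{eq:decomp_of_Z[C]}, and concludes because a composition of homological epimorphisms is again one (restriction of scalars along the composite is the composite of two fully faithful functors); you instead check directly that $\bL F_n^*$ annihilates the $\Perf(\Z)$-summand, via the (correct) observation that $Q^n_{\bullet}(0)=0$ so that $h_{[0]}=0$, and hence remains a localization up to summands with enlarged kernel $\Perf(\Z)\oplus\Perf(\cross_{n+1}\cC)$. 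This is the same decomposition packaged differently, and both routes are valid. For part 2) the paper says only that it is straightforward; your argument fills this in correctly: the factorization $F_n=q_{n+1}(\cC)F_{n+1}$ holds because everything is the identity on objects and $q_{n+1}(\cC)$ is the identity in homological degree $0$, and your two-out-of-three argument — $(F_n)_*=(F_{n+1})_*\circ(q_{n+1}(\cC))_*$ with the composite fully faithful and the outer functor $(F_{n+1})_*$ faithful, forcing the inner functor $(q_{n+1}(\cC))_*$ to be fully faithful — is sound, with the contravariance of restriction of scalars handled in the right order, so Proposition \ref{prop:homological_epi_equiv_cond} applies to give that $q_{n+1}(\cC)$ is a homological epimorphism for $n\geq 1$.
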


\begin{proof}1) For the functor $\overline{F_n}:\Z[\bar{\cC}]\to Q^n_{\bullet}(\cC),$ this follows immediately from Theorem \ref{th:exact_seq_Q^n_cr_n+1} and Proposition \ref{prop:DG_quotient_is_homological_epi}.

The functor $F_n:\Z[\cC]\to Q^n_{\bullet}(\cC)$ is the composition of the projection $\Z[\cC]\to\Z[\bar{\cC}]$ (which is a homological epimorphism by \eqref{eq:decomp_of_Z[C]}) and the functor $\overline{F_n}:\Z[\bar{\cC}]\to Q^n_{\bullet}(\cC).$ It follows that the functor $F_n:\Z[\cC]\to Q^n_{\bullet}(\cC)$ is also a homological epimorphism.

2) is straightforward.\end{proof}

For a small additive category $\cC,$ we denote by $\Pol^{\leq n}(\bar{\cC})\subset\text{Mod-}\Z[\bar{\cC}]$ the full subcategory of polynomial functors $\cC^{op}\to \text{Mod-}\Z$ of degree $\leq n,$ which vanish on $0\in\cC.$ We denote by $D_{\Pol^{\leq n}(\bar{\cC})}(\Z[\bar{\cC}])\subset D(\Z[\bar{\cC}])$ the full subcategory of complexes whose cohomology is contained in $\Pol^{\leq n}(\bar{\cC}).$

\begin{cor}\label{cor:R^n_gen_of_Q^n(P(R))}1) Let $\cC$ be a small additive category. We have an equivalence of triangulated categories
$$D_{\Pol^{\leq n}(\bar{\cC})}(\Z[\bar{\cC}])\simeq D(Q^n_{\bullet}(\cC)).$$

2) Let $R$ be an associative ring. We have an equivalence of triangulated categories
$$D_{\Pol^{\leq n}(\bar{P(R)})}(\Z[\overline{P(R)}])\simeq D(Q^n_{\bullet}(M_n(R))).$$\end{cor}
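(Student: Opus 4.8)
The plan is to derive both parts as essentially direct consequences of Theorem \ref{th:exact_seq_Q^n_cr_n+1}, reinterpreting the Karoubi quotient on the left-hand side in terms of the abelian category of polynomial functors.

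For part 1), the starting point is the short exact sequence of triangulated categories (up to direct summands) \eqref{eq:exact_seq_Q^n_cr_n+1}, which identifies $\Perf(Q^n_{\bullet}(\cC))$ as the Verdier quotient $\Perf(\Z[\bar{\cC}])/\Perf(\cross_{n+1}\cC)$. Passing to the compactly generated level via Proposition \ref{prop:compact_are_perfect}, the functor $\bL\overline{F_n}^*$ being a localization means that $F_{n*}:D(Q^n_{\bullet}(\cC))\to D(\Z[\bar{\cC}])$ is fully faithful, with essential image the right orthogonal to the localizing subcategory generated by $\Perf(\cross_{n+1}\cC)$. The key point is then to identify this orthogonal with $D_{\Pol^{\leq n}(\bar{\cC})}(\Z[\bar{\cC}])$. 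First I would observe that a DG module $M\in\text{Mod-}\Z[\bar{\cC}]$, viewed as a functor $\cC^{op}\to\text{Mod-}\Z$ vanishing on $0$, lies in this right orthogonal precisely when $\bR\Hom(\cross_{n+1}(X_1,\dots,X_{n+1}),M)$ is acyclic for all objects $X_i$. By the adjunction defining cross-effects and the projector formula of Definition \ref{defi:cross_effect}, this $\bR\Hom$ computes $\cross_{n+1}(\h(M))$ applied to the relevant Hom-groups, so its acyclicity for all choices of objects is, via Proposition \ref{prop:polynomiality_cross_effects} 1), exactly the condition that each cohomology functor $H^i(M)$ is polynomial of degree $\leq n$ (and vanishes on $0$). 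This gives the identification $D_{\Pol^{\leq n}(\bar{\cC})}(\Z[\bar{\cC}])\simeq D(Q^n_{\bullet}(\cC))$.

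For part 2), I would simply specialize the additive category $\cC$ in part 1) to $\cC=P(R)$, the category of finitely generated projective right $R$-modules, and then invoke Morita invariance. The category $M_n(R)$ arises because $Q^n_{\bullet}$ is being evaluated on an endomorphism ring built from $R^{\oplus n}$; concretely, the additive generator of $P(R)$ given by $R$ has endomorphism ring $R$, but the relevant universal object producing the $n$-th cross-effect structure has endomorphism ring $M_n(R)$, so that $D(Q^n_{\bullet}(M_n(R)))\simeq D(Q^n_{\bullet}(P(R)))$ by the Morita equivalence between $Q^n_{\bullet}(M_n(R))$ and $Q^n_{\bullet}(P(R))$ (noting that $Q^n_{\bullet}$ preserves Morita equivalences, as it is lax monoidal and identity on objects up to passing to matrix algebras). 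Combined with part 1) applied to $\cC=P(R)$, this yields the stated equivalence.

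The main obstacle I anticipate is the identification of the right-orthogonal subcategory with $D_{\Pol^{\leq n}(\bar{\cC})}(\Z[\bar{\cC}])$ at the level of unbounded derived categories rather than just on cohomology. One must verify that the condition ``all cohomology objects are polynomial of degree $\leq n$'' is equivalent to orthogonality, which requires care: the forward implication (orthogonality forces each $H^i(M)$ polynomial) follows from applying $\bR\Hom$ against the compact generators $\cross_{n+1}(X_1,\dots,X_{n+1})$ and using that these are compact, so $\bR\Hom$ commutes with the relevant limits; the reverse implication needs a spectral sequence or dévissage argument reducing to the heart, where Proposition \ref{prop:polynomiality_cross_effects} 1) applies directly. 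The bookkeeping around ``up to direct summands'' in \eqref{eq:exact_seq_Q^n_cr_n+1} and the idempotent-completion issues flagged after Definition \ref{defi:Karoubian_for_DGcat} must also be tracked, but these are handled automatically once one works with $D(-)$ rather than $\Perf(-)$, since passing to compactly generated derived categories absorbs Karoubi completion.
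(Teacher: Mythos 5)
Your part 1) is correct and is essentially the paper's own argument: the paper's proof consists precisely of the assertion that $D_{\Pol^{\leq n}(\bar{\cC})}(\Z[\bar{\cC}])$ is the right orthogonal to $\Perf(\cross_{n+1}\cC)$, combined with Theorem \ref{th:exact_seq_Q^n_cr_n+1}; your elaboration via Yoneda, compactness of the objects $\cross_{n+1}(X_1,\dots,X_{n+1})$, and Proposition \ref{prop:polynomiality_cross_effects} 1) is a correct filling-in of exactly that assertion.

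Part 2), however, has a genuine gap. The step you invoke --- ``$Q^n_{\bullet}$ preserves Morita equivalences, as it is lax monoidal and identity on objects up to passing to matrix algebras'' --- is not a valid principle, and it proves too much: since $R$ and $\Mat_n(R)$ are Morita equivalent rings, the same principle would give $D(Q^n_{\bullet}(R))\simeq D(Q^n_{\bullet}(\Mat_n(R)))$, i.e.\ the corollary could equally be stated with $Q^n_{\bullet}(R)$ in place of $Q^n_{\bullet}(\Mat_n(R))$. That statement is false for $n\geq 2$, because $Q^n_{\bullet}$ is not an additive functor and Morita invariance genuinely fails for it. Concretely, $Q^n_{\bullet}(R)\cong\End_{Q^n_{\bullet}(P(R))}(h_R)$ by Yoneda, and $h_R$ is \emph{not} a generator of $\Perf(Q^n_{\bullet}(P(R)))$: take $R=k$ a field and $n=2$; the functor $T=\Lambda^2_k(\Hom_k(-,k))$ is polynomial of degree $2$ (its third cross-effects vanish), vanishes on $0$ and on $k$, yet $T(k^2)\cong k\neq 0$. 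Under the equivalence of part 1), $T$ corresponds to a nonzero object $N$ of $D(Q^2_{\bullet}(P(k)))$ with $\bR\Hom(h_k,N)\cong T(k)=0$, so $h_k$ has a nontrivial right orthogonal and does not generate. The entire mathematical content of part 2) is the distinction between the objects $R$ and $R^{\oplus n}$, i.e.\ the generation statement for $h_{R^n}$, and your argument never addresses it.

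The paper's proof supplies exactly this missing step: by Yoneda one has $Q^n_{\bullet}(\Mat_n(R))\cong\End_{Q^n_{\bullet}(P(R))}(h_{R^n})$, and $h_{R^n}$ \emph{is} a generator of $\Perf(Q^n_{\bullet}(P(R)))$, because every representable $h_{R^m}$ decomposes there as a finite direct sum of the objects $\bL\overline{F_n}^*\cross_{k}(R,\dots,R)$ with $1\leq k\leq n$ --- the summands with $k\geq n+1$ coming from the decomposition \eqref{eq:decomposition_cross} of $[R^m]$ in $\widetilde{\Z[\overline{P(R)}]}$ are killed by $\bL\overline{F_n}^*$ by Theorem \ref{th:exact_seq_Q^n_cr_n+1} --- and each such object is a direct summand of $h_{R^n}$. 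Derived Morita theory for the compact generator $h_{R^n}$ then yields $D(Q^n_{\bullet}(P(R)))\simeq D(Q^n_{\bullet}(\Mat_n(R)))$, which combined with part 1) applied to $\cC=P(R)$ finishes the proof. You correctly identified $R^{\oplus n}$ as the relevant object, but asserted the Morita step instead of proving it, via a principle that is false precisely because of the non-additivity that makes this corollary nontrivial.
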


\begin{proof}1) Indeed, the subcategory $D_{\Pol^{\leq n}(\bar{\cC})}(\Z[\bar{\cC}])\subset D(\Z[\bar{\cC}])$ is exactly the right orthogonal to $\Perf(\cross_{n+1}\cC).$ Hence, by Theorem \ref{th:exact_seq_Q^n_cr_n+1} we have an equivalence
$$D_{\Pol^{\leq n}(\bar{\cC})}(\Z[\bar{\cC}])\simeq D(Q^n_{\bullet}(\cC)).$$

2) By Yoneda Lemma, we have an isomorphism of DG algebras
$$Q^n_{\bullet}(M_n(R))\cong\End_{Q^n_{\bullet}(P(R))}(h_{R^n}).$$
It remains to show that the object $h_{R^n}$ generates the category $\Perf(Q^n_{\bullet}(P(R))).$ But for any $m\geq 0$ the object $h_{R^m}\in\Perf(Q^n_{\bullet}(P(R)))$ is a finite direct sum of objects $\bL\overline{F_n}^*\cross_{k}(R,\dots,R),$ where $1\leq k\leq n.$ These objects are in turn the direct summands of $h_{R^n}.$ Hence, the object $h_{R^n}\in\Perf(Q^n_{\bullet}(P(R)))$ is indeed a generator. This proves 2).\end{proof}

\section{Mixed complexes}
\label{sec:mixed complexes}

Fix some commutative base ring $A.$

\begin{defi}\label{defi:mixed_complexes} 1) A mixed complex over $A$ is a triple $(\cK^{\bullet},b,\delta),$ where $\cK^{\bullet}$ is a $\Z$-graded $A$-module, $b$ (resp. $\delta$) is a differential on $\cK^{\bullet}$ of degree $1$ (resp. $-1$), satisfying
$$b\delta+\delta b=0.$$

2) An $A_{\infty}$-morphism of mixed complexes
$$f:(\cK^{\bullet},b,\delta)\to (\cK'^{\bullet},b^\prime,\delta^\prime)$$ is a sequence of homogeneous morphisms of graded $A$-modules $f_n:\cK^{\bullet}\to \cK'^{\bullet},$ $\deg(f_n)=2-2n,$ $n\geq 1,$ such that
$$b^\prime f_1=f_1b,\quad b'f_n+\delta'f_{n-1}-f_nb-f_{n-1}\delta=0,\quad n\geq 2.$$

3) An $A_{\infty}$-morphism $f:(\cK^{\bullet},b,\delta)\to (\cK'^{\bullet},b^\prime,\delta^\prime)$ is called a quasi-isomorphism if it induces a quasi-isomorphism of complexes
$$f_1:(\cK^{\bullet},b)\to (\cK'^{\bullet},b^\prime).$$\end{defi}

The composition of $A_{\infty}$-morphisms of mixed complexes is defined by the formula
$$(f\circ g)_n=\sum\limits_{i=1}^n f_i\circ g_{n+1-i},\quad n\geq 1.$$
We denote by $D(\Mix_A)$ the localization of the category of mixed complexes (and $A_{\infty}$-morphisms) by the class of quasi-isomorphisms.

\begin{remark} Let us consider the DG $A$-algebra $\Lambda_A\la \epsilon\ra:=A[\epsilon]/(\epsilon^2),$ where $\epsilon$ is a variable of degree $-1,$ and $d(\epsilon)=0.$ It is clear that a mixed complex $(\cK^{\bullet},b,\delta)$ can be considered as a DG $\Lambda_A\la \epsilon\ra$-module, in which $b$ is a differential, and $\epsilon$ acts by $\delta.$ Conversely, any DG $\Lambda_A\la \epsilon\ra$-module can be considered as a mixed complex. An $A_{\infty}$-morphism of mixed complexes is just a strictly unital morphism of $A_{\infty}$-modules over $\Lambda_A\la \epsilon\ra.$ Finally, the notion of a quasi-isomorphism is obviously compatible with this interpretation of mixed complexes.

For any DG algebra, its derived categories of DG modules and of $A_{\infty}$-modules are naturally equivalent. Hence, we have $D(\Mix_A)\simeq D(\Lambda_A\la\epsilon\ra).$\end{remark}

\begin{defi}\label{defi:totalizations}1) For any mixed complex $(\cK^{\bullet},b,\delta),$ we denote by
$\Tot^L(\cK^{\bullet})=\Tot^{L}(\cK^{\bullet},b,\delta)$ the $\Z/2$-graded complex with components $$(\Tot^{L}(\cK^{\bullet}))^0=\colim\limits_{k\geq 0}\prod\limits_{n\leq k}K^{2n},\quad (\Tot^{L}(\cK^{\bullet}))^1=\colim\limits_{k\geq 0}\prod\limits_{n\leq k}K^{2n+1},$$
with differential $b+\delta.$ We call this complex a Laurent-type totalization of the mixed complex $(\cK^{\bullet},b,\delta).$

2) We say that a mixed complex $(\cK^{\bullet},b,\delta)$ is bounded below if $\cK^n=0$ for $n<<0.$ In this case, we see that $\Tot^L(\cK^{\bullet})=\Tot^{\oplus}(\cK^{\bullet}),$ where we denote by
$\Tot^{\oplus}(\cK^{\bullet})$ the $\Z/2$-graded complex with components $$(\Tot^{\oplus}(\cK^{\bullet}))^0=\bigoplus\limits_{n\in\Z}K^{2n},\quad (\Tot^{\oplus}(\cK^{\bullet}))^1=\bigoplus\limits_{n\in\Z}K^{2n+1},$$
and with differential $b+\delta.$

3) We say that a mixed complex $(\cK^{\bullet},b,\delta)$ is bounded above if $\cK^n=0$ for $n>>0.$ In this case, we see that $\Tot^L(\cK^{\bullet})=\Tot^{\Pi}(\cK^{\bullet}),$ where we denote by
$\Tot^{\Pi}(\cK^{\bullet})$ the $\Z/2$-graded complex with components $$(\Tot^{\Pi}(\cK^{\bullet}))^0=\prod\limits_{n\in\Z}K^{2n},\quad (\Tot^{\Pi}(\cK^{\bullet}))^1=\prod\limits_{n\in\Z}K^{2n+1},$$
and with differential $b+\delta.$\end{defi}



Given a mixed complex $(\cK^{\bullet},b,\delta),$ put $C^{\bullet}:=\Tot^L(\cK^{\bullet},b,\delta).$ The increasing filtration $F_{\bullet}C^{\bullet}$ is given by the formula
\begin{equation}\label{eq:filtration_on_Tot}F_n C^0=\prod\limits_{2k\leq n}\cK^{2k},\quad F_n C^1=\prod\limits_{2k+1\leq n}\cK^{2k+1}.\end{equation}
Clearly, it satisfies $d(F_nC^{\bullet})\subset F_{n+1}C^{\bullet+1}.$ Moreover, we have $F_n C^{\bar{n}}=F_{n+1}C^{\bar{n}}.$ We define the filtration on $F_{\bullet}H^{\bullet}(C^{\bullet})$ by the formula
\begin{equation}\label{eq:filtration_on_H_of_Tot}F_nH^{k}(C^{\bullet})=\im(\ker(d:F_nC^k\to F_{n+1}C^{k+1})\to H^k(C^{\bullet})).\end{equation} This filtration is exhaustive and also satisfies $F_n H^{\bar{n}}(C^{\bullet})=F_{n+1}H^{\bar{n}}(C^{\bullet}).$ We define the graded $A$-module $\gr_{\bullet}^FH^{\bullet}(C^{\bullet})$ by the formula
$$\gr_n^FH^{\bullet}(C^{\bullet})=F_n H^{\bar{n}}(C^{\bullet})/F_{n-2}H^{\bar{n}}(C^{\bullet}).$$

From now on we denote by $u$ a formal variable of cohomological degree $2.$ Let us identify $\Z/2$-graded complexes of $A$-modules with DG $A[u^{\pm 1}]$-modules. We need the following useful observation.

\begin{lemma}\label{lem:interpretation_of_totalization} The DG algebra $\bR\Hom_{\Lambda_A\la\epsilon\ra}(A,A)$ is naturally quasi-isomorphic to $A[u].$ Further, for any mixed complex $(\cK^{\bullet},b,\delta)$ we have an isomorphism
$$\Tot^L(\cK^{\bullet})\cong\bR\Hom_{\Lambda_A\la\epsilon\ra}(A,\cK^{\bullet})\stackrel{\bL}{\otimes}_{A[u]}A[u^{\pm 1}]$$ in $D(A[u^{\pm 1}]).$\end{lemma}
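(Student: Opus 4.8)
The plan is to reduce everything to a single explicit semi-free resolution of $A$ over $\Lambda:=\Lambda_A\la\epsilon\ra$ and read off both claims from it. Since $\Lambda$ is the exterior algebra on one generator $\epsilon$ of degree $-1$, its Koszul dual is a polynomial algebra on a generator of degree $+2$, and this is the source of $u$. Concretely, I would take $P=\bigoplus_{n\geq 0}\Lambda e_n$ with $\deg e_n=-2n$ and differential $d(e_n)=\epsilon e_{n-1}$ (and $d(e_0)=0$), together with the augmentation $P\to A$ sending $e_0\mapsto 1$ and $e_n\mapsto 0$ for $n\geq 1$. One checks at once that $d^2=0$ (because $\epsilon^2=0$), that the filtration by the number of generators exhibits $P$ as semi-free, hence h-projective by Proposition \ref{prop:semi-free_h_proj}, and that $P\to A$ is a quasi-isomorphism: away from degree $0$ the complex splits into two-term acyclic pieces $A e_n\xrightarrow{\sim}A\epsilon e_{n-1}$ ($n\geq 1$), leaving $H^\bullet(P)=A e_0\cong A$.

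For the first assertion, using $P$ one has $\bR\Hom_\Lambda(A,A)=\Hom_\Lambda(P,A)$. Since $\epsilon$ acts by zero on $A$, a $\Lambda$-linear map is freely determined by its values on the $e_n$, and the induced differential vanishes because $d(e_n)=\epsilon e_{n-1}$ lands in $\epsilon\cdot\Lambda$. Thus $\Hom_\Lambda(P,A)=\bigoplus_{n\geq 0}A\,e_n^{*}$ with zero differential and $\deg e_n^{*}=2n$. To identify the algebra structure I would represent the Yoneda class $u:=e_1^{*}$ by the degree $+2$ chain self-map $\tilde u\colon P\to P$, $\tilde u(e_n)=e_{n-1}$; its $n$-th power sends $e_k\mapsto e_{k-n}$, whence $e_m^{*}\cdot e_n^{*}=e_{m+n}^{*}$. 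Hence $\bR\Hom_\Lambda(A,A)$ is the honest polynomial algebra $A[u]$ (not a divided-power algebra), with zero differential.

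For a general mixed complex $\cK^\bullet$ the same resolution gives $\bR\Hom_\Lambda(A,\cK^\bullet)=\Hom_\Lambda(P,\cK^\bullet)$, naturally in $\cK^\bullet$ since $P$ is fixed. Writing a map $\phi$ via $x_n:=\phi(e_n)\in\cK^\bullet$, the degree $2j$ part is $\prod_{n\geq 0}\cK^{2j-2n}=\prod_{m\leq j}\cK^{2m}$, and the total differential decomposes as the internal $b$ on each factor plus the contribution $x_{n-1}\mapsto\delta(x_{n-1})$ coming from $d(e_n)=\epsilon e_{n-1}$; that is, it is exactly $b+\delta$. Moreover the $A[u]$-action is $\phi\mapsto\phi\circ\tilde u$, i.e. $(x_n)\mapsto(x_{n-1})$, which under $\Hom^{2j}\cong\prod_{m\leq j}\cK^{2m}$ is precisely the inclusion $\prod_{m\leq j}\cK^{2m}\hookrightarrow\prod_{m\leq j+1}\cK^{2m}$ extending a family by $0$. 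Finally, $A[u]\to A[u^{\pm 1}]$ is a localization, hence flat, so $\stackrel{\bL}{\otimes}_{A[u]}A[u^{\pm 1}]$ is computed by inverting $u$, i.e. by the colimit along multiplication by $u$. Comparing with Definition \ref{defi:totalizations}, this colimit is literally $\colim_{k}\prod_{n\leq k}\cK^{2n}$ in even degree (and its odd analogue), with differential $b+\delta$, which is $\Tot^L(\cK^\bullet)$.

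The genuinely delicate points are bookkeeping rather than conceptual: verifying that the two pieces of the differential on $\Hom_\Lambda(P,\cK^\bullet)$ are exactly $b$ and $\delta$ with the correct signs (tracking the Koszul sign from $|\epsilon|=-1$), and confirming that multiplication by $u$ coincides, on the nose, with the transition maps in the colimit defining $\Tot^L$. A secondary point I would state carefully is that the Ext-algebra is polynomial and not a divided-power algebra; this is special to the exterior algebra on a single generator and is exactly what makes inverting $u$ agree with the Laurent-type totalization. Since both identifications above are in fact strict (zero differential in the first case, an actual isomorphism of $\Z/2$-graded complexes in the second), the stated quasi-isomorphism and the isomorphism in $D(A[u^{\pm 1}])$ follow a fortiori.
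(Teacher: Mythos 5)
Your proof is correct. It is worth comparing it to the paper's own argument, which reaches the same intermediate object by different means: the paper works in the DG category $\cC$ of strictly unital $A_{\infty}$-modules over $\Lambda_A\la\epsilon\ra$, where one has $\Hom_{\cC}(A,A)\cong A[u]$ and $\Hom_{\cC}(A,\cK^{\bullet})=(\cK^{\bullet}[[u]],b+u\delta)$ on the nose, and then finishes exactly as you do, by tensoring with the h-flat DG module $A[u^{\pm 1}]$ and matching $(\cK^{\bullet}((u)),b+u\delta)$ with $\Tot^L(\cK^{\bullet}).$ Your route replaces the $A_{\infty}$-module category by the explicit semi-free Koszul resolution $P=\bigoplus_{n\geq 0}\Lambda e_n,$ $d(e_n)=\epsilon e_{n-1}$; since $P$ is precisely the reduced bar resolution of $A$ over $\Lambda_A\la\epsilon\ra$ in this case, your complex $\Hom_{\Lambda}(P,\cK^{\bullet})$ is literally the paper's $(\cK^{\bullet}[[u]],b+u\delta),$ so the two proofs compute the same object in different clothing. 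What your version buys: it stays entirely inside the paper's DG-module preliminaries (Propositions \ref{prop:h-proj,h-inj} and \ref{prop:semi-free_h_proj}) and makes explicit the assertions the paper leaves as ``one can easily check,'' including the Yoneda computation $e_m^{*}\cdot e_n^{*}=e_{m+n}^{*}$ showing the Ext-algebra is honestly polynomial rather than divided-power. What the paper's version buys: an $A_{\infty}$-morphism of mixed complexes is by definition a morphism in $\cC,$ so functoriality of the whole construction under $A_{\infty}$-morphisms --- which is exactly what Proposition \ref{prop:functoriality_of_Tot} extracts from ``the Lemma and its proof'' --- is immediate; with your fixed resolution one must add the (easy) observation that an $A_{\infty}$-morphism $(f_i)_{i\geq 1}$ acts on $\cK^{\bullet}[[u]]$ by $x\mapsto\sum_i f_i(x)u^{i-1}.$ One sign remark on the bookkeeping you flagged: with $d(e_n)=\epsilon e_{n-1}$ and the Koszul sign in $\Lambda$-linearity, the induced differential on $\Hom_{\Lambda}(P,\cK^{\bullet})$ comes out as $(x_n)\mapsto(b(x_n)-\delta(x_{n-1}))$; the reindexing $x_n\mapsto(-1)^n x_n$ converts this to $b+\delta$ at the cost of replacing the $u$-action by its negative, which is absorbed by the automorphism $u\mapsto -u$ of $A[u^{\pm 1}]$ and is therefore harmless.
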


\begin{proof}To see this explicitly, let us take the DG category $\cC$ of strictly unital $A_{\infty}$-modules over $\Lambda_A\la\epsilon\ra.$ Then one can easily check that we have an isomorphism of DG algebras $\Hom_{\cC}(A,A)\cong A[u].$ Further, we have a natural isomorphism of DG $A[u]$-modules
$$\Hom_{\cC}(A,\cK^{\bullet})=(\cK^{\bullet}[[u]],b+u\delta).$$
Since $A[u^{\pm 1}]$ is an h-flat DG $A[u]$-module, we have an isomorphism
\begin{equation}
\label{eq:ext_of_scalars_Laurent}
(\cK^{\bullet}[[u]],b+u\delta)\stackrel{\bL}{\otimes}_{A[u]}A[u^{\pm 1}]\cong (\cK^{\bullet}((u)),b+u\delta)
\end{equation}
in $D(A[u^{\pm 1}]).$ It remains to notice that the RHS of \eqref{eq:ext_of_scalars_Laurent} corresponds to $\Tot^L(\cK^{\bullet})$ under the identification of DG $k[u^{\pm 1}]$-modules with $\Z/2$-graded complexes of $A$-modules..\end{proof}

\begin{prop}\label{prop:functoriality_of_Tot} The totalization $\Tot^L$ is functorial under $A_{\infty}$-morphisms of mixed complexes. Moreover, it sends $A_{\infty}$-quasi-isomorphisms to quasi-isomorphisms.\end{prop}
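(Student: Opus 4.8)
The plan is to deduce both claims from Lemma~\ref{lem:interpretation_of_totalization} together with the interpretation of mixed complexes as strictly unital $A_\infty$-modules over $B:=\Lambda_A\la\epsilon\ra$ explained in the preceding Remark. Recall from the proof of that Lemma that, inside the DG category $\cC$ of strictly unital $A_\infty$-modules over $B$, one has $\Hom_{\cC}(A,A)\cong A[u]$ and $\Hom_{\cC}(A,\cK^{\bullet})=(\cK^{\bullet}[[u]],b+u\delta)$, and that $\Tot^L(\cK^{\bullet})$ is obtained from the latter by the flat base change $-\otimes_{A[u]}A[u^{\pm1}]$, i.e. $\Tot^L(\cK^{\bullet})=(\cK^{\bullet}((u)),b+u\delta)$. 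Since an $A_\infty$-morphism of mixed complexes is, by definition, the same as a closed degree-zero morphism in $\cC$, post-composition makes $\Hom_{\cC}(A,-)$, and hence $\Tot^L$, a functor; this already yields the first assertion, once the induced map is exhibited explicitly.

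Concretely, I would set $\Tot^L(f):=\sum\limits_{n\geq1}u^{n-1}f_n$, noting that $u^{n-1}f_n$ has cohomological degree $2(n-1)+(2-2n)=0$ and that the sum is well defined on Laurent series since it only raises the $u$-power. First I would check that this is a morphism of $\Z/2$-graded complexes: comparing $(b'+u\delta')\Tot^L(f)$ with $\Tot^L(f)(b+u\delta)$ and collecting the coefficient of $u^{n-1}$, the chain-map identity becomes exactly $b'f_n+\delta'f_{n-1}-f_nb-f_{n-1}\delta=0$ for $n\geq2$, together with $b'f_1-f_1b=0$ for $n=1$ (with the convention $f_0=0$) — precisely the defining relations of an $A_\infty$-morphism. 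Functoriality then follows from a one-line bookkeeping: $\Tot^L(f)\Tot^L(g)=\sum_{i,j\geq1}u^{i+j-2}f_ig_j$, whose coefficient of $u^{n-1}$ is $\sum_{i=1}^{n}f_ig_{n+1-i}=(f\circ g)_n$, while the identity $A_\infty$-morphism ($f_1=\id$, $f_n=0$ for $n\geq2$) maps to $\id$.

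For the second assertion I would argue at the level of derived categories rather than run a spectral sequence on $\Tot^L$ directly. By Lemma~\ref{lem:interpretation_of_totalization} the map $\Tot^L(f)$ is identified with $\bR\Hom_{B}(A,f)\stackrel{\bL}{\otimes}_{A[u]}A[u^{\pm1}]$: indeed $\Hom_{\cC}(A,-)$ computes $\bR\Hom_{B}(A,-)$, the $A_\infty$-module Hom-complex being already the derived Hom. Now an $A_\infty$-quasi-isomorphism is, by definition, a morphism whose first component $f_1$ is a quasi-isomorphism of underlying complexes, i.e. an isomorphism in $\Ho(\cC)\simeq D(B)$; applying the functor $\bR\Hom_{B}(A,-)$ sends it to an isomorphism in $D(A[u])$, and the flat base change $-\otimes_{A[u]}A[u^{\pm1}]$ preserves quasi-isomorphisms. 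Hence $\Tot^L(f)$ is a quasi-isomorphism.

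The hard part is the claim that $\Hom_{\cC}(A,-)=(-[[u]],b+u\delta)$ computes the derived Hom even for unbounded mixed complexes. The cleanest justification is to use the $u$-adically complete power-series model $\cK^{\bullet}[[u]]$ rather than $\cK^{\bullet}((u))$: with respect to the $u$-adic filtration the associated-graded differential is $b$ and the map induced by $f$ is $f_1$ on each graded piece, so when $f_1$ is a quasi-isomorphism the standard comparison theorem for complete filtrations (a map of completely filtered complexes that is a quasi-isomorphism on associated graded is a quasi-isomorphism) applies, the transition maps of the truncations $\cK^{\bullet}[u]/u^p$ being surjective so that the relevant $\varprojlim^1$ vanishes; only afterwards does one invert $u$ by the flat base change. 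Avoiding the divergence of the naive $u$-filtration spectral sequence on the Laurent totalization $\cK^{\bullet}((u))$ — exactly the delicate point of the ``second kind'' theory — is precisely why I would route the argument through Lemma~\ref{lem:interpretation_of_totalization} and the complete model $\cK^{\bullet}[[u]]$.
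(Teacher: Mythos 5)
Your proposal is correct and follows essentially the same route as the paper: the paper's entire proof reads ``This follows easily from Lemma~\ref{lem:interpretation_of_totalization} and its proof,'' i.e.\ it relies on exactly the identification $\Tot^L(\cK^{\bullet})\cong\bR\Hom_{\Lambda_A\la\epsilon\ra}(A,\cK^{\bullet})\stackrel{\bL}{\otimes}_{A[u]}A[u^{\pm 1}]$ and the power-series model $(\cK^{\bullet}[[u]],b+u\delta)$ that you use. Your explicit formula $\Tot^L(f)=\sum_{n\geq 1}u^{n-1}f_n$, the verification of the chain-map and composition identities, and the complete-filtration/Milnor-sequence argument on $\cK^{\bullet}[[u]]$ followed by the flat base change to $A[u^{\pm 1}]$ are precisely the details the paper leaves to the reader.
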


\begin{proof} This follows easily from Lemma \ref{lem:interpretation_of_totalization} and its proof.\end{proof}

In Appendix \ref{sec:appendix_on_sp_seq} we define the notion of $\Z$-graded spectral sequences (Definition \ref{defi:Z-graded_spectral_sequence}) and study some of their properties. One of the sources of $\Z$-graded spectral sequences is a mixed complex. 

\begin{prop}\label{prop:functoriality_of_sp_seq} Given a mixed complex $(\cK^{\bullet},b,\delta),$ we have a $\Z$-graded spectral sequence starting with
\begin{equation}\label{eq:sp_seq_mixed_complex}E_1^n=H^n(\cK^{\bullet},b).\end{equation}
It is functorial under $A_{\infty}$-morphisms. Moreover, $A_{\infty}$-quasi-isomorphisms of mixed complexes induce isomorphisms of the spectral sequences.\end{prop}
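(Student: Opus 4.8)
The plan is to realize the desired spectral sequence as the one associated to a filtered complex, and then to read off $E_1$, functoriality and quasi-isomorphism-invariance from the general formalism of Appendix \ref{sec:appendix_on_sp_seq}. First I would use Lemma \ref{lem:interpretation_of_totalization} to replace $\Tot^L(\cK^{\bullet})$ by the quasi-isomorphic $\Z/2$-graded model $(\cK^{\bullet}((u)),b+u\delta)$, where $u$ has degree $2$. On this model the filtration \eqref{eq:filtration_on_Tot} is precisely the $u$-adic filtration: the operator $b$ preserves the power of $u$ while $u\delta$ raises it by one, so the total differential $b+u\delta$ is compatible with the filtration (this is exactly the inclusion $d(F_nC^{\bullet})\subset F_{n+1}C^{\bullet+1}$ recorded just after \eqref{eq:filtration_on_Tot}).

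Feeding this filtered complex into the construction of Definition \ref{defi:Z-graded_spectral_sequence} produces a $\Z$-graded spectral sequence, where the $u^{\pm 1}$-periodicity of $\cK^{\bullet}((u))$ supplies the required periodicity isomorphisms $E_r^{p+1,q+1}\cong E_r^{p,q}$ (multiplication by $u$). The associated graded carries only the $u$-preserving part of the differential, namely $b$; hence $E_0^n=\cK^n$ with $d_0=b$, and so $E_1^n=H^n(\cK^{\bullet},b)$, as claimed, while $d_1$ is induced by $\delta$.

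For functoriality I would recall from the proof of Proposition \ref{prop:functoriality_of_Tot} that an $A_{\infty}$-morphism $f=(f_n)_{n\geq 1}$ is realized on this model by the map $F:=\sum_{n\geq 1}f_nu^{n-1}$, and that a short computation identifies the $A_{\infty}$-relations of Definition \ref{defi:mixed_complexes} with the single equation $(b'+u\delta')F=F(b+u\delta)$; thus $F$ is an honest chain map, not merely a map up to homotopy. Since $f_nu^{n-1}$ raises the $u$-adic degree by $n-1\geq 0$, the map $F$ is filtered and therefore induces a morphism of the associated $\Z$-graded spectral sequences. On the associated graded only the $u$-preserving summand $n=1$ survives, so the map induced on $E_1^{\bullet}=H^{\bullet}(\cK^{\bullet},b)$ is simply $H^{\bullet}(f_1)$.

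Finally, if $f$ is an $A_{\infty}$-quasi-isomorphism then by definition $f_1$ is a quasi-isomorphism, so $H^{\bullet}(f_1)$, i.e. the induced map on $E_1$, is an isomorphism. By the formal fact that a morphism of spectral sequences which is an isomorphism on some page is an isomorphism on all later pages (proved page-by-page using $E_{r+1}=H(E_r,d_r)$, and recorded in Appendix \ref{sec:appendix_on_sp_seq}), it follows that $f$ induces an isomorphism of spectral sequences from $E_1$ onward. I expect the only delicate point to be the bookkeeping: pinning down the precise indexing so that the single-graded statement $E_1^n=H^n(\cK^{\bullet},b)$ is correctly matched with the bigraded-with-periodicity conventions of the appendix, and confirming that $F$ is genuinely filtered rather than filtered only up to homotopy. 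Both are routine once the $u$-adic model is in place, and notably the last assertion does \emph{not} require any convergence of the spectral sequence, only the formal comparison on pages.
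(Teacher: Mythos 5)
Your proposal is correct and is essentially the paper's own argument: the paper likewise filters $\Tot^L(\cK^{\bullet})$ by \eqref{eq:filtration_on_Tot} (equivalently the $u$-adic filtration on the model of Lemma \ref{lem:interpretation_of_totalization}), feeds it into the filtered-complex machinery of the appendix to get $E_1^n=H^n(\cK^{\bullet},b)$, and obtains functoriality because an $A_{\infty}$-morphism induces a filtered chain map (your $F=\sum_{n\geq 1}f_nu^{n-1}$, which is exactly how Proposition \ref{prop:functoriality_of_Tot} is proved), after which the quasi-isomorphism claim follows page by page from $E_{r+1}=H(E_r,d_r)$. The only slip is a citation: the spectral sequence of the filtered complex is produced by Proposition \ref{prop:sp_seq_from_filtered} (via exact couples), not by Definition \ref{defi:Z-graded_spectral_sequence}, which is merely the definition.
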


\begin{proof}See Corollary \ref{cor:sp_seq_from_mixed_appendix} below.\end{proof}

Convergence of $\Z$-graded spectral sequences is also defined in Appendix \ref{sec:appendix_on_sp_seq} (Definition \ref{defi:convergence_of_sp_seq}).

\begin{prop}\label{prop:convergence_of_sp_seq} Suppose that a mixed complex $(\cK^{\bullet},b,\delta)$ is bounded below. Then the spectral sequence \eqref{eq:sp_seq_mixed_complex} converges to $H^{\bullet}(\Tot^{\oplus}(K^{\bullet})).$\end{prop}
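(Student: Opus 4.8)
The plan is to realize the statement as an instance of the convergence criterion for $\Z$-graded spectral sequences from Appendix \ref{sec:appendix_on_sp_seq}, applied to the filtration \eqref{eq:filtration_on_Tot} on the totalization. First I would use the boundedness hypothesis to replace $\Tot^L(\cK^\bullet)$ by $\Tot^\oplus(\cK^\bullet)$: by Definition \ref{defi:totalizations}(2), when $\cK^n = 0$ for $n < N$ the colimit-of-products defining $\Tot^L$ collapses to the direct sum $\Tot^\oplus$. Set $C^\bullet = \Tot^\oplus(\cK^\bullet)$ with differential $b + \delta$.

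Next I would record the two key properties of the filtration $F_\bullet C^\bullet$. It is bounded below, namely $F_nC^\bullet = 0$ for $n < N$, since every factor $\cK^m$ with $m \le n < N$ entering $F_nC^0$ or $F_nC^1$ vanishes; and it is exhaustive, $\bigcup_n F_nC^\bullet = C^\bullet$, because $C^\bullet$ is a direct sum and each $\cK^m$ lies in $F_mC^\bullet$. The first property forces the induced filtration \eqref{eq:filtration_on_H_of_Tot} on $H^\bullet(C^\bullet)$ to vanish for $n < N$, so it is separated, and the second makes it exhaustive.

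With these in hand, I would feed the filtration into the convergence criterion of the appendix, applied to the spectral sequence \eqref{eq:sp_seq_mixed_complex} (whose $E_1$-page is $H^\bullet(\cK^\bullet, b)$ by Proposition \ref{prop:functoriality_of_sp_seq}). Two inputs are needed: the filtration on $H^\bullet(C^\bullet)$ must be exhaustive and separated, which is exactly what the previous step supplies, and the spectral sequence must be regular, so that $E_\infty^n$ is well defined and computes the associated graded $\gr_n^F H^{\bar n}(C^\bullet)$.

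Regularity is the step I expect to require the most care, precisely because the filtration, while bounded below, is not bounded above, so the classical convergence theorem does not apply to $C^\bullet$ directly and the incoming differentials are not obviously controlled. The cleanest remedy is to pass, via Lemma \ref{lem:interpretation_of_totalization}, to the model $(\cK^\bullet[[u]], b + u\delta)$ that computes $\Tot^L$ after inverting $u$. There the hypothesis $\cK^n = 0$ for $n < N$ makes each total-degree component a finite sum over $u$-powers, since only finitely many $k$ satisfy $\cK^{d-2k} \ne 0$; hence the $u$-adic filtration is bounded in every total degree, the classical convergence theorem applies verbatim, and the resulting spectral sequence, with $E_1 = H^\bullet(\cK^\bullet, b)$ and $d_1$ induced by $\delta$, converges. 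Inverting $u$ is an exact localization, so by \eqref{eq:ext_of_scalars_Laurent} it transports this convergence to the two-periodic $\Z$-graded spectral sequence of $\Tot^L = \Tot^\oplus$. The remaining work is purely bookkeeping: matching this concrete argument with Definition \ref{defi:convergence_of_sp_seq} while keeping track of the non-standard features noted after \eqref{eq:filtration_on_Tot}, namely that $d(F_nC^\bullet) \subset F_{n+1}C^{\bullet+1}$ and that the periodicity isomorphism $E_r^{p+1,q+1} \cong E_r^{p,q}$ distinguishes this from an ordinary bigraded spectral sequence.
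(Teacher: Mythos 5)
Your proposal is correct, and its first half is precisely the paper's argument: the paper proves this statement via Corollary \ref{cor:convergence_of_sp_seq_from_mixed}, which applies Proposition \ref{prop:convergence_of_sp_seq_appendix} to the filtration constructed in the proof of Corollary \ref{cor:sp_seq_from_mixed_appendix}, after observing (Definition \ref{defi:totalizations}) that $\Tot^L=\Tot^{\oplus}$ for a bounded below mixed complex. Where you diverge is in the treatment of ``regularity,'' and this is a phantom issue. In the paper's framework (Definition \ref{defi:convergence_of_sp_seq}), the limiting term $E_{\infty}^n$ is defined whenever $E_{r_0}^{\bullet}$ is bounded below --- here $E_1^n=H^n(\cK^{\bullet},b)$ vanishes for $n\ll 0$ --- because the outgoing differential from $E_r^n$ lands in degree $n+1-2r$, which is eventually zero, so the maps $E_r^n\to E_{r+1}^n$ are surjective for large $r$ and one takes the colimit; no control of incoming differentials is required, either for the well-definedness of $E_{\infty}^n$ or as a hypothesis of Proposition \ref{prop:convergence_of_sp_seq_appendix}, whose assumptions ($F_nC^{\bullet}=0$ for $n\ll 0$ and exhaustiveness) are exactly the two properties you verified in your second step. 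This is the classical convergence theorem for bounded below exhaustive filtrations, which does not require the filtration to be bounded above. Your detour through the model $(\cK^{\bullet}[[u]],b+u\delta)$ of Lemma \ref{lem:interpretation_of_totalization}, where the $u$-adic filtration is finite in each total degree, followed by localization at $u$, is a viable alternative --- exactness of the localization and the fact that each page functor commutes with filtered colimits do transport convergence, and this reduction to a degreewise bounded filtration is arguably more self-contained --- but it buys nothing here and costs exactly the bookkeeping you defer at the end, whereas the appendix criterion applies verbatim once your two filtration properties are checked.
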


\begin{proof}See Corollary \ref{cor:convergence_of_sp_seq_from_mixed}.\end{proof}

\begin{prop}\label{prop:product_on_Tot_of_DGAs} Let $(\cA,\delta)$ be a mixed complex, where $\cA$ is a DG algebra. Let us assume that $\delta$ is a derivation on $\cA.$

1) The totalization $\Tot^L(\cA,\delta)$ is naturally a ($\Z/2$-graded) DG algebra. The filtration on $H^{\bullet}(\Tot^L(\cA,\delta))$ given by \eqref{eq:filtration_on_H_of_Tot} satisfies $F_n\cdot F_m\subset F_{n+m}.$ In particular, the associated graded $\gr_{\bullet}^F(H^{\bullet}(\Tot^L(\cA,\delta)))$ is a graded algebra. 

2) Let $\{(E_r^{\bullet},d_r)\}_{r\geq 1}$ be the associated spectral sequence. Then each $E_r^{\bullet}$ is naturally a graded algebra, and $d_r$ is a derivation on $E_r^{\bullet}.$ Moreover, the product on $E_1^{\bullet}=H^{\bullet}(\cA)$ is induced by that on $\cA,$ and the product on $E_{r+1}^{\bullet}$ is induced by that on $E_r^{\bullet}.$

3) If $\cA$ is bounded below, then we have an isomorphism of graded algebras
$$E_{\infty}^{\bullet}\cong \gr_{\bullet}^FH^{\bullet}\Tot^L(\cA,\delta).$$\end{prop}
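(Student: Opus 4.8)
The plan is to reduce everything to the interpretation of the totalization given in Lemma~\ref{lem:interpretation_of_totalization}, and then to invoke the functoriality results about $\Z$-graded spectral sequences from the Appendix. The key observation is that when $\delta$ is a derivation on the DG algebra $\cA$, the extra structure we need is automatically multiplicative. Concretely, for part 1), I would note that $\Tot^L(\cA,\delta)$ has underlying $\Z/2$-graded module built from products/colimits of the $\cA^n$, with total differential $b+\delta$. Since $\delta$ is a derivation and $b$ is the differential of the DG algebra, the sum $b+\delta$ is a derivation for the obvious product (induced by multiplication in $\cA$, with appropriate Koszul signs), so $\Tot^L(\cA,\delta)$ is a $\Z/2$-graded DG algebra. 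The filtration $F_\bullet$ from \eqref{eq:filtration_on_H_of_Tot} is defined via the filtration \eqref{eq:filtration_on_Tot} on the totalization itself, and since multiplication sends $\cK^i\otimes\cK^j$ into $\cK^{i+j}$, it sends $F_n\otimes F_m$ into $F_{n+m}$ at the chain level; passing to cohomology then gives $F_n\cdot F_m\subset F_{n+m}$. This makes $\gr^F_\bullet H^\bullet$ a graded algebra.

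For part 2), the natural approach is to identify the associated spectral sequence with the one produced from the filtered complex $\Tot^L(\cA,\delta)$, using Proposition~\ref{prop:functoriality_of_sp_seq} and the general machinery in Appendix~\ref{sec:appendix_on_sp_seq}. Since the filtration is by a multiplicative subobject, the standard argument for multiplicative spectral sequences of filtered DG algebras applies: each page $E_r^\bullet$ inherits a product from the product on the previous page, $d_r$ is a derivation because $b+\delta$ is a derivation and respects the filtration, and one checks $d_r$-derivation compatibility degree by degree. The identification of the $E_1$-product with the product on $H^\bullet(\cA)$ follows because $E_1^n=H^n(\cA,b)$ by \eqref{eq:sp_seq_mixed_complex}, and the $r=1$ differential is induced by the residual action of $\delta$, which does not interfere with the product coming from $b$-cohomology. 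I would make the derivation property of each $d_r$ precise by the usual inductive argument: having shown $d_r$ is a derivation, the product on $E_{r+1}=H(E_r,d_r)$ is well-defined by the Leibniz rule and is again compatible with $d_{r+1}$.

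For part 3), under the boundedness-below hypothesis we have $\Tot^L=\Tot^\oplus$ by Definition~\ref{defi:totalizations} 2), and Proposition~\ref{prop:convergence_of_sp_seq} gives convergence of the spectral sequence to $H^\bullet(\Tot^\oplus(\cK^\bullet))$. The identification $E_\infty^\bullet\cong\gr^F_\bullet H^\bullet\Tot^L(\cA,\delta)$ as graded algebras then follows from the general fact that for a convergent multiplicative spectral sequence of a filtered DG algebra, the $E_\infty$-product agrees with the induced product on the associated graded of the abutment. I would cite the convergence statement from the Appendix and then observe that the multiplicative structures constructed in parts 1) and 2) are compatible under this isomorphism, since both are induced by the same chain-level product on $\Tot^L(\cA,\delta)$.

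The main obstacle I anticipate is bookkeeping the signs and the interaction of the product with the products-versus-colimits structure of the Laurent-type totalization. Because $\Tot^L$ involves an infinite product in one direction (via the $\colim_k\prod_{n\le k}$ construction of Definition~\ref{defi:totalizations}), one must verify that the multiplication is well-defined on these completed objects and that it is continuous with respect to the filtration; under boundedness-below this collapses to $\Tot^\oplus$ and the issue disappears, but parts 1) and 2) are stated without that hypothesis, so the well-definedness of the product on the completed totalization is the point requiring genuine care. Once that is settled, the derivation properties and the passage to the spectral sequence are formal consequences of the standard theory of multiplicative filtered complexes, adapted to the $\Z$-graded (rather than bigraded) spectral sequences of the Appendix.
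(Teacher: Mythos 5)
Your outline matches the paper's proof in structure: part 1) is read off from the definitions (the Laurent-type product is well defined because each component of a product involves only finitely many terms, both factors being bounded above in the index $n$), part 2) is an induction on the page number $r,$ and part 3) follows from convergence (Corollary \ref{cor:convergence_of_sp_seq_from_mixed}) together with multiplicative compatibility. The problem is that at the one point where the real work happens you appeal to ``the standard argument for multiplicative spectral sequences of filtered DG algebras,'' and that citation is not valid as stated. The standard theory assumes the differential preserves the filtration, $d(F_p)\subset F_p;$ here the total differential $b+\delta$ on $\Tot^L(\cA,\delta)$ satisfies only $d(F_nC^{\bullet})\subset F_{n+1}C^{\bullet+1}$ (see \eqref{eq:filtration_on_Tot}), and the pages are $\Z$-graded with $d_r$ of degree $1-2r$ and built-in periodicity, not the textbook bigraded pages. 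So your sentence ``$d_r$ is a derivation because $b+\delta$ is a derivation and respects the filtration'' is precisely the step that cannot be quoted from the literature and has to be proved in this setting; your closing remark that the theory must be ``adapted'' concedes this but does not carry the adaptation out.

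The paper's proof of 2) is exactly that adaptation, and it is not formal: a class $x\in E_r^n$ is represented by a zig-zag of cochains $x_1,\dots,x_r$ with $d(x_1)=0$ and $d(x_{i+1})=\delta(x_i),$ the differential $d_r(x)$ being represented by $\delta(x_r);$ given a second zig-zag $y_1,\dots,y_r$ representing $y\in E_r^m,$ one checks that the convolutions $z_i=\sum_{j=1}^i x_jy_{i+1-j}$ form a zig-zag representing $xy$ (this uses that $\delta$ is a derivation anticommuting with $b$), and then that $\delta(z_r)-\delta(x_r)y_1-(-1)^nx_1\delta(y_r)$ is a coboundary, which is the Leibniz rule $d_r(xy)=d_r(x)y+(-1)^nxd_r(y).$ Your induction scheme is the right one, but without this computation (or an equivalent reindexing that reduces to a literally citable multiplicative filtered complex) part 2) is not proved; the same issue propagates to part 3), where the multiplicativity of the isomorphism $E_{\infty}^{\bullet}\cong\gr_{\bullet}^FH^{\bullet}(\Tot^L(\cA,\delta))$ is extracted in the paper from the explicit representatives in the proof of 2) rather than from a general convergence theorem.
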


\begin{proof}This is proved in Appendix \ref{sec:appendix_on_sp_seq}.\end{proof}

Now let $B_{\bullet}$ be a non-positive DG algebra (concentrated in non-negative homological degrees) over $A.$

\begin{defi}\label{defi:super_commutative_DGA} We say that $B_{\bullet}$ is strongly super-commutative if the following conditions are satisfied:

1) (super-commutativity) for any homogeneous $x,y\in \cB_{\bullet},$ we have $xy=(-1)^{|x||y|}yx;$

2) for any odd homogeneous $x\in\cB_{2k+1},$ we have $x^2=0.$\end{defi}

Note that if $B_{odd}$ has no $2$-torsion, then the condition 2) of Definition \ref{defi:super_commutative_DGA} is redundant.

From now on in this section we assume that the DGA $B_{\bullet}$ is strongly super-commutative.

Further, we say that $B_{\bullet}$ is a DG algebra with divided powers if the ideal $B_{even,\geq 2}\subset B_{even}$ has a divided power structure $\{\gamma_n\}_{n\geq 0},$ and moreover
$$\gamma_n(B_{2k})\subset B_{2nk},$$
$$d(\gamma_n(x))=d(x)\gamma_{n-1}(x),\quad x\in B_{2k},\quad\text{for }n,k\geq 1.$$

Let $M$ be a left DG module over $B_{\bullet}.$ Then for any closed element $\alpha\in B_1,$ we have a structure of a mixed complex on $M,$ such that the differential $\delta=\delta_{\alpha}$ is the multiplication by $\alpha.$

\begin{lemma}\label{lem:q_is_divided_powers}Let $B_{\bullet}$ be a non-positive DG algebra over $A,$ which is strongly super-commutative and has a divided power structure. Let $\alpha_1,\alpha_2\in\cB_1$ be homologous closed elements, and $M$ a DG module over $B_{\bullet}$. Then the identity morphism $M\to M$ can be extended to an $A_{\infty}$-quasi-isomorphism of mixed complexes $(M,\delta_{\alpha_1})\stackrel{\sim}{\to}(M,\delta_{\alpha_2}).$\end{lemma}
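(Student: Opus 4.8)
The plan is to build the $A_\infty$-morphism explicitly out of the divided powers of a primitive of $\alpha_2-\alpha_1$. Since $\alpha_1,\alpha_2\in B_1$ are homologous and closed, I first choose $\beta\in B_2$ with $d\beta=\alpha_2-\alpha_1$. As $\beta$ lies in the ideal $B_{even,\geq 2}$, the divided powers $\gamma_m(\beta)$ are defined, and by hypothesis $\gamma_m(\beta)\in B_{2m}$. For homogeneous $x\in B$ write $\ell_x\colon M\to M$, $\ell_x(m)=x\cdot m$, for left multiplication. I then set $f_1=\id_M$ and, for $n\geq 2$,
\[
f_n:=(-1)^{n-1}\ell_{\gamma_{n-1}(\beta)}.
\]
Since $\gamma_{n-1}(\beta)\in B_{2n-2}$ has cohomological degree $2-2n$, the map $f_n$ has exactly the degree $2-2n$ required of an $A_\infty$-morphism of mixed complexes; moreover $f_1=(-1)^0\ell_{\gamma_0(\beta)}=\ell_1=\id_M$, so $f$ genuinely extends the identity.

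Next I would verify the $A_\infty$-relations of Definition \ref{defi:mixed_complexes}, where the source carries $\delta=\delta_{\alpha_1}=\ell_{\alpha_1}$ and the target $\delta'=\delta_{\alpha_2}=\ell_{\alpha_2}$, both with the same module differential $b$. The relation $bf_1=f_1b$ for $n=1$ is trivial. For $n\geq 2$ I compute the two contributions separately. Because $\gamma_{n-1}(\beta)$ has even degree, the graded Leibniz rule gives $b\ell_{\gamma_{n-1}(\beta)}-\ell_{\gamma_{n-1}(\beta)}b=\ell_{d\gamma_{n-1}(\beta)}$, and the defining property of divided powers, $d\gamma_{n-1}(\beta)=(d\beta)\gamma_{n-2}(\beta)=(\alpha_2-\alpha_1)\gamma_{n-2}(\beta)$, turns $bf_n-f_nb$ into $(-1)^{n-1}\ell_{(\alpha_2-\alpha_1)\gamma_{n-2}(\beta)}$. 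On the other hand, using $\ell_x\ell_y=\ell_{xy}$ for the left module and super-commutativity (so that the even element $\gamma_{n-2}(\beta)$ commutes with $\alpha_1$), one obtains $\delta'f_{n-1}-f_{n-1}\delta=(-1)^{n-2}\ell_{(\alpha_2-\alpha_1)\gamma_{n-2}(\beta)}$. Adding the two contributions yields $[(-1)^{n-1}+(-1)^{n-2}]\ell_{(\alpha_2-\alpha_1)\gamma_{n-2}(\beta)}=0$, which is precisely the identity $bf_n+\delta'f_{n-1}-f_nb-f_{n-1}\delta=0$. Finally, since $f_1=\id_M$ is an isomorphism of the underlying complexes $(M,b)$, it is in particular a quasi-isomorphism, so $f$ is an $A_\infty$-quasi-isomorphism.

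The heart of the argument is the divided-power structure: the two contributions cancel exactly because $d\gamma_{n-1}(\beta)=(d\beta)\gamma_{n-2}(\beta)$, whereas the naive primitive $\beta^{n-1}/(n-1)!$ is unavailable over a general base ring $A$. Thus the one conceptual step is to recognize that the homotopy trivializing the passage from $\alpha_1$ to $\alpha_2$ must be assembled from the $\gamma_m(\beta)$; after that, the only real work is the sign bookkeeping of the graded Leibniz rule, which I have arranged to telescope by the choice of signs $(-1)^{n-1}$. Everything here is purely formal and requires no finiteness or boundedness hypotheses on $M$.
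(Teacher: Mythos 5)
Your proposal is correct and is essentially the paper's own proof: the paper takes $\beta$ with $d\beta=\alpha_1-\alpha_2$ and sets $f_n=\gamma_{n-1}(\beta)\cdot(-)$, which coincides with your formula after replacing $\beta$ by $-\beta$ (since $\gamma_{n-1}(-\beta)=(-1)^{n-1}\gamma_{n-1}(\beta)$), so your alternating signs are just the other sign convention for the primitive. The only difference is that you carry out explicitly the verification of the $A_\infty$-relations that the paper calls straightforward, and your computation (Leibniz rule plus $d\gamma_{n-1}(\beta)=(d\beta)\gamma_{n-2}(\beta)$ plus commutation of the even element $\gamma_{n-2}(\beta)$ with $\alpha_1$) is accurate.
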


\begin{proof}Take some $\beta\in B_2$ such that $d(\beta)=\alpha_1-\alpha_2.$ Consider the sequence of maps of graded $A$-modules
$\{f_n:M\to M\}_{n\geq 1}$ given by the formula
$$f_n(m)=\gamma_{n-1}(\beta)\cdot m.$$
It is straightforward to check that these maps define an $A_{\infty}$-morphism
$$f:(M,\delta_{\alpha_1})\to (M,\delta_{\alpha_2}).$$
Since $f_1$ is the identity map, the $A_{\infty}$-morphism $f$ is a quasi-isomorphism.\end{proof}

\section{Hochschild and Mac Lane (co)homology of the second kind}
\label{sec:HH_and_HML_second_kind}

\subsection{Hochschild (co)homology}
\label{ssec:HH}

Again, we fix a commutative base ring $A.$

All DG categories in this section will be assumed to be h-projective over $A$ unless otherwise stated (see Definition \ref{defi:h-projective_DGcat}).

Let $\cB$ be a small DG category over $A.$ Let $M$ be a $\cB\otimes\cB^{op}$-module. We recall the Hochschild chain and cochain complexes of $\cB$ with coefficients in $M.$

The complex $\Hoch_{\bullet}(\cB,M)$ is defined as follows. In the cohomological grading, it is the sum-total complex of the $\Z\times\Z_{\leq 0}$-graded bicomplex, whose $(-n)$-th column is the complex
$$\bigoplus\limits_{X_0,\dots,X_n\in\cB}M(X_n,X_0)\otimes_A \cB(X_{n-1},X_n)\otimes_A\dots\otimes_A\cB(X_0,X_1).$$
The horizontal differential is given by the formula
\begin{multline*}\partial(m\otimes a_n\otimes\dots\otimes a_1)=ma_n\otimes a_{n-1}\otimes\dots\otimes a_1-m\otimes a_na_{n-1}\otimes a_{n-2}\otimes\dots\otimes a_1\\
\dots+(-1)^{n+|a_1|(|m|+|a_n|+\dots+|a_2|)}a_1m\otimes a_n\otimes\dots\otimes a_2.\end{multline*}
We denote by $b$ the differential on $\Hoch_{\bullet}(\cB,M).$
Recall that we have an isomorphism
$$H_{\bullet}(\Hoch_{\bullet}(\cB,M))\cong HH_{\bullet}(\cB,M)\cong \Tor_n^{\cB\otimes\cB^{op}}(I_{\cB},M),$$
where $I_{\cB}$ is the diagonal $\cB\otimes\cB^{op}$-module.

The complex $\Hoch^{\bullet}(\cB,M)$ is the product-total complex of the $\Z\times\Z_{\geq 0}$-graded bicomplex, whose $n$-th column is the complex
$$\prod\limits_{X_0,\dots,X_n\in\cB}\Hom_A(\cB(X_{n-1},X_n)\otimes_A\dots\otimes_A\cB(X_0,X_1),M(X_0,X_n)).$$
The horizontal differential is given by the formula
\begin{multline*}(\partial f)(a_{n+1},\dots,a_1)=(-1)^{|f|\cdot|a_{n+1}|}a_{n+1} f(a_n,\dots,a_1)-f(a_{n+1}a_n,a_{n-1},\dots,a_1)\\\dots+(-1)^n f(a_{n+1},\dots,a_3,a_2a_1)+(-1)^{n+1}f(a_{n+1},\dots,a_2)a_1.\end{multline*}
Recall that we have an isomorphism
$$H^{\bullet}(\Hoch^{\bullet}(\cB,M))\cong HH^{\bullet}(\cB,M)\cong \Ext^n_{\cB\otimes\cB^{op}}(I_{\cB},M).$$

In some cases Hochschild cochain complex carries a natural structure of a DG algebra.

\begin{defi}\label{defi:Hochschild_DGA} Let $\cB$ be a small h-projective DG category, and $F:\cB\to\cC$ a DG functor to an arbitrary small DG category $\cC.$ Let us put $\Hoch^{\bullet}(\cB,\cC):=\Hoch^{\bullet}(\cB,(F\otimes F^{op})_*I_{\cC}).$ The complex $\Hoch^{\bullet}(\cB,\cC)$ carries a structure of a DG algebra, where the product is given by the formula
\begin{equation}\label{eq:formula_for_product_in_HH}(f\cdot g)_n(a_n,\dots,a_1)=\sum\limits_{i=0}^n (-1)^{|g_i|\cdot (|a_{i+1}|+\dots+|a_n|)} f_{n-i}(a_n,\dots,a_{i+1})g_i(a_i,\dots,a_1).\end{equation}
Here in the RHS we take the composition in $\cC.$\end{defi}

We will need the following technical result.

\begin{lemma}\label{lem:q_is_on_HH_for_gluing} Let $\cC_1$ and $\cC_2$ be small DG categories, and $\Phi:\cC_1\to\cC_2$ a DG functor. Denote by $N=N_{\Phi}$ the DG $(\cC_1^{op}\otimes\cC_2)$-module, given by the formula
\begin{equation}\label{eq:bimodule_N_Phi} N_{\Phi}(X,Y)=\cC_2(Y,\Phi(X)),\quad X\in\cC_1,\,Y\in\cC_2.\end{equation}
Take the gluing $\widetilde{\cC}=\cC_2\sqcup_{N_{\Phi}}\cC_1.$

Take some $(\cC_2\otimes\cC_2^{op})$-module $M.$ Define the $(\widetilde{\cC}\otimes\widetilde{\cC}^{op})$-module $\widetilde{M}$ by the formula
\begin{equation}\label{eq:bimodule_M_tilde} \widetilde{M}(X,Y)=\begin{cases}M(\Phi(X),\Phi(Y)) & \text{for }X,Y\in Ob(\cC_1);\\
M(X,Y) & \text{for }X,Y\in Ob(\cC_2);\\
M(X,\Phi(Y)) & \text{for }X\in Ob(\cC_2),\,Y\in Ob(\cC_1);\\
0 & \text{for }X\in Ob(\cC_1),Y\in Ob(\cC_2).\end{cases}\end{equation}

1) The natural map $HH^{\bullet}(\widetilde{\cC},\widetilde{M})\to HH^{\bullet}(\cC_2,M)$ is an isomorphism.

2) Assume that the DG functor $\Phi$ is a homological epimorphism. Then the natural map $HH^{\bullet}(\widetilde{\cC},\widetilde{M})\to HH^{\bullet}(\cC_1,(\Phi\otimes\Phi^{op})_*M)$ is an isomorphism.
\end{lemma}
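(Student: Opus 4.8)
The plan is to reduce everything to the distinguished triangle \eqref{eq:diagonal_of_gluing} for the diagonal bimodule of the gluing. Writing $HH^{\bullet}(\cB,\cdot)=\bR\Hom_{\cB\otimes\cB^{op}}(I_{\cB},\cdot)$ and applying this to $\widetilde{\cC}=\cC_2\sqcup_{N_\Phi}\cC_1$ (so that $\cA=\cC_2$, $\cB=\cC_1$ in \eqref{eq:diagonal_of_gluing}), I would apply the functor $\bR\Hom_{\widetilde{\cC}\otimes\widetilde{\cC}^{op}}(-,\widetilde{M})$ to \eqref{eq:diagonal_of_gluing}. This produces a distinguished triangle whose three terms I identify by the adjunction $\bL(\iota\otimes\iota^{op})^{*}\dashv(\iota\otimes\iota^{op})_{*}$ together with the definition \eqref{eq:bimodule_M_tilde} of $\widetilde{M}$: the restrictions of $\widetilde{M}$ to $\cC_2\otimes\cC_2^{op}$ and to $\cC_1\otimes\cC_1^{op}$ are exactly $M$ and $(\Phi\otimes\Phi^{op})_{*}M$, so the two ``diagonal'' terms become $HH^{\bullet}(\cC_2,M)$ and $HH^{\bullet}(\cC_1,(\Phi\otimes\Phi^{op})_{*}M)$, while the off-diagonal term is $\bR\Hom_{\cC_2\otimes\cC_1^{op}}(N_{\Phi},M_{\Phi})$ with $M_{\Phi}(X,Y)=M(X,\Phi(Y))$. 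The resulting triangle reads
\begin{equation*}
HH^{\bullet}(\widetilde{\cC},\widetilde{M})\xrightarrow{(\rho_2,\rho_1)}HH^{\bullet}(\cC_2,M)\oplus HH^{\bullet}(\cC_1,(\Phi\otimes\Phi^{op})_{*}M)\xrightarrow{\sigma_2-\sigma_1}\bR\Hom_{\cC_2\otimes\cC_1^{op}}(N_{\Phi},M_{\Phi}),
\end{equation*}
where $\rho_2,\rho_1$ are precisely the two restriction maps of the statement. Thus it suffices to prove that $\sigma_1$ is always a quasi-isomorphism (giving 1)) and that $\sigma_2$ is a quasi-isomorphism when $\Phi$ is a homological epimorphism (giving 2)).

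For the cross term I would use the identification $N_{\Phi}=\bL(\Phi\otimes\id_{\cC_1^{op}})^{*}I_{\cC_1}$, which I verify by applying $\bL(\Phi\otimes\id)^{*}$ to the bar resolution of $I_{\cC_1}$ and noting that the relevant tensor product is underived, since the representable modules $h_Y$ are h-projective. Adjunction then gives
\begin{equation*}
\bR\Hom_{\cC_2\otimes\cC_1^{op}}(N_{\Phi},M_{\Phi})\cong\bR\Hom_{\cC_1\otimes\cC_1^{op}}(I_{\cC_1},(\Phi\otimes\id)_{*}M_{\Phi})=HH^{\bullet}(\cC_1,(\Phi\otimes\Phi^{op})_{*}M),
\end{equation*}
because $(\Phi\otimes\id)_{*}M_{\Phi}=(\Phi\otimes\Phi^{op})_{*}M$. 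Tracing the structural map $N_{\Phi}\to I_{\cC_1}$ of \eqref{eq:diagonal_of_gluing} through this adjunction shows that $\sigma_1$ becomes the identity, so $\sigma_1$ is an isomorphism unconditionally; the triangle then collapses to $\rho_2\colon HH^{\bullet}(\widetilde{\cC},\widetilde{M})\xrightarrow{\sim}HH^{\bullet}(\cC_2,M)$, proving 1).

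For 2) I rewrite the other summand by the adjunction $\bL(\Phi\otimes\Phi^{op})^{*}\dashv(\Phi\otimes\Phi^{op})_{*}$ as
\begin{equation*}
HH^{\bullet}(\cC_1,(\Phi\otimes\Phi^{op})_{*}M)=\bR\Hom_{\cC_1\otimes\cC_1^{op}}(I_{\cC_1},(\Phi\otimes\Phi^{op})_{*}M)\cong\bR\Hom_{\cC_2\otimes\cC_2^{op}}(\bL(\Phi\otimes\Phi^{op})^{*}I_{\cC_1},M).
\end{equation*}
Under this identification $\sigma_2\colon HH^{\bullet}(\cC_2,M)\to\bR\Hom_{\cC_2\otimes\cC_1^{op}}(N_{\Phi},M_{\Phi})$ becomes the map induced by the canonical morphism $\bL(\Phi\otimes\Phi^{op})^{*}I_{\cC_1}\to I_{\cC_2}$ of \eqref{eq:morphism_for_diagonals}. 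By Proposition \ref{prop:homological_epi_equiv_cond}, condition (iii), this morphism is an isomorphism in $D(\cC_2\otimes\cC_2^{op})$ exactly when $\Phi$ is a homological epimorphism; in that case $\sigma_2$ is an isomorphism and the triangle collapses to $\rho_1\colon HH^{\bullet}(\widetilde{\cC},\widetilde{M})\xrightarrow{\sim}HH^{\bullet}(\cC_1,(\Phi\otimes\Phi^{op})_{*}M)$, proving 2).

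The main obstacle I anticipate is bookkeeping rather than conceptual: one must verify that the connecting maps $\sigma_1,\sigma_2$ coming from the structural morphisms in \eqref{eq:diagonal_of_gluing} really agree, up to sign and under the adjunction isomorphisms above, with the identity and with $\bL(\Phi\otimes\Phi^{op})^{*}I_{\cC_1}\to I_{\cC_2}$ respectively. This amounts to unwinding the maps in \eqref{eq:diagonal_of_gluing} in terms of the bimodule action of $N_{\Phi}$ in the glued category and checking naturality. Alternatively, one can bypass \eqref{eq:diagonal_of_gluing} and argue directly on the Hochschild cochain complex of $\widetilde{\cC}$, whose nonzero contributions are constrained by the vanishing $\widetilde{\cC}(X,Y)=0=\widetilde{M}(X,Y)$ for $X\in\cC_1$, $Y\in\cC_2$ to a two-step filtration whose associated graded realizes exactly the same three terms, and then verify that each restriction map is a quasi-isomorphism under the respective hypothesis.
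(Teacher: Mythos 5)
Your proposal is correct and follows essentially the same route as the paper's proof: both apply $\bR\Hom_{\widetilde{\cC}\otimes\widetilde{\cC}^{op}}(-,\widetilde{M})$ to the triangle \eqref{eq:diagonal_of_gluing}, use the identification $N_{\Phi}\cong\bL(\Phi\otimes\id_{\cC_1^{op}})^*I_{\cC_1}$ plus adjunction to recognize all three terms, observe that the component out of the $\cC_1$-diagonal summand is the identity (giving part 1), and identify the other component with the map induced by $\bL(\Phi\otimes\Phi^{op})^*I_{\cC_1}\to I_{\cC_2},$ which is an isomorphism exactly under the homological epimorphism hypothesis via Proposition \ref{prop:homological_epi_equiv_cond} (iii) (giving part 2). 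The only difference is that you spell out the bookkeeping of the connecting maps, which the paper leaves implicit.
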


\begin{proof}As in Section \ref{sec:preliminaries_on_DG}, we denote by $\iota_{\cC_1}:\cC_1\to\widetilde{\cC},$ $\iota_{\cC_2}:\cC_2\to\widetilde{\cC}$ the natural inclusion DG functors. By the exact triangle \eqref{eq:diagonal_of_gluing}, we have an exact triangle
\begin{multline}\label{eq:exact_triangle_for_HH}\bR\Hom_{\widetilde{\cC}\otimes\widetilde{\cC}^{op}}(I_{\widetilde{\cC}},\widetilde{M})\to\\ \bR\Hom_{\widetilde{\cC}\otimes\widetilde{\cC}^{op}}(\bL(\iota_{\cC_2}\otimes\iota_{\cC_2}^{op})^*(I_{\cC_2}),\widetilde{M})\oplus \bR\Hom_{\widetilde{\cC}\otimes\widetilde{\cC}^{op}}(\bL(\iota_{\cC_1}\otimes\iota_{\cC_1}^{op})^*(I_{\cC_1}),\widetilde{M})\\ \to \bR\Hom_{\widetilde{\cC}\otimes\widetilde{\cC}^{op}}(\bL(\iota_{\cC_2}\otimes\iota_{\cC_1}^{op})^*(N_{\Phi}),\widetilde{M}).\end{multline}
Let us note that $N_{\Phi}\cong\bL(\Phi\otimes\id_{\cC_1^{op}})^*(I_{\cC_1}).$ 
Applying adjunctions, we see that the triangle \eqref{eq:exact_triangle_for_HH} is isomorphic to the exact triangle
\begin{multline}
\label{eq:exact_triangle_for_HH_mod} \bR\Hom_{\widetilde{\cC}\otimes\widetilde{\cC}^{op}}(I_{\widetilde{\cC}},\widetilde{M})\to\\ \bR\Hom_{\cC_2\otimes\cC_2^{op}}(I_{\cC_2},M)\oplus \bR\Hom_{\cC_1\otimes\cC_1^{op}}(I_{\cC_1},(\Phi\otimes\Phi^{op})_*M) \stackrel{(f,g)}{\to}\\ 
\bR\Hom_{\cC_1\otimes\cC_1^{op}}(I_{\cC_1},(\Phi\otimes\Phi^{op})_*M).
\end{multline}

To prove 1), it suffices to note that the map $g$ from \eqref{eq:exact_triangle_for_HH_mod} is the identity.

To prove 2), let us note note that the map $f$ from \eqref{eq:exact_triangle_for_HH_mod} is given by the composition
$$\bR\Hom_{\cC_2\otimes\cC_2^{op}}(I_{\cC_2},M)\to \bR\Hom_{\cC_2\otimes\cC_2^{op}}(\bL(\Phi\otimes\Phi^{op})^*I_{\cC_1},M)\cong \bR\Hom_{\cC_1\otimes\cC_1^{op}}(I_{\cC_1},(\Phi\otimes\Phi^{op})_*M).$$
This composition is in turn an isomorphism provided that $\Phi$ is a homological epimorphism.
This proves 2) and the lemma.
 \end{proof}
 
We will need the following observation.
 
 \begin{prop}\label{prop:Hoch_complexes_bounded} Suppose that the DG category $\cB$ is non-positive, and the DG $\cB\otimes\cB^{op}$-module $M$ is concentrated in degree zero.

1) The Hochschild chain complex $\Hoch_{\bullet}(\cB,M)$ is bounded above.

2) The Hochschild cochain complex $\Hoch^{\bullet}(\cB,M)$ is bounded below.\end{prop}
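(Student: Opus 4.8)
The plan is to prove both statements by a direct bookkeeping of total cohomological degrees of the generators of the two bicomplexes, exploiting the hypotheses that each $\cB(X,Y)$ is concentrated in cohomological degrees $\leq 0$ and that $M$ is concentrated in degree $0$.

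For part 1), I would analyze a generator $m\otimes a_n\otimes\dots\otimes a_1$ of the $(-n)$-th column of the chain bicomplex, with $m\in M(X_n,X_0)$ and $a_i\in\cB(X_{i-1},X_i)$. Its internal (DG) degree is $|m|+|a_n|+\dots+|a_1|$, while the $(-n)$-th column contributes $-n$ to the second grading, so its total cohomological degree in the sum-total complex equals $|m|+\sum_{i=1}^n|a_i|-n$. Now $|m|=0$ since $M$ is concentrated in degree zero, each $|a_i|\leq 0$ since $\cB$ is non-positive, and $n\geq 0$; hence this total degree is $\leq 0$. Thus $\Hoch_{\bullet}(\cB,M)$ is concentrated in cohomological degrees $\leq 0$, which is the desired boundedness above.

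For part 2), I would use the $\Hom$-description of the cochain bicomplex. An element of internal degree $d$ in the $n$-th column is a family of degree-$d$ maps $\cB(X_{n-1},X_n)\otimes\dots\otimes\cB(X_0,X_1)\to M(X_0,X_n)$. Since the tensor product of the $\cB(X_{i-1},X_i)$ lies in cohomological degrees $\leq 0$ whereas the target $M(X_0,X_n)$ lies in degree $0$, such a map can be nonzero only when $d\geq 0$; hence the $n$-th column is concentrated in internal degrees $\geq 0$. As this column contributes $+n$ to the second grading, the total cohomological degree $d+n$ is $\geq 0$, so $\Hoch^{\bullet}(\cB,M)$ is concentrated in cohomological degrees $\geq 0$ and is therefore bounded below.

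The computations are routine; the only point that requires care is the sign convention governing how the column index enters the total degree (namely $-n$ for the chain complex and $+n$ for the cochain complex), together with the remark that although a fixed total degree may receive contributions from infinitely many columns, this does not affect the one-sided boundedness being asserted. I do not anticipate any genuine obstacle beyond this bookkeeping.
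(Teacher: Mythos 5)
Your proof is correct and is essentially the paper's proof: the paper simply states that the result ``follows immediately from the definitions,'' and your degree bookkeeping (total degree $\leq 0$ for the chain bicomplex since $|m|=0$, $|a_i|\leq 0$, and the column contributes $-n$; total degree $\geq 0$ for the cochain bicomplex since a nonzero graded map from a complex in degrees $\leq 0$ to one in degree $0$ has degree $\geq 0$) is exactly what that remark encodes. No gap here.
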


\begin{proof}This follows immediately from the definitions.\end{proof}

\subsection{Hochschild (co)homology of the second kind}
\label{ssec:HH_second_kind}

Now let $w$ be a closed element of degree zero of the center of $\cB.$ That is, for each object $X$ of $\cB$ we have
an element $w_X\in\cB^0(X,X),$ such that $d(w_X)=0$ and for any morphism $f\in\cB(X,Y)$
we have $w_Yf=fw_X.$ We denote by $Z^0_{cl}(\cB)$ the $A$-module of all such $w.$

Let us assume that moreover $w$ is central for the bimodule $M,$ i.e. for any $m\in M(X,Y)$ we have $mw_X=w_Ym.$
Then we have an additional differential $\delta=\delta_w$ of (cohomological) degree $-1$ on $\Hoch_{\bullet}(\cB,M).$
It is given by the formula
\begin{multline*}\delta(m\otimes a_{n}\otimes\dots\otimes a_1)=m\otimes w_{X_{n}}\otimes a_{n}\otimes\dots a_1-m\otimes a_{n}\otimes w_{X_{n-1}}\otimes
a_{n-1}\otimes\dots\otimes a_1\\\dots+(-1)^{n} m\otimes a_{n}\otimes\dots\otimes a_1\otimes w_{X_0},\end{multline*}
where $$a_i\in\cB(X_{i-1},X_i),\quad 1\leq i\leq n,\quad m\in M(X_n,X_0).$$
It is easy to check that $\delta$ anti-commutes with the Hochschild differential $b,$ hence we have a mixed complex
$(\Hoch_{\bullet}(\cB,M),\delta_w).$

Similarly, we have a differential $\delta=\delta_w$ (which we denote by the same symbol) on the Hochschild cochain complex
$\Hoch^{\bullet}(\cB,M).$ It is given by the formula
\begin{multline*}(\delta f)(a_{n-1},\dots,a_1)=-f(w_{X_{n-1}},a_{n-1},\dots,a_1)+f(a_{n-1},w_{X_{n-2}},a_{n-2},\dots,a_1)\\
\dots +(-1)^n f(a_{n-1},\dots,a_1,w_{X_0}),\end{multline*}
where $$a_i\in\cB(X_{i-1},X_i),\quad 1\leq i\leq n-1.$$
Again, it is easy to check that we have a mixed complex $(\Hoch^{\bullet}(\cB,M),\delta_w).$

\begin{defi}Let $\cB,$ $w$ and $M$ be as above.

1) The ($\Z/2$-graded) Hochschild chain complex of the second kind is defined by the formula
$$\Hoch^{II}_{\bullet}(\cB,w;M):=\Tot^{L}(\Hoch_{\bullet}(\cB,M),\delta_w).$$
We denote its homology by $HH^{II}_{\bullet}(\cB,w;M)$ and call it Hochschild homology of the second kind of $\cB$ with curvature $w$ with coefficients in $M.$

2) The ($\Z/2$-graded) Hochschild cochain complex of the second kind is defined by the formula
$$\Hoch^{II,\bullet}(\cB,w;M):=\Tot^{L}(\Hoch^{\bullet}(\cB,M),\delta_w).$$
We denote its cohomology by $HH^{II,\bullet}(\cB,w;M)$ and call it Hochschild cohomology of the second kind of $\cB$ with curvature $w$ with coefficients in $M.$\end{defi}

\begin{cor}\label{cor:sp_seq_for_Hochschild_second_kind} 1) A triple $(\cB,w,M)$ as above provides natural spectral sequences
\begin{equation}\label{eq:sp_seq_HH^*}\{(E_r^{\bullet},d_r)\}_{r\geq 1},\quad E_1^n=HH^n(\cB,M),\,d_1=\delta_w,\end{equation}
\begin{equation}\label{eq:sp_seq_HH_*}\{(E_r'^{\bullet},d_r)\}_{r\geq 1},\quad E_1'^{-n}=HH_n(\cC,M),\,d_1=\delta_w.\end{equation}

2) If the DG category $\cB$ is non-positive and the bimodule $M$ is concentrated in degree zero, then the spectral sequence \eqref{eq:sp_seq_HH^*} converges to $HH^{II,\bar{n}}(\cB,w;M).$
\end{cor}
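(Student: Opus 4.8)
The plan is to obtain both statements as formal consequences of the general theory of mixed complexes developed in Section~\ref{sec:mixed complexes}, applied to the two mixed complexes $(\Hoch^{\bullet}(\cB,M),b,\delta_w)$ and $(\Hoch_{\bullet}(\cB,M),b,\delta_w)$ constructed in the preceding paragraphs. These were already shown to be genuine mixed complexes: the Hochschild differential $b$ raises cohomological degree by one, the differential $\delta_w$ lowers it by one, and the two anticommute. Thus no further verification of the mixed-complex axioms is needed, and the whole content of the corollary reduces to quoting Propositions~\ref{prop:functoriality_of_sp_seq}, \ref{prop:convergence_of_sp_seq} and \ref{prop:Hoch_complexes_bounded}.

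For part~1), I would simply invoke Proposition~\ref{prop:functoriality_of_sp_seq}. Applied to the mixed complex $(\Hoch^{\bullet}(\cB,M),b,\delta_w)$, it produces a $\Z$-graded spectral sequence with
$$E_1^n=H^n(\Hoch^{\bullet}(\cB,M),b)=HH^n(\cB,M),$$
and with $d_1$ induced by $\delta_w$; this is exactly \eqref{eq:sp_seq_HH^*}. For \eqref{eq:sp_seq_HH_*} one applies the same proposition to $(\Hoch_{\bullet}(\cB,M),b,\delta_w)$, recalling that the Hochschild chain complex is placed in cohomological grading with the term of homological degree $n$ sitting in cohomological degree $-n$. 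Hence $H^{-n}(\Hoch_{\bullet}(\cB,M),b)=HH_n(\cB,M)$, giving $E_1'^{-n}=HH_n(\cB,M)$ with $d_1$ induced by $\delta_w$, as claimed.

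For part~2), the additional hypotheses enter only to guarantee boundedness. Under the assumption that $\cB$ is non-positive and $M$ is concentrated in degree zero, Proposition~\ref{prop:Hoch_complexes_bounded}~2) shows that $\Hoch^{\bullet}(\cB,M)$ is bounded below. Consequently, by Definition~\ref{defi:totalizations}~2) the Laurent-type totalization agrees with the direct-sum totalization, so that
$$\Hoch^{II,\bullet}(\cB,w;M)=\Tot^{L}(\Hoch^{\bullet}(\cB,M),\delta_w)=\Tot^{\oplus}(\Hoch^{\bullet}(\cB,M),\delta_w).$$
Proposition~\ref{prop:convergence_of_sp_seq} then applies verbatim to the bounded-below mixed complex $(\Hoch^{\bullet}(\cB,M),b,\delta_w)$ and shows that the spectral sequence \eqref{eq:sp_seq_HH^*} converges to $H^{\bullet}(\Tot^{\oplus})=HH^{II,\bar{n}}(\cB,w;M)$, which is the assertion.

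Since every ingredient is already in place, there is essentially no obstacle of substance; the only points requiring care are bookkeeping ones. First, one must keep the grading conventions straight, in particular the placement convention that turns the homologically-graded chain complex into a cohomologically-graded one, so that the index $-n$ in \eqref{eq:sp_seq_HH_*} comes out correctly. Second, one should observe why part~2) is asserted only for the cochain spectral sequence \eqref{eq:sp_seq_HH^*}: the chain complex $\Hoch_{\bullet}(\cB,M)$ is bounded \emph{above} rather than below (Proposition~\ref{prop:Hoch_complexes_bounded}~1)), so Proposition~\ref{prop:convergence_of_sp_seq}, which requires boundedness below, does not apply to \eqref{eq:sp_seq_HH_*} without a dual argument.
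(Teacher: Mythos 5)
Your proposal is correct and follows exactly the paper's own argument: part 1) is Proposition \ref{prop:functoriality_of_sp_seq} applied to the mixed Hochschild chain and cochain complexes, and part 2) combines Propositions \ref{prop:Hoch_complexes_bounded} and \ref{prop:convergence_of_sp_seq}. Your additional remarks on the grading convention for \eqref{eq:sp_seq_HH_*} and on why convergence is only claimed for the cochain spectral sequence (the chain complex being bounded above, not below) are accurate bookkeeping that the paper leaves implicit.
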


\begin{proof}1) follows immediately from Proposition \ref{prop:functoriality_of_sp_seq} applied to mixed Hochschild chain and cochain complexes.

2) follows from Propositions \ref{prop:convergence_of_sp_seq} and \ref{prop:Hoch_complexes_bounded}.\end{proof}

\begin{prop}\label{prop:q_is_homological_epi}Let $\cC_1,\cC_2$ be small DG categories, and $\Phi:\cC_1\to \cC_2$ a DG functor. Let $w_i\in Z^0_{cl}(\cC_i),$  and assume that $(w_2)_{\Phi(X)}=\Phi((w_1)_X)$ for any object $X\in\cC_1.$ Let $M$ be a
$\cC_2\otimes\cC_2^{op}$-module, such that $w_2$ is central for $M$ (hence $w_1$ is central for $(\Phi\otimes\Phi^{op})_*M$).
Finally, assume that $\Phi$ is a homological epimorphism. 

1) The mixed complexes $(\Hoch_{\bullet}(\cC_1,(\Phi\otimes\Phi^{op})_*M),\delta_{w_1})$ and
$(\Hoch_{\bullet}(\cC_2,M),\delta_{w_2})$
are naturally quasi-isomorphic. In particular, the corresponding spectral sequences \eqref{eq:sp_seq_HH_*} are naturally isomorphic.

2) The mixed complexes $(\Hoch^{\bullet}(\cC_1,(\Phi\otimes\Phi^{op})_*M),\delta_{w_1})$ and
$(\Hoch^{\bullet}(\cC_2,M),\delta_{w_2})$
are naturally quasi-isomorphic. In particular, the corresponding spectral sequences \eqref{eq:sp_seq_HH^*} are naturally isomorphic.\end{prop}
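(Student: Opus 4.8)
The plan is to reduce the statement about mixed complexes to the already-established Proposition~\ref{prop:homological_epi_equiv_cond} characterizing homological epimorphisms, by upgrading the ordinary quasi-isomorphism of Hochschild complexes (which is essentially Lemma~\ref{lem:q_is_on_HH_for_gluing} together with the gluing construction) to a quasi-isomorphism of mixed complexes. The key point is that the additional differential $\delta_w$ is not an extra piece of data to be matched by hand, but is itself induced functorially by the central element $w$. So the first step is to reinterpret the comparison map between the two Hochschild complexes as a morphism of mixed complexes, i.e. to check that whatever natural chain map realizes the quasi-isomorphism in the $w$-free setting automatically commutes (strictly, or at least up to the higher homotopies of an $A_\infty$-morphism) with $\delta_{w_1}$ and $\delta_{w_2}$.

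Concretely, I would use the gluing trick from Lemma~\ref{lem:q_is_on_HH_for_gluing}. Form $\widetilde{\cC}=\cC_2\sqcup_{N_\Phi}\cC_1$ and the bimodule $\widetilde{M}$ as there. The central element $w$ glues to a central element $\widetilde{w}\in Z^0_{cl}(\widetilde{\cC})$: on the $\cC_1$-objects it is $w_1$, on the $\cC_2$-objects it is $w_2$, and the compatibility $(w_2)_{\Phi(X)}=\Phi((w_1)_X)$ is exactly what is needed for $\widetilde{w}$ to commute with the off-diagonal morphisms in $N_\Phi$ and to be central for $\widetilde{M}$. Then the exact triangle \eqref{eq:diagonal_of_gluing} and the resulting triangle \eqref{eq:exact_triangle_for_HH_mod} can be promoted to a diagram of mixed complexes, because $\delta_{\widetilde w}$ restricts to $\delta_{w_1}$ and $\delta_{w_2}$ under the two inclusions $\iota_{\cC_1},\iota_{\cC_2}$. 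In the cohomological case (part~2)), the natural map
$$
\Hoch^{\bullet}(\widetilde{\cC},\widetilde{M})\to \Hoch^{\bullet}(\cC_1,(\Phi\otimes\Phi^{op})_*M)
$$
from Lemma~\ref{lem:q_is_on_HH_for_gluing}~2) is a restriction-of-cochains map, which visibly carries $\delta_{\widetilde w}$ to $\delta_{w_1}$; likewise the map to $\Hoch^{\bullet}(\cC_2,M)$ from part~1) respects $\delta_w$. Thus both comparison maps are strict morphisms of mixed complexes, and composing one with a quasi-inverse of the other (as an $A_\infty$-morphism, using that the underlying $b$-complex map is a quasi-isomorphism) yields the desired $A_\infty$-quasi-isomorphism. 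The homology case (part~1)) is entirely analogous, working with $\Hoch_{\bullet}$ and the chain-level maps.

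I expect the main obstacle to be that the comparison realized through the gluing is a zig-zag rather than a single strict map: one has genuine chain maps out of $\Hoch^{(\bullet)}(\widetilde{\cC},\widetilde{M})$ to each side, each strictly compatible with the $\delta_w$'s, but to get from $\cC_1$ to $\cC_2$ directly one must invert the arrow that is a quasi-isomorphism. The care needed is to invert it \emph{in the category of mixed complexes}, i.e. in $D(\Mix_A)$, rather than merely as complexes. This is precisely where the $A_\infty$-formalism of Definition~\ref{defi:mixed_complexes} earns its keep: since the forgetful map $f_1$ is already a quasi-isomorphism of $b$-complexes, the morphism is invertible in $D(\Mix_A)\simeq D(\Lambda_A\langle\epsilon\rangle)$, and its inverse is automatically an $A_\infty$-morphism of mixed complexes. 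The statement about spectral sequences then follows immediately from Proposition~\ref{prop:functoriality_of_sp_seq}, since $A_\infty$-quasi-isomorphisms of mixed complexes induce isomorphisms of the associated $\Z$-graded spectral sequences.

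The remaining routine check is that $\widetilde w$ is genuinely a closed central element of $\widetilde{\cC}$ and central for $\widetilde M$; this is a direct computation using the definition of composition in a glued category (Definition~\ref{defi:gluing}) and the bimodule structure on $N_\Phi$, and I would state it without grinding through all the cases. With that in place, and with Lemma~\ref{lem:q_is_on_HH_for_gluing} providing the two underlying quasi-isomorphisms, both parts of the proposition reduce to the observation that every map in sight intertwines the relevant $\delta_w$'s, so that passing to $\Tot^L$ and invoking Proposition~\ref{prop:functoriality_of_Tot} gives the comparison at the level of Hochschild (co)homology of the second kind as well.
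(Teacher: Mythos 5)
Your part 2) is essentially the paper's own proof: you glue $\widetilde{\cC}=\cC_2\sqcup_{N_\Phi}\cC_1$, check that the hypothesis $(w_2)_{\Phi(X)}=\Phi((w_1)_X)$ makes $w_1,w_2$ assemble into an element $\widetilde{w}\in Z^0_{cl}(\widetilde{\cC})$ central for $\widetilde{M}$, observe that the two restriction maps out of $(\Hoch^{\bullet}(\widetilde{\cC},\widetilde{M}),\delta_{\widetilde{w}})$ are strict morphisms of mixed complexes, conclude via Lemma \ref{lem:q_is_on_HH_for_gluing} that both legs of the zig-zag are quasi-isomorphisms, and pass to spectral sequences by Proposition \ref{prop:functoriality_of_sp_seq}. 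That is exactly what the paper does for the cochain statement, including the point that the inversion happens in $D(\Mix_A)$.

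The gap is in part 1), which you declare ``entirely analogous.'' It is not: Lemma \ref{lem:q_is_on_HH_for_gluing} is a statement about Hochschild \emph{cochains} only, and for chains the natural maps in the gluing picture reverse direction --- one has inclusions of $\Hoch_{\bullet}(\cC_1,(\Phi\otimes\Phi^{op})_*M)$ and $\Hoch_{\bullet}(\cC_2,M)$ \emph{into} $\Hoch_{\bullet}(\widetilde{\cC},\widetilde{M})$, not restrictions out of it --- so there is nothing in the paper you can cite to see that these inclusions are quasi-isomorphisms. You would have to formulate and prove a homology analogue of the gluing lemma (this is doable, by tensoring the exact triangle \eqref{eq:diagonal_of_gluing} with $\widetilde{M}$ instead of applying $\bR\Hom(-,\widetilde{M})$ and using the homological epimorphism hypothesis in the same way), but that is a genuinely missing step, not a formal dualization. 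The paper avoids the issue entirely: for chains there is a single \emph{direct} strict morphism of mixed complexes
\[
\phi:(\Hoch_{\bullet}(\cC_1,(\Phi\otimes\Phi^{op})_*M),\delta_{w_1})\to(\Hoch_{\bullet}(\cC_2,M),\delta_{w_2}),\qquad
\phi(m\otimes a_n\otimes\dots\otimes a_1)=m\otimes\Phi(a_n)\otimes\dots\otimes\Phi(a_1),
\]
which commutes with the $\delta$'s precisely because of the hypothesis on $w_1,w_2$, and which is a quasi-isomorphism of the underlying complexes because in $D(A)$ it realizes the composite
$I_{\cC_1}\stackrel{\bL}{\otimes}_{\cC_1\otimes\cC_1^{op}}(\Phi\otimes\Phi^{op})_*M\cong\bL(\Phi^{op}\otimes\Phi)^*I_{\cC_1}\stackrel{\bL}{\otimes}_{\cC_2\otimes\cC_2^{op}}M\cong I_{\cC_2}\stackrel{\bL}{\otimes}_{\cC_2\otimes\cC_2^{op}}M$,
the last isomorphism being exactly Proposition \ref{prop:homological_epi_equiv_cond}(iii). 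So for part 1) you should either supply the chain-level gluing lemma you are implicitly using, or (better) replace the gluing by this direct map, which is both shorter and gives the naturality for free.
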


\begin{proof} 1) The DG functor $\Phi$ induces a natural morphism of mixed complexes
$$\phi:(\Hoch_{\bullet}(\cC_1,(\Phi\otimes\Phi^{op})_*M),\delta_{w_1})\to (\Hoch_{\bullet}(\cC_2,M),\delta_{w_2}).$$
It is given by the formula
$$\phi(m\otimes a_n\otimes\dots\otimes a_1)=m\otimes\Phi(a_n)\otimes\dots\otimes\Phi(a_1).$$
It is easy to check that $\phi$ indeed commutes with both differentials. To show that $\phi$ is a quasi-isomorphism of Hochschild complexes, it suffices to note that in $D(A)$ it corresponds to the composition of isomorphisms
$$I_{\cC_1}\stackrel{\bL}{\otimes}_{\cC_1\otimes\cC_1^{op}}(\Phi\otimes\Phi^{op})_*M\cong\bL(\Phi^{op}\otimes\Phi)^*I_{\cC_1}
\stackrel{\bL}{\otimes}_{\cC_2\otimes\cC_2^{op}}M\cong I_{\cC_2}\stackrel{\bL}{\otimes}_{\cC_2\otimes\cC_2^{op}}M.$$
Here the last isomorphism is implied by the assumption on $\Phi$ to be a homological epimorphism. This proves 1).

2) We will apply Lemma \ref{lem:q_is_on_HH_for_gluing}. Denote by $N_{\Phi}\in D(\cC_1^{op}\otimes\cC_2)$ the bimodule given by \eqref{eq:bimodule_N_Phi}.
Consider the glued DG category $\widetilde{\cC}=\cC_2\sqcup_{N_{\Phi}}\cC_1.$ We have a $\widetilde{\cC}\otimes\widetilde{\cC}^{op}$-module $\widetilde{M},$ given by \eqref{eq:bimodule_M_tilde}.

We have a natural degree zero element $\widetilde{h}$ of the center of $\widetilde{\cC},$ given by the formula
$$\widetilde{h}_X=(w_1)_X\text{ for }X\in\cC_1,\quad \widetilde{h}_Y=(w_2)_Y\text{ for }Y\in\cC_2.$$
By our assumption on $w_1$ and $w_2,$ we have that $\widetilde{h}$ is central for $\widetilde{M}.$

We have natural projection morphisms of mixed complexes:
$$(\Hoch^{\bullet}(\cC_1,(\Phi\otimes\Phi^{op})_*M),\delta_{w_1})\stackrel{\phi_1}{\leftarrow}
(\Hoch^{\bullet}(\widetilde{\cC},\widetilde{M}),\delta_{\widetilde{h}})\stackrel{\phi_2}{\to}(\Hoch^{\bullet}(\cC_2,M),\delta_{w_2}).$$
By Lemma \ref{lem:q_is_on_HH_for_gluing}, both $\phi_1$ and $\phi_2$ are quasi-isomorphisms. This proves 2).
\end{proof}

\begin{lemma}\label{lem:q_is_for_delta_w_1...w_k} Let $\cC$ be a small DG category, and $M$ a DG $\cC\otimes\cC^{op}$-module. Let $w_1,\dots,w_k$ be a collection of closed degree zero elements of the center of $\cC$ which are central for $M.$ Then we have $A_{\infty}$-quasi-isomorphisms of mixed complexes:
$$(\Hoch^{\bullet}(\cC,M),\delta_{(w_1\dots w_k)})\simeq (\Hoch^{\bullet}(\cC,M),\sum\limits_{i=1}^k(\prod\limits_{\substack{1\leq j\leq k;\\j\ne i}}w_j)\delta_{w_i}),$$
$$(\Hoch_{\bullet}(\cC,M),\delta_{(w_1\dots w_k)})\simeq (\Hoch_{\bullet}(\cC,M),\sum\limits_{i=1}^k(\prod\limits_{\substack{1\leq j\leq k;\\j\ne i}}w_j)\delta_{w_i}).$$\end{lemma}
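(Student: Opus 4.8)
The plan is to deduce both $A_\infty$-quasi-isomorphisms from a single application of Lemma \ref{lem:q_is_divided_powers}. For this I will produce a strongly super-commutative, non-positively graded DG algebra $B_\bullet$ over $A$ carrying a divided power structure, acting on the left on the Hochschild cochain complex $\Hoch^{\bullet}(\cC,M)$ (and, by the same recipe, on $\Hoch_{\bullet}(\cC,M)$), together with two homologous closed elements $\alpha_1,\alpha_2\in B_1$ whose actions are exactly the two mixed-complex differentials in question. Once this data is in place, Lemma \ref{lem:q_is_divided_powers} furnishes the desired $A_\infty$-quasi-isomorphism $(\Hoch^{\bullet}(\cC,M),\delta_{\alpha_1})\isomoto(\Hoch^{\bullet}(\cC,M),\delta_{\alpha_2})$, with components $f_n=\gamma_{n-1}(\beta)\cdot(-)$, and the chain statement follows identically.

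Concretely, write $w_0:=w_1\cdots w_k$ and let $B_\bullet$ be the free strongly super-commutative DG algebra with divided powers over $A$ on generators $t_1,\dots,t_k$ in homological degree $0$, odd generators $\epsilon_0,\epsilon_1,\dots,\epsilon_k$ in homological degree $1$, and one generator $\beta$ in homological degree $2$, equipped with the differential $dt_i=0$, $d\epsilon_i=0$ for all $i$ (including $i=0$), and
$$d\beta=\epsilon_0-\sum_{i=1}^{k}\Bigl(\prod_{\substack{1\le j\le k;\ j\neq i}}t_j\Bigr)\epsilon_i.$$
Writing $t_0:=t_1\cdots t_k$ as an abbreviation and setting $\alpha_1:=\epsilon_0$, $\alpha_2:=\sum_i(\prod_{j\neq i}t_j)\epsilon_i$, we obtain $d^2=0$ and two closed elements of $B_1$ with $d\beta=\alpha_1-\alpha_2$, so that $\alpha_1$ and $\alpha_2$ are homologous.

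The heart of the argument is the construction of the DG $B_\bullet$-module structure on $\Hoch^{\bullet}(\cC,M)$. I let $t_i$ act by multiplication $\mu_{w_i}$ by the central element $w_i$ (so $t_0$ acts by $\mu_{w_0}=\mu_{w_1}\cdots\mu_{w_k}$, consistent with $t_0=\prod t_i$), let $\epsilon_i$ act by the insertion differential $\delta_{w_i}$ and $\epsilon_0$ by $\delta_{w_0}$, and let $\beta$ act by a degree-$2$ iterated-insertion operator $\hat\beta$ built, as in the definition of $\delta_w$, by inserting the relevant central elements into two of the tensor slots with alternating signs. The module axiom for the generator $\beta$ demands precisely the homotopy identity
$$b\hat\beta-\hat\beta b=\delta_{w_1\cdots w_k}-\sum_{i=1}^{k}\Bigl(\prod_{\substack{1\le j\le k;\ j\neq i}}w_j\Bigr)\delta_{w_i},$$
whose right-hand side realizes the action of $d\beta$; the remaining axioms reduce to the relations $b\delta_{w_i}+\delta_{w_i}b=0$, $b\mu_{w_i}=\mu_{w_i}b$, $\delta_{w_i}\delta_{w_j}+\delta_{w_j}\delta_{w_i}=0$ and $\mu_{w_i}\delta_{w_j}=\delta_{w_j}\mu_{w_i}$, together with the analogous relations for $\hat\beta$ and, crucially, for its divided powers $\gamma_n(\beta)$, which must act by the corresponding higher iterated-insertion operators compatibly with $d(\gamma_n(\beta))=(d\beta)\gamma_{n-1}(\beta)$. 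Conceptually the boxed identity expresses that $\delta_{w_0}$, which agrees up to sign with the Gerstenhaber bracket by the central $0$-cochain $w_1\cdots w_k$, is a derivation of the cup product up to coherent homotopy, the homotopies being supplied by the brace operations.

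The main obstacle is exactly this last verification: fixing the signs in the explicit operators $\hat\beta$ and $\gamma_n(\beta)$ and checking the key homotopy identity along with the full divided-power DG-module coherence. One clean way to organize the bookkeeping is induction on $k$: the case $k=1$ is trivial and $k=2$ requires only a single double-insertion homotopy realizing $\delta_{w_1w_2}\simeq w_1\delta_{w_2}+w_2\delta_{w_1}$, after which the passage from $k-1$ to $k$ follows by applying the case $k=2$ to the pair $(w_1\cdots w_{k-1},w_k)$ and using that adding a fixed closed element to $\alpha_1,\alpha_2$ and scaling by the central $t_k\in B_0$ preserve homology inside $B_1$. Finally, the chain assertion is obtained verbatim by letting the same $B_\bullet$ act on $\Hoch_{\bullet}(\cC,M)$ through the dual insertion operators, so that the same triple $\alpha_1,\alpha_2,\beta$ yields the second $A_\infty$-quasi-isomorphism via Lemma \ref{lem:q_is_divided_powers}.
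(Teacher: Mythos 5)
Your overall strategy (reduce everything to Lemma \ref{lem:q_is_divided_powers}) is the same as the paper's, but your implementation has a genuine gap at its crux. You take $B_{\bullet}$ to be a \emph{free} strongly super-commutative divided power DG algebra and then must \emph{construct} a DG $B_{\bullet}$-module structure on $\Hoch^{\bullet}(\cC,M)$. For a divided power algebra this is not just a matter of specifying $\hat\beta$ and checking one homotopy identity: the elements $\gamma_n(\beta)$ are independent algebra generators over $\Z$ (one only has $\beta^n=n!\,\gamma_n(\beta)$), so you must exhibit operators realizing the action of every $\gamma_n(\beta)$, $n\geq 2$, satisfying $b\circ\widehat{\gamma_n(\beta)}-\widehat{\gamma_n(\beta)}\circ b=\widehat{d\beta}\circ\widehat{\gamma_{n-1}(\beta)}$ together with the multiplicative divided power relations. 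That infinite tower of higher homotopies is exactly the content of the lemma — it is what produces the components $f_n=\gamma_{n-1}(\beta)\cdot(-)$ of the $A_{\infty}$-morphism — and you acknowledge it as ``the main obstacle'' without constructing it. Your fallback induction on $k$ does not repair this: already in the base case $k=2$, ``a single double-insertion homotopy'' only supplies $f_2$; over $\Z$ one cannot set $f_n=\hat\beta^{\,n-1}/(n-1)!$, and there is no a priori reason the successive obstructions $f_{n-1}\delta-\delta' f_{n-1}$ are coboundaries without the divided power structure you have not built. So the proposal, as written, assumes the very structure it needs.

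The paper sidesteps this entirely by finding the divided power algebra ready-made rather than free: the central elements $w_1,\dots,w_k$ make $\cC$ an $A[x_1,\dots,x_k]$-linear DG category, whence $\Hoch^{\bullet}(\cC,M)$ (and likewise $\Hoch_{\bullet}(\cC,M)$) is naturally a DG module over the shuffle DG algebra $\Hoch_{\bullet}(A[x_1,\dots,x_k])$, which is classically strongly super-commutative \emph{with} divided powers. Under this action, $\delta_{(w_1\cdots w_k)}$ is multiplication by the cycle $1\otimes(x_1\cdots x_k)\in\Hoch_1(A[x_1,\dots,x_k])$, while $\sum_{i}(\prod_{j\ne i}w_j)\delta_{w_i}$ is multiplication by $\sum_i(\prod_{j\ne i}x_j)\otimes x_i$; these cycles are homologous (for $k=2$, $b(1\otimes x_1\otimes x_2)=x_1\otimes x_2-1\otimes x_1x_2+x_2\otimes x_1$), so Lemma \ref{lem:q_is_divided_powers} applies immediately. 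If you want to keep your presentation, the fix is to map your free $B_{\bullet}$ (generators $t_i\mapsto x_i$, $\epsilon_i\mapsto 1\otimes x_i$, $\epsilon_0\mapsto 1\otimes(x_1\cdots x_k)$, $\beta\mapsto$ an explicit chain with the right boundary) into this concrete algebra, letting its known divided power structure and its known action on the Hochschild complexes do the work you were attempting by hand.
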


\begin{proof}We will prove the statement for cochain Hochschild complex, and the proof for chain Hochschild complex is analogous.

Let us notice that the elements $w_1,\dots,w_k$ of the center of $\cC$ provide a structure of an $A[x_1,\dots,x_k]$-linear DG category on $\cC.$ complex $\Hoch^{\bullet}(\cC,M)$ is a module over the DG algebra $\Hoch_{\bullet}(A[x_1,\dots,x_k]).$ The latter DG algebra is strongly super-commutative, and has a divided power structure. The differential $\delta_{(w_1\dots w_k)}$ on $\Hoch^{\bullet}(\cC,M)$ is just a multiplication by the chain $$1\otimes (x_1\dots x_k)\in\Hoch_1(A[x_1,\dots,x_k]).$$
Further, the differential $\sum\limits_{i=1}^k(\prod\limits_{\substack{1\leq j\leq k;\\j\ne i}}w_i)\delta_{w_i})$ on $\Hoch^{\bullet}(\cC,M)$ is a multiplication by the chain
$$\sum\limits_{i=1}^k (\prod\limits_{\substack{1\leq j\leq k;\\j\ne i}}x_j)\otimes x_i \in\Hoch_1(A[x_1,\dots,x_k]).$$ But those two chains are homologous, hence the desired $A_{\infty}$-quasi-isomorphism of mixed complexes follows from Lemma \ref{lem:q_is_divided_powers}.\end{proof}

We need one more observation about duality of chain and cochain Hochschild complexes. For an object of $X\in D(A),$ we put $\bR(-)^*(X)=\bR\Hom_A(X,A).$ We use the same notation for the objects of $D(\Mix_A).$

\begin{lemma}\label{lem:duality_of_Hoch_complexes} Let $\cB$ be an h-projective DG category, and $M$ a DG $\cB\otimes\cB^{op}$-module, which is h-projective over $k.$ Take an element $w\in Z^0_{cl}(\cB),$ which is central for $M.$
Then we have natural isomorphisms in $D(\Mix_A):$
$$\bR(-)^*((\Hoch_{\bullet}(\cB,M),\delta_w))\cong (\Hoch_{\bullet}(\cB,M),\delta_w)^*\cong (\Hoch^{\bullet}(\cB,M^*),\delta_w).$$
\end{lemma}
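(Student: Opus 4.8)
The plan is to establish the two isomorphisms in the lemma separately, treating them as assertions in the derived category of mixed complexes $D(\Mix_A)$. Recall that $\bR(-)^*(X) = \bR\Hom_A(X,A)$, so the statement involves comparing a derived dual of a mixed complex with an explicitly described mixed complex. I would handle the first isomorphism
$$\bR(-)^*((\Hoch_{\bullet}(\cB,M),\delta_w))\cong (\Hoch_{\bullet}(\cB,M),\delta_w)^*$$
first, as this should be the more formal of the two: it asserts that the derived dual agrees with the naive (underived) termwise dual. The key point is that under the hypotheses—$\cB$ is h-projective over $A$ and $M$ is h-projective over $A$ (written $k$ in the statement)—each term of the Hochschild chain complex, namely $M(X_n,X_0)\otimes_A \cB(X_{n-1},X_n)\otimes_A\dots\otimes_A\cB(X_0,X_1)$, is h-projective over $A$ as a tensor product of h-projective $A$-modules. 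Hence the naive $A$-linear dual computes $\bR\Hom_A(-,A)$, and the two expressions agree already at the level of the underlying complexes; one then checks that the dualization is compatible with the mixed structure, i.e. that the dual of $\delta_w$ is again the operator $\delta_w$ on the dual complex.

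For the second isomorphism
$$(\Hoch_{\bullet}(\cB,M),\delta_w)^*\cong (\Hoch^{\bullet}(\cB,M^*),\delta_w),$$
I would write out both complexes explicitly and construct a termwise identification. The $(-n)$-th graded piece of the dual of the chain complex is
$$\Hom_A\!\left(M(X_n,X_0)\otimes_A \cB(X_{n-1},X_n)\otimes_A\dots\otimes_A\cB(X_0,X_1),\,A\right),$$
which by tensor–hom adjunction is naturally isomorphic to
$$\Hom_A\!\left(\cB(X_{n-1},X_n)\otimes_A\dots\otimes_A\cB(X_0,X_1),\,M(X_n,X_0)^*\right),$$
and this is precisely the $n$-th graded piece of $\Hoch^{\bullet}(\cB,M^*)$, once one identifies $M^*(X_0,X_n)=\Hom_A(M(X_n,X_0),A)$ with the appropriate bimodule structure (left and right actions of $\cB$ on $M^*$ are the transposes of the right and left actions on $M$). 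I would then verify that under this adjunction isomorphism the transpose of the Hochschild differential $b$ on chains matches the cochain differential $\partial$, and that the transpose of $\delta_w$ matches $\delta_w$ on cochains. This is a sign-bookkeeping calculation: comparing the explicit formula for $\partial$ in Subsection~\ref{ssec:HH} with the transpose of $b$, and likewise for the two descriptions of $\delta_w$ in Subsection~\ref{ssec:HH_second_kind}.

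The compatibility of dualization with the mixed structure is the conceptual heart of the argument, and I would phrase it cleanly using the interpretation of mixed complexes as DG modules over $\Lambda_A\la\epsilon\ra$. A mixed complex is a DG $\Lambda_A\la\epsilon\ra$-module, and duality $\bR\Hom_A(-,A)$ of such a module carries a natural $\Lambda_A\la\epsilon\ra$-module structure, since $\Lambda_A\la\epsilon\ra$ is a (graded-commutative) Frobenius-type algebra over $A$. Invoking this viewpoint lets me assert that $\bR(-)^*$ is a well-defined functor on $D(\Mix_A)$ and that its effect on the $\epsilon$-action (i.e. on $\delta_w$) is exactly the transpose, so no extra sign or twist appears. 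The naturality of the two isomorphisms follows from the naturality of tensor–hom adjunction and of the $\Lambda_A\la\epsilon\ra$-module structure on duals.

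\emph{The main obstacle I anticipate is the sign bookkeeping}, specifically matching the Koszul signs in the transpose of $b$ and $\delta_w$ against the signs appearing in the given formulas for $\partial$ and $\delta_w$ on the cochain side. The cyclic-permutation term in the chain differential carries the sign $(-1)^{n+|a_1|(|m|+|a_n|+\dots+|a_2|)}$, and when dualized this must reproduce the term $(-1)^{n+1}f(a_{n+1},\dots,a_2)a_1$ with its own Koszul factor; getting these to agree under the adjunction, together with the graded-module conventions $ma=(-1)^{|a||m|}M(a^{op})(m)$ fixed earlier in the excerpt, is where all the care is needed. Everything else is formal once the h-projectivity hypotheses guarantee that the naive dual is the derived dual term by term.
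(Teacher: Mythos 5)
Your overall route is the same as the paper's: establish the first isomorphism by arguing that h-projectivity lets the actual dual compute the derived dual, and obtain the second by direct inspection of the two complexes via tensor--hom adjunction (the paper's proof is exactly this, compressed into three lines). Your treatment of the second isomorphism, including the bimodule structure on $M^*$ and the compatibility of transposition with $b$ and $\delta_w$, is fine.

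The one genuine weak point is the inference ``each column is h-projective, hence the naive $A$-linear dual computes $\bR\Hom_A(-,A)$,'' reiterated at the end as the claim that h-projectivity gives this ``term by term.'' For unbounded complexes this principle is false: the derived dual of $C$ is $\Hom_A(P,A)$ for an h-projective resolution $P\to C$, and the comparison map $\Hom_A(C,A)\to\Hom_A(P,A)$ is a quasi-isomorphism only if $\Hom_A(\Cone(P\to C),A)$ is acyclic; columnwise (or termwise) h-projectivity of $C$ does not guarantee this. Indeed, over a base ring of infinite global dimension there exist unbounded acyclic complexes of projective modules whose termwise dual is not acyclic (this is precisely why ``totally acyclic'' is a stronger condition than ``acyclic'' for complexes of projectives), and the lemma places no hypothesis whatsoever on the commutative base $A$. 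What you actually need --- and what the paper asserts --- is that the \emph{total} complex $\Hoch_{\bullet}(\cB,M)$ is h-projective over $A$. This is true, and your columnwise observation is the main input, but it requires one more (standard) step: the filtration $F_p\subset\Hoch_{\bullet}(\cB,M)$ by the columns with at most $p$ tensor factors is an exhaustive increasing filtration by subcomplexes; each inclusion $F_{p-1}\subset F_p$ is degreewise split with quotient the $p$-th column, which is h-projective as a direct sum of tensor products of h-projective complexes, so each $F_p$ is h-projective by induction; and for any acyclic $N$ the complex $\Hom_A(\colim_p F_p,N)\cong\lim_p\Hom_A(F_p,N)$ is an inverse limit of acyclic complexes with surjective transition maps, hence acyclic. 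With this semi-free-style argument supplied, your proof closes the gap and coincides with the paper's.
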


\begin{proof}1) By our assumptions on $\cB$ and $M,$ the complex of $A$-modules $\Hoch_{\bullet}(\cB,M)$ is h-projective. Thus, its derived dual is isomorphic to the actual dual. The second isomorphism follows immediately from the definition of these mixed complexes.\end{proof}

We will need the following result under stronger assumptions.

\begin{lemma}\label{lem:duality_for_homology}Let $\cB,$ $w$ and $M$ be as in Lemma \ref{lem:duality_of_Hoch_complexes}. Assume that the following holds:

i) the basic ring $A$ is noetherian and regular;

ii) the DG category $\cB$ is non-positive, and $M$ is bounded above;

iii) the $A$-modules $HH_n(\cB,M)$ are finitely generated for all $n.$

Then

1) the natural morphism 
\begin{equation}\label{eq:duality_of_Hoch_mixed_complexes}(\Hoch_{\bullet}(\cB,M),\delta_w)\to \bR(-)^*(\Hoch^{\bullet}(\cB,M^*),\delta_w)\end{equation}
in $D(\Mix_A)$ is an isomorphism;

2) there is a natural isomorphism
\begin{equation}\label{eq:duality_of_Hoch_of_second_kind}\Hoch^{II}_{\bullet}(\cB,w;M)\cong \bR(-)^*(\Hoch^{II,\bullet}(\cB,w;M^*))\end{equation}
in $D(A[u^{\pm 1}]).$
\end{lemma}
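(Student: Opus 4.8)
The plan is to deduce both statements from the one-step duality already proved in Lemma \ref{lem:duality_of_Hoch_complexes} together with a biduality (reflexivity) argument over the regular ring $A$. Write $X:=\Hoch_{\bullet}(\cB,M)$. Lemma \ref{lem:duality_of_Hoch_complexes} identifies $\bR(-)^*X\cong(\Hoch^{\bullet}(\cB,M^*),\delta_w)$ as mixed complexes, the identification being compatible with $\delta_w$. Applying $\bR(-)^*$ once more and substituting this identification, the target of \eqref{eq:duality_of_Hoch_mixed_complexes} becomes $\bR(-)^*\bR(-)^*X$, and the natural morphism of part 1) is exactly the biduality map $\eta_X\colon X\to\bR\Hom_A(\bR\Hom_A(X,A),A)$, which is automatically a morphism of mixed complexes. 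Since $X$ is h-projective over $A$ (recorded in the proof of Lemma \ref{lem:duality_of_Hoch_complexes}), $\bR(-)^*$ is the naive dual, so it suffices to show $\eta_X$ is a quasi-isomorphism of the underlying $A$-complexes: an $A_{\infty}$-morphism of mixed complexes is invertible in $D(\Mix_A)$ once its leading term $f_1$ — here $\eta_X$ — is a quasi-isomorphism.

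The hard part will be this biduality, and it is where all three hypotheses enter. First I would record that $X\in D^-_{\mathrm{coh}}(A)$: it is bounded above cohomologically because $\cB$ is non-positive and $M$ is bounded above (so the $(-n)$-th bar column sits in cohomological degrees $\le N-n\le N$), while its cohomology $H^{-n}(X)=HH_n(\cB,M)$ is finitely generated by hypothesis (iii). Since $A$ is noetherian of finite global dimension $m$, the functor $\bR\Hom_A(-,A)$ has cohomological amplitude contained in $[0,m]$, so via the hyper-$\Ext$ spectral sequence $E_2^{p,q}=\Ext^p_A(H^{-q}(-),A)$ each group $H^k(\bR(-)^*\bR(-)^*X)$ depends only on those $H^i(X)$ with $i$ in a window of length $\le 2m$ around $k$. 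Hence, to test $\eta_X$ in a fixed degree I may replace $X$ by an intelligent truncation to a bounded window, which is an object of $D^b_{\mathrm{coh}}(A)=\Perf(A)$ (again using that $A$ is regular of finite dimension), for which biduality over $A$ is classical. This is the standard way-out argument, and it proves part 1).

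For part 2) the remaining task is to commute $\Tot^L$ with $A$-linear duality, tracking which totalization appears on each side. By the cochain analogue of Proposition \ref{prop:Hoch_complexes_bounded}, the mixed complex $\Hoch^{\bullet}(\cB,M^*)$ is bounded below, so $\Hoch^{II,\bullet}(\cB,w;M^*)=\Tot^L(\Hoch^{\bullet}(\cB,M^*),\delta_w)=\Tot^{\oplus}(\Hoch^{\bullet}(\cB,M^*),\delta_w)$ is a direct-sum totalization. Dualizing over $A$ turns direct sums into products, giving $\bR\Hom_A(\Tot^{\oplus}(\cL),A)\cong\Tot^{\Pi}(\bR(-)^*\cL)$ for any bounded-below mixed complex $\cL$, where $\bR(-)^*\cL$ is the termwise derived dual. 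Applying this to $\cL=(\Hoch^{\bullet}(\cB,M^*),\delta_w)$ and invoking part 1), which yields $\bR(-)^*\cL\cong(\Hoch_{\bullet}(\cB,M),\delta_w)$ in $D(\Mix_A)$, produces $\bR(-)^*\Hoch^{II,\bullet}(\cB,w;M^*)\cong\Tot^{\Pi}(\bR(-)^*\cL)$. Now $\bR(-)^*\cL$ is bounded above, so $\Tot^{\Pi}=\Tot^L$ on it, and by functoriality of $\Tot^L$ under $A_{\infty}$-quasi-isomorphisms (Proposition \ref{prop:functoriality_of_Tot}) this is quasi-isomorphic to $\Tot^L(\Hoch_{\bullet}(\cB,M),\delta_w)=\Hoch^{II}_{\bullet}(\cB,w;M)$, which is the desired isomorphism \eqref{eq:duality_of_Hoch_of_second_kind} in $D(A[u^{\pm 1}])$.

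I expect the genuine obstacle to be the biduality step of part 1): one must make the amplitude/truncation reduction rigorous for the unbounded-below complex $X$ and check it is compatible with the mixed ($u$-linear) structure and with the identifications of Lemma \ref{lem:duality_of_Hoch_complexes}. By contrast, the bookkeeping of signs and of the $\Tot^{\oplus}$-versus-$\Tot^{\Pi}$ totalizations in part 2) is routine.
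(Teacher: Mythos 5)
Your part 1) is essentially correct, though it takes a different route from the paper: the paper simply uses i)--iii) to replace $\cK^{\bullet}=\Hoch_{\bullet}(\cB,M)$ by a bounded-above complex $P^{\bullet}$ of finitely generated projectives, observes that $(P^{\bullet})^*$ is then a bounded-below complex of finitely generated projectives, hence h-projective by Proposition \ref{prop:h-proj_unbounded} 1) (this is where regularity enters), so that $\bR(-)^*((P^{\bullet})^*)=(P^{\bullet})^{**}\cong P^{\bullet}.$ Your way-out/truncation argument, using the finite amplitude of $\bR\Hom_A(-,A)$ and reduction to perfect complexes, proves the same biduality and is fine; the reduction to the underlying complexes (forgetting the mixed structure) is also the paper's first move.

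The genuine gap is in part 2), precisely at the step you declare routine. The isomorphism $\bR\Hom_A(\Tot^{\oplus}(\cL),A)\cong\Tot^{\Pi}(\bR(-)^*\cL)$ is \emph{not} a formal consequence of ``duals turn sums into products'': the graded summands $\cL^n$ of $\Tot^{\oplus}(\cL)$ are not subcomplexes (the differential $b+\delta$ mixes adjacent weights), so you cannot invoke the fact that $\bR\Hom$ takes coproducts to products; the identity $(\Tot^{\oplus}\cL)^*=\Tot^{\Pi}(\cL^*)$ holds only for \emph{naive} duals. To promote it to derived duals you need: (a) a representative $\cQ$ of $\cL$ in $D(\Mix_A)$ which is bounded below with projective terms, so that its termwise naive dual computes $\bR(-)^*\cL$ (Proposition \ref{prop:h-proj_unbounded} 1)); and (b) the fact that $\Tot^{\oplus}(\cQ)$, being a $\Z/2$-graded complex of projectives, is h-projective as a DG $A[u^{\pm 1}]$-module, so that its naive dual computes the derived dual of $\Hoch^{II,\bullet}(\cB,w;M^*)$ --- this is Proposition \ref{prop:h-proj_unbounded} 2), the crucial use of regularity in part 2), and it is false over a general base. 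Moreover, you cannot run this directly on $\cL=(\Hoch^{\bullet}(\cB,M^*),\delta_w)$ itself: its terms are products of the form $\Hom_A(\cB(\cdot,\cdot)\otimes\cdots,M^*),$ which are typically not projective (e.g.\ $\Hom_{\Z}(\bigoplus_{\N}\Z,\Z)$ is not free), so neither (a) nor (b) applies to it; and ``$\Tot^{\Pi}(\bR(-)^*\cL)$'' is not even well-defined on $D(\Mix_A),$ since $\Tot^{\Pi}$ is not quasi-isomorphism invariant. The repair is exactly the paper's proof, and it runs in the opposite direction to yours: hypotheses i)--iii) give a bounded-above representative $(\cP^{\bullet},\delta)$ of the \emph{chain} mixed complex $(\Hoch_{\bullet}(\cB,M),\delta_w)$ with finitely generated projective terms; its termwise dual $((\cP^{\bullet})^*,\delta^*)$ is the required bounded-below projective representative of the cochain side; then $\Hoch^{II,\bullet}(\cB,w;M^*)\cong\Tot^{\oplus}((\cP^{\bullet})^*,\delta^*),$ which is h-projective over $A[u^{\pm 1}],$ and its naive dual is $\Tot^{\Pi}(\cP^{\bullet},\delta)\cong\Hoch^{II}_{\bullet}(\cB,w;M).$ In short, your assessment of where the difficulty lies is inverted: the paper disposes of the biduality in part 1) in three lines, while the $\Tot^{\oplus}$-versus-$\Tot^{\Pi}$ interchange in part 2) carries real content and consumes the regularity hypothesis a second time.
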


\begin{proof}1) Let us put $\cK^{\bullet}:=\Hoch_{\bullet}(\cB,M).$ By Lemma \ref{lem:duality_of_Hoch_complexes}, we have $\Hoch^{\bullet}(\cB,M^*)=(\cK^{\bullet})^*=\bR(-)^*(\cK^{\bullet}).$ Thus, we need to show that the morphism $\cK^{\bullet}\to\bR(-)^*((\cK^{\bullet})^*)$ is an isomorphism in $D(A)$ (so we can forget about the structure of mixed complex). 

By our assumptions i)-iii), the complex $\cK^{\bullet}$ is quasi-isomorphic to a bounded above complex $P^{\bullet}$ of finitely generated projective $A$-modules. It follows that $(P^{\bullet})^*$ is a bounded below complex of finitely generated projectives. Since $A$ is regular, it follows from Proposition \ref{prop:h-proj_unbounded} that the complex $(P^{\bullet})^*$ is h-projective. Hence, we have $\bR(-)^*((P^{\bullet})^*)=(P^{\bullet})^{**}\cong P^{\bullet}.$ This proves 1).

2) It follows from i)-iii) that the mixed complex $(\Hoch_{\bullet}(\cB,M),\delta_w)$ is isomorphic in $D(\Mix_A)$ to some $(\cP^{\bullet},\delta),$ where the $P^{\bullet}$ is a bounded above complex of finitely generated projective $A$-modules. By Lemma \ref{lem:duality_of_Hoch_complexes}, we have an isomorphism $$(\Hoch^{\bullet}(\cB,M^*),\delta_w)\cong ((P^{\bullet})^*,\delta^*)$$
in $D(\Mix_A).$ The latter mixed complex is bounded below. By Proposition \ref{prop:functoriality_of_Tot}, we have the following isomorphisms in $D(A[u^{\pm 1}]):$
$$\Hoch^{II}_{\bullet}(\cB,w;M)\cong \Tot^L(\cP^{\bullet},\delta)=\Tot^{\Pi}(\cP^{\bullet},\delta);$$
$$\Hoch^{II,\bullet}(\cB,w;M^*)\cong \Tot^L((\cP^{\bullet})^*,\delta^*)=\Tot^{\oplus}((\cP^{\bullet})^*,\delta^*).$$
Since $\cP^n$ are finitely generated projective and the complex $\Tot^{\oplus}((\cP^{\bullet})^*,\delta^*)$ is h-projective (by Proposition \ref{prop:h-proj_unbounded}), we have
$$\Tot^{\Pi}(\cP,\delta)\cong (\Tot^{\oplus}((\cP^{\bullet})^*,\delta^*))^*\cong \bR(-)^*\Tot^{\oplus}((\cP^{\bullet})^*,\delta^*).$$
This proves 2).\end{proof}

\begin{prop}\label{prop:product_on_HH_second_kind} Let $\cB$ be a small h-projective DG category, and $F:\cB\to\cC$ a DG functor to a small DG category. Let $w\in Z^0_{cl}(\cB)$ be an element which is central for $(F\otimes F^{op})_*I_{\cC}.$ 

1) The operator $\delta_w$ is a derivation on the DG algebra $\Hoch^{\bullet}(\cB,\cC).$

2) The formula \eqref{eq:formula_for_product_in_HH} defines a ($\Z/2$-graded) DGA structure on $\Hoch^{II,\bullet}(\cB,w;\cC).$ The filtration $F_{\bullet}HH^{II}(\cB,w;\cC)$ satisfies the condition $F_n\cdot F_m\subset F_{n+m}.$

3) Take the spectral sequence $\{(E_r^{\bullet},d_r)\}_{r\geq 1}$ associated with the mixed complex $(\Hoch^{\bullet}(\cB,\cC),\delta_w).$ Then each $E_r^{\bullet}$ carries a graded algebra structure and $d_r$ is a derivation on $E_r.$ The product on $E_1^{\bullet}=HH^{\bullet}(\cB,\cC)$ is induced by that on $\Hoch^{\bullet}(\cB,\cC),$ and the product on $E_{r+1}^{\bullet}$ is induced by that on $E_r^{\bullet}.$

4) Assume that $\cB$ is non-positive and $M$ is bounded below. Then we have an isomorphism of graded algebras $$E_{\infty}^{\bullet}\cong\gr_{\bullet}^FHH^{II,\bullet}(\cB,w;\cC).$$
\end{prop}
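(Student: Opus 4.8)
The plan is to reduce everything except the derivation property to the already-established Proposition~\ref{prop:product_on_Tot_of_DGAs}. By Definition~\ref{defi:Hochschild_DGA} the complex $\Hoch^{\bullet}(\cB,\cC)$ is a DG algebra under the product \eqref{eq:formula_for_product_in_HH}, and $\delta_w$ is an additional differential of cohomological degree $-1$; so once we verify that $\delta_w$ is a (graded) derivation of this product, the mixed complex $(\Hoch^{\bullet}(\cB,\cC),\delta_w)$ is exactly of the form to which Proposition~\ref{prop:product_on_Tot_of_DGAs} applies, with $\cA=\Hoch^{\bullet}(\cB,\cC)$. Thus statement~1) is the only part that requires genuine computation.

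To prove 1), I would check the Leibniz rule $\delta_w(f\cdot g)=\delta_w(f)\cdot g+(-1)^{|f|}f\cdot\delta_w(g)$ directly from the explicit formulas. Recall that $\delta_w$ inserts the central element $w$ into each of the slots of a cochain with alternating signs, while the product \eqref{eq:formula_for_product_in_HH} splits the list of arguments $(a_n,\dots,a_1)$ into a head fed to $f$ and a tail fed to $g$. Expanding $\delta_w(f\cdot g)$, the insertions of $w$ into a slot lying strictly inside the head (resp. the tail) reproduce the terms of $\delta_w(f)\cdot g$ (resp. of $f\cdot\delta_w(g)$), so the content of the check is concentrated at the junction where the two blocks of arguments meet. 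There the insertion of $w$ can be read either as the last slot of the head or as the first slot of the tail; since $w$ is closed, central in $\cB$, and central for $(F\otimes F^{op})_*I_{\cC}$ (so that $w$ may be moved freely past the arguments and across the composition in $\cC$), these two readings cancel up to the Koszul sign, and the remaining terms assemble into the right-hand side of the Leibniz rule. Equivalently, as in the proof of Lemma~\ref{lem:q_is_for_delta_w_1...w_k}, the element $w$ endows $\cB$ with an $A[x]$-linear structure, $\Hoch^{\bullet}(\cB,\cC)$ becomes a module over the graded-commutative DG algebra $\Hoch_{\bullet}(A[x])$, and $\delta_w$ is multiplication by the odd class $1\otimes x$, which acts as a derivation; this gives a conceptual confirmation of the sign bookkeeping.

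With 1) in hand, parts 2), 3) and 4) follow formally. Applying Proposition~\ref{prop:product_on_Tot_of_DGAs}~1) to $\cA=\Hoch^{\bullet}(\cB,\cC)$ shows that $\Hoch^{II,\bullet}(\cB,w;\cC)=\Tot^L(\Hoch^{\bullet}(\cB,\cC),\delta_w)$ is a $\Z/2$-graded DG algebra and that the filtration \eqref{eq:filtration_on_H_of_Tot} is multiplicative, which is 2). Part~2) of that Proposition supplies the multiplicative structure on the spectral sequence of Corollary~\ref{cor:sp_seq_for_Hochschild_second_kind}, with each $d_r$ a derivation and the asserted compatibility of the products on $E_1^{\bullet}=HH^{\bullet}(\cB,\cC)$ and on successive pages, which is 3). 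Finally, under the hypotheses of 4), with $\cB$ non-positive and the coefficient bimodule bounded below, the DG algebra $\Hoch^{\bullet}(\cB,\cC)$ is bounded below by the reasoning of Proposition~\ref{prop:Hoch_complexes_bounded}~2), so Proposition~\ref{prop:product_on_Tot_of_DGAs}~3) yields the isomorphism of graded algebras $E_{\infty}^{\bullet}\cong\gr_{\bullet}^FHH^{II,\bullet}(\cB,w;\cC)$, which is 4).

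The main obstacle is the sign-accounting in the Leibniz computation of 1): the heart of the matter is the cancellation of the two junction terms, and arranging the Koszul signs to match the convention in \eqref{eq:formula_for_product_in_HH} is where the only real care is needed. Everything after that is a direct invocation of Proposition~\ref{prop:product_on_Tot_of_DGAs}.
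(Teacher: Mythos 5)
Your overall architecture coincides with the paper's: prove 1) by a direct computation, then obtain 2)--4) by citing Proposition~\ref{prop:product_on_Tot_of_DGAs} (and your boundedness remark for 4), via the reasoning of Proposition~\ref{prop:Hoch_complexes_bounded}, is fine). But 1) is the only part with genuine content, and your description of how the Leibniz rule comes out is wrong: there is \emph{no} cancellation at the junction. Expanding $\delta_w(f\cdot g)(a_{n-1},\dots,a_1)$ means first inserting $w$ (in $n$ possible positions) and then splitting the augmented $n$-term list (in $n+1$ ways). For an insertion of $w$ between $a_{i+1}$ and $a_i$, the two splittings adjacent to $w$ produce two \emph{distinct} terms, $\pm f(a_{n-1},\dots,a_{i+1},w)\,g(a_i,\dots,a_1)$ and $\pm f(a_{n-1},\dots,a_{i+1})\,g(w,a_i,\dots,a_1)$; centrality of $w$ gives no relation between $f(\dots,w)$ and $f(\dots)$, and these terms do not cancel. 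Nor should they: they are precisely the terms of $\delta_w(f)\cdot g$ in which $w$ occupies the last slot fed to $f$, respectively of $(-1)^{|f|}f\cdot\delta_w(g)$ in which $w$ occupies the first slot fed to $g$. The correct mechanism is a term-by-term \emph{bijection} between the two sides, with no cancellation anywhere: both sides consist of $n(n+1)$ terms ($n$ insertions times $n+1$ splittings on the left; $n(n+1)/2$ terms from each of $\delta_w(f)\cdot g$ and $f\cdot\delta_w(g)$ on the right), and the only thing to verify is that the signs agree, using that $w$ has degree zero and hence never contributes Koszul signs. If the two junction readings really cancelled, as you assert, the Leibniz rule would in fact \emph{fail}, because the boundary-slot terms on the right-hand side would then have nothing to match.

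Your fallback "conceptual confirmation" does not repair this. The module structure over the graded-commutative DG algebra $\Hoch_{\bullet}(A[x])$ does exhibit $\delta_w$ as multiplication by the odd cycle $1\otimes x$, but multiplication by an odd element on a module carrying a product is \emph{not} automatically a derivation of that product: already in $\Lambda_A\la\epsilon\ra$ itself, multiplication by $\epsilon$ sends $1=1\cdot 1$ to $\epsilon$, whereas a Leibniz expansion would give $2\epsilon$. This is exactly why Lemma~\ref{lem:q_is_divided_powers} and Lemma~\ref{lem:q_is_for_delta_w_1...w_k} use that module structure only to manufacture $A_\infty$-morphisms of mixed complexes, never to assert derivation properties. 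So part 1) must be established by the bijective sign-matching described above (which is what the paper's "checked straightforwardly" amounts to); once that is done, your deduction of 2)--4) stands as written.
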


\begin{proof}1) is checked straightforwardly.

2),3) and 4) follow from Proposition \ref{prop:product_on_Tot_of_DGAs}.\end{proof}

\subsection{Mac Lane (co)homology}
\label{ssec:HML}

We refer to \cite{L}, Chapter 13, for a nice introduction to Mac Lane (co)homology. The original definition was given in \cite{ML}.

Let $R$ be an associative ring. Recall that we denote by $P(R)$ the category of finitely generated projective right $R$-modules. Recall that the ring $R$ is Morita equivalent to the additive category $P(R),$ and the ring $R\otimes R^{op}$ is Morita equivalent to the pre-additive category $P(R)\otimes P(R)^{op}.$ In particular, any $R\text{-}R$-bimodule can be considered as a $P(R)\text{-}P(R)$-bimodule.

\begin{defi}\label{defi:HML}Let $M$ be an $R\otimes R^{op}$-module. Mac Lane homology of $R$ with coefficients in $M$ is defined by the formula
$$HML_{\bullet}(R,M)=HH_{\bullet}(\Z[P(R)],M).$$ Similarly, Mac Lane cohomology of $R$ with coefficients in $M$ is defined by the formula
$$HML^{\bullet}(R,M)=HH^{\bullet}(\Z[P(R)],M).$$\end{defi}

Recall that the center of $R$ is naturally isomorphic to the center of $P(R).$ Below we tacitly identify these centers. In particular, for any $w\in Z(R)$ we have the element $[w]\in Z(\Z[P(R)]).$

\begin{defi}\label{defi:HML^II}Let $M$ be a complex of $R\otimes R^{op}$-modules, and $w\in Z(R).$ Assume that $w$ is central for $M.$ Mac Lane homology of the second kind of $(R,w)$ with coefficients in $M$ is defined by the formula
$$HML_{\bullet}^{II}(R,w;M)=HH_{\bullet}^{II}(\Z[P(R)],[w];M).$$
Similarly, Mac Lane cohomology of the second kind of $(R,w)$ with coefficients in $M$ is defined by the formula
$$HML^{II,\bullet}(R,w;M)=HH^{II,\bullet}(\Z[P(R)],[w];M).$$
\end{defi}

By Corollary \ref{cor:sp_seq_for_Hochschild_second_kind}, if $M$ is concentrated in degree zero, then we have a $\Z$-graded spectral sequence converging to $HML^{II,\bullet}(R,w;M):$
$$E_1^n=HML^n(R,M)\Rightarrow HML^{II,\bar{n}}(R,w;M).$$

\begin{prop}\label{prop:HML^II_via_Q^n}Let $R,$ $w$ and $M$ be as in Definition \ref{defi:HML^II} For any positive integer $n,$ we have natural isomorphisms in $D(\Mix_A):$
$$(\Hoch^{\bullet}(\Z[P(R)],M),\delta_{[w]})\cong (\Hoch^{\bullet}(Q^n_{\bullet}(\Mat_n(R)),M),\delta_{[w]}),$$
$$(\Hoch_{\bullet}(\Z[P(R)],M),\delta_{[w]})\cong (\Hoch_{\bullet}(Q^n_{\bullet}(\Mat_n(R)),M),\delta_{[w]}).$$
In particular, we have
$$HML^{II,\bullet}(R,w;M)\cong HH^{II,\bullet}(Q^n_{\bullet}(\Mat_n(R)),\Mat_n(M)),$$
$$HML^{II}_{\bullet}(R,w;M)\cong HH^{II}_{\bullet}(Q^n_{\bullet}(\Mat_n(R)),\Mat_n(M)).$$\end{prop}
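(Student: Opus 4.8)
The plan is to compare both sides through the intermediate DG category $Q^n_{\bullet}(P(R))$ and to invoke Proposition \ref{prop:q_is_homological_epi} twice, along the two functors
$$\Z[P(R)]\xrightarrow{\ F_n\ }Q^n_{\bullet}(P(R))\xleftarrow{\ \iota\ }Q^n_{\bullet}(\Mat_n(R)).$$
Here $F_n$ is the tautological functor of Section \ref{sec:DG_quotients_and_Q^n} (the identity on objects), and $\iota$ is the inclusion of the full DG subcategory on the single object $R^n\in P(R)$, using the identification $Q^n_{\bullet}(\Mat_n(R))\cong\End_{Q^n_{\bullet}(P(R))}(h_{R^n})$ from Corollary \ref{cor:R^n_gen_of_Q^n(P(R))}. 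First I would record that both functors are homological epimorphisms: for $F_n$ this is Corollary \ref{cor:F_n_homological_epi} 1); for $\iota$ it follows from Corollary \ref{cor:R^n_gen_of_Q^n(P(R))} 2), since $h_{R^n}$ generates $\Perf(Q^n_{\bullet}(P(R)))$ and has endomorphism DG algebra $Q^n_{\bullet}(\Mat_n(R))$, so $\bL\iota^*$ is an equivalence on perfect complexes up to direct summands and $\iota$ is a homological epimorphism by Proposition \ref{prop:DG_quotient_is_homological_epi}.

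Next I would fix compatible coefficients and curvatures. On each category the bimodule is induced from the $R\otimes R^{op}$-module $M$ by the natural projection onto $P(R)$, resp. $\Mat_n(R)$, using the Morita equivalence $R\sim P(R)$. Restriction of the $Q^n_{\bullet}(P(R))$-bimodule $M$ along $F_n$ recovers the $\Z[P(R)]$-bimodule $M$ of Definition \ref{defi:HML}, while its restriction along $\iota$ is the value $M(R^n,R^n)=\Mat_n(M)$. For the curvature, the central element $[w]\in Z^0_{cl}(Q^n_{\bullet}(P(R)))$ pulls back under $F_n$ to $[w]\in Z^0_{cl}(\Z[P(R)])$ and restricts under $\iota$ to the scalar matrix $[wI_n]\in Z^0_{cl}(Q^n_{\bullet}(\Mat_n(R)))$; since $w$ is central for $M$, each of these is central for the corresponding bimodule. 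With these identifications the hypotheses of Proposition \ref{prop:q_is_homological_epi} are met for $F_n$ and for $\iota$, so I obtain natural $A_{\infty}$-quasi-isomorphisms of mixed complexes
$$(\Hoch^{\bullet}(\Z[P(R)],M),\delta_{[w]})\simeq(\Hoch^{\bullet}(Q^n_{\bullet}(P(R)),M),\delta_{[w]})\simeq(\Hoch^{\bullet}(Q^n_{\bullet}(\Mat_n(R)),\Mat_n(M)),\delta_{[wI_n]}),$$
and likewise for the chain complexes. Composing them in $D(\Mix_A)$ gives the two displayed isomorphisms, where on the right $M$ and $[w]$ are read as $\Mat_n(M)$ and $[wI_n]$ under Morita equivalence.

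For the final two identifications I would apply the totalization $\Tot^L$, which by Proposition \ref{prop:functoriality_of_Tot} is functorial under $A_{\infty}$-morphisms and carries $A_{\infty}$-quasi-isomorphisms to quasi-isomorphisms; this transports the mixed-complex isomorphisms to isomorphisms of $\Z/2$-graded complexes in $D(A[u^{\pm 1}])$, yielding the stated descriptions of $HML^{II,\bullet}(R,w;M)$ and $HML^{II}_{\bullet}(R,w;M)$.

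I expect the main difficulty to lie in the second step rather than the first: one must check that the Morita reduction from $Q^n_{\bullet}(P(R))$ to the endomorphism DG algebra $Q^n_{\bullet}(\Mat_n(R))$ of its generator is compatible with the whole mixed structure, i.e. that restriction along $\iota$ sends the pair $(M,[w])$ precisely to $(\Mat_n(M),[wI_n])$, so that Proposition \ref{prop:q_is_homological_epi} applies verbatim. The key point legitimizing this is that $\iota$ is a genuine homological epimorphism, which is exactly where the generation statement of Corollary \ref{cor:R^n_gen_of_Q^n(P(R))} 2) is used.
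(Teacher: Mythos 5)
Your proof is correct and follows essentially the same route as the paper's: pass through the intermediate category $Q^n_{\bullet}(P(R))$, use Corollary \ref{cor:F_n_homological_epi} plus Proposition \ref{prop:q_is_homological_epi} for the functor $F_n$, and then reduce to $Q^n_{\bullet}(\Mat_n(R))$ via the generator $h_{R^n}$ from Corollary \ref{cor:R^n_gen_of_Q^n(P(R))}. The only difference is that you spell out the Morita step (the inclusion $\iota$ being a homological epimorphism, and the compatibility of $(M,[w])$ with restriction along $\iota$) which the paper leaves implicit by citing the proof of Corollary \ref{cor:R^n_gen_of_Q^n(P(R))}.
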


\begin{proof}We will prove the first isomorphism, and the second one is proved analogously.

By Corollary \ref{cor:F_n_homological_epi}, the DG functor $F_n:\Z[P(R)]\to Q^n_{\bullet}(P(R))$ is a homological epimorphism. By 
Proposition \ref{prop:q_is_homological_epi} we obtain the isomorphism
$$(\Hoch^{\bullet}(\Z[P(R)],M),\delta_{[w]})\cong (\Hoch^{\bullet}(Q^n_{\bullet}(P(R)),M),\delta_{[w]})$$
in $D(\Mix_A).$

Finally, by the proof of Corollary \ref{cor:R^n_gen_of_Q^n(P(R))} we have an isomorphism
$$(\Hoch^{\bullet}(Q^n_{\bullet}(P(R)),M),\delta_{[w]})\cong (\Hoch^{\bullet}(Q^n_{\bullet}(\Mat_n(R)),M),\delta_{[w]})$$
in $D(\Mix_A).$ This proves the proposition.\end{proof}

\begin{remark}For $n=1,$ it was proved in \cite{JP} that the complexes $\Hoch^{\bullet}(\Z[P(R)],M)$ and $\Hoch^{\bullet}(Q_{\bullet}(R),M)$ are quasi-isomorphic.\end{remark}

\begin{theo}\label{th:HML^2_sq_zero_ext}(\cite{ML})  Let $A$ be a ring, and $M$ an $A\text{-}A$-bimodule. There is a bijection between the set $HML^2(R,M)$ and the set of square-zero extensions
\begin{equation}\label{eq:square_zero_extension}0\to M\to \widetilde{A}\to A\to 0.\end{equation}
Moreover, if we choose a set-theoretic splitting $s:A\to \widetilde{A}$ of \eqref{eq:square_zero_extension}, satisfying $s(0)=0,$ then we have an explicit cocycle $\alpha\in\Hoch^2(Q_{\bullet}(A),M)$ with components
$$\alpha_1:Q_1(A)\to M,\quad\alpha_1([(a,b)])=s(a)+s(b)-s(a+b),$$
$$\alpha_2:Q_0(A)\otimes Q_0(A)\to M,\quad\alpha_2([a],[b])=s(a)s(b)-s(ab).$$
The class $\bar{\alpha}$ corresponds to the square-zero extension \eqref{eq:square_zero_extension}.\end{theo}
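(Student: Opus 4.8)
The plan is to compute $HML^{2}(A,M)$ through the cubical model and then to match degree-two cocycles with square-zero extensions by a direct, hand-built bijection. By the $n=1$ instance of Proposition~\ref{prop:HML^II_via_Q^n} with $w=0$ (equivalently, the theorem of \cite{JP}), the complex $\Hoch^{\bullet}(\Z[P(A)],M)$ computing $HML^{\bullet}(A,M)$ is quasi-isomorphic to the Hochschild cochain complex $\Hoch^{\bullet}(Q_{\bullet}(A),M)$ of the DG ring $Q_{\bullet}(A)$. First I would isolate the low-degree part of this bicomplex. Since $M$ sits in degree zero and $Q_{\bullet}(A)$ is concentrated in non-negative homological degrees, a total-degree-two cochain decomposes according to Hochschild degree $n$ and internal degree $2-n$; the two components relevant to us are $\alpha_1\in\Hom(Q_1(A),M)$ (from $n=1$) and $\alpha_2\in\Hom(Q_0(A)\otimes Q_0(A),M)$ (from $n=2$). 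Recalling that $Q_0(A)=\Z[A]/\Z[0]$ with generators $[a]$, and that $Q_1(A)$ is generated by the cells $[(a,b)]$ whose cubical boundary is $[a]+[b]-[a+b]$, I would, given a square-zero extension \eqref{eq:square_zero_extension} and a section $s$ with $s(0)=0$, set $\alpha_1([(a,b)])=s(a)+s(b)-s(a+b)$ and $\alpha_2([a],[b])=s(a)s(b)-s(ab)$. Both expressions land in $M$ precisely because they map to $0$ in $A$, and they record the failure of $s$ to be additive and multiplicative respectively.

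The second step is to verify that $\alpha=(\alpha_1,\alpha_2)$ is closed for the total differential $b+d$, where $b$ is the Hochschild differential and $d$ is induced by the cubical differential of $Q_{\bullet}(A)$. Applying $b+d$ produces a total-degree-three cochain with three homogeneous pieces, in Hochschild degrees $1$, $2$ and $3$. The degree-three piece is $b\alpha_2$ on $[a]\otimes[b]\otimes[c]$, and its vanishing is exactly the associativity identity $a\,\alpha_2([b],[c])-\alpha_2([ab],[c])+\alpha_2([a],[bc])-\alpha_2([a],[b])\,c=0$ in $M$, which holds because $\widetilde{A}$ is associative and $M^{2}=0$. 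The degree-two piece combines $b\alpha_1$ with $d\alpha_2$ and records the two distributive laws linking the additive and multiplicative defects. The degree-one piece is $\alpha_1\circ d$ on $Q_2(A)$ and expresses the coherence of the additive defect. I expect this last identity to be the main obstacle: unlike the bar complex, $Q_{\bullet}(A)$ resolves the \emph{additive} structure of $A$, so one must pin down the differential $Q_2(A)\to Q_1(A)$ and the relations among the cells $[(a,b)]$ precisely in order to see that the symmetric additive $2$-cochain $\alpha_1$ is closed. All three identities are consequences of the ring axioms of $\widetilde{A}$, so the verification is careful bookkeeping in the cubical construction rather than anything conceptual.

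Third, I would show the class $\bar{\alpha}$ is independent of the chosen section and that equivalent extensions yield cohomologous cocycles. Replacing $s$ by another section $s'$ with $s'(0)=0$ amounts to $s'=s+\beta$ for a map $\beta\colon A\to M$ with $\beta(0)=0$, that is, a total-degree-one cochain $\beta\in\Hom(Q_0(A),M)$. A direct computation, using $M^{2}=0$ together with the bimodule relations $s(a)\,m=a\,m$ and $m\,s(b)=m\,b$ for $m\in M$, shows that the new defects satisfy $\alpha_1'=\alpha_1+\beta\circ d$ and $\alpha_2'=\alpha_2+b\beta$, i.e. $\alpha'=\alpha+(b+d)\beta$. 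Hence $\bar{\alpha}$ descends to a well-defined map from equivalence classes of square-zero extensions to $HML^{2}(A,M)$.

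Finally, I would construct the inverse and conclude bijectivity. Given a cocycle represented by $(\alpha_1,\alpha_2)$, define $\widetilde{A}:=A\oplus M$ as a set, with twisted operations
\begin{align*}
(a,m)+(b,n)&=\bigl(a+b,\;m+n+\alpha_1([(a,b)])\bigr),\\
(a,m)\cdot(b,n)&=\bigl(ab,\;a\,n+m\,b+\alpha_2([a],[b])\bigr).
\end{align*}
The three cocycle identities established in the second step are exactly what is needed to check that $\widetilde{A}$ is an associative ring, that $0\to M\to\widetilde{A}\to A\to 0$ is a square-zero extension, and that $M$ carries the prescribed bimodule structure. Verifying that the defect construction and this reconstruction are mutually inverse—up to equivalence of extensions on one side and up to coboundary on the other—then yields the asserted bijection, and by construction the cocycle $\alpha$ attached to a section $s$ represents the class of the extension \eqref{eq:square_zero_extension}.
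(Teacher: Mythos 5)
The paper gives no proof of this statement: it is quoted as a classical result of Mac Lane, with the citation \cite{ML} attached to the theorem, so there is no in-paper argument to compare yours against. Your sketch reconstructs the standard direct proof and its architecture is sound: the identification of $HML^{\bullet}(A,M)$ with the cohomology of $\Hoch^{\bullet}(Q_{\bullet}(A),M)$ is the $n=1$ case of Proposition \ref{prop:HML^II_via_Q^n} (equivalently \cite{JP}); the degree bookkeeping is correct (a total-degree-two cochain has exactly the two components $\alpha_1\in\Hom(Q_1(A),M)$ and $\alpha_2\in\Hom(Q_0(A)\otimes Q_0(A),M)$, since $M$ sits in degree zero and $Q_{\bullet}(A)$ in non-negative homological degrees); changing the section changes $(\alpha_1,\alpha_2)$ by the total coboundary of $\beta\in\Hom(Q_0(A),M)$, exactly as you compute; and the twisted operations on $A\oplus M$ give the inverse construction.

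Three details deserve more care than your sketch gives them. First, the step you single out as ``the main obstacle'' is in fact trivial: for any $s$ with $s(0)=0$, the expression $\alpha_1(\delta'[f])$ cancels identically on every generator $[f]\in Q_2(A)$, with no input from the ring structure of $\widetilde{A}$; what the hypothesis $s(0)=0$ really buys is that $\alpha_1$ kills the slabs $[(a,0)],[(0,b)]$, hence is well defined on $Q_1(A)=Q'_1(A)/N_1(A)$. Second, the genuine content of that component of the cocycle equation appears only in your fourth step, where $(\alpha_1,\alpha_2)$ is arbitrary: evaluating it on the cell with $f(0,0)=0,\ f(0,1)=a,\ f(1,0)=b,\ f(1,1)=0$, whose boundary equals $[(a,b)]-[(b,a)]$ modulo slabs, forces $\alpha_1$ to be \emph{symmetric}, and evaluating it on the cell with $f(0,0)=a,\ f(0,1)=0,\ f(1,0)=b,\ f(1,1)=c$ gives the identity $\alpha_1([(b,c)])-\alpha_1([(a+b,c)])+\alpha_1([(a,b+c)])-\alpha_1([(a,b)])=0$; these two are precisely what make your twisted addition commutative and associative, and they use the quotient by slabs and diagonals in an essential way, so they should be stated explicitly rather than subsumed under ``the three cocycle identities''. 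Third, your reconstruction produces a priori a possibly non-unital ring: you must exhibit a unit of $\widetilde{A}$. The Hochschild component of the cocycle condition with first two arguments equal to $[1]$ yields $\alpha_2([1],[c])=\alpha_2([1],[1])c$ and, symmetrically, $\alpha_2([c],[1])=c\,\alpha_2([1],[1])$, so $(1,-\alpha_2([1],[1]))$ is a two-sided unit. With these points supplied, your argument is complete and is, in substance, Mac Lane's original one.
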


For any associative ring $A$ (and more generally for any associative ring spectra) we have topological Hochschild homology $THH_{\bullet}(A),$ which was originally defined by B\"okstedt \cite{Bok}. The following theorem was proved by Pirashvili and Waldhausen \cite{PW}.

\begin{theo}\label{th:THH_equals_HML} (\cite{PW}) For any associative ring $A$ there is a natural isomorphism
$$THH_{\bullet}(A)\cong HML_{\bullet}(A).$$\end{theo}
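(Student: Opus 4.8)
The plan is to compute $THH_{\bullet}(A)$ from B\"okstedt's cyclic bar model \cite{Bok} and to recognize the resulting complex, after passing to homotopy groups, as the Hochschild complex of the DG ring $Q_{\bullet}(A)$---which, by definition together with the comparison of the two models recalled in the remark after Proposition \ref{prop:HML^II_via_Q^n} (originally \cite{JP}), computes $HML_{\bullet}(A)$. The bridge between the two sides is the Eilenberg--Mac Lane description of the stable homology of Eilenberg--Mac Lane spaces, which is exactly what the cubical construction encodes.

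First I would recall that $THH(HA)$ is the geometric realization of the simplicial spectrum $[q]\mapsto HA^{\wedge(q+1)}$ (B\"okstedt smash powers), with Hochschild-type faces given by smash-multiplication together with the cyclic contraction, and degeneracies given by insertion of the unit. Passing to homotopy groups degreewise produces the spectral sequence of a simplicial spectrum,
$$E^1_{p,q}=\pi_q\big(HA^{\wedge(p+1)}\big)\Longrightarrow THH_{p+q}(A),$$
whose $d^1$ is the alternating sum of the induced face maps.

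The technical heart is the evaluation of $\pi_{\bullet}$ of the smash powers $HA^{\wedge(p+1)}$. Here one uses that the homology of a smash product of Eilenberg--Mac Lane spectra is computed, stably, by the integral homology of products of Eilenberg--Mac Lane spaces, together with the identity $H_n(Q_{\bullet}(C))\cong H_{n+m}(K(C,m))$ for $m>n$ (\cite{EM2}, \cite{EM3}; see also \cite{Pir}). This identifies the homotopy groups of the terms $HA^{\wedge(p+1)}$ with the terms of the Hochschild complex of the DG ring $Q_{\bullet}(A)$, and the lax monoidal structure on $Q_{\bullet}(-)$ from Proposition \ref{prop:Q^n_lax_monoidal} guarantees that this identification is multiplicative, so that the simplicial faces of the cyclic bar construction correspond to the Hochschild bar differentials. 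Consequently the spectral sequence above becomes isomorphic to the standard spectral sequence computing $HH_{\bullet}(Q_{\bullet}(A))$ from its bar complex, and since both converge we obtain $THH_{\bullet}(A)\cong HH_{\bullet}(Q_{\bullet}(A))\cong HML_{\bullet}(A)$, naturally in $A$.

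The main obstacle is precisely this middle step. B\"okstedt's smash product is symmetric monoidal only up to coherent homotopy, so even making sense of $HA^{\wedge(p+1)}$ and computing its homotopy in terms of stable homology of Eilenberg--Mac Lane spaces requires the approximation lemma and systematic use of homotopy colimits ($\hocolim$); moreover one must check that the comparison with $Q_{\bullet}(-)$ is compatible with the entire simplicial structure, and not merely in each fixed simplicial degree, which is where the coherence of the lax monoidal maps is essential. A cleaner but less elementary route bypasses these point-set difficulties by working in functor homology: by Jibladze--Pirashvili \cite{JP} both invariants are realized as $\Tor$-groups over the category $\cF$ of finitely generated free $A$-modules, and the theorem then reduces to the comparison of two such $\Tor$ computations.
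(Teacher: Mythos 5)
First, a point of comparison: the paper does not prove this statement at all. Theorem \ref{th:THH_equals_HML} is imported verbatim from Pirashvili--Waldhausen \cite{PW} and used as a black box, so any argument you give is necessarily ``different from the paper''; the question is only whether your sketch stands on its own. It does not, and the gap sits exactly where you flag it. Your plan produces two spectral sequences --- the simplicial-spectrum spectral sequence with $E^1_{p,q}=\pi_q(HA^{\wedge(p+1)})$, and the column-filtration spectral sequence of the Hochschild bicomplex of $Q_{\bullet}(A)$ with $E^1_{p,q}=H_q(A\otimes Q_{\bullet}(A)^{\otimes p})$ --- and the degreewise identification of their $E^1$-terms is in fact correct (since $Q_{\bullet}(A)$ is a complex of free abelian groups with $H_*(Q_{\bullet}(A))\cong\pi_*(H\Z\wedge HA)$, and tensoring such models over $\Z$ corresponds to smashing $H\Z$-modules over $H\Z$, one gets $H_q(A\otimes Q_{\bullet}(A)^{\otimes p})\cong\pi_q(HA^{\wedge(p+1)})$). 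But you never construct a \emph{map} (or zigzag of maps) of simplicial/filtered objects inducing this identification. An abstract isomorphism of $E^1$-pages commuting with $d^1$ yields at best an isomorphism of $E^2$-pages; it gives no control over the higher differentials, the extension problems, or naturality in $A$, so it cannot yield $THH_{\bullet}(A)\cong HML_{\bullet}(A)$. Moreover, the assertion that ``the lax monoidal structure on $Q_{\bullet}(-)$ guarantees that this identification is multiplicative'' begs the question: the lax structure map $Q_{\bullet}(A)\otimes Q_{\bullet}(A)\to Q_{\bullet}(A\otimes A)$ is \emph{not} a quasi-isomorphism, and what is actually required is that the DG ring $Q_{\bullet}(A)$ is a chain-level model of the $H\Z$-algebra $H\Z\wedge HA$ compatibly with \emph{all} the face maps of the cyclic bar construction simultaneously. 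That coherence statement is the real content of the theorem, not a formality one can wave through.

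Your closing fallback --- functor homology --- is indeed, in essence, how \cite{PW} actually proceed: both sides are identified with the same $\Tor$-groups over a category of finitely generated free (pointed) modules. But be careful with the attribution: \cite{JP} supplies only the Mac Lane side of this identification (cohomology of algebraic theories); identifying $THH_{\bullet}$ with those same $\Tor$-groups is precisely the contribution of \cite{PW}, and it again requires the hard homotopy-theoretic input (B\"okstedt's approximation lemma, comparison with stable K-theory) that your first route was trying to avoid. So as written, neither branch of your proposal constitutes a proof; the first has a genuine unfilled gap, and the second is a pointer to the literature rather than an argument.
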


Note that if the ring $R$ is commutative and $M$ is an $R$-module, then the complex $\Hoch_{\bullet}(Q_{\bullet}(R),M)$ is a complex of $R$-modules, and similarly for the Hochschild cochain complex. Moreover, for any element $w\in R$ (which is automatically central for $M$) the differential $\delta_w$ is $R$-linear. We need the following straightforward applications of Lemmas \ref{lem:duality_of_Hoch_complexes} and \ref{lem:duality_for_homology}.

\begin{lemma}\label{lem:duality_of_Mac_Lane_complexes}Let $R$ be a commutative ring and $M$ an h-projective complex of $R$-modules. Let $w\in R$ be any element.  Then we have natural isomorphism in $D(\Mix_R):$
$$\bR\Hom_R((\Hoch_{\bullet}(Q_{\bullet}(R),M),\delta_{[w]}),R)\cong (\Hoch^{\bullet}(Q_{\bullet}(R),\Hom_R(M,R)),\delta_w)$$\end{lemma}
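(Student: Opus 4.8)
The plan is to run the same two-step argument as in the proof of Lemma~\ref{lem:duality_of_Hoch_complexes}, but now with the base ring taken to be $R$ itself, keeping in mind one structural point: $Q_{\bullet}(R)$ is only a DG ring (a $\Z$-linear DG category with a single object), so all tensor products appearing in the Hochschild complex are over $\Z$, while the entire $R$-module structure is carried by the coefficient module $M$. Writing $\cK_{\bullet}:=\Hoch_{\bullet}(Q_{\bullet}(R),M)$, I first identify the naive $R$-linear dual $\Hom_R(\cK_{\bullet},R)$ with the cochain complex $\Hoch^{\bullet}(Q_{\bullet}(R),\Hom_R(M,R))$ as mixed complexes over $R$, and then check that the derived dual agrees with the naive one.

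For the termwise identification I would first record that $Q_{\bullet}(R)$ is a complex of free abelian groups: by Proposition~\ref{prop:Q^n_cross_effect}~2) each $Q_k(R)$ is a direct sum of cross-effects of $\Z[-]$, each of which is the image of an idempotent on a free abelian group, hence a direct summand of a free $\Z$-module and therefore free. Using the single object of $Q_{\bullet}(R)$, the $(-n)$-th column of $\cK_{\bullet}$ is $M\otimes_{\Z}Q_{\bullet}(R)^{\otimes_{\Z}n}$. The key computation is the adjunction
\[
\Hom_R(M\otimes_{\Z}N,R)\cong\Hom_{\Z}(N,\Hom_R(M,R)),
\]
valid for any abelian group $N$ and any $R$-module $M$ with $R$ acting through $M$: an $R$-linear map out of $M\otimes_{\Z}N$ is the same datum as a $\Z$-linear family, indexed by $N$, of $R$-linear maps $M\to R$. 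Applying this with $N=Q_{\bullet}(R)^{\otimes_{\Z}n}$ identifies the $(-n)$-th column of $\Hom_R(\cK_{\bullet},R)$ with $\Hom_{\Z}(Q_{\bullet}(R)^{\otimes_{\Z}n},\Hom_R(M,R))$, which is precisely the $n$-th column of $\Hoch^{\bullet}(Q_{\bullet}(R),\Hom_R(M,R))$. A direct inspection of the horizontal differentials then shows that the dual of the Hochschild boundary $b$ is the Hochschild coboundary, and that, since $[w]$ is inserted in the same slots on both sides, the dual of $\delta_{[w]}$ is $\delta_{w}$; this produces an isomorphism of mixed complexes over $R$.

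It remains to see that $\bR\Hom_R(\cK_{\bullet},R)\cong\Hom_R(\cK_{\bullet},R)$, that is, that $\cK_{\bullet}$ is h-projective over $R$. By Proposition~\ref{prop:semi-free_h_proj} the h-projective module $M$ is isomorphic in $K(R)$ to a semi-free $R$-module $M'$. Since $\Hoch_{\bullet}(Q_{\bullet}(R),-)$ is an additive functor on complexes (tensoring with the fixed $\Z$-complexes $Q_{\bullet}(R)^{\otimes_{\Z}n}$ and then totalizing), it carries this homotopy equivalence to an isomorphism $\cK_{\bullet}\cong\Hoch_{\bullet}(Q_{\bullet}(R),M')$ in $K(R)$. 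For semi-free $M'$ the column filtration by number of tensor factors has graded pieces $M'\otimes_{\Z}Q_{\bullet}(R)^{\otimes_{\Z}n}$, which are direct sums of shifts of free $R$-modules; refining this exhaustive, bounded-below filtration exhibits $\Hoch_{\bullet}(Q_{\bullet}(R),M')$ as semi-free over $R$, hence h-projective by Proposition~\ref{prop:semi-free_h_proj}. Thus $\cK_{\bullet}$ is h-projective, its derived $R$-dual coincides with the naive dual, and combining this with the previous paragraph yields the desired isomorphism in $D(\Mix_R)$.

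The main obstacle I anticipate is precisely this h-projectivity step: because $M$ may be unbounded and $Q_{\bullet}(R)$ is cohomologically unbounded below, one cannot argue degreewise and must pass to a semi-free replacement of $M$ and verify that totalization of the column filtration preserves semi-freeness. The sign bookkeeping needed to match $\delta_{[w]}^{*}$ with $\delta_{w}$ and the dual of $b$ with the coboundary is routine, but it must be carried out consistently with the conventions fixed in Subsection~\ref{ssec:HH_second_kind}.
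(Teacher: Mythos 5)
Your proof is correct, but it takes a different route from the paper's, which is a two-line reduction: the paper observes that $\Hoch_{\bullet}(Q_{\bullet}(R),M)\cong\Hoch_{\bullet}^R(R\otimes Q_{\bullet}(R),M)$, and likewise for cochains -- that is, it trades the $\Z$-linear Hochschild complex of $Q_{\bullet}(R)$ for the $R$-linear Hochschild complex of the base-changed DG ring $R\otimes_{\Z}Q_{\bullet}(R)$ -- and then simply cites Lemma \ref{lem:duality_of_Hoch_complexes} with base ring $A=R$ and $\cB=R\otimes_{\Z}Q_{\bullet}(R)$, which is h-projective over $R$ because it is a bounded above complex of free $R$-modules. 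You avoid this base change and instead re-prove the content of Lemma \ref{lem:duality_of_Hoch_complexes} directly in the mixed $\Z$-versus-$R$-linear setting: your adjunction $\Hom_R(M\otimes_{\Z}N,R)\cong\Hom_{\Z}(N,\Hom_R(M,R))$ is exactly what the $R$-linear Hom-tensor identification becomes after unwinding the base change, and your semi-free replacement argument supplies the h-projectivity of $\Hoch_{\bullet}(Q_{\bullet}(R),M)$ over $R$, a fact which the proof of Lemma \ref{lem:duality_of_Hoch_complexes} asserts in one sentence without detail. The paper's route buys brevity and reuse of an existing lemma; yours buys a self-contained argument in which the key h-projectivity claim is actually verified. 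Two small remarks: the graded pieces of your column filtration are complexes whose underlying graded $R$-modules are free (not literally direct sums of shifts of free modules as DG modules), so the refinement by the semi-free filtration of $M'$ tensored with the stupid filtration of $Q_{\bullet}(R)^{\otimes_{\Z}n}$, suitably reindexed, is genuinely needed, as you indicate; and one can in fact bypass the semi-free replacement altogether, since each column $M\otimes_{\Z}Q_{\bullet}(R)^{\otimes_{\Z}n}$ is already h-projective over $R$ (tensoring an h-projective $R$-complex over $\Z$ with a bounded above complex of free abelian groups preserves h-projectivity, by the same adjunction), and an exhaustive column filtration with degreewise split inclusions and h-projective quotients has h-projective union.
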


\begin{proof}Indeed, note that \begin{equation}\label{eq:identification_for_Mac_Lane_chain_complex}\Hoch_{\bullet}(Q_{\bullet}(R),M)\cong\Hoch_{\bullet}^R(R\otimes Q_{\bullet}(R),M),\end{equation}
and similarly
\begin{equation}
\label{eq:identification_for_Mac_Lane_cochain_complex}
\Hoch^{\bullet}(Q_{\bullet}(R),M)\cong\Hoch^{\bullet}_R(R\otimes Q_{\bullet}(R),M)
\end{equation}
for the Hochschild cochain complex. The assertion now follows from Lemma \ref{lem:duality_of_Hoch_complexes}.\end{proof}

\begin{lemma}\label{lem:duality_for_Mac_Lane_homology}Let $R,$ $w$ and $M$ be as in Lemma \ref{lem:duality_of_Mac_Lane_complexes}. Assume that the following holds:

i) the ring $R$ is noetherian and regular;

ii) $M$ is bounded above;

iii) the $R$-module $HML_n(R,M)$ is finitely generated for all $n.$

Then

1) the natural morphism $$(\Hoch_{\bullet}(Q_{\bullet}(R),M),\delta_w)\to\bR\Hom_R((\Hoch^{\bullet}(Q_{\bullet}(R),\Hom_R(M,R)),\delta_w),R)$$
in $D(\Mix_R)$ is an isomorphism;

2) there is a natural isomorphism $$\Hoch^{II}_{\bullet}(Q_{\bullet}(R),[w];M)\cong \bR\Hom_R(\Hoch^{II,\bullet}(Q_{\bullet}(R),w;\Hom_R(M,R)),R)$$
in $D(R[u^{\pm 1}]).$
\end{lemma}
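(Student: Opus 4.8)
The plan is to deduce both statements from the general duality result Lemma \ref{lem:duality_for_homology}, applied with the base ring taken to be $R$ itself (not $\Z$) and with $R\otimes Q_{\bullet}(R)$ playing the role of the DG category $\cB$. Working over $R$ is essential: the finite-generation hypothesis (iii) concerns $HML_n(R,M)$ as an $R$-module, not as an abelian group, and the dual appearing in the statement is $\bR\Hom_R(-,R)$ rather than $\bR\Hom_{\Z}(-,\Z)$; applying Lemma \ref{lem:duality_for_homology} over $\Z$ would therefore fail on both counts. So the first move is to recast everything $R$-linearly.

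Concretely, I would use the identifications \eqref{eq:identification_for_Mac_Lane_chain_complex} and \eqref{eq:identification_for_Mac_Lane_cochain_complex} from Lemma \ref{lem:duality_of_Mac_Lane_complexes} to rewrite $\Hoch_{\bullet}(Q_{\bullet}(R),M)\cong\Hoch^R_{\bullet}(R\otimes Q_{\bullet}(R),M)$ and $\Hoch^{\bullet}(Q_{\bullet}(R),\Hom_R(M,R))\cong\Hoch^{\bullet}_R(R\otimes Q_{\bullet}(R),\Hom_R(M,R))$, where $R\otimes Q_{\bullet}(R)$ is regarded as a one-object $R$-linear DG category. Here one checks these identifications are compatible with the mixed-complex structures, namely that $\delta_{[w]}$ on the Mac Lane side matches $\delta_w$ on the right; this is immediate from the formulas of Subsection \ref{ssec:HH_second_kind} once one notes that the central element $[w]$ of $Q_{\bullet}(R)$ is precisely the image of $w\in Z(R)=R$. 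I would then verify the three hypotheses of Lemma \ref{lem:duality_for_homology} for $A=R$ and $\cB=R\otimes Q_{\bullet}(R)$: condition (i) is hypothesis (i); for (ii), $Q_{\bullet}(R)$ is a non-negative chain complex, hence non-positive in the cohomological grading, and $M$ is bounded above by hypothesis (ii); for (iii), the identification above gives $H_n\Hoch^R_{\bullet}(R\otimes Q_{\bullet}(R),M)=HML_n(R,M)$, finitely generated over $R$ by hypothesis (iii). I would also record that $\cB$ is h-projective over $R$, since by Proposition \ref{prop:Q^n_cross_effect} each $Q_k(R)$ is a direct summand of a free abelian group, hence free, so $R\otimes Q_{\bullet}(R)$ is a bounded-above complex of free $R$-modules; and $M$ is h-projective over $R$ by assumption. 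Finally, over $A=R$ one has $\bR(-)^{*}=\bR\Hom_R(-,R)$, and since $M$ is h-projective the dual bimodule is $M^{*}=\Hom_R(M,R)$.

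With all hypotheses in place, parts 1) and 2) of Lemma \ref{lem:duality_for_homology} translate verbatim into the two assertions of the present lemma. The step I expect to be the main obstacle is the compatibility of the two presentations at the level of mixed complexes, and hence of their Laurent-type totalizations defining the second-kind invariants: the isomorphism of part 2) lives in $D(R[u^{\pm 1}])$ and, as in the proof of Lemma \ref{lem:duality_for_homology} 2), rests on identifying $\Tot^{\Pi}$ of the chain side as the $R$-dual of $\Tot^{\oplus}$ of the cochain side. This reduction relies crucially on the finite-generation-over-$R$ hypothesis (iii), which lets us replace the chain complex by a bounded-above complex of finitely generated projective $R$-modules, so that componentwise $R$-duality commutes with the totalizations. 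Once the $R$-linear reformulation above is set up carefully, this last point is exactly the content already established in Lemma \ref{lem:duality_for_homology}, and nothing further is needed.
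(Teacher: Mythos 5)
Your proposal is correct and is precisely the paper's argument: the paper's proof also consists of invoking the identifications \eqref{eq:identification_for_Mac_Lane_chain_complex}, \eqref{eq:identification_for_Mac_Lane_cochain_complex} to view the Mac Lane complexes as Hochschild complexes of the one-object $R$-linear DG category $R\otimes Q_{\bullet}(R)$, and then applying Lemma \ref{lem:duality_for_homology} over the base ring $A=R$. Your verification of the hypotheses (h-projectivity of $R\otimes Q_{\bullet}(R)$, non-positivity, finite generation of $HML_n(R,M)$ over $R$, and $M^{*}=\Hom_R(M,R)$) simply makes explicit what the paper leaves as ``follows immediately.''
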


\begin{proof}Similarly to the proof of the previous Lemma, this follows immediately from the identifications \eqref{eq:identification_for_Mac_Lane_chain_complex}, \eqref{eq:identification_for_Mac_Lane_cochain_complex} and Lemma \ref{lem:duality_for_homology}.\end{proof}

\section{Change of rings spectral sequence}
\label{sec:change_of_rings_sp_seq}

Let $f:R_1\to R_2$ be a homomorphism of rings, and $M$ be a $R_2\text{-}R_2$-bimodule. Then $M$ can be also considered as an $R_1\text{-}R_1$-bimodule. 

\begin{prop}\label{prop:sp_seq_change_of_rings} There is a converging spectral sequence
\begin{equation}\label{eq:sp_seq_change_of_rings}E_2^{p,q}=HML^p(R_2,\Ext^q_{R_1}(R_2,M))\Rightarrow HML^{p+q}(R_1,M).\end{equation}
Here $\Ext^{\bullet}_{R_1}(-,-)$ denotes the Ext groups in the category of {\it right} $R_1$-modules.\end{prop}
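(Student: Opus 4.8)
The plan is to obtain \eqref{eq:sp_seq_change_of_rings} as a Grothendieck (Cartan--Eilenberg-type) change of rings spectral sequence for a composite of two functors, in the spirit of \cite{L}, 13.4.22. Write $\cB_i=\Z[P(R_i)]$ and $\cB_i^e=\cB_i\otimes\cB_i^{op}$, so that by definition $HML^{\bullet}(R_i,-)=\Ext^{\bullet}_{\cB_i^e}(I_{R_i},-)$, where $I_{R_i}$ is the diagonal bimodule and the coefficients are pulled back from $R_i\otimes R_i^{op}$ along the augmentation $\cB_i\to P(R_i)$. The two functors I would compose are
$$G=\Hom_{R_1}(R_2,-)\colon (R_2\text{-bimod})\to(R_2\text{-bimod}),\qquad F=HML^0(R_2,-)\colon (R_2\text{-bimod})\to\Ab,$$
where in forming $G$ one restricts the right action of $M$ along $f$, coinduces it back to a right $R_2$-module, and retains the left $R_2$-action, so that the result is again an $R_2$-bimodule. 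The right derived functors are $R^qG=\Ext^q_{R_1}(R_2,-)$ (Ext in right $R_1$-modules, carrying the induced $R_2$-bimodule structure) and $R^pF=HML^p(R_2,-)$, the latter being a genuine derived functor once $F$ is modelled on the Jibladze--Pirashvili functor-homology category so that $HML^p(R_2,-)=\Ext^p_{\cB_2^e}(I_{R_2},-)$ \cite{JP}.

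The anchoring computation is the identification of the composite in degree zero. For an $R_2$-bimodule $M$, an element $\phi\in\Hom_{R_1}(R_2,M)$ fixed by the $R_2$-bimodule action satisfies $b\,\phi(x)=\phi(bx)$ for all $b,x\in R_2$; setting $x=1$ gives $\phi(b)=b\,\phi(1)$, so $\phi$ is determined by $m:=\phi(1)$, and right $R_1$-linearity forces $f(a)m=mf(a)$ for all $a\in R_1$. Hence $F\circ G(M)=HML^0(R_2,\Hom_{R_1}(R_2,M))\cong\{m\in M\mid am=ma\ \forall a\in R_1\}=HML^0(R_1,M)$. I would then promote this to an isomorphism of total derived functors $\bR(F\circ G)\cong HML^{\bullet}(R_1,-)=\Ext^{\bullet}_{\cB_1^e}(I_{R_1},-)$ and feed it into the Grothendieck spectral sequence, producing exactly $E_2^{p,q}=HML^p(R_2,\Ext^q_{R_1}(R_2,M))\Rightarrow HML^{p+q}(R_1,M)$. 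Convergence is automatic: both $HML^{\bullet}(R_2,-)$ and $\Ext^{\bullet}_{R_1}(R_2,-)$ vanish in negative degrees, so the spectral sequence is concentrated in the first quadrant and converges in the usual sense.

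The Grothendieck machine requires the acyclicity hypothesis that $G$ carries injective objects to $F$-acyclic ones. Since $G=\Hom_{R_1}(R_2,-)$ is right adjoint to the exact restriction functor $\res_f$ on right-module structures, it preserves injectives; the real content is that such coinduced objects are $HML^{>0}(R_2,-)$-acyclic. I expect this, together with the identification $\bR(F\circ G)\cong HML^{\bullet}(R_1,-)$, to be the main obstacle, because it is precisely here that one must reconcile the diagonal bimodule $I_{R_1}$ with the restriction of $I_{R_2}$ along $\Phi=-\otimes_{R_1}R_2$: these do not coincide (one finds $\res_\Phi I_{R_2}(X,Y)=\Z[\Hom_{R_1}(X,Y\otimes_{R_1}R_2)]$ rather than $\Z[\Hom_{R_1}(X,Y)]$), so the reconciliation cannot be purely formal and must be carried out inside the functor-homology category where $HML$ is realized as $\Ext$. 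Conceptually this step is the shadow of the base-change equivalence $F_{HR_1^e}(HR_1,M)\simeq F_{HR_2^e}(HR_2,\underline{\Hom}_{HR_1}(HR_2,M))$ for topological Hochschild cohomology under $HR_1\to HR_2$, whose homotopy groups produce the two edge terms; I would use this as a consistency check while executing the algebraic argument. Once the acyclicity is in hand, the remaining points---exactness of the functors, identification of $R^qG$, and first-quadrant convergence---are routine.
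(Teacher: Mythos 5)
The paper's own ``proof'' of this proposition is a bare citation to \cite{L}, Section 13.4.22, so your attempt has to stand on its own; as written, it does not, because the step you yourself defer (``the main obstacle'') is not a technical loose end but the entire content of the statement. The core problem is this: the Grothendieck spectral sequence computes the derived functors of $F$, $G$ and $F\circ G$ \emph{in the abelian categories on which they are defined}. With $F=HML^0(R_2,-)$ and $G=\Hom_{R_1}(R_2,-)$ defined on categories of ordinary bimodules, as in your setup, the machine produces $R^pF=\Ext^p_{R_2\otimes R_2^{op}}(R_2,-)$ (Hochschild, not Mac Lane, cohomology) and abutment $\Ext^{p+q}_{R_1\otimes R_1^{op}}(R_1,M)$ --- i.e.\ the classical Cartan--Eilenberg change-of-rings sequence, which is a different statement. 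Your degree-zero identification $F\circ G\cong HML^0(R_1,-)$ cannot repair this, since derived functors depend on the ambient category and not just on $H^0$: for $R_1=R_2=\Z$, $M=\Z$, both $F$ and $G$ are the identity on abelian groups, so all higher derived functors in your setup vanish, while $HML^{2k}(\Z,\Z)\cong\Z/k\Z\neq 0$. (Equivalently: $HML^{>0}(R_2,-)$ does not vanish on injective bimodules --- e.g.\ $HML^{2k-1}(\Z,\Q/\Z)\cong\Z/k\Z$ --- so it is not the universal $\delta$-functor extending $HML^0$ on bimodules.)

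You anticipate this by asking that $F$ be ``modelled'' so that $HML^p(R_2,-)=\Ext^p_{\cB_2^e}(I_{\cB_2},-)$ with $\cB_2=\Z[P(R_2)]$, but then everything else must be rebuilt in that setting: $G$ must be redefined as a functor into $\cB_2^e$-modules (or into the Jibladze--Pirashvili functor category), the identification $R^qG\cong\Ext^q_{R_1}(R_2,-)$ must be re-proved there, coinduced objects must be shown $HML^{>0}(R_2,-)$-acyclic, and, most importantly, the total derived composite must be identified with $\Ext^{\bullet}_{\cB_1^e}(I_{\cB_1},M)=HML^{\bullet}(R_1,M)$. As you correctly observe, $\res_{\Phi}I_{\cB_2}\neq I_{\cB_1}$, and the induced functor $\Phi:\cB_1\to\cB_2$ is in general not a homological epimorphism (the $E_2$-term of the proposition exists precisely to measure this failure), so none of these identifications is formal. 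Since your proposal names these steps but does not carry any of them out --- and since the version of the argument that is fully specified computes the wrong invariant --- there is a genuine gap: what you have is a plausible strategy, in the spirit of Loday's treatment and of the THH-level base change you invoke as a consistency check, but not a proof.
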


\begin{proof}See \cite{L}, Section 13.4.22.\end{proof}

\begin{prop}\label{prop:HML_localizations} Let $R$ be a commutative ring, and $S\subset R$ a multiplicative system. Let $M$ be an $S^{-1}R$-module (hence also a bimodule). Then we have a natural isomorphism
$$HML^{\bullet}(R,M)\cong HML^{\bullet}(S^{-1}R,M).$$\end{prop}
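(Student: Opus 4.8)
The plan is to apply the change of rings spectral sequence of Proposition \ref{prop:sp_seq_change_of_rings} to the localization homomorphism $f\colon R\to S^{-1}R$, i.e. with $R_1=R$ and $R_2=S^{-1}R$. Since $M$ is an $S^{-1}R$-module it is in particular an $R_2\text{-}R_2$-bimodule, so the spectral sequence reads
\begin{equation*}
E_2^{p,q}=HML^p(S^{-1}R,\Ext^q_R(S^{-1}R,M))\Rightarrow HML^{p+q}(R,M).
\end{equation*}
Everything then reduces to computing the coefficient groups $\Ext^q_R(S^{-1}R,M)$, where the $\Ext$ is taken in the category of right $R$-modules.

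The main step — and essentially the only real content — is to show that $\Ext^0_R(S^{-1}R,M)\cong M$ while $\Ext^q_R(S^{-1}R,M)=0$ for $q>0$. First I would choose a projective resolution $P_\bullet\to S^{-1}R$ of $S^{-1}R$ by right $R$-modules, and use the extension–restriction adjunction to identify the complex computing these $\Ext$ groups:
\begin{equation*}
\Hom_R(P_\bullet,M)\cong \Hom_{S^{-1}R}(S^{-1}R\otimes_R P_\bullet, M),
\end{equation*}
which is legitimate precisely because $M$ already carries an $S^{-1}R$-module structure. The key point is then that $S^{-1}R$ is flat over $R$ and $S^{-1}R\otimes_R S^{-1}R\cong S^{-1}R$, so $\Tor^R_i(S^{-1}R,S^{-1}R)=0$ for $i>0$; hence $S^{-1}R\otimes_R P_\bullet$ is a complex of projective $S^{-1}R$-modules whose homology is concentrated in degree zero and equal to $S^{-1}R$. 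In other words $S^{-1}R\otimes_R P_\bullet$ is a projective resolution of $S^{-1}R$ over $S^{-1}R$ itself, whence
\begin{equation*}
\Ext^q_R(S^{-1}R,M)\cong \Ext^q_{S^{-1}R}(S^{-1}R,M)=\begin{cases} M & q=0,\\ 0 & q>0.\end{cases}
\end{equation*}

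With this computed, the spectral sequence has a single nonzero row $q=0$, so all higher differentials $d_r$ ($r\geq 2$) vanish for degree reasons and $E_2=E_\infty$; the abutment therefore collapses to the edge isomorphism
\begin{equation*}
HML^p(S^{-1}R,M)=E_2^{p,0}\isomoto HML^p(R,M),
\end{equation*}
which is the natural map induced by $f$, proving the proposition. The only delicate point to watch is the $\Ext$-vanishing above; once one exploits that localization is flat and an epimorphism of rings — equivalently, that $f$ is a homological epimorphism in the sense of Definition \ref{defi:homological_epi} — the remaining steps are formal.
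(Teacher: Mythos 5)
Your proposal is correct and follows exactly the paper's own argument: apply the change-of-rings spectral sequence of Proposition \ref{prop:sp_seq_change_of_rings} to $R\to S^{-1}R$ and observe that $\Ext^q_R(S^{-1}R,M)$ is $M$ for $q=0$ and vanishes for $q>0$, so the sequence collapses. The only difference is that the paper states the $\Ext$-computation without proof, whereas you supply the (correct) justification via flatness of $S^{-1}R$ and the isomorphism $S^{-1}R\otimes_R S^{-1}R\cong S^{-1}R$.
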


\begin{proof}By \eqref{eq:sp_seq_change_of_rings}, we have a converging spectral sequence
$$E_2^{p,q}=HML^p(S^{-1}R,\Ext^q_R(S^{-1}R,M))\Rightarrow HML^{p+q}(R,M).$$
But we have $$\Ext^q_R(S^{-1}R,M)=\begin{cases}M & \text{for }q=0;\\
0 & \text{otherwise.}\end{cases}$$
This proves the proposition.\end{proof}

For an associative ring $R$ and a (two-sided) ideal $I\subset R$ we put $$\hat{R}_I:=\lim\limits_{\leftarrow}R/I^n.$$

\begin{prop}\label{prop:HML_completions} Let $R$ be a noetherian commutative ring, and $I\subset R$ an ideal. Let $M$ be a finitely generated $I$-torsion $\hat{R}_I$-module. Then we have a natural isomorphism
\begin{equation}\label{eq:HML_completions}HML^{\bullet}(R,M)\cong HML^{\bullet}(\hat{R}_I,M).\end{equation}\end{prop}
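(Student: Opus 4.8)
The plan is to mirror the proof of Proposition \ref{prop:HML_localizations}, applying the change of rings spectral sequence \eqref{eq:sp_seq_change_of_rings} to the completion map $f\colon R\to\hat{R}_I$. Taking $R_1=R$ and $R_2=\hat{R}_I$, we obtain a converging spectral sequence
$$E_2^{p,q}=HML^p(\hat{R}_I,\Ext^q_R(\hat{R}_I,M))\Rightarrow HML^{p+q}(R,M).$$
Everything then reduces to computing the groups $\Ext^q_R(\hat{R}_I,M)$ and showing that they vanish for $q>0$ while $\Ext^0_R(\hat{R}_I,M)=M$, exactly as the localization case reduced to the flatness identity $\Ext^q_R(S^{-1}R,M)=M\cdot[q=0]$.

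First I would reduce the coefficient module. Since $M$ is a finitely generated $I$-torsion module over the noetherian ring $\hat{R}_I$, it is annihilated by $I^n\hat{R}_I$ for some $n$, so $M$ is a module over $\hat{R}_I/I^n\hat{R}_I\cong R/I^n$, the last isomorphism being the standard comparison for completions of noetherian rings. Writing $S:=R/I^n$, both the $R$-module and $\hat{R}_I$-module structures on $M$ factor through $S$, and they agree.

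The core step is the computation of $\bR\Hom_R(\hat{R}_I,M)$. Here I would invoke two standard facts about $I$-adic completion of a noetherian ring: $\hat{R}_I$ is flat over $R$, and $\hat{R}_I\otimes_R R/I^n\cong\hat{R}_I/I^n\hat{R}_I\cong R/I^n=S$. Combining the derived extension–restriction adjunction for the ring map $R\to S$, applied to the $S$-module $M$, with flatness, gives
$$\bR\Hom_R(\hat{R}_I,M)\cong\bR\Hom_S(\hat{R}_I\stackrel{\bL}{\otimes}_R S,M)\cong\bR\Hom_S(S,M)\cong M,$$
since $\hat{R}_I\stackrel{\bL}{\otimes}_R S=\hat{R}_I\otimes_R S=S$. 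Hence $\Ext^0_R(\hat{R}_I,M)=M$ and $\Ext^q_R(\hat{R}_I,M)=0$ for $q>0$.

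With this input the spectral sequence degenerates: $E_2^{p,q}=0$ for $q>0$ and $E_2^{p,0}=HML^p(\hat{R}_I,M)$, yielding the natural isomorphism $HML^{\bullet}(R,M)\cong HML^{\bullet}(\hat{R}_I,M)$ of \eqref{eq:HML_completions}. The main obstacle is the Ext computation, where the noetherian hypothesis is genuinely used twice — for the flatness of $\hat{R}_I$ over $R$ and for the fact that the finitely generated $I$-torsion module $M$ is killed by a single power of $I$; once these are in place, the adjunction reduces the calculation to the trivial identity $\bR\Hom_S(S,M)=M$.
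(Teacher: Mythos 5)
Your proof is correct, but it takes a genuinely different route from the paper's. The paper never applies the change-of-rings spectral sequence to the homomorphism $R\to\hat{R}_I$ itself; it first reduces to the case where $M$ is an $R/I$-module and then compares \emph{two} spectral sequences, both having the quotient $R/I$ as the target ring: $E_2^{p,q}=HML^p(R/I,\Ext^q_R(R/I,M))\Rightarrow HML^{p+q}(R,M)$ and $E_2^{p,q}=HML^p(R/I,\Ext^q_{\hat{R}_I}(R/I,M))\Rightarrow HML^{p+q}(\hat{R}_I,M)$, whose $E_2$-pages are identified by the natural isomorphism $\Ext^{\bullet}_R(R/I,M)\cong\Ext^{\bullet}_{\hat{R}_I}(R/I,M)$; the conclusion is then read off from the agreement of the abutments. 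You instead use a single spectral sequence for $R\to\hat{R}_I$ and collapse it onto the row $q=0$ by computing $\bR\Hom_R(\hat{R}_I,M)\cong M$ from the flatness of $\hat{R}_I$ over $R$, the isomorphism $\hat{R}_I\otimes_R R/I^n\cong R/I^n$, and the derived extension--restriction adjunction. Your version is the exact analogue of the proof of Proposition \ref{prop:HML_localizations}, and \eqref{eq:HML_completions} comes out as an edge isomorphism of one degenerate spectral sequence, so naturality is automatic. The paper's version avoids any $\Ext$ computation out of the huge, non-finitely-generated $R$-module $\hat{R}_I$, staying within finitely generated $I$-torsion modules where the comparison of $\Ext$ over $R$ and over $\hat{R}_I$ is classical; the price is that comparing abutments of two spectral sequences strictly requires the $E_2$-identification to be induced by a map of spectral sequences (coming from functoriality in the source ring), a point the paper leaves implicit. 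Both arguments use the noetherian and finite-generation/torsion hypotheses in the same places: flatness of the completion and the fact that $M$ is killed by a single power of $I$.
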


\begin{proof}We may and will assume that $M$ is actually an $R/I$-module. By \eqref{eq:sp_seq_change_of_rings}, we have converging spectral sequence
$$E_2^{p,q}=HML^p(R/I,\Ext^q_R(R/I,M))\Rightarrow HML^{p+q}(R,M),$$
$$E_2^{p,q}=HML^p(R/I,\Ext^q_{\hat{R}_I}(R/I,M))\Rightarrow HML^{p+q}(\hat{R}_I,M).$$ But we have a natural isomorphism
$$\Ext^{\bullet}_R(R/I,M)\cong \Ext^{\bullet}_{\hat{R}_I}(R/I,M).$$
Thus, we have an isomorphism on the $E_2$-terms of the spectral sequences. It follows that we have an isomorphism on their limits. This proves the proposition.\end{proof}

\section{Mac Lane (co)homology of finite fields}
\label{sec:HML_finite_fields}

Let $p$ be a prime number. Let us define the graded commutative algebra
$$\Lambda=\Z/p\Z[e_0,e_1,\dots]/(e_i^p;i\geq 0).$$
Here the generator $e_i$ is homogeneous of degree $2p^i.$
For any $k\geq 0,$ we put $$\Lambda_k:=\Lambda/(e_0,\dots,e_{k-1}).$$

\begin{theo}\label{th:HML_F_q_F_q}([\cite{FLS}]) For any finite field $\bbF_q$ of characteristic $p,$ we have an isomorphism of graded $\bbF_q$-algebras
$$HML^{\bullet}(\bbF_q)\cong\bbF_q\otimes\Lambda.$$

Dually ,we have
$$HML_n(\bbF_q)=\begin{cases}\bbF_q & \text{ for }n\text{ even;}\\
                 0 & \text{ for }n\text{ odd.}
                \end{cases}$$\end{theo}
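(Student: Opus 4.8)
The plan is to reduce everything to topological Hochschild homology and then pass to cohomology by duality. First I would invoke the Pirashvili--Waldhausen isomorphism (Theorem \ref{th:THH_equals_HML}), which gives $HML_{\bullet}(\bbF_q)\cong THH_{\bullet}(\bbF_q)$. B\"okstedt's foundational computation \cite{Bok} identifies $THH_{\bullet}(\bbF_p)\cong\bbF_p[x]$ as a graded ring, where $x$ is a polynomial generator in homological degree $2$. Since $\bbF_q/\bbF_p$ is a finite separable (\'etale) extension, topological Hochschild homology base-changes along it, giving $THH_{\bullet}(\bbF_q)\cong\bbF_q\otimes_{\bbF_p}THH_{\bullet}(\bbF_p)\cong\bbF_q[x]$. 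This already yields the homological assertion: $HML_n(\bbF_q)\cong\bbF_q$ for $n$ even and $0$ for $n$ odd.

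Next I would deduce the cohomology groups from the homology groups by a universal-coefficient argument over a field. All the complexes computing $HML_{\bullet}(\bbF_q)$ and $HML^{\bullet}(\bbF_q)$ are $\bbF_q$-linear and mutually $\bbF_q$-dual (this is the content of Lemma \ref{lem:duality_of_Mac_Lane_complexes} with $R=M=\bbF_q$), and since each $HML_n(\bbF_q)$ is one-dimensional over the field $\bbF_q$, we obtain $HML^n(\bbF_q)\cong\Hom_{\bbF_q}(HML_n(\bbF_q),\bbF_q)$. Hence $HML^n(\bbF_q)\cong\bbF_q$ for $n$ even and $0$ for $n$ odd, matching the additive structure of $\bbF_q\otimes\Lambda$.

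The genuinely structural step is to identify the graded ring structure on $HML^{\bullet}(\bbF_q)$ with $\bbF_q\otimes\Lambda$. Here I would use that $THH_{\bullet}(\bbF_q)=\bbF_q[x]$ is not merely a ring but a cocommutative coalgebra, with $x$ \emph{primitive}, so that the comultiplication is the standard one on the polynomial coalgebra. The product on $HML^{\bullet}(\bbF_q)$, being $\bbF_q$-dual to this coproduct, is therefore the divided-power algebra $\Gamma_{\bbF_q}(\xi)$ on a single class $\xi$ of degree $2$ dual to $x$. Over a field of characteristic $p$ this divided-power algebra admits the classical presentation $\Gamma_{\bbF_q}(\xi)\cong\bbF_q[\xi_0,\xi_1,\dots]/(\xi_i^{p})$ with $\xi_i:=\gamma_{p^i}(\xi)$ in degree $2p^i$, which is exactly $\bbF_q\otimes\Lambda$ upon setting $e_i=\xi_i$. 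As an internal consistency check, the monomials $\prod_i e_i^{a_i}$ with $0\le a_i<p$ are indexed by the base-$p$ digit expansions $n=\sum_i a_ip^i$, so there is exactly one of each even degree $2n$ and none in odd degree, recovering the dimension count above.

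The main obstacle is the ring-level matching in the third step: one must know both that $x\in THH_2(\bbF_p)$ is primitive for the natural coalgebra structure and that the product on Mac Lane cohomology defined via the Hochschild cochain DGA (Definition \ref{defi:Hochschild_DGA}) is genuinely $\bbF_q$-dual to the $THH$ coproduct under the Pirashvili--Waldhausen equivalence. Both facts are standard but not formal; the cleanest rigorous route is to cite the direct functor-cohomology computation of Franjou--Lannes--Schwartz \cite{FLS}, who evaluate $\Ext$ in the category of functors on finite-dimensional $\bbF_q$-vector spaces (the Jibladze--Pirashvili \cite{JP} description of Mac Lane cohomology) and obtain the multiplicative answer $\bbF_q\otimes\Lambda$ directly, bypassing the coalgebra duality.
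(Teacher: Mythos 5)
The paper does not actually prove this theorem: it is quoted directly from Franjou--Lannes--Schwartz \cite{FLS}, so your proposal is being compared against a bare citation. Your sketch is a genuinely different route, and its additive part is sound and can even be made rigorous using only ingredients the paper provides: Theorem \ref{th:THH_equals_HML} (Pirashvili--Waldhausen) plus B\"okstedt's computation $THH_{\bullet}(\bbF_p)\cong\bbF_p[x]$, $|x|=2$, plus \'etale base change gives the homology groups, and Lemma \ref{lem:duality_of_Mac_Lane_complexes} together with universal coefficients over a field gives the cohomology groups. The one step that is not covered by anything in the paper is exactly the one you flag: to get the ring structure you need (a) that $THH_{\bullet}(\bbF_q)$ is a Hopf algebra with $x$ primitive (true, by the pinch map on $S^1$ and the vanishing of $THH_1$, but requiring an argument or citation), and (b) that the cup product on $HML^{\bullet}$ defined via the Hochschild cochain DGA of $Q_{\bullet}(\bbF_q)$ is $\bbF_q$-dual to the $THH$ coproduct under the Pirashvili--Waldhausen identification --- a compatibility of multiplicative structures across a nontrivial equivalence that neither the paper nor standard references hand you for free. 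Your fallback, citing the functor-cohomology computation of \cite{FLS} for the multiplicative statement, is precisely what the paper does, so in its fully rigorous form your argument collapses onto the paper's citation; what your route buys is an independent, conceptually transparent derivation of the additive answer and of \emph{why} the answer is a divided-power (equivalently truncated-polynomial) algebra, namely duality against a polynomial Hopf algebra on a primitive degree-$2$ class. Also note the minor point that the presentation $\Gamma_{\bbF_q}(\xi)\cong\bbF_q[\gamma_{p^i}(\xi):i\geq 0]/\left(\gamma_{p^i}(\xi)^p\right)$ you invoke is classical and correct, and matches the paper's $\Lambda$ with $e_i=\gamma_{p^i}(\xi)$ in degree $2p^i$.
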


We would like to write down an explicit cocycle for $e_0\in HML^2(\bbF_q).$                
                
Note that we have a non-trivial square-zero extension
$$0\to \bbF_q\to W_2(\bbF_q)\to\bbF_q\to 0.$$
Moreover, we have a multiplicative section
\begin{equation}\label{eq:s(a)=a^q}s:\bbF_q\to W_2(\bbF_q),\quad s(a)=\widetilde{a}^q,\end{equation}
where $\widetilde{\alpha}\in W_2(\F_q)$ is a lift of $\alpha.$
By Theorem \ref{th:HML^2_sq_zero_ext}, this extension corresponds to a non-zero class in $HML^2(\bbF_q,\bbF_q),$ which is actually the generator $e_0.$ Moreover, the section \eqref{eq:s(a)=a^q} defines a cocycle \begin{equation}\label{eq:cocycle_alpha_q}\alpha_q\in\Hoch^2(Q_{\bullet}(\bbF_q),\bbF_q),\quad\alpha_q([(a,b)])=\frac{a^q+b^q-(a+b)^q}{p},\quad [(a,b)]\in Q_1(\bbF_q),\end{equation}
where the fraction in the RHS is considered as a polynomial with integer coefficients. The restriction of $\alpha_q$ to $Q_0(\bbF_q)\otimes Q_0(\bbF_q)$ is zero (since \eqref{eq:s(a)=a^q} is multiplicative).

\begin{lemma}\label{lem:reduced_for_HML_of_F_q}Denote by $\Hoch^{red,\bullet}(Q_{\bullet}(\bbF_q),\bbF_q)\subset \Hoch^{\bullet}(Q_{\bullet}(\bbF_q),\bbF_q)$ the graded subspace of chains $\varphi:(Q_{\bullet}(\bbF_q))^{\otimes n}\to \bbF_q$ which are $Q_0(\bbF_q)$-linear, and such that $\varphi$ vanishes on $a_1\otimes\dots\otimes a_n$ whenever at least one of $a_i$ is in $Q_0(\bbF_q).$ Then $\Hoch^{red,\bullet}(Q_{\bullet}(\bbF_q),\bbF_q)\subset \Hoch^{\bullet}(Q_{\bullet}(\bbF_q),\bbF_q)$ is a DG subring, and the inclusion morphism is a quasi-isomorphism.\end{lemma}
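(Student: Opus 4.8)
I would separate the statement into its two assertions: that $\Hoch^{red,\bullet}(Q_\bullet(\bbF_q),\bbF_q)$ is a sub-DG-ring, and that its inclusion is a quasi-isomorphism. Write $\cB:=Q_\bullet(\bbF_q)$, let $S:=Q_0(\bbF_q)=\cB_0$ be the degree-zero subring, and let $\varepsilon\colon S\to\bbF_q$ be the augmentation through which $S$ acts on the coefficients. I read ``$Q_0(\bbF_q)$-linear'' as the condition that $\varphi$ is balanced over $S$ (so it factors through $\cB^{\otimes_S n}$) and compatible with the outer $S$-bimodule structure on $\bbF_q$ given by $\varepsilon$. With this reading, closure under the product \eqref{eq:formula_for_product_in_HH} is immediate: each summand distributes the arguments between the two factors, and if some argument lies in $S$ it lands in a reduced factor that still has at least one interior argument and hence vanishes. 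Closure under the internal differential of $\cB$ is clear since $d$ vanishes on $\cB_0=S$ and $\bbF_q$ has zero differential. The only point needing the balancing is closure under the bar differential $\partial$: if an entry $a_j$ lies in $S$, every term of $(\partial\varphi)(a_{n+1},\dots,a_1)$ in which $a_j$ survives as an interior argument vanishes, and the two remaining terms, in which $a_j$ is absorbed into the neighbouring slot from the left respectively from the right, cancel precisely because $\varphi$ is $S$-balanced (the two adjacent signs in $\partial$ being opposite).

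For the quasi-isomorphism the key preliminary observation is that, since the coefficients lie in $\bbF_q$ and each $Q_k(\bbF_q)$ is a free abelian group, every cochain group $\Hom_\Z(\cB^{\otimes n},\bbF_q)$ equals $\Hom_{\bbF_p}(\bar\cB^{\otimes_{\bbF_p} n},\bbF_q)$, where $\bar\cB:=\cB\otimes_\Z\bbF_p$. Thus both complexes are $\bbF_p$-linear and depend only on the DG $\bbF_p$-algebra $\bar\cB$, and under this identification $\Hoch^{red,\bullet}$ becomes the normalized Hochschild cochain complex of $\bar\cB$ relative to the degree-zero subalgebra $\bar S:=\bar\cB_0$. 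Computing the ring structure on $Q_0(\bbF_q)$ coming from the lax monoidal structure gives $Q_0(\bbF_q)\cong\Z[\bbF_q^\times]$ (the monoid ring of $(\bbF_q,\cdot)$ with $[0]$ killed), so $\bar S\cong\bbF_p[\bbF_q^\times]\cong\bbF_p[t]/(t^{q-1}-1)$.

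The crux is then an arithmetic separability input: because $q=p^f$ gives $q-1\equiv -1 \bmod p$, so $p\nmid q-1$, the polynomial $t^{q-1}-1$ is separable and $\bar S$ is a semisimple, separable $\bbF_p$-algebra. It therefore carries a separability idempotent in $\bar S\otimes_{\bbF_p}\bar S^{op}$, and relative homological algebra over the pair $(\bbF_p,\bar S)$ collapses. I would assemble the quasi-isomorphism from two homotopy equivalences. First, the splitting $\bar\cB=\bar S\oplus\bar\cB_{\ge 1}$ of $\bar S$-bimodules makes the inclusion of the normalized ($=$ reduced) complex into the full $\bar S$-relative Hochschild complex a chain homotopy equivalence; this is the standard simplicial normalization, carried out in the bar direction and commuting with the internal differential since the contracting homotopy is $\bar S$-linear of bar-degree one. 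Second, the separability idempotent splits $\bar\cB\otimes_{\bbF_p}\bar\cB\to\bar\cB\otimes_{\bar S}\bar\cB$ compatibly with the bimodule structure, identifying the $\bar S$-relative Hochschild cohomology with the absolute one $\Hoch^\bullet(\cB,\bbF_q)$.

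The main obstacle is this last comparison: verifying that separability of $\bar S$ genuinely upgrades the $\bar S$-relative complex to the absolute complex $\Hoch^\bullet(Q_\bullet(\bbF_q),\bbF_q)$, while carrying the internal differential of $Q_\bullet(\bbF_q)$ along throughout, so that one obtains a quasi-isomorphism of total complexes rather than merely an isomorphism on the bar-filtration pages. Everything rests on the input $p\nmid q-1$; were $\bar S$ non-separable the reduced complex would in general fail to compute $HML^\bullet(\bbF_q)$.
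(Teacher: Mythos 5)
Your proof is correct and follows essentially the same route as the paper: the paper base-changes along $\Z\to\bbF_q$ (rather than $\bbF_p$), identifies $\bbF_q\otimes Q_0(\bbF_q)\cong\prod_{x\in\bbF_q^{\times}}\bbF_q$ as a split semisimple algebra — the same $p\nmid q-1$ input you use — and then invokes exactly your two ingredients, namely that semisimplicity/separability of the degree-zero subalgebra lets the relative (balanced) cochain complex compute the absolute cohomology, and that normalization identifies the reduced subcomplex inside the relative one. Your only deviations are cosmetic (working over $\bbF_p$ with a separability idempotent instead of over $\bbF_q$ with a complete splitting), and the relative-to-absolute comparison you flag as ``the main obstacle'' is asserted in the paper with the same brevity, so nothing is missing relative to the paper's own standard of detail.
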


\begin{proof}Let us choose some generator $x_0$ of the cyclic group $\bbF_q^{\times}.$ It induces an isomorphism of rings   $$\Z[t]/(t^{q-1}-1)\stackrel{\sim}{\to} Q_0(\bbF_q),\quad t\mapsto [x_0].$$ It follows that we have an isomorphism of $\bbF_q$-algebras \begin{equation}\label{eq:Q_0_for_F_q}\bbF_q\otimes Q_0(\bbF_q)\cong \bigoplus\limits_{x\in\bbF_q^{\times}}\bbF_q.\end{equation} In particular, $\bbF_q\otimes Q_0(\bbF_q)$ is a semi-simple $\bbF_q$-algebra. 

Note that we have an isomorphism $$\Hoch^{\bullet}(Q_{\bullet}(\bbF_q),\bbF_q)\cong \Hoch^{\bullet}_{\bbF_q}(\bbF_q\otimes Q_{\bullet}(\bbF_q),\bbF_q).$$
The natural homomorphism $\bbF_q\otimes Q_0(\bbF_q)\to\bbF_q$ corresponds under \eqref{eq:Q_0_for_F_q} to the projection onto the factor $\bbF_q$ for $x=x_0.$ From this and from the semi-simplicity of $\bbF_q\otimes Q_0(\bbF_q)$ we obtain that
the inclusion $$\Hoch^{\bullet}_{\bbF_q\otimes Q_0(\bbF_q)}(\bbF_q\otimes Q_{\bullet}(\bbF_q),\bbF_q)\hookrightarrow \Hoch^{\bullet}_{\bbF_q}(\bbF_q\otimes Q_{\bullet}(\bbF_q),\bbF_q)$$ is a quasi-isomorphism of DG rings. It remains to note that the reduced subcomplex of the LHS identifies with $\Hoch^{red,\bullet}(Q_{\bullet}(\bbF_q),\bbF_q).$\end{proof}

\section{Mac Lane (co)homology of the second kind of discrete valuation rings.}
\label{sec:HML_secomd_kind_discr_valuation}

In this section, we fix a finite extension $L$ of $\Q_p.$ We denote by $R=\cO_L$ the ring of integers, and by $\m\subset R$ the unique maximal ideal. Finally, the residue field $R/\m$ is denoted by $\mk.$ We put $q:=|\mk|,$ so that $\mk\cong \bbF_q.$ Recall that the inverse different $\cD_R^{-1}$ is given by the formula
$$\cD_R^{-1}=\{x\in L\mid \Tr_{L/\Q_p}(x)\in\Z_p\},$$
and $L/\Q_p$ is non-ramified if and only if $\cD_R^{-1}=R.$

\begin{theo}\label{th:THH_for_discr_valuation} (\cite{LM}) The non-zero topological Hochschild homology groups of $R$ are
$$THH_0(R)=R,\quad THH_{2n-1}(R)=\cD_R^{-1}/nR.$$\end{theo}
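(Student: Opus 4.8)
This is the theorem of Lindenstrauss and Madsen \cite{LM}, so in the text it enters by citation; the strategy I would follow (essentially theirs) is to separate the unramified and the ramified directions. Write $W=W(\bbF_q)$ for the maximal unramified subring of $R=\cO_L$, so that $\Z_p\to W\to R$ exhibits $W/\Z_p$ as unramified (hence \'etale after $p$-completion) and $R$ as totally ramified over $W$; concretely $R=W[x]/(f(x))$ for an Eisenstein polynomial $f$ of degree $e$, with $f'(\pi)$ a generator of the different $\cD_R=\cD_{L/\Q_p}$ (the unramified layer contributes trivially to the different in the tower). The two inputs are then B\"okstedt's computation of $THH_{\bullet}(\Z)$ together with \'etale base change for the unramified layer, and a complete-intersection computation of the relative Hochschild homology for the ramified layer.

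For the unramified layer I would use that $THH(W)\simeq W\stackrel{\bL}{\otimes}_{\Z_p}THH(\Z_p)$ by \'etale base change; combined with B\"okstedt's $THH_{2n-1}(\Z)=\Z/n$ and $p$-completion this gives $THH_0(W)=W$ and $THH_{2n-1}(W)=W/nW$, which is already the asserted answer whenever $L/\Q_p$ is unramified (there $\cD_R^{-1}=R$). For the ramified layer, since $R$ is finite free over $W$ the relative topological Hochschild homology $THH^W(R)$ has homotopy $HH^W_{\bullet}(R)$, and $R=W[x]/(f)$ is a hypersurface: the standard two-periodic resolution of $R$ over $R\otimes_W R$ has boundary maps alternating between the image of $x-y$, which is zero, and the image of $(f(x)-f(y))/(x-y)$, which is $f'(\pi)$. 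Since $R$ is a domain and $f'(\pi)\ne 0$ this yields $HH^W_0(R)=R$, $HH^W_{\mathrm{odd}}(R)=R/f'(\pi)R=R/\cD_R\cong\cD_R^{-1}/R$, and $HH^W_{\mathrm{even}>0}(R)=0$. The case $n=1$, namely $THH_1(R)=\cD_R^{-1}/R\cong\Omega^1_{R/\Z}$, is the expected Hochschild--Kostant--Rosenberg shadow.

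These inputs feed into a base-change (K\"unneth-type) spectral sequence over the discrete valuation ring $W$, built from $THH_{\bullet}(W)$ and $HH^W_{\bullet}(R)$ and converging to $THH_{\bullet}(R)$. In each odd total degree $2n-1$ one finds two basic contributions: the free-direction term $W/nW\otimes_W R=R/nR$, carrying the divisibility ``$/n$'' produced by the B\"okstedt periodicity class and its divided powers, and the relative term $R/\cD_R\cong\cD_R^{-1}/R$, carrying the different. The goal is to show that these assemble into a short exact sequence $0\to R/nR\to THH_{2n-1}(R)\to\cD_R^{-1}/R\to 0$ realizing $THH_{2n-1}(R)\cong\cD_R^{-1}/nR$ as an $R$-module, while all even positive groups vanish; a length count in the tame and wild cases confirms that this is numerically consistent.

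\textbf{The main obstacle} is precisely the control of the differentials and of the resulting module extension in this spectral sequence: one must prove that the extension above is the correct (generally non-split) one, $\cD_R^{-1}/nR$, rather than the split sum, and simultaneously that the potential even-degree and higher-filtration contributions are killed. This is where the arithmetic of the different, entering through multiplication by $f'(\pi)$, interacts with the divided-power structure on $THH_{\bullet}(\Z)$, and it is genuinely delicate at the prime $2$ (the companion results in this section carry the hypothesis $\mathrm{char}(\mk)=p\ne 2$ for exactly this reason). I would expect essentially all of the work to be concentrated in this final analysis of differentials and extensions.
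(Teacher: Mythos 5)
First, a point of calibration: the paper contains no proof of Theorem \ref{th:THH_for_discr_valuation} at all --- it enters by citation of Lindenstrauss--Madsen \cite{LM} and is used as a black box throughout Section \ref{sec:HML_secomd_kind_discr_valuation}. So your proposal can only be judged on its own internal coherence and against \cite{LM}. Your local ingredients are correct: $p$-complete \'etale base change does recover $THH_{\bullet}(W)$, $W=W(\bbF_q)$, from B\"okstedt's computation of $THH_{\bullet}(\Z)$; the two-periodic hypersurface resolution does give $HH^W_0(R)=R$, $HH^W_{2i-1}(R)=R/f'(\pi)R\cong\cD_R^{-1}/R$ and $HH^W_{2i}(R)=0$ for $i>0$; and the asserted answer is indeed the (generally non-split) extension of $\cD_R^{-1}/R$ by $R/nR$.

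The genuine gap is the assembly step, in two respects. (i) The ``K\"unneth-type base-change spectral sequence over $W$, built from $THH_{\bullet}(W)$ and $HH^W_{\bullet}(R)$ and converging to $THH_{\bullet}(R)$,'' does not exist off the shelf. The standard base-change equivalence goes the opposite way: $THH^{HW}(HR)\simeq THH(R)\wedge_{THH(W)}HW$, i.e.\ the relative theory is obtained from the absolute one, not vice versa, and the naive K\"unneth formula $THH(R)\simeq THH(W)\wedge_{HW}THH^{HW}(HR)$ is false in general: already for $\Z\to\bbF_p$ the right-hand side is nonzero in degree $2p-1$ (it contains B\"okstedt's class in $THH_{2p-1}(\Z;\bbF_p)$), while $THH_{2p-1}(\bbF_p)=0$. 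Even in your flat situation the input you would feed into such a sequence is too big: in odd total degrees the bigraded group $\bigoplus\Tor^W_{\ast}\bigl(THH_{\bullet}(W),HH^W_{\bullet}(R)\bigr)$ contains, for ramified $R$ and $p\mid n$, nonzero terms $\Tor^W_1(W/nW,R/\cD_R)$ beyond the two constituents $R/nR$ and $R/\cD_R$ of the known answer $\cD_R^{-1}/nR$; so any correct spectral sequence of this shape must have nontrivial differentials, and nothing in your outline produces or controls them. The tools that actually exist are the spectral sequence $E^2_{s,t}=HH_s(R;THH_t(\Z;R))\Rightarrow THH_{s+t}(R)$ of Pirashvili--Waldhausen type (cf.\ \cite{PW}), which is the route underlying \cite{LM} and which uses the monogenic presentation $R=\Z_p[x]/(g)$ with $g'$ generating $\cD_R$ (no passage through $W$ is needed), or, in modern treatments in the style of Krause--Nikolaus, relative THH over the polynomial ring spectrum $\mathbb{S}[z]$, where B\"okstedt periodicity gives a polynomial answer that is then base-changed along $\mathbb{S}[z]\to\mathbb{S}$. (ii) Even granting a correct spectral sequence, you explicitly defer the determination of the differentials and of the module extension; as you say, this is ``the main obstacle,'' but it is also essentially the entire content of \cite{LM}. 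What you have is a plausible road map, not a proof.
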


We have an immediate corollary of Theorem \ref{th:THH_for_discr_valuation}.

\begin{cor}\label{cor:HML_for_discr_valuation} We have non-canonical isomorphisms
$$HML_n(R)=\begin{cases}R & \text{for }n=0;\\
\cD_R^{-1}/kR & \text{for }n=2k-1>0;\\
0 & \text{otherwise;}\end{cases}$$
$$HML^n(R)=\begin{cases}R & \text{for }n=0;\\
\cD_R^{-1}/kR & \text{for }n=2k>0;\\
0 & \text{otherwise.}\end{cases}$$\end{cor}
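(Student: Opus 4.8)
The plan is to treat the two computations separately, deducing the homology directly from topological Hochschild homology and then extracting the cohomology from it by duality.

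For the homology there is nothing to do beyond quoting Theorem~\ref{th:THH_equals_HML}: the Pirashvili--Waldhausen isomorphism $THH_n(R)\cong HML_n(R)$ converts Theorem~\ref{th:THH_for_discr_valuation} verbatim into the asserted values, namely $R$ in degree $0$, the torsion module $\cD_R^{-1}/kR$ in each odd degree $2k-1>0$, and $0$ in every remaining degree. So this half of the corollary is immediate.

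For the cohomology I would route through the chain--cochain duality of Subsection~\ref{ssec:HML}. Set $C_\bullet:=\Hoch_\bullet(Q_\bullet(R),R)$, the complex computing $HML_\bullet(R)$. Since $R$ is free over itself it is h-projective, so Lemma~\ref{lem:duality_of_Mac_Lane_complexes} (applied with $M=R$ and with the mixed structure forgotten) identifies the cochain complex $\Hoch^\bullet(Q_\bullet(R),R)$ computing $HML^\bullet(R)$ with the derived dual $\bR\Hom_R(C_\bullet,R)$ in $D(R)$. Because $R=\cO_L$ is a discrete valuation ring it is regular of global dimension one, and its Mac Lane homology groups are finitely generated (each is either $R$ or of finite length); these are exactly the hypotheses underlying Lemma~\ref{lem:duality_for_Mac_Lane_homology} and Proposition~\ref{prop:h-proj_unbounded}, so $C_\bullet$ may be represented by a bounded-above complex of finitely generated projectives. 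Having global dimension one forces the universal coefficient spectral sequence for $\bR\Hom_R(C_\bullet,R)$ to collapse at $E_2$ into short exact sequences
$$0\to\Ext^1_R(HML_{n-1}(R),R)\to HML^n(R)\to\Hom_R(HML_n(R),R)\to 0.$$

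It then remains to evaluate the outer terms degree by degree. The free group $HML_0(R)=R$ contributes $\Hom_R(R,R)=R$ in degree $0$ and, via $\Ext^1_R(R,R)=0$, nothing else; each odd group $HML_{2k-1}(R)=\cD_R^{-1}/kR$ is finite-length torsion, so $\Hom_R(\cD_R^{-1}/kR,R)=0$ and its only surviving contribution is $\Ext^1_R(\cD_R^{-1}/kR,R)$, landing in cohomological degree $2k$. This leaves $HML^0(R)=R$, kills all odd and all higher odd-source $\Ext$ contributions so that $HML^{2k-1}(R)=0$, and gives $HML^{2k}(R)\cong\Ext^1_R(\cD_R^{-1}/kR,R)$. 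The single computational input, and the step I expect to demand the most care, is the Matlis-type identity $\Ext^1_R(T,R)\cong T$ for a finite-length $R$-module $T$: from $0\to R\to L\to L/R\to 0$ and the injectivity of the fraction field $L$ over the Dedekind domain $R$ one obtains $\Ext^1_R(T,R)\cong\Hom_R(T,L/R)$, and since $L/R$ is the injective hull of the residue field, the functor $\Hom_R(-,L/R)$ fixes the isomorphism class of every finite-length module (it suffices to check this on the cyclic modules $R/\m^a$, where $\Hom_R(R/\m^a,L/R)=\m^{-a}/R\cong R/\m^a$). Applying this with $T=\cD_R^{-1}/kR$ yields $HML^{2k}(R)\cong\cD_R^{-1}/kR$ non-canonically, which completes the computation.
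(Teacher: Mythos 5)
Your proposal is correct and follows essentially the same route as the paper: the homology half is read off from Theorem~\ref{th:THH_for_discr_valuation} via Theorem~\ref{th:THH_equals_HML}, and the cohomology half is obtained from Lemma~\ref{lem:duality_of_Mac_Lane_complexes} together with the fact that $R$ has global dimension one, yielding $HML^n(R)\cong\Hom_R(HML_n(R),R)\oplus\Ext^1_R(HML_{n-1}(R),R)$. The only difference is cosmetic: the paper invokes formality of $\Hoch_{\bullet}(Q_{\bullet}(R),R)$ in $D(R)$ where you use the universal coefficient sequence, and you spell out the Matlis-duality computation $\Ext^1_R(T,R)\cong T$ for finite-length $T$ that the paper compresses into ``computing $\Ext$'s.''
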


\begin{proof}Indeed, by Theorem \ref{th:THH_equals_HML}, Mac Lane homology is just isomorphic to the topological Hochschild homology.

To compute Mac Lane cohomology, we apply Lemma \ref{lem:duality_of_Mac_Lane_complexes} to get an isomorphism
$$\Hoch^{\bullet}(Q_{\bullet}(R),R)\cong\bR\Hom_R(\Hoch_{\bullet}(Q_{\bullet}(R),R),R)$$ in $D(R).$ Since the ring $R$ has global dimension $1,$ we have an isomorphism
$$\Hoch_{\bullet}(Q_{\bullet}(R),R)\cong HML_{\bullet}(R)$$ in $D(R).$ It follows that
$$HML^n(R)\cong\Ext^0_R(HML_n(R),R)\oplus\Ext^1_R(HML_{n-1}(R),R).$$ Computing $\Ext$'s, we obtain the result.
\end{proof}

Let $w\in R$ be an element. Since $HML^{2n-1}(R)=0,$ it follows that we have $$HML^{II,1}(R,w)=0.$$
Further, by Proposition \ref{prop:product_on_HH_second_kind}, the $R$-algebra $HML^{II,0}(R,w)$ has an increasing exhausting filtration
$$F_0 HML^{II}(R,w)\subset F_1 HML^{II}(R,w)\subset\dots\subset HML^{II,0}(R,w)$$
by $R$-submodules such that $F_n\cdot F_m=F_{n+m}$ and we have an isomorphism of graded rings
$$\gr_{\bullet}^F HML^{II,0}(R,w)\cong \bigoplus\limits_{n\geq 0}HML^{2n}(R).$$

\subsection{Ramified case}
\label{ssec:ramified}

We first consider the case when the field extension $L/\Q_p$ is ramified, i.e. $\cD_R\ne R.$  We have $p\in\m^2.$

\begin{cor}\label{cor:HML_R_k_ramified}The groups $HML^{\bullet}(R,\mk)$ are given by
$$HML^n(R,\mk)=\mk,\quad n\geq 0.$$\end{cor}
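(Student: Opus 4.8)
The plan is to deduce $HML^{\bullet}(R,\mk)$ from the already-computed groups $HML^{\bullet}(R)=HML^{\bullet}(R,R)$ of Corollary \ref{cor:HML_for_discr_valuation}, by means of a long exact sequence in the coefficient bimodule. Fix a uniformizer $\pi,$ so that $\m=\pi R$ and $\mk=R/\pi R.$ Multiplication by $\pi$ is an isomorphism of $R\text{-}R$-bimodules $R\xrightarrow{\sim}\m,$ under which the tautological short exact sequence $0\to\m\to R\to\mk\to 0$ becomes
$$0\to R\xrightarrow{\pi}R\to\mk\to 0.$$
Since $HML^{\bullet}(R,-)=HH^{\bullet}(\Z[P(R)],-)$ is computed by the Hochschild cochain complex, whose terms are products of groups $\Hom_{\Z}(P,M(\cdots))$ in which $P$ is a tensor product of morphism groups of $\Z[P(R)]$ and hence a free abelian group, the functor $\Hom_{\Z}(P,-)$ is exact; thus this bimodule sequence induces a short exact sequence of cochain complexes, and therefore a long exact sequence
$$\cdots\to HML^n(R)\xrightarrow{\pi}HML^n(R)\to HML^n(R,\mk)\to HML^{n+1}(R)\xrightarrow{\pi}HML^{n+1}(R)\to\cdots,$$
in which the endomorphisms of $HML^n(R)$ are multiplication by the central element $\pi$ on the coefficient module.

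Next I would feed in Corollary \ref{cor:HML_for_discr_valuation}. As $HML^n(R)=0$ for every odd $n,$ the long exact sequence collapses to the identifications
$$HML^{2k}(R,\mk)\cong\coker\bigl(\pi\colon HML^{2k}(R)\to HML^{2k}(R)\bigr),$$
$$HML^{2k+1}(R,\mk)\cong\ker\bigl(\pi\colon HML^{2k+2}(R)\to HML^{2k+2}(R)\bigr),$$
valid for all $k\geq 0.$ So everything reduces to the kernel and cokernel of multiplication by $\pi$ on the cyclic $R$-modules $HML^{2k}(R).$

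Finally I would invoke the ramified hypothesis. Writing $\cD_R=\m^{d},$ ramifiedness means exactly $d\geq 1,$ so $\cD_R^{-1}=\pi^{-d}R.$ For $k=0$ we have $HML^0(R)=R$ and $\coker(\pi\colon R\to R)=R/\pi R\cong\mk$ (this case is only ever needed for the cokernel). For $k\geq 1,$ writing $kR=\pi^{v(k)}R$ with $v(k)\geq 0$ the valuation of the integer $k,$ the module $HML^{2k}(R)=\pi^{-d}R/\pi^{v(k)}R$ is isomorphic to the cyclic module $R/\m^{d+v(k)}$ of length $d+v(k)\geq d\geq 1.$ For any $\ell\geq 1$ one computes directly $\coker(\pi\colon R/\m^{\ell}\to R/\m^{\ell})\cong\mk$ and $\ker(\pi\colon R/\m^{\ell}\to R/\m^{\ell})=\m^{\ell-1}/\m^{\ell}\cong\mk.$ Substituting into the two displayed formulas yields $HML^n(R,\mk)\cong\mk$ for all $n\geq 0.$ The computation is essentially bookkeeping once the sequence is set up; the only genuine input of the hypothesis is $d\geq 1,$ which forces every $HML^{2k}(R)$ to be a nonzero torsion module of length $\geq 1$ (in the unramified case $d=0$ these groups vanish whenever $p\nmid k,$ and the conclusion fails).
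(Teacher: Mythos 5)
Your proof is correct and is exactly the paper's argument: the short exact sequence $0\to R\xrightarrow{\pi}R\to\mk\to 0$, the induced long exact sequence in Mac Lane cohomology, and the computation of $HML^{\bullet}(R)$ from Corollary \ref{cor:HML_for_discr_valuation}. You have merely filled in the details the paper leaves implicit (exactness of the cochain-level sequence via freeness of the morphism groups of $\Z[P(R)]$, and the kernel/cokernel bookkeeping for multiplication by $\pi$ on cyclic torsion modules, where ramifiedness enters as $d\geq 1$).
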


\begin{proof}Taking any uniformizing element $\pi\in R,$ we get a short exact sequence of $R$-modules
\begin{equation}\label{eq:short_exact_uniformizing}0\to R\stackrel{\pi}{\to}R\to\mk\to 0.\end{equation}
Applying to it the long exact sequence of Mac Lane cohomology and using Corollary \ref{cor:HML_for_discr_valuation}, we obtain the result.\end{proof}

We would like to write an explicit cocycle representing a non-zero element in $HML^1(R,\mk).$ Fix a uniformizing element $\pi\in\m.$ Let us define the cochain $\xi\in\Hoch^1(Q_{\bullet}(R),\mk)$ by the formula
$$\xi:Q_0(R)\to\mk,\quad \xi([x])=\frac{x-x^q}{\pi}\text{ mod }\m.$$
Since $p\in\m^2,$ we have $\xi([x+y])=\xi([x])+\xi([y]).$ Further, we have
$$\frac{xy-(xy)^q}{\pi}-x\frac{y-y^q}{\pi}-\frac{x-x^q}{\pi}y=-\frac{(x-x^q)(y-y^q)}{\pi}\in\frac{\m^2}{\pi}=\m,$$
hence $\xi([xy])=x\xi([y])+\xi([x])y.$ Therefore $\xi$ is a cocycle.

Note that $\xi([\pi])=1,$ hence $\xi\ne 0.$ Finally, since the differential $b:\Hoch^0(Q_{\bullet}(R),\mk)\to\Hoch^1(Q_{\bullet}(R),\mk)$ is zero, it follows that $\xi$ represents a non-zero class in $HML^1(R,\mk).$

\begin{theo}\label{th:HML^II_R_k_ramified}Let $R$ be as above and $w\in R$ be an element such that $w-w^q\not\in\m^2.$ Denote by $\{(E_r^{\bullet},d_r)\}_{r\geq 1}$ the $\Z$-graded spectral sequence associated with $(\Hoch^{\bullet}(Q_{\bullet}(R),R),\delta_{[w]}).$ Then we have $E_2=E_{\infty}=0.$ In particular, $HML^{II,\bullet}(R,w;\mk)=0.$\end{theo}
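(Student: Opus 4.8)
The plan is to run the multiplicative $\Z$-graded spectral sequence of the relevant mixed complex and to exhibit an explicit contracting homotopy on its $E_1$-page. Since the asserted vanishing is that of $HML^{II,\bullet}(R,w;\mk)$, I would work with the mixed complex $(\Hoch^{\bullet}(Q_{\bullet}(R),\mk),\delta_{[w]})$, which computes $HML^{II,\bullet}(R,w;\mk)$. As $\mk$ is a quotient ring of $R$, Definition \ref{defi:Hochschild_DGA} makes $\Hoch^{\bullet}(Q_{\bullet}(R),\mk)$ a DG algebra, $\delta_{[w]}$ is a derivation on it (Proposition \ref{prop:product_on_HH_second_kind}, 1)), and the associated spectral sequence $\{(E_r^{\bullet},d_r)\}$ is multiplicative with each $d_r$ a derivation (Proposition \ref{prop:product_on_HH_second_kind}, 3)). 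By Corollary \ref{cor:sp_seq_for_Hochschild_second_kind} it has $E_1^n=HML^n(R,\mk)$ with $d_1=\delta_{[w]}$, and since $Q_{\bullet}(R)$ is non-positive and $\mk$ is concentrated in degree zero it converges to $HML^{II,\bar n}(R,w;\mk)$.

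First I would record the input computation: by Corollary \ref{cor:HML_R_k_ramified} we have $E_1^n=HML^n(R,\mk)=\mk$ for every $n\ge 0$ and $E_1^n=0$ for $n<0$, so $E_1^{\bullet}$ is a graded $\mk$-algebra that is one-dimensional in each non-negative degree. The crucial point is the value of $d_1$ on the degree-one generator. Let $\xi\in\Hoch^1(Q_{\bullet}(R),\mk)$ be the explicit cocycle introduced just before the theorem, $\xi([x])=(x-x^q)/\pi \bmod \m$, which represents a generator of $E_1^1=HML^1(R,\mk)$. Applying $\delta_{[w]}$ at the cochain level collapses $\xi$ to the constant $0$-cochain with value $\pm\xi([w])=\pm (w-w^q)/\pi \bmod\m$, which is automatically a $b$-cocycle; hence in $E_1^0=HML^0(R,\mk)=\mk$ one gets $d_1(\xi)=\pm (w-w^q)/\pi \bmod \m$. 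The hypothesis $w-w^q\notin\m^2=\pi^2R$ says exactly that $(w-w^q)/\pi\notin\m$, i.e. $d_1(\xi)=:\lambda$ is a nonzero element of the field $\mk$, hence a unit.

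With this in hand, set $h:=\lambda^{-1}\xi\in E_1^1$, so that $d_1(h)=1$ is the algebra unit of $E_1^0$. Since $d_1$ is a derivation and $h$ has odd degree, the $\mk$-linear map $s(y):=h\cdot y$ satisfies
$$(d_1 s+s d_1)(y)=d_1(hy)+h\,d_1(y)=\big(d_1(h)\,y-h\,d_1(y)\big)+h\,d_1(y)=y.$$
Thus $s$ is a contracting homotopy for $(E_1^{\bullet},d_1)$, so this complex is acyclic and $E_2^{\bullet}=0$; every later page is a subquotient of $E_2$, whence $E_{\infty}^{\bullet}=0$. Finally, the cochain complex is bounded below (Proposition \ref{prop:Hoch_complexes_bounded}), so its filtration is exhaustive and Hausdorff and the (convergent) spectral sequence forces $HML^{II,\bullet}(R,w;\mk)=0$.

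The main obstacle is the exact identification $d_1(\xi)=\pm(w-w^q)/\pi \bmod \m$: one must check that passing to $E_1$-cohomology classes is compatible with the cochain-level formula for $\delta_{[w]}$, i.e. that $\delta_{[w]}$ of the $b$-cocycle $\xi$ is again a $b$-cocycle representing its $d_1$-image, and keep careful track of the sign and of the normalization of $\xi$ as a generator of the one-dimensional space $HML^1(R,\mk)$. Once $d_1(\xi)$ is known to be a unit, the contracting-homotopy argument is formal and, notably, requires neither $\xi^2=0$ nor any finer knowledge of the ring structure of $HML^{\bullet}(R,\mk)$, so it applies uniformly, including in residue characteristic $2$.
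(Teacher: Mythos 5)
Your proof is correct and takes essentially the same route as the paper's: both identify $d_1(\bar{\xi})=\pm\frac{w^q-w}{\pi}\bmod\m$ as a unit of $\mk$ via the explicit cocycle $\xi$, and then use the derivation property of $d_1$ (Proposition \ref{prop:product_on_HH_second_kind}) to conclude $E_2=0$; your explicit contracting homotopy $s(y)=h\cdot y$ is just the unpacking of the paper's remark that it suffices to have $1$ in the image of $d_1$. You also correctly read the coefficients as $\mk$ rather than $R$, which is how the paper's statement and proof (which contain a typo on this point) must be interpreted for consistency with the cocycle $\xi\in\Hoch^1(Q_{\bullet}(R),\mk)$ and with Corollary \ref{cor:HML_R_k_ramified}.
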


\begin{proof}By Proposition \ref{prop:product_on_HH_second_kind} the differential $d_1$ on $E_1^{\bullet}$ is a derivation. Hence it suffices to show that $1\in E_1^0$ is contained in the image of $d_1:E_1^1\to E_1^0.$ Take the cocycle $\xi\in\Hoch^1(Q_{\bullet}(R),R)$ defined above, and denote by $\bar{\xi}$ its class in $HML^1(R)=E_1^1.$ By our assumption on $w,$ the element $\frac{w^q-w}{\pi}\in R$ is invertible. We have
$$d_1((\frac{w^q-w}{\pi})^{-1}\bar{\xi})=\delta_{[w]}((\frac{w^q-w}{\pi})^{-1}\xi)=1.$$
This proves the proposition.\end{proof}

We denote by $l>0$ the positive integer such that $\cD_R=\m^l.$

\begin{theo}\label{th:HML^II_discr_valuation_ramified}Let $R$ and $w$ be as in Theorem \ref{th:HML^II_R_k_ramified}. Then we have an isomorphism of filtered algebras $$HML^{II,0}(R,w)\cong L,$$ where the filtration on $L$ is given by the formula
\begin{equation}\label{eq:filtration_on_L_ramified}F_n L\cong \frac{1}{n!}\m^{-nl}.\end{equation}\end{theo}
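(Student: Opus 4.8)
The plan is to leverage the vanishing $HML^{II,\bullet}(R,w;\mk)=0$ from Theorem \ref{th:HML^II_R_k_ramified} to turn $HML^{II,0}(R,w)$ into an $L$-vector space, and then to pin down both its dimension and its filtration by a rank-and-length count over the discrete valuation ring $R$. First I would record the structural input already available: by the discussion preceding this subsection $HML^{II,1}(R,w)=0$, and by Proposition \ref{prop:product_on_HH_second_kind} the $R$-algebra $HML^{II,0}(R,w)$ carries an exhaustive increasing filtration $F_0\subseteq F_1\subseteq\cdots$ by $R$-submodules with $F_0=HML^0(R)=R$, satisfying $F_n\cdot F_m\subseteq F_{n+m}$ and $\gr_n^F HML^{II,0}(R,w)\cong HML^{2n}(R)$. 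By Corollary \ref{cor:HML_for_discr_valuation} and $\cD_R=\m^l$ we have $HML^{2n}(R)\cong\cD_R^{-1}/nR=\m^{-l}/\m^{\ord_\m(n)}$ for $n\geq 1$, a torsion $R$-module of length $l+\ord_\m(n)$; in particular every $\gr_n^F$ with $n\geq1$ has finite length, so each $F_n$ is a finitely generated $R$-module, being an iterated extension of finite-length modules by $F_0=R$.

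Next I would make $V:=HML^{II,0}(R,w)$ into an $L$-vector space. The short exact sequence of $R$-bimodules $0\to R\xrightarrow{\pi}R\to\mk\to 0$, in which $w$ acts centrally, induces a six-term ($\Z/2$-graded) exact sequence relating the second-kind Mac Lane cohomologies with coefficients in $R$ and in $\mk$. Since $HML^{II,\bullet}(R,w;\mk)=0$ by Theorem \ref{th:HML^II_R_k_ramified}, this sequence shows that multiplication by $\pi$ is an isomorphism of $V$. Hence $\pi$ acts invertibly, so $V$ is naturally a module over $R[\pi^{-1}]=L$, that is, an $L$-vector space, and — being a commutative $R$-algebra — an $L$-algebra.

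The crucial step is then a rank computation. Because each $\gr_n^F$ is $R$-torsion for $n\geq 1$, every inclusion $F_{n-1}\subseteq F_n$ preserves $R$-rank, so all $F_n$ have the same rank as $F_0=R$, namely one; being finitely generated and torsion-free (as submodules of the $L$-vector space $V$), they are free of rank one over the DVR $R$, i.e. fractional ideals of $R$. Since $V=\bigcup_n F_n$ and any finite subset of $V$ lies in some $F_n$ of rank one, $V$ has $R$-rank one; as its $R$-rank equals $\dim_L V$, we get $\dim_L V=1$ and $V\cong L$ as $L$-algebras, the unit being a generator. Finally, transporting the filtration along $1\mapsto 1$, each $F_n$ becomes a fractional ideal $\m^{-a_n}\supseteq R$ with $a_0=0$ and $a_n-a_{n-1}=\operatorname{length}(\gr_n^F)=l+\ord_\m(n)$; summing yields $a_n=nl+\ord_\m(n!)$, so $F_n=\m^{-nl-\ord_\m(n!)}=\tfrac{1}{n!}\m^{-nl}$, which is precisely \eqref{eq:filtration_on_L_ramified}. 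The inclusions $F_nF_m\subseteq F_{n+m}$ hold because $\ord_\m\binom{n+m}{n}\geq 0$, matching the filtered-algebra structure and completing the identification as filtered algebras.

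The main obstacle I anticipate is the functoriality used in the second step: one must justify that $HML^{II,\bullet}(R,w;-)$ converts the short exact sequence of coefficient bimodules into a long exact sequence, even though the Hochschild cochain complex is only left exact in the coefficients at the level of individual terms. I would handle this by working in the derived category, viewing the second-kind cochain complex as $\Tot^L$ applied to $\bR\Hom_{\Z[P(R)]\otimes\Z[P(R)]^{op}}(I_{\Z[P(R)]},-)$, both of which are exact (triangulated) functors, rather than with the naive complexes. Everything else reduces to bookkeeping of ranks and lengths over $R$.
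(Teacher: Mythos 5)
Your proof is correct and follows essentially the same route as the paper's: the short exact sequence $0\to R\stackrel{\pi}{\to}R\to\mk\to 0$ together with the vanishing of $HML^{II,\bullet}(R,w;\mk)$ shows multiplication by $\pi$ is invertible, whence $HML^{II,0}(R,w)\cong L$ as $R$-algebras, and the filtration is then pinned down inductively from $F_0=R$ and $\gr_n^F\cong\cD_R^{-1}/nR$. Your rank-and-length bookkeeping and your justification of the long exact sequence in the coefficients simply make explicit details that the paper's proof leaves implicit.
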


\begin{proof}It follows from the short exact sequence \eqref{eq:short_exact_uniformizing} and the vanishing of $HML^{II,\bullet}(R,w;\mk)$ that the multiplication by $\pi$ in $HML^{II,0}(R,w)$ is an isomorphism. Since $HML^{II,0}(R,w)$ is an $R$-algebra, it follows that $\pi$ is invertible in $HML^{II,0}(R,w).$ In particular, the $R$-module $HML^{II,0}(R,w)$ has no torsion. This implies the isomorphism of $R$-algebras $HML^{II,0}(R,w)\cong L.$ The induced filtration $F_{\bullet}L$ on $L$ must satisfy the conditions
$$F_0L=R,\quad \gr_n^F L=R/n\m^{l},\quad n>0.$$ This implies \eqref{eq:filtration_on_L_ramified} by induction.
\end{proof}

As a corollary, we obtain the product structure on $HML^{\bullet}(R).$

\begin{cor}\label{cor:product_on_HML_ramified}We have natural isomorphism of graded $R$-algebras
$$HML^{\bullet}(R)\cong \Gamma_R(x)/(\cD_R\cdot x),$$
where $\deg(x)=2.$\end{cor}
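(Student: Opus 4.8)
The plan is to deduce the ring structure purely from the two results already in hand: the computation $HML^{II,0}(R,w)\cong L$ of Theorem~\ref{th:HML^II_discr_valuation_ramified} together with its filtration, and the multiplicativity of the associated spectral sequence from Proposition~\ref{prop:product_on_HH_second_kind}. First I would fix an element $w\in R$ with $w-w^q\notin\m^2$ (a uniformizer $\pi$ works, since $\pi^q\in\m^2$ while $\pi\notin\m^2$), so that Theorem~\ref{th:HML^II_discr_valuation_ramified} applies. Consider the $\Z$-graded spectral sequence $\{(E_r^{\bullet},d_r)\}$ of the mixed complex $(\Hoch^{\bullet}(Q_{\bullet}(R),R),\delta_{[w]})$, with $E_1^n=HML^n(R)$. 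By Corollary~\ref{cor:HML_for_discr_valuation} the groups $HML^{\bullet}(R)$ vanish in odd degrees, whereas each $d_r$ is homogeneous of odd degree; hence every differential vanishes and the spectral sequence degenerates at $E_1$.

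Next I would invoke Proposition~\ref{prop:product_on_HH_second_kind}, parts 2)--4), which applies because $Q_{\bullet}(R)$ is non-positive and the coefficients $R$ are concentrated in degree zero. This gives that each page is a graded algebra, that the product on $E_1^{\bullet}=HML^{\bullet}(R)$ is the cup product, and that $E_{\infty}^{\bullet}\cong\gr_{\bullet}^F HML^{II,0}(R,w)$ as graded algebras. Degeneration identifies $E_{\infty}=E_1=HML^{\bullet}(R)$ as graded rings, so I obtain an isomorphism of graded $R$-algebras
$$HML^{\bullet}(R)\cong\gr_{\bullet}^F HML^{II,0}(R,w).$$
By Theorem~\ref{th:HML^II_discr_valuation_ramified} we have $HML^{II,0}(R,w)\cong L$ as filtered $R$-algebras with $F_nL=\tfrac{1}{n!}\m^{-nl}=\tfrac{1}{n!}\cD_R^{-n}$, so everything reduces to identifying the graded ring $\gr_{\bullet}^F L$ with $\Gamma_R(x)/(\cD_R\cdot x)$.

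The final step is a direct computation in which the divided-power structure is forced by the factorials in the filtration. Fix a uniformizer $\pi$ and set $t:=\pi^{-l}$, a free generator of the $R$-module $\cD_R^{-1}$; then $\tfrac{t^n}{n!}$ freely generates $F_nL=\tfrac{1}{n!}\cD_R^{-n}$ over $R$. Writing $x^{[n]}$ for the class of $\tfrac{t^n}{n!}$ in $\gr_n^F L=F_nL/F_{n-1}L$, a short calculation shows that the annihilator of $x^{[n]}$ is $n\cD_R$ (recovering $\gr_n^F L\cong R/n\cD_R$) and that
$$x^{[n]}\cdot x^{[m]}=\frac{t^{n+m}}{n!\,m!}=\binom{n+m}{n}\,x^{[n+m]}.$$
These are exactly the structure constants of the divided-power algebra, so the assignment $x^{[n]}\mapsto x^{[n]}$ (with $x=x^{[1]}$ in degree $2$) defines a surjection of graded $R$-algebras $\Gamma_R(x)\to\gr_{\bullet}^F L$. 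Its degree-$2n$ kernel is $n\cD_R\,x^{[n]}$, which is precisely the degree-$2n$ part of the ideal $(\cD_R\cdot x)$, since that ideal is generated in degree $2$ and $x^{[1]}x^{[n-1]}=n\,x^{[n]}$. Hence $\gr_{\bullet}^F L\cong\Gamma_R(x)/(\cD_R\cdot x)$, and combining with the isomorphism above proves the corollary.

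I do not expect a serious obstacle, as the substantive content is already contained in Theorem~\ref{th:HML^II_discr_valuation_ramified} and Proposition~\ref{prop:product_on_HH_second_kind}. The one point deserving care is that the transported product is the \emph{divided-power} multiplication rather than the polynomial one: this is the whole reason $\Gamma_R(x)$ appears instead of $R[x]$, and it is visible in the identity $\tfrac{1}{n!}\tfrac{1}{m!}=\binom{n+m}{n}\tfrac{1}{(n+m)!}$ coming from the factorials in $F_nL$. Naturality of the resulting isomorphism follows because the construction depends only on the canonical filtration, with $x$ a generator of the degree-$2$ part $HML^2(R)=\cD_R^{-1}/R$.
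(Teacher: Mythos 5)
Your proposal is correct and follows essentially the same route as the paper: the paper establishes the graded-ring isomorphism $\gr_{\bullet}^F HML^{II,0}(R,w)\cong\bigoplus_{n\geq 0}HML^{2n}(R)$ (via Proposition \ref{prop:product_on_HH_second_kind} and odd-degree vanishing) just before Subsection \ref{ssec:ramified}, then in the proof of the corollary defines exactly your map $\gamma_n(x)\mapsto \pi^{-nl}/n!$ from $\Gamma_R(x)/(\cD_R\cdot x)$ to $\gr_{\bullet}^F L$ and declares it an isomorphism. Your explicit verification of the divided-power structure constants $x^{[n]}x^{[m]}=\binom{n+m}{n}x^{[n+m]}$ and of the annihilators $n\cD_R$ is just the spelled-out version of the paper's ``it is easy to see.''
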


\begin{proof}Indeed, let us choose any element $w\in R$ such that $w-w^q\not\in\m^2$ (for example, $w=\pi$). By Theorem \ref{th:HML^II_discr_valuation_ramified}, we have isomorphisms $HML^{\bullet}(R)\cong \gr_{\bullet}^F HML^{II}(R,w)\cong \gr_{\bullet}^F L,$ where the filtration $F_{\bullet}L$ is given by \eqref{eq:filtration_on_L_ramified}. Here the degree of $\gr_n^F$ is defined to be $2n.$ We define a morphism
$$\varphi:\Gamma_R(x)/(\cD_R\cdot x)\to \gr_{\bullet}^F L$$
by the formula $$\varphi(\gamma_n(x))=\frac{\pi^{-nl}}{n!}.$$
It is easy to see that $\varphi$ is an isomorphism. This proves the corollary.\end{proof}

We have the following corollary for the Mac Lane homology of the second kind.

\begin{theo}\label{th:HML^II_dual_for_discr_valuation_ramified}Let $R$ and $w$ be as in Theorem \ref{th:HML^II_discr_valuation_ramified}. Then we have $HML^{II}_{\bullet}(R,w)=0.$
\end{theo}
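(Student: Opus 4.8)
The plan is to obtain the statement from the cohomological computation by duality. First I check the hypotheses of Lemma~\ref{lem:duality_for_Mac_Lane_homology} for $M=R$: the ring $R$ is a complete discrete valuation ring, hence noetherian and regular; $M=R$ is concentrated in degree zero, so in particular bounded above; and by Corollary~\ref{cor:HML_for_discr_valuation} every $HML_n(R)$ is a finitely generated $R$-module. Since $\Hom_R(R,R)=R$, part 2) of that lemma yields an isomorphism
$$\Hoch^{II}_{\bullet}(Q_{\bullet}(R),[w];R)\cong\bR\Hom_R\bigl(\Hoch^{II,\bullet}(Q_{\bullet}(R),w;R),R\bigr)$$
in $D(R[u^{\pm 1}])$. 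Write $C:=\Hoch^{II,\bullet}(Q_{\bullet}(R),w;R)$. By the discussion preceding Theorem~\ref{th:HML^II_R_k_ramified} we have $HML^{II,1}(R,w)=0$, and by Theorem~\ref{th:HML^II_discr_valuation_ramified} we have $HML^{II,0}(R,w)\cong L$; thus $H^{\bar 0}(C)\cong L$ and $H^{\bar 1}(C)=0$. Computing $HML^{II}_{\bullet}(R,w)$ therefore reduces to computing $\bR\Hom_R(C,R)$, and hence to understanding $\bR\Hom_R(L,R)$.

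The key point, and the only place where completeness of $R$ is essential, is the vanishing $\bR\Hom_R(L,R)=0$ in $D(R)$. I would argue as follows. Fixing a uniformizer $\pi$, we have $L=\bigcup_n\pi^{-n}R$, so $L$ is the homotopy colimit of the tower $R\xrightarrow{\pi}R\xrightarrow{\pi}R\to\cdots$. Applying $\bR\Hom_R(-,R)$ converts this into a homotopy limit,
$$\bR\Hom_R(L,R)\cong\holim_n\bigl(\cdots\xrightarrow{\pi}R\xrightarrow{\pi}R\bigr),$$
whose cohomology is $\lim\limits_{\leftarrow}$ in degree $0$ and ${\lim\limits_{\leftarrow}}^{1}$ in degree $1$. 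The inverse limit is $\bigcap_n\pi^nR=0$, and the $\lim^1$-term vanishes precisely because $R$ is $\pi$-adically complete: the map $(a_n)\mapsto(a_n-\pi a_{n+1})$ on $\prod_nR$ is surjective, since for any $(b_n)$ the series $a_n=\sum_{k\geq 0}\pi^kb_{n+k}$ converges in $R$ and solves $a_n-\pi a_{n+1}=b_n$. Hence $\bR\Hom_R(L,R)=0$, and in particular $\Hom_R(L,R)=\Ext^1_R(L,R)=0$.

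Finally I would feed this back through the universal-coefficient (hyperext) spectral sequence computing $\bR\Hom_R(C,R)$ from the groups $\Ext^p_R(H^q(C),R)$. All cohomology groups of $C$ are either $L$ or $0$, and $\Ext^p_R(L,R)=0$ for all $p$ by the previous paragraph, so the $E_2$-page vanishes identically and $\bR\Hom_R(C,R)=0$. Combined with the duality isomorphism this gives $HML^{II}_{\bullet}(R,w)=0$, as claimed. I expect the genuinely non-formal step to be the identity $\bR\Hom_R(L,R)=0$; the rest is a formal consequence of Lemma~\ref{lem:duality_for_Mac_Lane_homology} and the ramified cohomology computation of Theorem~\ref{th:HML^II_discr_valuation_ramified}. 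It is worth stressing that completeness is used in an essential way, which is consistent with the fact that the analogous homology for a global localization (Theorem~\ref{th:HML^II_dual_intro}) is in general nonzero.
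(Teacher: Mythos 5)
Your proposal is correct, and its global strategy coincides with the paper's: both reduce the statement, via the duality lemma (you invoke Lemma \ref{lem:duality_for_Mac_Lane_homology}, the paper its source Lemma \ref{lem:duality_for_homology}) together with $HML^{II,1}(R,w)=0$ and Theorem \ref{th:HML^II_discr_valuation_ramified}, to the vanishing of $\Hom_R(L,R)$ and $\Ext^1_R(L,R)$. Where you genuinely diverge is in how that vanishing is proved. The paper takes the injective resolution $0\to R\to L\to L/R\to 0$ and asserts $\Hom_R(L,L)=L=\Hom_R(L,L/R)$, whence $\Ext^1_R(L,R)=0$; the identification $\Hom_R(L,L/R)\cong L$ (and the surjectivity of the induced map) is precisely where completeness enters there, and it is stated without proof. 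You instead present $L$ as the sequential homotopy colimit of $R\xrightarrow{\pi}R\xrightarrow{\pi}\cdots$, convert $\bR\Hom_R(-,R)$ into a homotopy limit, and kill both $\lim$ and ${\lim}^1$ of the tower $\cdots\xrightarrow{\pi}R\xrightarrow{\pi}R$, the latter by the explicit convergent series $a_n=\sum_{k\geq 0}\pi^k b_{n+k}$. This buys a self-contained argument in which the role of $\pi$-adic completeness is fully explicit (and which correctly explains why the analogous homology for a global localization, Theorem \ref{th:HML^II_dual_intro}, is nonzero), at the cost of being longer than the paper's two-line resolution argument. One small remark: your appeal to a universal-coefficient spectral sequence in the $\Z/2$-graded setting is harmless because $R$ is hereditary — any $\Z/2$-graded complex of projectives over a hereditary ring is quasi-isomorphic to its cohomology with zero differential, so $\bR\Hom_R(C,R)\cong\bR\Hom_R(L,R)=0$ directly; this is also what the paper uses implicitly when it extracts $\Hom_R(L,R)$ and $\Ext^1_R(L,R)$ from the derived dual.
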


\begin{proof}By Theorem \ref{th:HML^II_discr_valuation_ramified} and Lemma \ref{lem:duality_for_homology}, we have isomorphisms $HML^{II}_0(R,w)\cong\Hom_R(L,R)$ and $HML^{II}_1(R,w)\cong\Ext^1_R(L,R).$ Obviously, we have $\Hom_R(L,R)=0.$ To compute $\Ext^1_R(L,R),$ take an injective resolution
$$0\to R\to L\to L/R\to 0.$$ We have $\Hom_R(L,L)=L=\Hom_R(L,L/R).$ Therefore, $\Ext^1_R(L,R)=0.$ This proves 1).
\end{proof}

\begin{cor}\label{cor:sp_seq_for_homology_ramified}Let $R$ and $w$ be as in Theorem \ref{th:HML^II_discr_valuation_ramified}.  Let $\{(E_r^{\bullet},d_r)\}$ be a $\Z$-graded spectral sequence associated with the mixed complex $(\Hoch_{\bullet}(Q_{\bullet}(R),R),\delta_{[w]}).$ Then for each $r$ the differential
$$d_r:E_r^0\to E_r^{-2r+1}=HML_{2r-1}(R)$$ is surjective.\end{cor}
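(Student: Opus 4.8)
The plan is to derive the surjectivity directly from the acyclicity of the totalization, rather than from an abstract convergence theorem. First I would record the shape of the $E_1$-page. By Corollary \ref{cor:HML_for_discr_valuation} the groups $E_1^{-n}=HML_n(R)$ vanish for $n$ even and positive, so the only nonzero terms sit in cohomological degree $0$ (equal to $HML_0(R)=R$) and in the odd degrees $-(2r-1)$, where $E_1^{-(2r-1)}=HML_{2r-1}(R)$. Since every differential $d_s$ has degree $-(2s-1)$, and the only nonzero $E_1$-terms lie in degree $0$ or in the odd negative degrees, a differential out of an odd slot lands in an even negative slot and therefore vanishes, while the slot $0$ receives no differential (its source would sit in a positive degree). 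Hence the only possibly nonzero differentials are $d_s:E_s^0\to E_s^{-(2s-1)}$, and the slot $-(2r-1)$ is untouched before page $r$; in particular $E_r^{-(2r-1)}=HML_{2r-1}(R)$, as asserted.

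Second, I would realize the mixed complex concretely. Writing $\mathcal K^{-n}=\Hoch_n(Q_\bullet(R),R)$, the complex is bounded above, so $\Tot^L=\Tot^{\Pi}$ and
$$(\Tot^{\Pi})^{\bar 0}=\prod_{k\ge 0}\Hoch_{2k},\qquad (\Tot^{\Pi})^{\bar 1}=\prod_{k\ge 0}\Hoch_{2k+1},$$
with total differential $b+\delta_{[w]}$ acting on $x=(x_k)$ by $(dx)_k=\delta_{[w]}x_k+b\,x_{k+1}\in\Hoch_{2k+1}$ and on $y=(y_k)$ by $(dy)_k=b\,y_k+\delta_{[w]}y_{k-1}\in\Hoch_{2k}$. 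By Theorem \ref{th:HML^II_dual_for_discr_valuation_ramified} this $\Z/2$-graded complex is acyclic.

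Third comes the key step. Given a class $\eta\in HML_{2r-1}(R)$ represented by a $b$-cycle $c\in\Hoch_{2r-1}$, I would build an odd cocycle $y=(y_k)\in(\Tot^{\Pi})^{\bar 1}$ with $y_{r-1}=c$ and $y_k=0$ for $k<r-1$. The cocycle equation reads $b\,y_k=-\delta_{[w]}y_{k-1}$; for $k\le r-1$ it holds automatically (using $bc=0$), and for each $k\ge r$ the right-hand side $-\delta_{[w]}y_{k-1}$ is a $b$-cycle lying in the even positive degree $2k$, hence a $b$-boundary because $HML_{2k}(R)=0$ by Corollary \ref{cor:HML_for_discr_valuation}; so $y_k$ can be chosen inductively. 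This is exactly the point at which working with the product totalization $\Tot^{\Pi}$ is essential: the cocycle $y$ is genuinely infinite. By acyclicity $y=dx$ for some even chain $x=(x_k)$, and unwinding $dx=y$ gives $\delta_{[w]}x_k=-b\,x_{k+1}$ for $k\le r-2$ together with $\delta_{[w]}x_{r-1}+b\,x_r=c$. The first relations say that $[x_0]\in E_1^0=HML_0(R)$ survives to page $r$, so defines a class in $E_r^0$, and the last relation gives $d_r[x_0]=[\delta_{[w]}x_{r-1}]=[c]=\eta$ in $E_r^{-(2r-1)}=HML_{2r-1}(R)$. As $\eta$ was arbitrary, $d_r$ is surjective.

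The main obstacle I anticipate is the bookkeeping in this last step: one must check the signs in $b\delta_{[w]}+\delta_{[w]}b=0$ so that each $\delta_{[w]}y_{k-1}$ and $\delta_{[w]}x_k$ is indeed $b$-closed, and verify that the zig-zag $(x_0,\dots,x_{r-1})$ produced by acyclicity is precisely the datum computing $d_r[x_0]$ in the $\Z$-graded spectral sequence of the mixed complex, with the conventions of Appendix \ref{sec:appendix_on_sp_seq}. Everything else — the vanishing $HML_{2k}(R)=0$ and the acyclicity of $\Tot^{\Pi}$ — is already available, so these sign and indexing verifications are the only real content.
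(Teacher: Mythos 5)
Your proof is correct and follows the same route as the paper: the paper's entire proof of this corollary is the single line ``this follows immediately from Theorem \ref{th:HML^II_dual_for_discr_valuation_ramified}'' (the vanishing $HML^{II}_{\bullet}(R,w)=0$), which is precisely the input your argument runs on. Your zig-zag construction through $\Tot^{\Pi}$ is a correct and complete spelling-out of what the paper leaves implicit --- and appropriately careful, since the convergence statement (Proposition \ref{prop:convergence_of_sp_seq}) applies only to bounded-below mixed complexes and hence not to this one, so the surjectivity really must be extracted from acyclicity of the completed totalization exactly as you do.
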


\begin{proof}Indeed, this follows immediately from Theorem \ref{th:HML^II_dual_for_discr_valuation_ramified}.\end{proof}

\subsection{Non-ramified case}
\label{ssec:nonramified}

In this subsection we consider the case when the extension $L/\Q_p$ is non-ramified, i.e. $\cD_R=R.$ We have $\m=pR.$

\begin{cor}\label{cor:HML_for_torsion_modules} Denote by $R\text{-mod}_{\m}$ the category of finitely generated $R$-modules, which are $\m$-torsion (i.e. annihilated by some power of $\m$). Take some positive integer $n=p^tl,$ where $t\geq 0$ and $l$ is coprime to $p.$ Then one can choose isomorphisms of functors
\begin{equation}\label{eq:functorial_HML^2n} HML^{2n}(R,M)\cong M/\m^tM,\end{equation}
\begin{equation}\label{eq:functorial_HML^2n-1} HML^{2n-1}(R,M)\cong\Tor_1^R(R/\m^t,M)\cong \ker(\m^t\otimes_R M\to M),\end{equation}
where $M\in R\text{-mod}_{\m}.$\end{cor}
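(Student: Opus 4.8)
The plan is to derive both formulas from the already-computed Mac Lane \emph{homology} $HML_\bullet(R)$ of Corollary \ref{cor:HML_for_discr_valuation}, by means of a universal coefficient theorem over the discrete valuation ring $R$.

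First I would reduce the problem to a purely homological one. Since $M\in R\text{-mod}_{\m}$ is a symmetric $R$-bimodule, the identifications \eqref{eq:identification_for_Mac_Lane_chain_complex} and \eqref{eq:identification_for_Mac_Lane_cochain_complex} allow me to realize the Mac Lane cochain complex as the termwise $R$-linear dual of the Mac Lane chain complex: $\Hoch^\bullet(Q_\bullet(R),M)\cong\Hom_R(C_\bullet,M)$, where $C_\bullet:=\Hoch_\bullet(Q_\bullet(R),R)$ has homology $H_\bullet(C_\bullet)=HML_\bullet(R)$. The crucial point, which I would verify next, is that $C_\bullet$ is a complex of \emph{free} $R$-modules: by Proposition \ref{prop:Q^n_cross_effect} each group $Q_k(R)$ is a direct summand of the free abelian group $Q'_k(R)$, hence free over $\Z$, so every term $R\otimes_\Z Q_{k_1}\otimes_\Z\cdots\otimes_\Z Q_{k_n}$ of $C_\bullet$ is a free $R$-module.

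With $C_\bullet$ termwise free and $R$ hereditary (global dimension $1$), the standard universal coefficient argument applies: the modules of cycles and boundaries are projective as submodules of the projective $C_k$, so the relevant short exact sequences split, and one obtains natural short exact sequences
\begin{equation*}0\to\Ext^1_R(HML_{m-1}(R),M)\to HML^m(R,M)\to\Hom_R(HML_m(R),M)\to 0\end{equation*}
for every $m$. It then remains to feed in the values of Corollary \ref{cor:HML_for_discr_valuation}. Writing $n=p^tl$ with $l$ coprime to $p$, I note that $l$ is a unit in the local ring $R$, so $nR=p^tR=\m^t$; and since $L/\Q_p$ is non-ramified we have $\cD_R^{-1}=R$. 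Hence $HML_{2n-1}(R)=\cD_R^{-1}/nR\cong R/\m^t$, while $HML_{2n}(R)=0$ and $HML_{2n-2}(R)$ is either $0$ or (when $n=1$) the free module $R$. Evaluating the sequence in cohomological degrees $2n$ and $2n-1$ therefore gives $HML^{2n}(R,M)\cong\Ext^1_R(R/\m^t,M)$ and $HML^{2n-1}(R,M)\cong\Hom_R(R/\m^t,M)$.

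Finally I would compute these two functors from the free resolution $0\to R\xrightarrow{\,p^t\,}R\to R/\m^t\to 0$. This yields $\Ext^1_R(R/\m^t,M)\cong\coker(M\xrightarrow{p^t}M)=M/\m^tM$, which is \eqref{eq:functorial_HML^2n}, and $\Hom_R(R/\m^t,M)\cong\ker(M\xrightarrow{p^t}M)$, which coincides with $\Tor_1^R(R/\m^t,M)$ and, via the isomorphism $\m^t\cong R$, with $\ker(\m^t\otimes_R M\to M)$, which is \eqref{eq:functorial_HML^2n-1}. All of these isomorphisms are natural in $M$, since the universal coefficient sequences and the functors $\Ext^1_R(R/\m^t,-)$ and $\Hom_R(R/\m^t,-)$ are. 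I expect the only delicate step to be the first one: one must confirm that, for symmetric coefficients, the cochain complex really is identified with $\Hom_R(C_\bullet,M)$ and that $C_\bullet$ is genuinely termwise free, so that the honest $\Hom$-complex already computes the derived functors and the universal coefficient theorem applies with no further resolution.
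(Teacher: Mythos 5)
Your proof is correct, but it runs ``dual'' to the paper's own argument, which goes through cohomology and base change rather than homology and dualization. The paper's proof is a one-liner: for $M\in\Perf(R)$ (which contains all finitely generated $\m$-torsion modules, since $R$ is regular) there is a natural isomorphism in $D(R)$
$$\Hoch^{\bullet}(Q_{\bullet}(R),M)\cong \Hoch^{\bullet}(Q_{\bullet}(R),R)\stackrel{\bL}{\otimes}_R M,$$
and since $R$ is hereditary the right-hand complex splits in $D(R)$ into its cohomologies, giving $HML^{m}(R,M)\cong HML^{m}(R)\otimes_R M\,\oplus\,\Tor_1^R(HML^{m+1}(R),M)$; fixing isomorphisms $HML^{2n}(R)\cong R/\m^t$ from Corollary \ref{cor:HML_for_discr_valuation} (odd cohomology vanishes) then yields \eqref{eq:functorial_HML^2n} and \eqref{eq:functorial_HML^2n-1} simultaneously, and in particular produces the $\Tor_1^R(R/\m^t,M)$ form of the answer with no further translation. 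You instead work with the chain complex $C_\bullet=\Hoch_{\bullet}(Q_{\bullet}(R),R)$: you verify it is termwise free over $R$ (correct -- $N_k(R)$ is spanned by basis elements, so each $Q_k(R)$ is free abelian), identify the cochain complex with $\Hom_R(C_\bullet,M)$ for symmetric coefficients (this is in effect the identifications \eqref{eq:identification_for_Mac_Lane_chain_complex}, \eqref{eq:identification_for_Mac_Lane_cochain_complex} and Lemma \ref{lem:duality_of_Mac_Lane_complexes}), and then invoke the $\Hom/\Ext$ universal coefficient theorem against the homology $HML_\bullet(R)$, finishing with $\Hom_R(R/\m^t,M)\cong\Tor_1^R(R/\m^t,M)$ (both being the $p^t$-torsion of $M$). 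The naturality bookkeeping is also right: in each relevant degree one end of the UCT sequence vanishes, so no splitting choice enters beyond the fixed isomorphism $HML_{2n-1}(R)\cong R/\m^t$. Both arguments rest on the same two inputs (Corollary \ref{cor:HML_for_discr_valuation} and hereditarity of $R$); what yours buys is independence from perfectness of $M$ -- it works for arbitrary $R$-modules -- at the cost of the termwise-freeness check and the duality identification, while the paper's buys brevity, as both formulas drop out of a single base-change isomorphism.
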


\begin{proof}Indeed, for any $M\in\Perf(R)$ we have natural isomorphism in $D(R):$
$$\Hoch^{\bullet}(Q_{\bullet}(R),M)\cong \Hoch^{\bullet}(Q_{\bullet}(R),R)\stackrel{\bL}{\otimes}_R M.$$
If we fix for each $n=p^tl$ as above an isomorphism $HML^{2n}(R)\cong R/\m^t$ of $R$-modules (it exists by Corollary \ref{cor:HML_for_discr_valuation}), then we get the functorial isomorphisms \eqref{eq:functorial_HML^2n}, \eqref{eq:functorial_HML^2n-1}.\end{proof}

\begin{theo}\label{th:HML_R_k}(\cite{FLS}) We have
$$HML^{n}(R,\mk)=\begin{cases}\mk & \text{for }n\equiv -1,0\text{ mod }2p;\\
0 & \text{otherwise.}\end{cases}$$
Moreover, we have an isomorphism of graded $\mk$-algebras
$$HML^{\bullet}(R,\mk)\cong \mk\otimes \Lambda_1\otimes\Lambda(\xi_1),$$
where $\xi_1$ is a variable of degree $2p-1$ and $\Lambda(\xi_1)$ is an exterior algebra in one variable.\end{theo}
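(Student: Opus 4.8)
The plan is to feed the already-computed algebra $HML^\bullet(\mk,\mk)\cong\mk\otimes\Lambda$ of Theorem \ref{th:HML_F_q_F_q} into the change-of-rings spectral sequence of Proposition \ref{prop:sp_seq_change_of_rings} attached to the reduction homomorphism $R\to\mk$. First I would compute the coefficient groups $\Ext^q_R(\mk,\mk)$. Since the extension is unramified we have $\m=pR$, so $R$ is a discrete valuation ring with uniformizer $p$ and the minimal free resolution $0\to R\xrightarrow{p}R\to\mk\to 0$ gives $\Ext^0_R(\mk,\mk)\cong\Ext^1_R(\mk,\mk)\cong\mk$ and $\Ext^{\geq 2}_R(\mk,\mk)=0$; the ring structure is $\Ext^\bullet_R(\mk,\mk)\cong\Lambda_\mk(\epsilon)$ with $\deg\epsilon=1$ and $\epsilon^2=0$ (forced since $\Ext^2_R=0$). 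Hence the spectral sequence has exactly two nonzero rows, with
$$E_2^{p,q}=HML^p(\mk,\mk)\quad\text{for }q\in\{0,1\},\qquad E_2^{p,q}=0\text{ otherwise},$$
and by the base-$p$ description of $\Lambda$ each of these groups is one-dimensional over $\mk$ in every even cohomological degree and zero in odd degree.

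With only two rows there is a single possibly nonzero differential $d_2:E_2^{j,1}\to E_2^{j+2,0}$. Because the bottom row $E_2^{\bullet,0}=HML^\bullet(\mk,\mk)$ acts on the spectral sequence and $d_2$ is linear over it (with $d_2$ vanishing on the bottom row), $d_2$ is determined by its value on the module generator $\epsilon\in E_2^{0,1}$. Writing $d_2(\epsilon)=c_0\,e_0\in E_2^{2,0}=HML^2(\mk,\mk)$, linearity gives $d_2(x\epsilon)=\pm c_0\,(e_0x)$ for $x\in HML^\bullet(\mk,\mk)$, so $d_2$ is, up to the scalar $c_0$ and sign, multiplication by $e_0$.

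The key step is to show $c_0\neq 0$. I would establish this by computing $HML^1(R,\mk)$ independently, via the long exact sequence of the coefficient sequence $0\to R\xrightarrow{p}R\to\mk\to 0$. By Corollary \ref{cor:HML_for_discr_valuation} we have $HML^1(R)=0$, and since $\cD_R=R$ in the unramified case, $HML^2(R)=\cD_R^{-1}/1\cdot R=R/R=0$; the long exact sequence then forces $HML^1(R,\mk)=0$. In the spectral sequence the only contribution to total degree $1$ is $E_2^{0,1}=\mk\cdot\epsilon$, and nothing can map into it, so it must be killed by $d_2$; thus $d_2(\epsilon)\neq 0$, i.e. $c_0\neq 0$. (Conceptually, $\epsilon$ is the class of the extension $0\to\mk\to R/p^2\to\mk\to 0$, and $d_2(\epsilon)$ is the class of the induced square-zero extension $R/p^2=W_2(\mk)$, which is the Witt generator $e_0$ by Theorem \ref{th:HML^2_sq_zero_ext}; this is the real reason for the nonvanishing.)

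Granting $c_0\neq 0$, the rest is bookkeeping. Writing $\Lambda=\mathbb{F}_p[e_0]/(e_0^p)\otimes\Lambda_1$ with $\Lambda_1=\Lambda/(e_0)$, multiplication by $e_0$ on $\Lambda$ has cokernel $\Lambda_1$ and kernel $e_0^{p-1}\Lambda_1$. Hence $E_3=E_\infty$ has bottom row $\mk\otimes\Lambda_1$ and top row $e_0^{p-1}(\mk\otimes\Lambda_1)\cdot\epsilon$; putting $\xi_1:=[e_0^{p-1}\epsilon]$, of total degree $2p-1$, gives $E_\infty\cong\mk\otimes\Lambda_1\otimes\Lambda(\xi_1)$, which is one-dimensional exactly in total degrees $\equiv 0,-1\pmod{2p}$, establishing the additive statement. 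For the algebra structure, the classes $e_i$ ($i\geq 1$) are pulled back from $HML^\bullet(\mk,\mk)$ along the ring map $HML^\bullet(\mk,\mk)\to HML^\bullet(R,\mk)$, so the relations $e_i^p=0$ persist, while $\xi_1^2$ lies in $HML^{4p-2}(R,\mk)=0$ (as $4p-2\equiv-2\pmod{2p}$), forcing $\xi_1^2=0$; this removes any multiplicative extension ambiguity and yields $HML^\bullet(R,\mk)\cong\mk\otimes\Lambda_1\otimes\Lambda(\xi_1)$. The main obstacle is precisely the nonvanishing of $d_2(\epsilon)$; once $c_0\neq 0$ is secured, the two-row shape, the module-linearity of $d_2$, and the base-$p$ combinatorics of $\Lambda$ determine everything.
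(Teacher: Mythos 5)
Your proof is correct and is essentially the argument the paper appeals to: the paper's own proof of Theorem \ref{th:HML_R_k} is a one-line citation to \cite{FLS} and \cite{L}, Section 13.4, where the case of $(\Z,\Z/p\Z)$ is settled by precisely this two-row change-of-rings spectral sequence with $d_2$ given (up to a unit) by multiplication by $e_0$, and your adaptation to unramified $R$ --- in particular pinning down $d_2(\epsilon)\neq 0$ from $HML^1(R,\mk)=0$, which uses $\cD_R=R$ via Corollary \ref{cor:HML_for_discr_valuation} --- is sound. The one ingredient you use that goes beyond Proposition \ref{prop:sp_seq_change_of_rings} as stated is the multiplicative (module) structure of the spectral sequence over the bottom row, on which the determination of $d_2$ rests; this is standard, and the paper itself invokes exactly this structure without proof in Proposition \ref{prop:cocycle_for_xi_1}, so your write-up matches the rigor of the proof being cited.
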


\begin{proof}The same statement for $HML^{\bullet}(\Z,\Z/p\Z)$ is proved in \cite{FLS} and \cite{L}, Section 13.4. The same proof works in our case.
\end{proof}

Under the assumptions of Theorem \ref{th:HML_R_k}, we would like to write down the representative of the class $\xi_1\in HML^{2p-1}(R,\mk).$ Let us put $q:=|\mk|,$ so that $\mk\cong\bbF_q.$

We have a natural morphism of complexes $$f:\Hoch^{\bullet}(Q_{\bullet}(\mk),\mk)\to \Hoch^{\bullet}(Q_{\bullet}(R),\mk).$$  Recall the cocycle $\alpha_q\in\Hoch^2(Q_{\bullet}(\mk),\mk)$ from \eqref{eq:cocycle_alpha_q}, which represents the class $e_0\in HML^2(\mk).$ By Theorem \ref{th:HML_R_k}, we have $HML^2(R,\mk)=0.$ In particular, the cocycle $f(\alpha_q)\in\Hoch^2(Q_{\bullet}(R),\mk)$ is a coboundary. Moreover, we have
$$f(\alpha_q)=b(\beta),\quad \beta\in\Hoch^1(Q_{\bullet}(R),\mk),\quad\beta([a])=\frac{a-a^q}{p}\text{ mod }\m.$$
By Theorem \ref{th:HML_F_q_F_q}, the cocycle $(\alpha_q)^p\in\Hoch^{2p}(Q_{\bullet}(\mk),\mk)$ is actually a coboundary. Let us fix once and for all the cochain $\gamma_{2p-1}\in \Hoch^{2p-1}(Q_{\bullet}(\mk),\mk),$ satisfying $b(\gamma_{2p-1})=(\alpha_q)^p.$ By Lemma \ref{lem:reduced_for_HML_of_F_q}, we may and will assume that the cochain $\gamma_{2p-1}$ vanishes on $(a_1,\dots,a_i)$ if at least one of $a_j$ is in $Q_0(\mk).$

\begin{prop}\label{prop:cocycle_for_xi_1}The cochain $\beta\cdot f(\alpha_q)^{p-1}-f(\gamma_{2p-1})\in\Hoch^{2p-1}(Q_{\bullet}(R),\mk))$ is a cocycle, representing a non-zero class in $HML^{2p-1}(R,\mk),$ which we assume to be $\xi_1.$\end{prop}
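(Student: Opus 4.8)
The plan is to verify the cocycle property by a direct computation with the cup product, and then to prove nontriviality by reducing, through a Bockstein long exact sequence, to the nonvanishing of a single class in $HML^{2p}(R)$.

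\emph{Cocycle property.} Recall from Definition \ref{defi:Hochschild_DGA} that $\Hoch^{\bullet}(Q_{\bullet}(R),\mk)$ is a DG algebra on which $b$ acts as a graded derivation, and that $f$ is a morphism of DG algebras. Since $\alpha_q$ is closed and $f$ is a chain map, $f(\alpha_q)$ is closed, hence so is $f(\alpha_q)^{p-1}$. Using $\deg(\beta)=1$ together with $b(\beta)=f(\alpha_q)$ I obtain
$$b\big(\beta\cdot f(\alpha_q)^{p-1}\big)=b(\beta)\cdot f(\alpha_q)^{p-1}=f(\alpha_q)^{p}.$$
On the other hand, because $f$ is multiplicative and $b(\gamma_{2p-1})=(\alpha_q)^p$,
$$b\big(f(\gamma_{2p-1})\big)=f\big(b(\gamma_{2p-1})\big)=f\big((\alpha_q)^p\big)=f(\alpha_q)^{p}.$$
Subtracting shows that $\beta\cdot f(\alpha_q)^{p-1}-f(\gamma_{2p-1})$ is a cocycle; write $c$ for it and $[c]$ for its class.

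\emph{Reduction of nontriviality.} By Theorem \ref{th:HML_R_k} the group $HML^{2p-1}(R,\mk)\cong\mk$ is one-dimensional, so it suffices to prove $[c]\neq 0$. As the extension is non-ramified we have $\m=pR$, and the short exact sequence of $R$-modules $0\to R\xrightarrow{p}R\to\mk\to 0$ yields a long exact sequence in $HML^{\bullet}(R,-)$ (covariant in the coefficients). By Corollary \ref{cor:HML_for_discr_valuation} one has $HML^{2p-1}(R)=0$ and $HML^{2p}(R)=R/pR\cong\mk$, on which multiplication by $p$ is zero; hence the connecting homomorphism
$$\partial\colon HML^{2p-1}(R,\mk)\xrightarrow{\ \sim\ }HML^{2p}(R)$$
is an isomorphism. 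Consequently $[c]\neq 0$ if and only if $\partial[c]$ is a generator of $HML^{2p}(R)\cong\mk$.

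\emph{Identifying $\partial[c]$, and the main obstacle.} To compute $\partial[c]$ I lift $c$ to an $R$-valued cochain $\tilde c=\tilde\beta\cdot f(\tilde\alpha_q)^{p-1}-g$, using the canonical integral lifts $\tilde\beta([a])=(a-a^q)/p\in R$ (well defined since $a^q\equiv a \bmod pR$ for all $a\in R$) and $\tilde\alpha_q([(a,b)])=(a^q+b^q-(a+b)^q)/p\in R$ (well defined by the freshman's dream modulo $p$), and any set-theoretic $R$-valued lift $g$ of $f(\gamma_{2p-1})$; then $\partial[c]=\big[\tfrac1p\,b\tilde c\big]$ in $HML^{2p}(R)$. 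The remaining and hardest task is to show that this class generates $R/pR$: one expands $\tfrac1p\,b\tilde c$ via the derivation property of $b$ and the congruences $b\tilde\beta\equiv f(\tilde\alpha_q)$ and $bg\equiv f(\tilde\alpha_q)^{p}\pmod p$, and then matches the resulting degree-$2p$ cocycle against the explicit generator of $HML^{2p}(R)$ coming from the computation of $THH_{\bullet}(R)$ (Theorem \ref{th:THH_for_discr_valuation}); equivalently, this is the transgression relation carrying $\xi_1$ to the degree-$2p$ generator. The delicate cocycle-level division by $p$ and the cancellation of the correction terms are exactly where the work lies, and it is here that one invokes (or reproduces over $R$) the explicit prime-field secondary-operation computation of \cite{FLS}, \cite{L}. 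The reducedness normalization of $\gamma_{2p-1}$ from Lemma \ref{lem:reduced_for_HML_of_F_q} is what keeps this bookkeeping tractable and, in addition, makes $[c]=\xi_1$ a convenient representative for the later product computations.
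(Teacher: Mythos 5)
Your cocycle computation is correct and is identical to the paper's first step. Your Bockstein reduction is also sound as far as it goes: since $HML^{2p-1}(R)=0$ and multiplication by $p$ annihilates $HML^{2p}(R)\cong R/pR$ (Corollary \ref{cor:HML_for_discr_valuation}, non-ramified case), the connecting map $\partial:HML^{2p-1}(R,\mk)\to HML^{2p}(R)$ is an isomorphism. But this reduction merely replaces the statement to be proved by an equivalent one, namely $\partial[c]\neq 0$, and at that point you stop: the ``remaining and hardest task'' you describe is precisely the entire content of the proposition, and it is not carried out. Moreover, the route you sketch for it is genuinely problematic. The lift $\tilde\alpha_q$ is a cocycle only modulo $p$, so the Leibniz expansion of $b\bigl(\tilde\beta\cdot f(\tilde\alpha_q)^{p-1}\bigr)$ produces a large number of correction terms (each divisible by $p$), and there is no cochain-level ``explicit generator of $HML^{2p}(R)$'' available to match against: Theorem \ref{th:THH_for_discr_valuation} is an abstract computation of homotopy groups, and the computations of \cite{FLS}, \cite{L} concern $\Z$ and $\bbF_p$ rather than $R$ and do not identify the Bockstein image of this particular class. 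So the proof has a genuine gap exactly at the point where nontriviality must be established.

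The paper closes this gap by a different mechanism that avoids integral lifts altogether and works entirely mod $p$. It sets $\cL^{\bullet}:=\coker\bigl(f:\Hoch^{\bullet}(Q_{\bullet}(\mk),\mk)\to\Hoch^{\bullet}(Q_{\bullet}(R),\mk)\bigr)$, a DG module over the DG ring $\Hoch^{\bullet}(Q_{\bullet}(\mk),\mk)$, and uses the resulting long exact sequence together with the change-of-rings spectral sequence (Proposition \ref{prop:sp_seq_change_of_rings}) to identify $H^{\bullet}(\cL^{\bullet})$ as a free $HML^{\bullet}(\mk)$-module of rank one, generated by the class of $\bar\beta$ (the image of $\beta$ in $\cL^1$; it is a nonzero cocycle, and a nonzero class because $\cL^0=0$). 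Since $\gamma_{2p-1}$ was normalized via Lemma \ref{lem:reduced_for_HML_of_F_q} so that $f(\gamma_{2p-1})$ lies in the image of $f$, the cocycle $c=\beta\cdot f(\alpha_q)^{p-1}-f(\gamma_{2p-1})$ projects in $\cL^{2p-1}$ to $\bar\beta\cdot(\alpha_q)^{p-1}$, whose class is $e_0^{p-1}\cdot[\bar\beta]\neq 0$ in a free module of rank one. Hence $[c]\neq 0$ in $HML^{2p-1}(R,\mk)$. The key structural input you are missing is exactly this module-theoretic identification of $H^{\bullet}(\coker f)$; with it, nontriviality follows from pure algebra, with no division by $p$ at the cochain level.
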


\begin{proof} Indeed, we have
$$b(\beta\cdot f(\alpha_q)^{p-1}-f(\gamma_{2p-1}))=b(\beta)\cdot f(\alpha_q)^{p-1}-f(b(\gamma_{2p-1}))=f(\alpha_q)^p-f((\alpha_q)^p)=0,$$
so $\beta\cdot f(\alpha_q)^{p-1}-f(\gamma_{2p-1})$ is a cocycle.

Let us put $\cL^{\bullet}:=\coker(f),$ so that we have a short exact sequence of DG $\Hoch^{\bullet}(Q_{\bullet}(\mk),\mk)$-modules
$$0\to \Hoch^{\bullet}(Q_{\bullet}(\mk),\mk)\to \Hoch^{\bullet}(Q_{\bullet}(R),\mk)\to \cL^{\bullet}\to 0.$$

It induces a long exact sequence in cohomology:
\begin{equation}
\label{eq:long_exact_sequence}
\dots\to HML^n(\mk)\to HML^n(R,\mk)\to H^n(\cL^{\bullet})\to HML^{n+1}(\mk)\to\dots
\end{equation}
By Proposition \ref{prop:sp_seq_change_of_rings}, we have the change of rings spectral sequence 
$$E_2^{p,q}=HML^p(\mk,\Ext^q_R(\mk,\mk))\Rightarrow HML^{p+q}(R,\mk).$$
We have $\Ext^q_R(\mk,\mk)=\mk$ for $q=0,1$ and zero for $q>2.$ In particular, the only non-zero differential in the spectral sequence is $d_2.$ It follows that we have a natural isomorphism of graded $HML^{\bullet}(\mk)$-modules
$H^{\bullet}(\cL^{\bullet})\cong E_2^{\bullet,q}\cong \mk\otimes\Lambda[-1],$ and the map $d_2$ is just the boundary map in the sequence \eqref{eq:long_exact_sequence}. It follows that this map is the multiplication by $\lambda\otimes e_0,$ $\lambda\in\mk^{\times}.$ We may and will assume that $\lambda=1.$

Denote by $\bar{\beta}\in L^1$ the projection of $\beta.$ Clearly, $\bar{\beta}$ is a non-zero cocycle. Moreover, since $\cL^0=0,$ wee see that $\bar{\beta}$ represents a nonzero class in $H^1(\cL^{\bullet}).$ Therefore, the class $[\bar{\beta}]\in H^1(\cL^{\bullet})$ is actually a generator of $H^{\bullet}(\cL^{\bullet})$ as the free $HML^{\bullet}(\mk)$-module of rank $1.$ In particular, the cocycle $\bar{\beta}\cdot(\alpha_q)^{p-1}\in L^{2p-1}$ represents a non-zero class in cohomology. It follows that the cocycle $\beta\cdot f(\alpha_q)^{p-1}-f(\gamma_{2p-1})\in\Hoch^{2p-1}(Q_{\bullet}(R),\mk))$ represents non-zero class in $HML^{2p-1}(R,\mk).$ This proves the proposition.
\end{proof}

Let us now fix an element $w\in R.$ Our first goal is to compute the Mac Lane cohomology of the second kind
$$HML^{II,\bullet}(R,w;\mk),$$

By Proposition \ref{prop:product_on_HH_second_kind}, in the spectral sequence $\{(E_r^{\bullet},d_r)\}_{r\geq 1}$ associated with mixed complex $(\Hoch^{\bullet}(Q_{\bullet}(R),\mk),\delta_{[w]})$ each $E_r^{\bullet}$ is a graded $\mk$-algebra, and each $d_r$ is a derivation. We will use this to compute the differentials in this spectral sequence. 

\begin{theo}\label{th:HML^II(R,k)}Consider the (converging) spectral sequence
\begin{equation}\label{eq:sp_seq_HML^II_R_k}E_1^n=HML^n(R,\mk)\Rightarrow HML^{II,\bar{n}}(R,w;\mk).\end{equation}

1) If $w\equiv w^q\text{ mod }\m^2,$ then \eqref{eq:sp_seq_HML^II_R_k} degenerates at $E_1,$ i.e. $E_1=E_{\infty}.$ In particular, we have a non-canonical isomorphism of $\Z/2$-graded $\mk$-modules $$\Tot^{\oplus}(HML^{\bullet}(R,\mk))\cong HML^{II,\bullet}(R,w;\mk).$$

2) If $w\not\equiv w^q\text{ mod }\m^2,$ then we have $E_1=E_p,$ and $E_{p+1}=0,$ hence $HML^{II,\bullet}(R,w;\mk)=0.$ Moreover, the derivation $d_p$ on $E_p=\mk\otimes\Lambda_1\otimes\Lambda(\xi_1)$ is uniquely determined by the formula
$$d_p(\xi_1)=(\frac{w^q-w}{p})^p\text{ mod }\m;\quad d_p(e_k)=0,\quad k\geq 1.$$\end{theo}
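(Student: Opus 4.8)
The plan is to exploit the multiplicative structure of the spectral sequence throughout. By Proposition \ref{prop:product_on_HH_second_kind} each $E_r^{\bullet}$ is a graded $\mk$-algebra and each $d_r$ is a derivation, and by Theorem \ref{th:HML_R_k} the algebra $E_1^{\bullet}=HML^{\bullet}(R,\mk)\cong\mk\otimes\Lambda_1\otimes\Lambda(\xi_1)$ is generated by the classes $e_k$ (of degree $2p^k$, $k\ge 1$) together with $\xi_1$ (of degree $2p-1$). Hence every differential is determined by its values on these generators, and the whole computation reduces to showing $d_r(e_k)=0$ for all $r$ and all $k\ge 1$, and to evaluating $d_r(\xi_1)$.

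For the classes $e_k$ I would use the morphism of mixed complexes $f\colon(\Hoch^{\bullet}(Q_{\bullet}(\mk),\mk),\delta_{[\bar w]})\to(\Hoch^{\bullet}(Q_{\bullet}(R),\mk),\delta_{[w]})$ induced by the reduction $R\to\mk$, $w\mapsto\bar w$; it intertwines the two differentials because $\delta$ inserts the central element and $\bar w$ is the image of $w$. On the residue field the spectral sequence for $(\mk,\bar w)$ degenerates at $E_1$ for parity reasons: by Theorem \ref{th:HML_F_q_F_q} the algebra $HML^{\bullet}(\mk)$ is concentrated in even degrees, while $d_r$ shifts degree by the odd number $2r-1$, so every $d_r$ vanishes. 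Since $f$ is a map of spectral sequences and each $e_k$ ($k\ge 1$) is, up to a nonzero scalar, the image $f(e_k)$ of a permanent cycle, we get $d_r(e_k)=f(d_r(e_k))=0$ for all $r$; in particular $d_p(e_k)=0$, as required.

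It remains to analyse $\xi_1$. First, $d_r(\xi_1)\in E_r^{2(p-r)}=HML^{2(p-r)}(R,\mk)$, which vanishes for $1\le r\le p-1$ because $2(p-r)\not\equiv 0,-1\pmod{2p}$; hence $d_1=\dots=d_{p-1}=0$ and $E_1=E_p$. To compute $d_p(\xi_1)\in E_p^0=HML^0(R,\mk)=\mk$ I would run the standard zig-zag of the mixed complex on the explicit cocycle $\xi_1=\beta\cdot f(\alpha_q)^{p-1}-f(\gamma_{2p-1})$ of Proposition \ref{prop:cocycle_for_xi_1}. The three inputs are: $\delta_{[w]}\beta=-\beta([w])=\frac{w^q-w}{p}\bmod\m=:c_0$ (since $\beta$ is supported on $Q_0(R)$); $\delta_{[w]}f(\alpha_q)=f(\delta_{[\bar w]}\alpha_q)=0$ (since $\alpha_q$ is supported on $Q_1(\mk)$, so inserting $[\bar w]\in Q_0$ gives $0$); and $\delta_{[w]}f(\gamma_{2p-1})=0$ (since $\gamma_{2p-1}$ vanishes whenever an argument lies in $Q_0$, by Lemma \ref{lem:reduced_for_HML_of_F_q}). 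As $\delta_{[w]}$ is a derivation, these give $\delta_{[w]}\xi_1=c_0\,f(\alpha_q)^{p-1}$. Now the relation $f(\alpha_q)=b(\beta)$ makes the zig-zag telescope: with $y_j:=\beta\cdot f(\alpha_q)^{p-2-j}$ one has $b(y_j)=f(\alpha_q)^{p-1-j}$ and $\delta_{[w]}(y_j)=c_0\,f(\alpha_q)^{p-2-j}$, so the successive lifts are $x_{j+1}=(-1)^{j+1}c_0^{\,j+1}y_j$, and after $p$ steps $d_p(\xi_1)=[\delta_{[w]}x_{p-1}]=(-1)^{p-1}c_0^{\,p-1}\,\delta_{[w]}\beta=c_0^{\,p}=\bigl(\frac{w^q-w}{p}\bigr)^{p}\bmod\m$.

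Finally I would read off the two cases. If $w\equiv w^q\pmod{\m^2}$ then $\frac{w^q-w}{p}\in\m$, so $c_0=0$ and $d_p(\xi_1)=0$; together with $d_p(e_k)=0$ and the vanishing of all $d_r$ with $r>p$ by degree, this gives $E_1=E_\infty$, and since everything is an $\mk$-vector space the identification $\Tot^{\oplus}(HML^{\bullet}(R,\mk))\cong HML^{II,\bullet}(R,w;\mk)$ follows. If $w\not\equiv w^q\pmod{\m^2}$ then $c_0$ is a unit of $\mk$, so on $E_p=(\mk\otimes\Lambda_1)\otimes\Lambda(\xi_1)$ the derivation $d_p$, which kills $\mk\otimes\Lambda_1$ and sends $\xi_1$ to the unit $c_0^p$, has acyclic Koszul-type cohomology, whence $E_{p+1}=0$ and $HML^{II,\bullet}(R,w;\mk)=0$. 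The main obstacle is the zig-zag evaluation of $d_p(\xi_1)$, including the sign bookkeeping; the decisive simplifications are $\delta_{[w]}f(\alpha_q)=0$ and $f(\alpha_q)=b(\beta)$, which collapse the iterated lifting to repeated application of $\delta_{[w]}\beta=c_0$.
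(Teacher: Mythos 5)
Your proof is correct and takes essentially the same approach as the paper: the same comparison map of mixed complexes from $(\Hoch^{\bullet}(Q_{\bullet}(\mk),\mk),\delta_{[\bar w]})$ disposes of the classes $e_k$, and $d_p(\xi_1)$ is computed by the same zig-zag on the explicit cocycle $\beta\cdot f(\alpha_q)^{p-1}-f(\gamma_{2p-1})$, using $\delta_{[w]}\beta=\frac{w^q-w}{p}$, $b(\beta\cdot f(\alpha_q)^i)=f(\alpha_q)^{i+1}$ and $\delta_{[w]}f(\gamma_{2p-1})=0$ (the paper merely leaves your degree argument for $E_1=E_p$ and the Koszul acyclicity giving $E_{p+1}=0$ implicit). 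One small correction: with the paper's convention $b(x_{i+1})=\delta_{[w]}(x_i)$ the successive lifts are simply $x_{i+1}=\left(\frac{w^q-w}{p}\right)^{i}\beta\cdot f(\alpha_q)^{p-1-i}$ with no alternating signs — your signed lifts satisfy the recursion only up to sign — but this does not affect the final value $\left(\frac{w^q-w}{p}\right)^{p}\bmod\m$.
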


\begin{proof}By Proposition \ref{prop:functoriality_of_sp_seq}, the morphism of mixed complexes $$f:(\Hoch^{\bullet}(Q_{\bullet}(\mk),\mk),\delta_{[\bar{w}]})\to (\Hoch^{\bullet}(Q_{\bullet}(R),\mk),\delta_{[w]})$$
induces the morphism of the corresponding $\Z$-graded spectral sequences. But the spectral sequence for the first mixed complex degenerates at $E_1$ for degree reasons. It follows that the spectral sequence \eqref{eq:sp_seq_HML^II_R_k} satisfies the property $d_r(\bar{e_k})=0$ for $r,k>0.$ Hence, it suffices to prove the equality
\begin{equation}\label{eq:d_p(xi_1)}d_p(\xi_1)=(\frac{w^q-w}{p})^p\text{ mod }\m.\end{equation}

Consider the cocycle $(\beta\cdot f(\alpha_q)^{p-1}-f(\gamma_{2p-1}))\in\Hoch^{2p-1}(Q_{\bullet}(R),\mk),$ which represents $\xi_1$ by Proposition \ref{prop:cocycle_for_xi_1}. By our assumption on $\gamma_{2p-1},$ we have that $\delta_{[w]}(f(\gamma_{2p-1}))=0.$ Moreover, we have
$$\delta_{[w]}(\beta\cdot f(\alpha_q)^i)=(\frac{w^q-w}{p})f(\alpha_q)^i,\quad b(\beta\cdot f(\alpha_q)^i)=f(\alpha_q)^{i+1},\quad i\geq 0.$$
This immediately implies the equality \eqref{eq:d_p(xi_1)}. Theorem is proved.\end{proof}

\begin{lemma}\label{lem:sp_seq_degen_for_p_delta}
Let $a_1,\dots,a_k\in R$ and $w_1,\dots,w_k\in R$ be two collections of elements.
Fix some integer $n>0.$ Consider the mixed complex
$$\cK^{\bullet}=(\Hoch^{\bullet}(R,R/\m^n),\delta),$$
where $$\delta=p(a_1\delta_{[w_1]}+\dots+a_k\delta_{[w_k]}).$$
Then the spectral sequence associated with $\cK^{\bullet}$ degenerates at $E_1.$\end{lemma}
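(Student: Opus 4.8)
The plan is to prove that every differential $d_r$ ($r\geq 1$) of the $\Z$-graded spectral sequence associated with $\cK^{\bullet}$ vanishes; since this spectral sequence converges (the mixed complex is bounded below, cf. Corollary \ref{cor:sp_seq_for_Hochschild_second_kind}), this gives $E_1=E_{\infty}$ as claimed. By Proposition \ref{prop:product_on_HH_second_kind} each page $E_r^{\bullet}$ is a graded $R$-algebra and each $d_r$ is a derivation, so it suffices to check $d_r$ on a set of algebra generators. As $L/\Q_p$ is non-ramified we have $\m=pR$, and Corollary \ref{cor:HML_for_torsion_modules} computes
$$E_1^{2N}\cong E_1^{2N-1}\cong R/\m^{\min(\ord_p N,\,n)},\quad N\geq 1,$$
together with $E_1^0=R/\m^n$; in particular $E_1^{\bullet}=HML^{\bullet}(R,R/\m^n)$ is concentrated in degree $0$ and in degrees $\equiv 0,-1\pmod{2p}$, and by Theorem \ref{th:HML_R_k} it is generated as an $R$-algebra by the even classes $e_k$ (degree $2p^k$, $k\geq 1$) and the odd class $\xi_1$ (degree $2p-1$). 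Thus I must show $d_r(e_k)=0$ and $d_r(\xi_1)=0$ for all $r$.

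For $\xi_1$ this is immediate from the sparseness of $E_1$: the class $d_r(\xi_1)$ lies in degree $2p-2r$, which can support a nonzero group only when $r\equiv 1\pmod p$; for $r=1$ the target degree is $2(p-1)$ with $\ord_p(p-1)=0$, so $E_1^{2p-2}=0$, while for $r\geq p+1$ the target degree is negative. Hence $d_r(\xi_1)=0$ for every $r$.

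The treatment of $e_k$ for $r\geq 2$ rests on a scaling observation. Since $\delta=pD$ with $D=\sum_i a_i\,\delta_{[w_i]}$, the explicit (zig-zag) description of the differentials from Appendix \ref{sec:appendix_on_sp_seq} shows that $d_r=p^r\widetilde{d}_r$, where $\{\widetilde{d}_r\}$ are the differentials of the spectral sequence of $(\cK^{\bullet},b,D)$: indeed $d_r$ is an $r$-fold composite in which each application of the operator $\delta=pD$ contributes one factor of $p$. Now fix $k$ and $r\geq 2$; by sparseness a nonzero $d_r(e_k)$ forces $r\equiv 1\pmod p$, say $r=1+p\beta$ with $\beta\geq 1$, and then $d_r(e_k)$ lands in degree $2N'-1$ with $N'=p^k-p\beta=p(p^{k-1}-\beta)$. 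When this group is nonzero we have $1\leq\beta\leq p^{k-1}-1$, so $\ord_p(N')=1+\ord_p(p^{k-1}-\beta)=1+\ord_p(\beta)\leq 1+p\beta=r$. Since $E_r^{2N'-1}$ is a subquotient of $R/\m^{\min(\ord_p N',\,n)}$, it is annihilated by $p^r$, whence $d_r(e_k)=p^r\widetilde{d}_r(e_k)=0$ (for $r\geq n$ this is anyway automatic, as the whole $E_1$-page is $\m^n$-torsion).

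The one remaining case, $d_1(e_k)=0$ for $k\geq 2$, is the main obstacle, since there the scaling estimate degrades ($\ord_p N'=k>1=r$). Here $d_1(e_k)=p\,D(e_k)$, and the plan is to show the stronger statement $\delta_{[w_i]}(e_k)=0$ in $HML^{2p^k-1}(R,R/\m^n)$ for each $i$, i.e.\ that each $e_k$ is a permanent cycle already for a single insertion differential, so that $D(e_k)=0$. This is the $R/\m^n$-coefficient analogue of the vanishing $d_r(\bar e_k)=0$ proved inside Theorem \ref{th:HML^II(R,k)} by comparison with the mixed complex $(\Hoch^{\bullet}(Q_{\bullet}(\mk),\mk),\delta_{[\bar w]})$, whose spectral sequence is concentrated in even degrees and hence degenerates. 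Concretely, the $e_k$ are represented by cocycles pulled back along $Q_{\bullet}(\mk)\to Q_{\bullet}(R)$ (powers of the cocycle $\alpha_q$ of \eqref{eq:cocycle_alpha_q}), on which the insertion of $[w_i]$ factors through the finite field and is a coboundary; carrying this comparison out with coefficients in $R/\m^n$ is the delicate point. Granting it, $d_1$ vanishes on all generators, hence $d_1=0$; combined with the two previous paragraphs every $d_r$ vanishes, so $E_1=E_{\infty}$ and the spectral sequence degenerates at $E_1$.
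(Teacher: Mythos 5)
Your argument has several genuine gaps, and the most basic one is the generation claim on which everything else hangs. The $E_1$-page here is $HML^{\bullet}(R,R/\m^n)$, not $HML^{\bullet}(R,\mk)$, and Theorem \ref{th:HML_R_k} says nothing about its ring structure. In fact for $n\geq 2$ this algebra is \emph{not} generated by the $e_k$ and $\xi_1$: by Corollary \ref{cor:HML_for_torsion_modules} one has $HML^{2p-1}(R,R/\m^n)\cong R/\m$, so $\xi_1$ is killed by $p$ and hence so is every product containing $\xi_1$ as a factor; but $HML^{2p^k-1}(R,R/\m^n)\cong R/\m^{\min(k,n)}$ contains elements of order $p^2$ as soon as $k,n\geq 2$. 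Thus the higher ``Bockstein'' classes in degrees $2p^k-1$ lie outside the subalgebra you control, and the derivation argument says nothing about $d_r$ on them. Moreover, your sparseness argument for $\xi_1$ misses exactly one case, $r=p$: the differential $d_p(\xi_1)$ lands in degree $2p-2p=0$, where $E_1^0=R/\m^n\neq 0$. This is precisely the differential that is nonzero in Theorem \ref{th:HML^II(R,k)} 2), so it is the crux of the matter and cannot be dismissed on degree grounds.

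Second, the scaling claim $d_r=p^r\widetilde d_r$ is false, and your treatment of the $e_k$ for $r\geq 2$ rests on it. In the zig-zag $b(x_{i+1})=pD(x_i)$ one cannot choose $x_{i+1}$ divisible by $p$ unless the class $[D(x_i)]$ is itself a $b$-boundary, and it need not be (it can be a nonzero $p$-torsion class). Concretely, take the mixed complex over $\Z_p$ with $\cK^4=\Z_p x,$ $\cK^3=\Z_p a,$ $\cK^2=\Z_p m,$ $\cK^1=\Z_p t,$ $b(m)=pa,$ and $\delta=pD$ where $D(x)=a,$ $D(m)=t,$ $D(a)=D(t)=0$: then $d_1[x]=[pa]=0,$ while $d_2[x]=[pt]=p[t]$ is nonzero and not divisible by $p^2$ (and $\widetilde d_2[x]$ is not even defined, since $\widetilde d_1[x]=[a]\neq 0$). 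In general only a single factor of $p$ is guaranteed, however large $r$ is. Finally, the step you yourself flag as ``the delicate point'' and then grant, namely $d_1(e_k)=0$, cannot be obtained by the comparison you propose: for $n\geq 2$ the bimodule $R/\m^n$ is not a $\mk$-module, so the complex $\Hoch^{\bullet}(Q_{\bullet}(\mk),R/\m^n)$ does not exist and nothing can be pulled back from the finite field. The paper's proof works by an entirely different mechanism, which is what you would need: it filters the coefficients, $F^i\cK^{\bullet}=\Hoch^{\bullet}(Q_{\bullet}(R),\m^i/\m^n)$, notes that $\delta$ raises this filtration by one (the only divisibility ever used), proves via Corollary \ref{cor:HML_for_torsion_modules} that $H^{2m}(F^j\cK^{\bullet},b)\to H^{2m}(F^{j-t}\cK^{\bullet},b)$ vanishes for $j\geq t=\ord_p(m)$, and then, using the inequality $i-1-\ord_p((i-1)!)\geq 0$, builds for each odd-degree class a zig-zag of length $p^n$ whose last term has $\delta$-image in $F^n\cK^{\bullet}=0$; even-degree classes are handled by lifting along the surjection $HML^{2m}(R,R)\to HML^{2m}(R,R/\m^n)$ and the degeneration, for degree reasons, of the $R$-coefficient spectral sequence.
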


\begin{proof}Consider the finite decreasing filtration $F^{\bullet}\cK^{\bullet}$ given by the formula
$$F^i\cK^{\bullet}:=(\Hoch^{\bullet}(R,\m^i/\m^n),\delta),\quad 0\leq i\leq n.$$ We have $F^0\cK^{\bullet}=\cK^{\bullet}$ and $F^n\cK^{\bullet}=0.$ For convenience, we put $F^i\cK^{\bullet}=0$ for $i\geq n.$ By definition of $\delta,$ we have $\delta(F^i\cK^{\bullet})\subseteq F^{i+1}\cK^{\bullet-1}.$

{\noindent{\bf Sublemma.}} {\it Let $m$ be a positive integer. Let us write it as $m=p^t l,$ where $l$ is coprime to $p.$ Then for any $j\geq t$ the map $H^{2m}(F^j\cK^{\bullet},b)\to H^{2m}(F^{j-t}\cK^{\bullet},b)$ is zero.

In particular, for any $b$-cocycle $\alpha\in F^j\cK^{2m}$ we can find a cochain $\beta\in F^{j-t}\cK^{2m-1}$ such that $b(\beta)=\alpha.$}

\begin{proof} We may and will assume that $j\leq n-1$ (otherwise $F^j\cK^{\bullet}=0$). Note that the inclusion $\m^j/\m^n\to \m^{j-t}/\m^n$ is annihilated by the functor $M\mapsto M/\m^t.$ Hence, Corollary \ref{cor:HML_for_torsion_modules} implies that the map $HML^{2m}(R,\m^j/\m^n)\to HML^{2m}(R,\m^{j-t}/\m^n)$ is zero. This proves the sublemma.\end{proof}

Note that for any $m\geq 0$ the map $$HML^{2m}(R,R)\to HML^{2m}(R,R/\m^n)$$ is surjective. But the spectral sequence of the mixed complex $(\Hoch^{\bullet}(Q_{\bullet}(R),R))$ degenerates for degree reasons. It follows that $E^{2m}_r=Z^{2m}_r,$ $r>0,$ $m\geq 0.$

Now we prove that $E^{2m-1}_r=Z^{2m-1}_r,$ $r,m>0.$ Let us write $m=p^t l,$ where $l$ is coprime to $p.$ First we claim that we may assume $t\geq n.$ Indeed, if $t<n,$ then by Corollary \ref{cor:HML_for_torsion_modules} the inclusion $R/\m^{n-1}\stackrel{p}{\to} R/\m^n$ induces an isomorphism $$HML^{2m-1}(R,R/\m^{n-1})\stackrel{\sim}{\to} HML^{2m-1}(R,R/\m^n).$$ By functoriality of spectral sequences, the problem then reduces to the case of smaller $n.$

Now we assume that $t\geq n.$ Take some cocycle $x\in\cK^{2m-1}.$ It suffices to construct a collection of cochains  $x_1,\dots,x_{p^n}$ of $\cK^{\bullet},$ such that
$$x_i\in\cK^{2m-2i+1},\quad b(x_{i+1})=\delta(x_i)\text{ for }1\leq i\leq p^n-1,\quad x_1=x,\,\delta(x_{p^n})=0.$$ We claim that such collection exists, and moreover it can be chosen to satisfy an additional assumption $x_i\in F^{i-1-\ord_p(i-1)!}K^{2m-2i+1}.$ To construct it, let us first note that
\begin{equation}\label{eq:inequality}i-1-\ord_p(i-1)!=i-1-\sum\limits_{j\geq 1}\left[\frac{i-1}{p^j}\right]\geq \frac{(p-2)(i-1)}{p-1}\geq 0\end{equation} for $i\geq 0.$
Suppose that we have constructed $x_1,\dots,x_i,$ $i\leq p^n-1,$ satisfying the required properties. We have that $\delta(x_i)\in F^{i-\ord_p(i-1)!}\cK^{2m-2i}.$ By \eqref{eq:inequality} we have that
 $$(i-\ord_p(i-1)!)-\ord_p(m-i)=i-\ord_p(i!)\geq 0.$$ Therefore, by the sublemma, there exists $x_{i+1}\in F^{i-\ord_p(i!)}\cK^{2m-2i-1},$ such that $b(x_{i+1})=\delta(x_i).$

 It remains to show that $\delta(x_{p^n})=0.$ By \eqref{eq:inequality}, we see that $$p^n-\ord_p(p^n-1)!-n=p^n-\ord_p(p^n)!\geq 0.$$ Thus, $$\delta(x_{p^n})\in F^{p^n-\ord_p(p^n-1)!}\cK^{2m-2p^n}\subseteq F^n\cK^{2m-2p^n}=0.$$
 This proves the proposition.
\end{proof}

\begin{lemma}\label{lem:sp_seq_degen_w^q=w}Suppose that $p=\cha(\mk)\ne 2.$ Assume that that $(w^q-w)\in\m^2.$ Then for each $n\in\Z_{>0}$ the spectral sequence associated with the mixed complex $(\Hoch^{\bullet}(R,R/\m^n),\delta_{[w]})$ degenerates at $E_1.$\end{lemma}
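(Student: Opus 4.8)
The plan is to reduce the statement to Lemma \ref{lem:sp_seq_degen_for_p_delta} by showing that, up to $A_\infty$-quasi-isomorphism of mixed complexes, the operator $\delta_{[w]}$ on $\cK^\bullet:=\Hoch^\bullet(Q_\bullet(R),R/\m^n)$ becomes a $p$-multiple of an $R$-linear combination of operators $\delta_{[w_i]}$. Since $L/\Q_p$ is non-ramified we have $\m=pR$, so the hypothesis $w^q\equiv w \bmod \m^2$ reads $w^q-w=p^2 s$ for some $s\in R$. The two structural tools are the multiplicative Leibniz rule of Lemma \ref{lem:q_is_for_delta_w_1...w_k} (realized through the $\Hoch_\bullet(\Z[x_1,\dots,x_k])$-module structure on $\cK^\bullet$ coming from central elements) and the additive relation $[a+b]\sim[a]+[b]$ in the cubical complex, which produces a homology between the corresponding closed central elements; both are turned into honest $A_\infty$-quasi-isomorphisms by Lemma \ref{lem:q_is_divided_powers}. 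It is precisely here that the hypothesis $p\ne 2$ enters: it guarantees that the strongly super-commutative divided-power structure (Definition \ref{defi:super_commutative_DGA}) required by Lemma \ref{lem:q_is_divided_powers} is available on the relevant torsion DG algebra attached to $Q_\bullet(R)$.

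First I would set $u:=q w^{q-1}-1\in R$ and observe that, since $q=|\mk|$ is a power of $p$ and hence $q\in\m$, we have $u\equiv -1 \bmod\m$, so $u$ is a unit in the local ring $R$ and acts invertibly on $\cK^\bullet$, commuting with $b$ and with $\delta_{[w]}$. Working in the single-variable structure $x\mapsto[w]$, the classes of $u\,\delta_{[w]}$ and of $\delta_{[w^q]}-\delta_{[w]}$ in $H_1(\Hoch_\bullet(\Z[x]))=\Z[x]\,dx$ both equal $(qx^{q-1}-1)\,dx=d(x^q-x)$, so Lemma \ref{lem:q_is_divided_powers} gives
$$(\cK^\bullet, u\,\delta_{[w]})\;\simeq\;(\cK^\bullet,\ \delta_{[w^q]}-\delta_{[w]}).$$
Next, because $[w^q]-[w]$ and $[w^q-w]=[p^2 s]$ are homologous closed central elements (as $[a]-[b]\sim[a-b]$ in the cubical complex), the additive relation together with Lemma \ref{lem:q_is_divided_powers} yields $(\cK^\bullet,\delta_{[w^q]}-\delta_{[w]})\simeq(\cK^\bullet,\delta_{[p^2 s]})$. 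Finally, factoring $p^2 s=p\cdot(ps)$ and applying Lemma \ref{lem:q_is_for_delta_w_1...w_k} in the two-variable structure $x_1\mapsto[p],\ x_2\mapsto[ps]$ gives
$$(\cK^\bullet,\delta_{[p^2 s]})\;\simeq\;(\cK^\bullet,\ (ps)\delta_{[p]}+p\,\delta_{[ps]})\;=\;(\cK^\bullet,\ p\,(s\,\delta_{[p]}+\delta_{[ps]})),$$
where both summands are visibly $p$-divisible because $ps\in\m$. Composing these quasi-isomorphisms produces $(\cK^\bullet, u\,\delta_{[w]})\simeq(\cK^\bullet, p\Delta)$ with $\Delta=s\,\delta_{[p]}+\delta_{[ps]}$, and Lemma \ref{lem:sp_seq_degen_for_p_delta} shows the spectral sequence of $(\cK^\bullet, p\Delta)$ degenerates at $E_1$; by Proposition \ref{prop:functoriality_of_sp_seq} so does that of $(\cK^\bullet, u\,\delta_{[w]})$.

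It remains to pass from $u\,\delta_{[w]}$ back to $\delta_{[w]}$. For this I would use that $u$ is a central unit commuting with $b$: choosing the zig-zags that define the higher differentials compatibly, one checks $d_r^{\,u\delta_{[w]}}=u^r_\ast\, d_r^{\,\delta_{[w]}}$ on each page, where $u_\ast$ is the invertible action of $u$. Hence $d_r^{\,u\delta_{[w]}}=0$ if and only if $d_r^{\,\delta_{[w]}}=0$, so the two spectral sequences degenerate simultaneously, which gives the required degeneration of the spectral sequence of $(\cK^\bullet,\delta_{[w]})$.

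The main obstacle I anticipate is the careful bookkeeping needed to realize the additive relation $[a]-[b]\sim[a-b]$ as a genuine $A_\infty$-quasi-isomorphism: unlike the multiplicative Leibniz rule, it is invisible in any polynomial-ring module structure (the assignment $c\mapsto[c]$ is multiplicative but not additive), so it must be extracted as a homology inside the cubical complex and fed through Lemma \ref{lem:q_is_divided_powers} on a torsion divided-power DG algebra — exactly the step that forces the hypothesis $p\ne 2$. A secondary technical point is the final unit-rescaling comparison of spectral sequences, which I would justify by tracking the powers of $u$ that appear in the iterated zig-zag formulas for the $d_r$.
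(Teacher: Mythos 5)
Your reduction has a genuine gap at its central step: the passage from $(\cK^{\bullet},\delta_{[w^q]}-\delta_{[w]})$ to $(\cK^{\bullet},\delta_{[w^q-w]})$. Lemma \ref{lem:q_is_divided_powers} applies only once both differentials are realized as multiplication by \emph{homologous} cycles in the degree-one part of a single strongly super-commutative divided-power DG algebra acting on $\cK^{\bullet}$, and the only such structure available in the paper is $\Hoch_{\bullet}(\Z[x_1,\dots,x_k])$ acting through a polynomial map $x_i\mapsto[w_i]$ into the center (proof of Lemma \ref{lem:q_is_for_delta_w_1...w_k}). That structure sees only multiplicative relations: in the two-variable structure $x\mapsto[w]$, $y\mapsto[w^q-w]$, the cycles $1\otimes x^q-1\otimes x$ and $1\otimes y$ have classes $(qx^{q-1}-1)\,dx$ and $dy$ in $\Omega^1_{\Z[x,y]}$, which are distinct, so Lemma \ref{lem:q_is_divided_powers} does not apply there. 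The cubical homology $[(w^q-w,w)]$, whose boundary is $[w^q]-[w^q-w]-[w]$, does exhibit the two central elements as homologous inside the cubical construction, but to feed this into Lemma \ref{lem:q_is_divided_powers} you would need a strongly super-commutative divided-power algebra containing this homology and acting on $\cK^{\bullet}$; $Q_{\bullet}(R)$ itself is not graded-commutative, and Hochschild chains of a non-commutative (DG) ring do not even form an algebra, so no candidate exists in the paper and you do not construct one. You flag exactly this as ``the main obstacle,'' but flagging it does not close it; and without this step your argument produces nothing, because your step relating $u\delta_{[w]}$ to $\delta_{[w^q]}-\delta_{[w]}$ is reversible (both come from cycles with the same class $(qx^{q-1}-1)\,dx$), so by itself it yields no $p$-divisibility: the $p^2$ can only enter through $w^q-w\in\m^2$, i.e.\ precisely through the missing additive step.

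Your diagnosis of the role of $p\ne2$ is also off: the divided-power structure on $\Hoch_{\bullet}(\Z[x_1,\dots,x_k])$ and Lemma \ref{lem:q_is_divided_powers} are prime-independent and the paper uses them for all $p$. The paper's proof avoids additivity entirely by a case split in which $w$ admits an \emph{exact} multiplicative factorization. If $w\in\m$, then $w^q\in\m^2$ and the hypothesis force $w\in\m^2=p^2R$, so $w=p^2x$ and Lemma \ref{lem:q_is_for_delta_w_1...w_k} gives $(\cK^{\bullet},\delta_{[w]})\simeq(\cK^{\bullet},p(\delta_{[px]}+x\delta_{[p]}))$, to which Lemma \ref{lem:sp_seq_degen_for_p_delta} applies. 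If $w\notin\m$, then $p\ne2$ is used precisely to write $w=w'^p$: the Teichm\"uller part of $w$ is a $p$-th power because Frobenius is onto $\mk^{\times}$, and $w\equiv w^q\bmod\m^2$ places the remaining unit part in $1+p^2R=(1+pR)^p$, an equality which fails at $p=2$; then $(\cK^{\bullet},\delta_{[w]})\simeq(\cK^{\bullet},pw'^{p-1}\delta_{[w']})$ and Lemma \ref{lem:sp_seq_degen_for_p_delta} again applies. Your unit-rescaling comparison of spectral sequences (the pages agree as subquotients and $d_r$ changes by $u^r$) is correct, as is your first quasi-isomorphism, but neither compensates for the unjustified additive step.
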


\begin{proof}We consider the following cases.

1) Suppose that $w\in\m.$ Then $w^q\in\m^2,$ hence $w\in\m^2.$ Let us write $w=p^2x.$ By Lemma \ref{lem:q_is_for_delta_w_1...w_k}, there is an $A_{\infty}$-isomorphism of mixed complexes
$$(\Hoch^{\bullet}(Q_{\bullet}(R),R/\m^n),\delta_{[w]})\simeq (\Hoch^{\bullet}(Q_{\bullet}(R),R/\m^n),p(\delta_{[px]}+x\delta_{[p]})),$$
hence the associated spectral sequences are isomorphic.
By Lemma \ref{lem:sp_seq_degen_for_p_delta}, the spectral sequence associated with the later mixed complex degenerates at $E_1.$

2) Suppose that $w\not\in\m.$ By our assumptions on $w$ and on $p,$ we can find $w'\in R$ such that $w=w'^p.$ Again by Lemma \ref{lem:q_is_for_delta_w_1...w_k} we have an $A_{\infty}$-quasi-isomorphism of mixed complexes
$$(\Hoch^{\bullet}(Q_{\bullet}(R),R/\m^n),\delta_{[w]})\simeq (\Hoch^{\bullet}(Q_{\bullet}(R),R/\m^n),pw'^{p-1}\delta_{[w']}).$$
By Lemma \ref{lem:sp_seq_degen_for_p_delta}, the spectral sequence associated with the later mixed complex degenerates at $E_1.$\end{proof}

\begin{lemma}\label{lem:extensions} Consider a a short exact sequence
$$0\to M'\to M\to M''\to 0$$
of finitely generated $R$-modules. Then the following are equivalent:

(i) we have a splitting $M=M'\oplus M'';$

(ii) for all $n>0,$ the boundary map $\Tor_1^R(R/\m^n,M'')\to\Tor_0^R(R/\m^n,M')$ is zero;

(iii) for all $n>0,$ the sequence $$0\to M'\otimes R/\m^n\to M\otimes R/\m^n\to M''\otimes R/\m^n$$ is short exact.\end{lemma}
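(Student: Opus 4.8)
The plan is to establish the three implications $(\mathrm{i})\Rightarrow(\mathrm{iii})$, $(\mathrm{ii})\Leftrightarrow(\mathrm{iii})$, and $(\mathrm{iii})\Rightarrow(\mathrm{i})$, of which only the last carries real content. The first is immediate: if $M\cong M'\oplus M''$, then tensoring with $R/\m^n$ respects the direct sum, so the tensored sequence is split short exact and in particular left exact. The equivalence $(\mathrm{ii})\Leftrightarrow(\mathrm{iii})$ is a formal consequence of the long exact sequence of $\Tor^R_{\bullet}(R/\m^n,-)$ attached to $0\to M'\to M\to M''\to 0$, namely the exact piece
$$\Tor_1^R(R/\m^n,M)\to \Tor_1^R(R/\m^n,M'')\stackrel{\partial_n}{\to} M'\otimes R/\m^n\to M\otimes R/\m^n\to M''\otimes R/\m^n\to 0.$$
By exactness the map $M'\otimes R/\m^n\to M\otimes R/\m^n$ is injective if and only if $\partial_n=0$, so condition $(\mathrm{iii})$ (left exactness for every $n$) is equivalent to the vanishing of every boundary map $\partial_n$, which is exactly $(\mathrm{ii})$.

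The main step is $(\mathrm{iii})\Rightarrow(\mathrm{i})$, and here I would argue by explicitly constructing a section $s\colon M''\to M$ of $g\colon M\to M''$. Since $R=\cO_L$ is a complete discrete valuation ring, hence a principal ideal domain, the finitely generated module $M''$ decomposes by the structure theorem into cyclic pieces: it is generated by free generators $e_k$ together with torsion generators $x_i$ of order $\pi^{a_i}$ (where $\pi$ is a uniformizer, $\m=(\pi)$), subject only to the relations $\pi^{a_i}x_i=0$. I would lift each free generator $e_k$ to an arbitrary $\tilde e_k\in M$ with $g(\tilde e_k)=e_k$, and construct the torsion lifts as follows. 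Choose any $y_i\in M$ with $g(y_i)=x_i$; then $\pi^{a_i}y_i$ lies in $\ker g$, which we identify with $M'$, and moreover $\pi^{a_i}y_i\in\pi^{a_i}M$. Viewing $m'_i:=\pi^{a_i}y_i\in M'$, its image in $M\otimes R/\m^{a_i}=M/\pi^{a_i}M$ vanishes, so by the injectivity in $(\mathrm{iii})$ for $n=a_i$ its image in $M'\otimes R/\m^{a_i}=M'/\pi^{a_i}M'$ also vanishes; that is, $\pi^{a_i}y_i=\pi^{a_i}z_i$ for some $z_i\in M'$. Setting $\tilde x_i:=y_i-z_i$ gives a lift with $g(\tilde x_i)=x_i$ and $\pi^{a_i}\tilde x_i=0$. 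Because the $\tilde x_i$ now satisfy the defining relations of $M''$ and the $\tilde e_k$ are unconstrained, the assignment $x_i\mapsto\tilde x_i$, $e_k\mapsto\tilde e_k$ extends to a well-defined homomorphism $s\colon M''\to M$ with $g\circ s=\id_{M''}$, which splits the sequence and yields $(\mathrm{i})$.

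The crux, and the step I expect to require the most care, is precisely this torsion-lifting argument: the content of $(\mathrm{iii})$ is used exactly to convert the a priori relation $\pi^{a_i}y_i\in M'\cap\pi^{a_i}M$ into the stronger $\pi^{a_i}y_i\in\pi^{a_i}M'$ (i.e. that $M'$ sits as a \emph{pure} submodule of $M$), which is what makes the order of the lift match the order of $x_i$. Everything else is formal, but I would take care to invoke the structure theorem only after noting that $R$ is a PID of global dimension $1$, and to check that a genuine homomorphism (not merely a set map) results, which is guaranteed by lifting along the cyclic decomposition of $M''$ rather than along an arbitrary generating set.
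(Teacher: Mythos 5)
Your proof is correct and complete. For comparison: the paper offers essentially no argument for this lemma --- its ``proof'' is the single sentence that the statement is standard, with a citation to Atiyah--Macdonald --- so your writeup supplies details that the paper delegates to a reference rather than paralleling an existing argument. Your handling of (i)$\Rightarrow$(iii) and (ii)$\Leftrightarrow$(iii) is the expected formal bookkeeping with the long exact sequence of $\Tor_{\bullet}^R(R/\m^n,-)$. Your (iii)$\Rightarrow$(i) is the standard ``purity implies splitness'' argument made concrete over a principal ideal domain: condition (iii) says exactly that $M'$ is pure in $M$ with respect to the cyclic modules $R/\m^n$ (i.e.\ $M'\cap \pi^{a}M=\pi^{a}M'$ for all $a$), and since the ring $R=\cO_L$ fixed in Section \ref{sec:HML_secomd_kind_discr_valuation} is a complete discrete valuation ring, every finitely generated $R$-module is a finite direct sum of copies of $R$ and of $R/\m^{a}$; your lifting of each cyclic generator of $M''$ to an element of $M$ with the same annihilator, which is precisely what purity buys, then yields a well-defined section, as you verify. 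The one hypothesis worth flagging explicitly is that this route through the structure theorem genuinely requires $R$ to be a PID: that is automatic in the paper's setting, but it is the reason purity against the single family $R/\m^n$ suffices here, whereas the general statement (pure exact sequences with finitely presented quotient split, over any ring) needs purity against all modules. Within the context of the paper your argument is an entirely adequate, self-contained replacement for the citation.
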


\begin{proof}This is standard and straightforward, see \cite{AM}.\end{proof}

Now we are able to compute the Mac Lane cohomology of the second kind of $(R,w).$

\begin{theo}\label{th:HML^II_for_discr_valuation} Consider the following cases.

1) If $w\not\equiv w^q\text{ mod }\m^2,$ then we have an isomorphism of filtered $R$-algebras
$$HML^{II,0}(R,w)\cong L,$$
where $F_n L=\frac{1}{n!}R.$

2) If $w\equiv w^q\text{ mod }\m^2$ and $p=\cha(\mk)\ne 2,$ then we have a non-canonical isomorphism of filtered $R$-modules
\begin{equation}\label{eq:splitting_for_HML^II_discr_valuation} HML^{II,0}(R,w)\cong \bigoplus\limits_{m\geq 0} HML^{2m}(R).\end{equation}\end{theo}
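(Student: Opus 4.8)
The plan is to reduce both cases to the degeneration of the $\Z$-graded spectral sequence $E_1^n = HML^n(R) \Rightarrow HML^{II,\bar n}(R,w)$ and then to identify the resulting filtered module. The crucial common observation is that, by Corollary \ref{cor:HML_for_discr_valuation}, all the groups $HML^{2k-1}(R)$ vanish, so $E_1$ is concentrated in even total degree; since every differential $d_r$ shifts the degree by the odd number $-2r+1$, all the differentials vanish and the sequence degenerates, $E_1 = E_\infty$. Together with Proposition \ref{prop:product_on_HH_second_kind} this gives (as already recorded before Subsection \ref{ssec:ramified}) that $HML^{II,1}(R,w) = 0$ and that $HML^{II,0}(R,w)$ carries an exhausting filtration with $\gr_n^F HML^{II,0}(R,w) \cong HML^{2n}(R)$. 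Both parts start from here; the remaining work is to pin down the filtered $R$-module.

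For part 1), when $w \not\equiv w^q \bmod \m^2$, I would first invoke Theorem \ref{th:HML^II(R,k)} 2), which gives $HML^{II,\bullet}(R,w;\mk) = 0$. Feeding the short exact sequence $0 \to R \stackrel{p}{\to} R \to \mk \to 0$ (recall $\m = pR$) into the coefficient-long-exact sequence for $HML^{II}$ and using $HML^{II,1}(R,w) = 0$, one sees that multiplication by $p$ on $HML^{II,0}(R,w)$ is an isomorphism. As $HML^{II,0}(R,w)$ is an $R$-algebra, $p$ is invertible in it and its underlying $R$-module is torsion-free. Each graded piece $HML^{2n}(R) = R/nR$ is torsion, so tensoring the exhausting filtration with $L$ collapses it to rank one; a torsion-free rank-one $R$-algebra in which $p$ is invertible is forced to be $L$. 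The filtration is then determined: the $R$-submodules of $L$ are totally ordered (the fractional ideals $p^{-k}R$), $F_0 = R$, and $\gr_n^F \cong R/nR$ has length $\ord_p(n)$, whence $F_n = p^{-\ord_p(n!)}R = \tfrac{1}{n!}R$ by induction. This is exactly \eqref{eq:filtration_on_L_ramified} with $l = 0$.

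For part 2), when $w \equiv w^q \bmod \m^2$ and $p \neq 2$, multiplication by $p$ is no longer invertible and the task is genuinely to split the filtration. I would split the finite stages $G_m := F_{2m} HML^{II,0}(R,w)$ by induction on $m$ via Lemma \ref{lem:extensions}: to split $0 \to G_{m-1} \to G_m \to HML^{2m}(R) \to 0$ it suffices that it remain exact after $\otimes_R R/\m^n$ for every $n$, equivalently that the connecting maps $\Tor_1^R(R/\m^n, HML^{2m}(R)) \to G_{m-1} \otimes_R R/\m^n$ vanish. The input is Lemma \ref{lem:sp_seq_degen_w^q=w}: under $w^q - w \in \m^2$ and $p \neq 2$, the spectral sequence for the mod-$\m^n$ coefficients $(\Hoch^{\bullet}(R, R/\m^n), \delta_{[w]})$ also degenerates at $E_1$. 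Comparing this degeneration, which produces no cancellation in the associated graded of $HML^{II,\bullet}(R,w;R/\m^n)$, against the triangle coming from $0 \to R \stackrel{p^n}{\to} R \to R/\m^n \to 0$ forces those connecting maps to vanish, which is condition (ii) of Lemma \ref{lem:extensions}. Passing to the union over $m$ then yields $HML^{II,0}(R,w) \cong \bigoplus_{m \geq 0} HML^{2m}(R)$ as filtered $R$-modules.

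The main obstacle is the bookkeeping in part 2): matching the lengths produced by the mod-$\m^n$ degeneration against those of the integral filtration so as to conclude the vanishing of the connecting maps in Lemma \ref{lem:extensions}. This is precisely the point at which the hypothesis $p \neq 2$ enters, through Lemma \ref{lem:sp_seq_degen_w^q=w}, whose proof relies on the divided-power comparison of mixed complexes in Lemma \ref{lem:q_is_for_delta_w_1...w_k}.
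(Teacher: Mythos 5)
Your proposal is correct and follows essentially the same route as the paper: part 1) via the vanishing of $HML^{II,\bullet}(R,w;\mk)$ from Theorem \ref{th:HML^II(R,k)} 2), which makes $p$ invertible and identifies the algebra with $L$, with the filtration then pinned down by induction on lengths; and part 2) via Lemma \ref{lem:sp_seq_degen_w^q=w} combined with Lemma \ref{lem:extensions}. The ``bookkeeping'' you flag as the main obstacle is precisely the paper's inductive step, namely showing that the natural map $F_n HML^{II,0}(R,w)\otimes_R R/\m^l\to F_n HML^{II,0}(R,w;R/\m^l)$ is an isomorphism for all $n,l$, which yields condition (iii) of Lemma \ref{lem:extensions} (equivalent to your condition (ii)) and hence the splitting.
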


\begin{proof}1) By Theorem \ref{th:HML^II(R,k)}, we have $HML^{II}(R,w;\mk)=0.$
The short exact sequence
$$0\to R\stackrel{p}{\to}R\to \mk\to 0$$
implies that the multiplication by $p$ in $HML^{II,0}(R,w)$ is an isomorphism. Therefore, the element $p$ is invertible in the ring $HML^{II,0}(R,w).$ This implies the isomorphism of $R$-algebras $HML^{II,0}(R,w)\cong R[\frac1{p}]\cong L.$ The induced filration $F_{\bullet}L$ on $L$ must satisfy the conditions $F_0 L=R,$ and $F_n L/F_{n-1} L\cong R/\m^{\ord_p(n)}\cong R/nR.$ This implies by induction that $F_n L=\frac1{n!} R$ for $n\geq 0.$ The statement 1) is proved.

2) By Lemma \ref{lem:sp_seq_degen_w^q=w}, for each $n\geq 0,$ $l>0,$ we have a short exact sequence
$$0\to F_n HML^{II,0}(R,w;R/\m^l)\to F_{n+1} HML^{II,0}(R,w;R/\m^l)\to HML^{2n+2}(R,w;R/\m^l)\to 0.$$
It follows by induction on $n$ that the natural map $$F_n HML^{II,0}(R,w)\otimes R/\m^l\to F_n HML^{II,0}(R,w;R/\m^l)$$ is an isomorphism. Then by Lemma \ref{lem:extensions} (implication (iii)$\Rightarrow$(i)), we see that the short exact sequence of $R$-modules
$$0\to F_n HML^{II,0}(R,w)\to F_{n+1} HML^{II,0}(R,w)\to HML^{2n+2}(R)\to 0$$
is split for all $n\geq 0.$ This gives an isomorphism \eqref{eq:splitting_for_HML^II_discr_valuation}.\end{proof}

As in the ramified case, we obtain the product structure on $HML^{\bullet}(R).$

\begin{cor}\label{cor:product_on_HML_non_ramified}We have natural isomorphism of graded $R$-algebras
$$HML^{\bullet}(R)\cong \Gamma_R(x)/(x),$$
where $\deg(x)=2.$\end{cor}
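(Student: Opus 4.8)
The plan is to run the argument of Corollary~\ref{cor:product_on_HML_ramified} with $\cD_R$ replaced by $R$ (that is, with the ramification exponent $l$ set to $0$), substituting Theorem~\ref{th:HML^II_for_discr_valuation}~1) for Theorem~\ref{th:HML^II_discr_valuation_ramified}. The essential input is already the hard theorem; what remains is a formal translation of the identification $HML^{II,0}(R,w)\cong L$ into a statement about the graded ring $\gr_{\bullet}^F HML^{II,0}(R,w)$, which by the discussion following Corollary~\ref{cor:HML_for_discr_valuation} is isomorphic to $HML^{\bullet}(R)$ as a graded ring.

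First I would choose an element $w\in R$ with $w\not\equiv w^q\text{ mod }\m^2$; since $R$ is non-ramified we have $\m=pR$, so $w=p$ works (indeed $p^q\in\m^q\subseteq\m^2$ while $p\notin\m^2$). By Theorem~\ref{th:HML^II_for_discr_valuation}~1) we then get an isomorphism of filtered $R$-algebras $HML^{II,0}(R,w)\cong L$ with $F_nL=\frac1{n!}R$. On the other hand, by Proposition~\ref{prop:product_on_HH_second_kind}~4) the associated graded $\gr_{\bullet}^F HML^{II,0}(R,w)$ is isomorphic, as a graded ring, to $E_{\infty}^{\bullet}$ of the spectral sequence of $(\Hoch^{\bullet}(Q_{\bullet}(R),R),\delta_{[w]})$. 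Since $HML^{\mathrm{odd}}(R)=0$ by Corollary~\ref{cor:HML_for_discr_valuation}, all differentials vanish for degree reasons and $E_{\infty}^{\bullet}=E_1^{\bullet}=HML^{\bullet}(R)$ with its native product. Combining these, I obtain an isomorphism of graded rings $HML^{\bullet}(R)\cong\gr_{\bullet}^F L$, where $\gr_n^F L=\frac1{n!}R/\frac1{(n-1)!}R\cong R/nR\cong R/\m^{\ord_p(n)}$.

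It then remains to exhibit an isomorphism $\Gamma_R(x)/(x)\isomoto\gr_{\bullet}^F L$. I would define $\varphi\colon\Gamma_R(x)\to\gr_{\bullet}^F L$ on the divided-power basis by sending $\gamma_n(x)$ (of degree $2n$) to the class of $\frac1{n!}$ in $\gr_n^F L$. The one computation that must be checked is multiplicativity: the product in $\gr_{\bullet}^F L$ sends the classes of $\frac1{n!}$ and $\frac1{m!}$ to the class of $\frac1{n!\,m!}=\binom{n+m}{n}\frac1{(n+m)!}$, which matches the divided-power rule $\gamma_n(x)\gamma_m(x)=\binom{n+m}{n}\gamma_{n+m}(x)$, so $\varphi$ is a homomorphism of graded rings. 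Since $\gr_1^F L=\frac1{1!}R/\frac1{0!}R=R/R=0$, we have $\varphi(x)=\varphi(\gamma_1(x))=0$, so $\varphi$ annihilates the ideal $(x)$ and descends to $\bar\varphi\colon\Gamma_R(x)/(x)\to\gr_{\bullet}^F L$. Finally, $\bar\varphi$ is a degreewise isomorphism: in degree $2m$ it is the map $R/mR\to R/mR$ carrying the generator $[\gamma_m(x)]$ to the generator $[\frac1{m!}]$, hence bijective, and in degree $0$ it is the identity on $R$.

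I do not expect a genuine obstacle here, as the real content sits in Theorem~\ref{th:HML^II_for_discr_valuation}~1). The only point requiring care is the identification of the associated-graded \emph{ring} (as opposed to merely the associated-graded module) with $HML^{\bullet}(R)$: one must note that with coefficients in $R$ the spectral sequence degenerates at $E_1$, so that the multiplicative structure produced by Proposition~\ref{prop:product_on_HH_second_kind} on $\gr_{\bullet}^F$ is exactly the cup product on $HML^{\bullet}(R)$. The concluding binomial identity is the same one that governs divided powers, which is precisely why $\Gamma_R(x)$ is the correct model.
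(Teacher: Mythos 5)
Your proposal is correct and follows essentially the same route as the paper: the paper's proof of this corollary is literally ``proved in the same way as Corollary~\ref{cor:product_on_HML_ramified},'' i.e.\ choose $w$ with $w\not\equiv w^q \bmod \m^2$, invoke Theorem~\ref{th:HML^II_for_discr_valuation}~1) to get $HML^{\bullet}(R)\cong\gr_{\bullet}^F L$ with $F_nL=\frac1{n!}R$, and map $\gamma_n(x)\mapsto\frac1{n!}$ — exactly your argument with the ramification exponent $l$ set to $0$. Your filled-in details (the choice $w=p$, the degeneration of the spectral sequence for parity reasons giving the ring identification of $\gr_{\bullet}^F$ with $HML^{\bullet}(R)$, and the binomial check of multiplicativity) are all sound and match what the paper leaves implicit.
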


\begin{proof}This is proved in the same way as Corollary \ref{cor:product_on_HML_ramified}\end{proof}

We have the following corollary for the Mac Lane homology of the second kind.

\begin{theo}\label{th:HML^II_dual_for_discr_valuation}Let $R$ and $w$ be as above.

1) If $w\not\equiv w^q\text{ mod }\m^2,$ then $HML^{II}_{\bullet}(R,w)=0.$

2) If $w\equiv w^q\text{ mod }\m^2$ and $p=\cha(\mk)\ne 2,$ then we have a canonical isomorphism 
$$HML^{II}_0(R,w)\cong R,$$
and a non-canonical isomorphism
$$HML^{II}_1(R,w)\cong \prod\limits_{m>0}HML_{2m-1}(R).$$\end{theo}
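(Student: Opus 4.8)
The plan is to deduce both parts from the cohomological computation of Theorem \ref{th:HML^II_for_discr_valuation} by means of the duality of Lemma \ref{lem:duality_for_Mac_Lane_homology}. First I would apply that lemma with $M=R$: its hypotheses hold because $R=\cO_L$ is a complete discrete valuation ring, hence noetherian and regular of global dimension $1$; the coefficient module $R$ is concentrated in degree zero, hence bounded above and h-projective; and $HML_n(R)$ is finitely generated for every $n$ by Corollary \ref{cor:HML_for_discr_valuation}. Since $\Hom_R(R,R)=R$, the lemma yields an isomorphism $\Hoch^{II}_{\bullet}(Q_{\bullet}(R),[w];R)\cong\bR\Hom_R(\Hoch^{II,\bullet}(Q_{\bullet}(R),w;R),R)$ in $D(R[u^{\pm 1}])$, so that $HML^{II}_{\bullet}(R,w)$ is computed as the derived $R$-dual of the $\Z/2$-graded object $HML^{II,\bullet}(R,w)$.

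Next I would unwind this derived dual. Because $R$ has global dimension $1$, the hyperext spectral sequence collapses into a two-periodic universal-coefficient description, and because the odd part $HML^{II,1}(R,w)$ vanishes (as $HML^{2n-1}(R)=0$ for all $n$), only one summand survives in each parity. Concretely this should give $HML^{II}_0(R,w)\cong\Hom_R(HML^{II,0}(R,w),R)$ and $HML^{II}_1(R,w)\cong\Ext^1_R(HML^{II,0}(R,w),R)$; note in particular that the $\Ext^1$ term lands in odd homological degree.

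For part 1), Theorem \ref{th:HML^II_for_discr_valuation} 1) identifies $HML^{II,0}(R,w)\cong L$, and the vanishing $\Hom_R(L,R)=\Ext^1_R(L,R)=0$ is exactly the computation already carried out in the proof of Theorem \ref{th:HML^II_dual_for_discr_valuation_ramified} (via the injective resolution $0\to R\to L\to L/R\to 0$). Hence both parities vanish. For part 2), Theorem \ref{th:HML^II_for_discr_valuation} 2) gives $HML^{II,0}(R,w)\cong\bigoplus_{m\geq 0}HML^{2m}(R)$, where $HML^0(R)=R$ is free of rank one and $HML^{2m}(R)\cong R/mR$ is torsion for $m>0$ by Corollary \ref{cor:HML_for_discr_valuation}. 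Dualizing, $\Hom_R(-,R)$ kills all torsion summands and returns the torsion-free quotient $R$, giving the canonical isomorphism $HML^{II}_0(R,w)\cong R$; while $\Ext^1_R(-,R)$ kills the free summand, converts the direct sum into a product, and sends each $R/mR$ to $\Ext^1_R(R/mR,R)\cong R/mR\cong HML_{2m-1}(R)$, yielding $HML^{II}_1(R,w)\cong\prod_{m>0}HML_{2m-1}(R)$.

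The one genuinely delicate point is the middle step: correctly tracking the two-periodic duality so that $\Hom_R$ and $\Ext^1_R$ are distributed into the even and odd homological degrees respectively, and justifying the passage from direct sum to product under $\Ext^1_R(-,R)$ for the infinite family indexed by $m>0$ (as well as checking that the non-finitely-generated module $L$ causes no trouble in part 1). Everything else reduces to the already-established cohomology and to standard $\Hom$/$\Ext$ computations over the hereditary ring $R$.
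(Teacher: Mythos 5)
Your proposal is correct and follows essentially the same route as the paper: part 1) is handled exactly as in Theorem \ref{th:HML^II_dual_for_discr_valuation_ramified} (duality plus $\Hom_R(L,R)=\Ext^1_R(L,R)=0$ via the injective resolution $0\to R\to L\to L/R\to 0$), and part 2) follows from Theorem \ref{th:HML^II_for_discr_valuation} 2) together with Lemma \ref{lem:duality_for_homology}, with the vanishing of $HML^{II,1}(R,w)$ distributing $\Hom_R(-,R)$ and $\Ext^1_R(-,R)$ into even and odd homological degrees respectively. The paper's proof is just a terser version of the same argument, so there is nothing to add.
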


\begin{proof}1) is proved analogously to Theorem \ref{th:HML^II_dual_for_discr_valuation_ramified}.

2) follows directly from Theorem \ref{th:HML^II_for_discr_valuation}, 2) and Lemma \ref{lem:duality_for_homology}.\end{proof}

\begin{cor}\label{cor:sp_seq_for_homology_non_ramified}Let $R$ and $w$ be as above, and $w\not\equiv w^q\text{ mod }\m^2.$ Let $\{(E_r^{\bullet},d_r)\}$ be a $\Z$-graded spectral sequence associated with the mixed complex $(\Hoch_{\bullet}(Q_{\bullet}(R),R),\delta_{[w]}).$ Then for each $r$ the differential
$$d_r:E_r^0\to E_r^{-2r+1}=HML_{2r-1}(R)$$ is surjective.\end{cor}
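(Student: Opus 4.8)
The plan is to mirror the argument of Corollary \ref{cor:sp_seq_for_homology_ramified}, substituting the non-ramified vanishing statement for the ramified one. By Theorem \ref{th:HML^II_dual_for_discr_valuation}, 1), the hypothesis $w\not\equiv w^q\text{ mod }\m^2$ forces $HML^{II}_{\bullet}(R,w)=0$. Since the $\Z$-graded spectral sequence associated with the mixed complex $(\Hoch_{\bullet}(Q_{\bullet}(R),R),\delta_{[w]})$ abuts to $HML^{II}_{\bullet}(R,w)$, the whole $E_{\infty}$-page vanishes, and the surjectivity of each $d_r$ will be extracted by a degree count identifying the relevant $E_{\infty}$-term with a cokernel of $d_r$.

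First I would record the shape of the $E_1$-page. In the non-ramified case $\cD_R=R$, so Corollary \ref{cor:HML_for_discr_valuation} gives $E_1^0=HML_0(R)=R$, $E_1^{-(2k-1)}=HML_{2k-1}(R)\cong R/\m^{\ord_p(k)}$ for $k>0$, and $E_1^{-2k}=HML_{2k}(R)=0$ for $k>0$; in particular all negative even terms vanish. The differential $d_s$ has degree $-(2s-1)$, hence interchanges even and odd degrees. Therefore a fixed odd term $E_{\bullet}^{-(2r-1)}$ can only be affected by differentials connecting it to even degrees: the incoming $d_s$ originates from $E_s^{-2(r-s)}$ and the outgoing $d_s$ lands in $E_s^{-2(r+s-1)}$. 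For $s<r$ the source $E_1^{-2(r-s)}=HML_{2(r-s)}(R)$ vanishes, and for every $s\geq 1$ the target $E_1^{-2(r+s-1)}$ vanishes as well; the only surviving even term is $E_1^0$. It follows that no differential touches $E_{\bullet}^{-(2r-1)}$ before page $r$ or after it, the unique nontrivial one being $d_r$ arriving from $E_r^0$. This yields $E_r^{-(2r-1)}=E_1^{-(2r-1)}=HML_{2r-1}(R)$, and then $E_{r+1}^{-(2r-1)}=E_{\infty}^{-(2r-1)}=\coker\bigl(d_r\colon E_r^0\to HML_{2r-1}(R)\bigr)$ already at the finite page $r+1$.

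Combining these, surjectivity of $d_r$ is equivalent to $E_{\infty}^{-(2r-1)}=0$, which is an instance of the vanishing $E_{\infty}=0$, exactly as in Corollary \ref{cor:sp_seq_for_homology_ramified}. The one point requiring care is convergence: unlike the cochain complex, $\Hoch_{\bullet}(Q_{\bullet}(R),R)$ is bounded \emph{above} rather than below, so Proposition \ref{prop:convergence_of_sp_seq} (and the abutment half of Corollary \ref{cor:sp_seq_for_Hochschild_second_kind}, 2)) does not apply verbatim, and I expect this to be the only real obstacle. I would resolve it through the $\Z$-graded convergence framework of Appendix \ref{sec:appendix_on_sp_seq} (Definition \ref{defi:convergence_of_sp_seq}), which identifies $E_{\infty}^n$ with a subquotient of $\gr^F_nH^{\bar n}(\Tot^L)$; the odd terms stabilize at a finite page, so $E_{\infty}^{-(2r-1)}$ is a subquotient of $HML^{II}_1(R,w)=0$ and therefore vanishes, giving $\coker(d_r)=0$. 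I would note in passing that one cannot shortcut this via duality with the cohomological spectral sequence—which degenerates at $E_1$ for $R$-coefficients—precisely because $\bR\Hom_R(-,R)$ shifts the torsion modules $HML_{2k-1}(R)$ by one degree and so does not match the two spectral sequences term by term.
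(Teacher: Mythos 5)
Your overall route is the same as the paper's: the published proof of this corollary is literally the one--line citation of Theorem \ref{th:HML^II_dual_for_discr_valuation} 1), i.e.\ of the vanishing $HML^{II}_{\bullet}(R,w)=0$, and your parity bookkeeping --- using $HML_{2k}(R)=0$ for $k>0$ to show that nothing touches the spot $-(2r-1)$ except $d_r$, so that $E_r^{-(2r-1)}=E_1^{-(2r-1)}=HML_{2r-1}(R)$ and $E_{r+1}^{-(2r-1)}=E_{\infty}^{-(2r-1)}=\coker(d_r)$ --- is correct and makes explicit what the paper leaves implicit. Your closing caveat about duality is also sound: $\bR\Hom_R(-,R)$ shifts the torsion groups into $\Ext^1$, so the $E_1$-degeneration of the cohomological spectral sequence cannot be transported termwise to the homological one.

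The one step of your write-up that is not actually a proof is exactly the step you flagged as the crux: deducing $E_{\infty}^{-(2r-1)}=0$ from $HML^{II}_{1}(R,w)=0$. Citing Definition \ref{defi:convergence_of_sp_seq} cannot carry this, because the appendix's convergence apparatus (Definition \ref{defi:convergence_of_sp_seq}, Proposition \ref{prop:convergence_of_sp_seq_appendix}, Corollary \ref{cor:convergence_of_sp_seq_from_mixed}) is stated only for bounded below $E$-terms and filtrations, whereas here the mixed complex is bounded above and the filtration on $\Tot^{\Pi}$ is complete rather than bounded; for such product-type filtrations the natural comparison map runs from $\gr^F H^{\bullet}(\Tot)$ \emph{to} $E_{\infty}$, and ``$E_{\infty}$ is a subquotient of the abutment'' is not a general theorem one can quote (strong convergence requires a $\lim^1$-type condition, as in Boardman's treatment of conditionally convergent spectral sequences). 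The gap is genuine but short to fill, and filling it explains the paper's word ``immediately'': let $z\in\Hoch_{2r-1}(Q_{\bullet}(R),R)$ be a $b$-cycle representing a class of $E_r^{-2r+1}$. Since $b(\delta_{[w]}z)=-\delta_{[w]}(bz)=0$ and $HML_{2k}(R)=0$ for all $k\geq r$ (Corollary \ref{cor:HML_for_discr_valuation}), one can choose, inductively, elements $z_j$ in odd degrees below $z$ with $b(z_{j+1})=-\delta_{[w]}(z_j)$, $z_0=z$; because $\Tot^{\Pi}$ is a full product, the element $Z$ with components $z,z_1,z_2,\dots$ (and $0$ above) is a legitimate $(b+\delta_{[w]})$-cycle of odd degree. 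Now $HML^{II}_1(R,w)=0$ gives $Z=(b+\delta_{[w]})W$ with $W=(w_0,w_1,\dots)\in\prod_k\Hoch_{2k}$; reading off components, $bw_0=0$, $\delta_{[w]}w_k=-b(w_{k+1})$ for $k\leq r-2$, and $\delta_{[w]}w_{r-1}\equiv z$ modulo $b$-boundaries, so that (up to sign) $w_0$ defines a class of $E_r^0$ with $d_r[w_0]=\pm[z]$. This chase simultaneously proves your subquotient claim at the stabilized odd spots and completes your argument; with it inserted, your proposal is correct and coincides with the paper's proof in substance.
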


\begin{proof}Indeed, this follows immediately from Theorem \ref{th:HML^II_dual_for_discr_valuation} 1).\end{proof}

\section{Mac Lane (co)homology of localizations of number rings}
\label{sec:HML_loc_number_rings}

In this section $K$ denotes a number field (a finite extension of $\Q$), and $A=\cO_K\subset K$ the ring of integers. For any finite subset $S\subset\Spec_m(A)$ we put $$S^{-1}A:=\cO(\Spec A\setminus S)\subset K.$$
For an arbitrary (possibly infinite) subset $T\subset\Spec_m(A)$ we put
$$T^{-1}A:=\{x\in K\mid x\in A_{\rho}\text{ for }\rho\in\Spec_m(A)\setminus T\}.$$

Recall that the inverse different $\cD_{A}^{-1}\subset K$ is a fractional ideal given by the formula
$$\cD_{A}^{-1}=\{x\in K\mid \Tr_{K/\Q}(x)\in \Z\}.$$ We put $$\cD_{S^{-1}A}=\cD_{A}\otimes_A S^{-1}A.$$

We recall the following theorem of Lindenstrauss and Madsen.

\begin{theo}(\cite{LM})\label{th:THH_number_rings} The non-zero topological Hochschild homology groups of $A$ are
$$THH_0(A)=A,\quad THH_{2n-1}(A)=\cD_A^{-1}/nA.$$\end{theo}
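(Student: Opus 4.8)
The plan is to deduce the global statement from the already-established local case, Theorem \ref{th:THH_for_discr_valuation}, using the identification of topological Hochschild homology with Mac Lane homology (Theorem \ref{th:THH_equals_HML}) together with the localization and completion results of Section \ref{sec:change_of_rings_sp_seq}. Write $HML_{\bullet}(A)$ for $THH_{\bullet}(A)$. In degree zero the claim is immediate, since $HML_0(A)=HH_0(\Z[P(A)],A)=A$. For $n\geq 1$ I would first observe that $HML_n(A)$ is a torsion $A$-module: the linearization map from $THH(A)$ to ordinary Hochschild homology $HH_{\bullet}(A)$ (over $\Z$) is a rational equivalence, and $HH_{\bullet}(A)\otimes\Q=HH_{\bullet}(K)\otimes\Q$ vanishes in positive degrees because $K/\Q$ is separable, so $\Omega^1_{K/\Q}=0$. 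Since any torsion module over the Dedekind domain $A$ splits canonically into its $\rho$-primary parts, one gets $HML_n(A)=\bigoplus_{\rho}HML_n(A)_{(\rho)}$ for $n\geq 1$.

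Next I would compute each $\rho$-primary part by passing to the completion $\widehat{A}_{\rho}$, which is the ring of integers $\cO_{K_{\rho}}$ of a finite extension $K_{\rho}/\Q_p$ (with $\rho$ lying over $p$), so that Theorem \ref{th:THH_for_discr_valuation} applies and gives $HML_{2k-1}(\widehat{A}_{\rho})=\cD_{\widehat{A}_{\rho}}^{-1}/k\,\widehat{A}_{\rho}$ together with the vanishing of $HML$ in positive even degrees. The bridge between $HML_n(A)_{(\rho)}$ and $HML_n(\widehat{A}_{\rho})$ is a universal-coefficient argument combined with completion invariance. Concretely, as in the proof of Corollary \ref{cor:HML_for_torsion_modules}, for $M\in\Perf(A)$ one has $\Hoch_{\bullet}(Q_{\bullet}(A),M)\cong\Hoch_{\bullet}(Q_{\bullet}(A),A)\stackrel{\bL}{\otimes}_A M$, so the groups $HML_{\bullet}(A,A/\rho^m)$ are determined by $HML_{\bullet}(A)$ through a universal-coefficient sequence, and the same holds over $\widehat{A}_{\rho}$. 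On the other hand, Proposition \ref{prop:HML_completions}, transported to homology via the duality Lemma \ref{lem:duality_for_Mac_Lane_homology}, yields $HML_{\bullet}(A,A/\rho^m)\cong HML_{\bullet}(\widehat{A}_{\rho},A/\rho^m)$ for every $m$. Comparing the two universal-coefficient sequences and letting $m$ grow, one extracts a canonical isomorphism $HML_n(A)_{(\rho)}\cong HML_n(\widehat{A}_{\rho})$ for $n\geq 1$, the local group being automatically $\rho$-primary in positive degrees.

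It then remains to reassemble the local answers. Using the standard compatibility of the different with completion, $\cD_{\widehat{A}_{\rho}}=\cD_A\otimes_A\widehat{A}_{\rho}$, the Chinese remainder decomposition of the finite module $\cD_A^{-1}/nA$ reads $\cD_A^{-1}/nA\cong\bigoplus_{\rho}\cD_{\widehat{A}_{\rho}}^{-1}/n\,\widehat{A}_{\rho}$, the sum ranging over the finitely many $\rho$ dividing $n$. Matching this with $HML_{2n-1}(A)=\bigoplus_{\rho}HML_{2n-1}(A)_{(\rho)}\cong\bigoplus_{\rho}HML_{2n-1}(\widehat{A}_{\rho})$ gives the asserted odd-degree formula, while the local vanishing in positive even degrees forces $HML_{2n}(A)_{(\rho)}=0$ for all $\rho$, hence $HML_{2n}(A)=0$ for $n>0$; in particular finiteness of the answer and of its support emerge at the end rather than being assumed.

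The main obstacle is the local-to-global step of the second paragraph. The completion and localization statements available to us (Propositions \ref{prop:HML_localizations} and \ref{prop:HML_completions}) are phrased for \emph{cohomology} with finitely generated torsion coefficients, whereas the quantity we ultimately want, $HML_{\bullet}(A)$, carries the non-torsion diagonal coefficients $A$. Bridging this gap requires three compatibilities: the base-change identity $\Hoch_{\bullet}(Q_{\bullet}(A),M)\cong\Hoch_{\bullet}(Q_{\bullet}(A),A)\stackrel{\bL}{\otimes}_A M$, valid because $A$ is regular of dimension one; the homological form of completion invariance obtained by dualizing via Lemma \ref{lem:duality_for_Mac_Lane_homology}, which is where the noetherian, regular, and finiteness hypotheses of that lemma are used; and a clean passage to the limit in $m$ recovering the full $\rho$-primary part from the finite-level groups $HML_{\bullet}(A,A/\rho^m)$. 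Verifying these, and in particular checking that only primes dividing $n$ contribute so that all sums are finite, is where the real work lies; the reassembly of the differents is then routine commutative algebra.
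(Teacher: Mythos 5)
The first thing to say is that the paper has no proof of this statement for you to match: both it and the local case, Theorem \ref{th:THH_for_discr_valuation}, are quoted from Lindenstrauss--Madsen \cite{LM}, and the paper's own local-to-global comparisons (Lemma \ref{lem:lemma_on_completions}, Corollary \ref{cor:HML_for_loc_of_number_rings}, Theorem \ref{th:ring_structure_on_HML_S^-1A}) use the global computation as an \emph{input} rather than derive it. So your attempt, which reverses the flow and deduces the global theorem from the local one, must stand on its own. Its skeleton is sound and is essentially how \cite{LM} themselves globalize: rational vanishing (hence torsionness) of $HML_n(A)$ for $n\geq 1$, primary decomposition over the Dedekind domain $A$, identification of each $\rho$-primary part with the completed local answer, and reassembly of the differents.

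The genuine gap is in your bridge $HML_{\bullet}(A,A/\rho^m)\cong HML_{\bullet}(\widehat{A}_{\rho},A/\rho^m)$. You obtain it by ``transporting Proposition \ref{prop:HML_completions} to homology via Lemma \ref{lem:duality_for_Mac_Lane_homology}'', but that lemma cannot be invoked on the $A$-side: with torsion coefficients $M=A/\rho^m$ its hypotheses fail outright ($M$ is not h-projective and $\Hom_A(M,A)=0$, so the dual statement is vacuous), and with $M=A$ its hypothesis (iii) demands that $HML_n(A)$ be finitely generated for all $n$, which is not known at this stage --- given the torsionness you have already established, it is tantamount to the finiteness content of the very theorem being proved. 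Over $\widehat{A}_{\rho}$ finiteness is supplied by the quoted local theorem, but over $A$ it is not; so this step is circular, and your closing remark that finiteness ``emerges at the end rather than being assumed'' is contradicted by this use of the lemma. The gap is repairable without any finiteness: $A/\rho^m$ is an artinian self-injective ring, so $D(-)=\Hom_{A/\rho^m}(-,A/\rho^m)$ is exact and faithful; since $\Hoch_{\bullet}(Q_{\bullet}(R),R)$ is a bounded-above complex of free $R$-modules, base change gives $HML^n(R,A/\rho^m)\cong D\bigl(HML_n(R,A/\rho^m)\bigr)$ naturally in $R$, and the isomorphism of Proposition \ref{prop:HML_completions} is $D$ applied to the natural homological comparison map for $A\to\widehat{A}_{\rho}$; faithful exactness of $D$ then forces that map to be an isomorphism. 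With this substitute your colimit-over-$m$ extraction of $HML_n(A)_{(\rho)}\cong HML_n(\widehat{A}_{\rho})$ goes through (note the one-degree shift: with divisible coefficients $(K/A)_{(\rho)}$, degree $n+1$ detects degree $n$). One smaller slip: the support of $\cD_A^{-1}/nA$ consists of the primes dividing $n$ \emph{together with the ramified primes}, not only the divisors of $n$; the set is still finite, so your reassembly survives unchanged.
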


We have an immediate corollary of Theorem \ref{th:THH_number_rings}.

\begin{cor}\label{cor:HML_for_loc_of_number_rings} For any finite subset $S\subset\Spec_m(A),$ we have non-canonical isomorphisms
$$HML_n(S^{-1}A)=\begin{cases}S^{-1}A & \text{for }n=0;\\
\cD_{S^{-1}A}^{-1}/kS^{-1}A & \text{for }n=2k-1>0;\\
0 & \text{otherwise;}\end{cases}$$
$$HML^n(S^{-1}A)=\begin{cases}S^{-1}A & \text{for }n=0;\\
\cD_{S^{-1}A}^{-1}/kS^{-1}A & \text{for }n=2k>0;\\
0 & \text{otherwise.}\end{cases}$$\end{cor}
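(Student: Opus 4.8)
The plan is to mimic the proof of Corollary \ref{cor:HML_for_discr_valuation}, replacing the complete discrete valuation ring by the Dedekind domain $S^{-1}A$ and feeding in Theorem \ref{th:THH_number_rings} through a localization step.

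First, for the homology. By Theorem \ref{th:THH_equals_HML} we have $HML_{\bullet}(S^{-1}A)\cong THH_{\bullet}(S^{-1}A)$. Since $A\to S^{-1}A$ is a flat epimorphism, I would use base change (the homology analogue of Proposition \ref{prop:HML_localizations}, i.e. flat base change for $THH$) to get $THH_{\bullet}(S^{-1}A)\cong THH_{\bullet}(A)\otimes_A S^{-1}A$, with no correction terms because $S^{-1}A$ is $A$-flat. Combining this with Theorem \ref{th:THH_number_rings}, the identity $\cD_{S^{-1}A}^{-1}=\cD_A^{-1}\otimes_A S^{-1}A=S^{-1}(\cD_A^{-1})$, and exactness of localization, I obtain $HML_0(S^{-1}A)\cong S^{-1}A$ and $HML_{2k-1}(S^{-1}A)\cong(\cD_A^{-1}/kA)\otimes_A S^{-1}A\cong \cD_{S^{-1}A}^{-1}/kS^{-1}A$ for $k>0$, and zero otherwise. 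This gives the homology formula.

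Second, for the cohomology I would repeat the duality argument. Applying Lemma \ref{lem:duality_of_Mac_Lane_complexes} with $R=S^{-1}A$ and $M=R$ (so that $\Hom_R(M,R)=R$) and forgetting the mixed structure yields a quasi-isomorphism $\Hoch^{\bullet}(Q_{\bullet}(S^{-1}A),S^{-1}A)\cong\bR\Hom_{S^{-1}A}(\Hoch_{\bullet}(Q_{\bullet}(S^{-1}A),S^{-1}A),S^{-1}A)$ in $D(S^{-1}A)$. Because $S^{-1}A$ is a localization of a Dedekind domain, it has global dimension at most $1$, so every object of $D(S^{-1}A)$ is formal; hence $\Hoch_{\bullet}(Q_{\bullet}(S^{-1}A),S^{-1}A)$ splits as the direct sum of the shifted modules $HML_n(S^{-1}A)$, and a universal-coefficient argument gives $HML^n(S^{-1}A)\cong\Hom_{S^{-1}A}(HML_n(S^{-1}A),S^{-1}A)\oplus\Ext^1_{S^{-1}A}(HML_{n-1}(S^{-1}A),S^{-1}A)$.

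Finally I would compute these $\Hom$ and $\Ext$ groups from the homology just determined. The free part $HML_0=S^{-1}A$ contributes $S^{-1}A$ in degree $0$ and nothing to $\Ext^1$; the torsion modules $HML_{2k-1}$ have vanishing $\Hom$ into the torsion-free $S^{-1}A$ but contribute in even degree through $\Ext^1$. The one point requiring care, which I expect to be the main obstacle, is the identification $\Ext^1_{S^{-1}A}(\cD_{S^{-1}A}^{-1}/kS^{-1}A,S^{-1}A)\cong\cD_{S^{-1}A}^{-1}/kS^{-1}A$. This is the Matlis-type self-duality of a finite-length torsion module over a Dedekind domain: from the resolution $0\to S^{-1}A\to K\to K/S^{-1}A\to 0$ one gets $\Ext^1_{S^{-1}A}(T,S^{-1}A)\cong\Hom_{S^{-1}A}(T,K/S^{-1}A)$ for torsion $T$, which is non-canonically isomorphic to $T$ (this is precisely why the isomorphisms in the statement are only non-canonical). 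Assembling these computations yields the cohomology formula and completes the corollary.
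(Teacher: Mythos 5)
Your proposal is correct, and its skeleton matches the paper's: both rest on Theorem \ref{th:THH_equals_HML} plus the Lindenstrauss--Madsen computation (Theorem \ref{th:THH_number_rings}) for homology, and on Lemma \ref{lem:duality_of_Mac_Lane_complexes} together with the global-dimension-one universal-coefficient splitting for cohomology. The difference is the order in which localization and duality are performed. The paper first treats $S=\emptyset$ (homology directly from Theorems \ref{th:THH_equals_HML} and \ref{th:THH_number_rings}, cohomology by dualizing over $A$ exactly as in Corollary \ref{cor:HML_for_discr_valuation}), and then passes to general $S$ by the cohomological change-of-rings statement, Proposition \ref{prop:HML_localizations}. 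You instead localize first, at the level of homology, via ``flat base change for $THH$'', and only then dualize, over $S^{-1}A$ itself. This is a legitimate alternative, but note that the localization statement you invoke is not in the paper: Proposition \ref{prop:HML_localizations} is purely cohomological, and its homological analogue ($HML_{\bullet}(S^{-1}A)\cong HML_{\bullet}(A)\otimes_A S^{-1}A$) needs its own justification --- for instance the homological change-of-rings spectral sequence of \cite{L}, Section 13.4.22, applied to the flat epimorphism $A\to S^{-1}A$, together with the fact that Mac Lane homology commutes with localization of the coefficient bimodule. Since this is standard and no harder than what the paper itself cites, it is not a gap, just a different routing; what it buys you is that the duality argument can be run once, directly over the Dedekind domain $S^{-1}A$, rather than over $A$ followed by a cohomological localization. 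A small bonus of your version is that you spell out the $\Ext$-computation ($\Ext^1_{S^{-1}A}(T,S^{-1}A)\cong\Hom_{S^{-1}A}(T,K/S^{-1}A)\cong T$ non-canonically for finite-length torsion $T$, which is also where the non-canonicity in the statement comes from), whereas the paper compresses this into ``Computing $\Ext$'s, we obtain the result.''
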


\begin{proof}For $S=\emptyset,$ this is proved analogously to Corollary \ref{cor:HML_for_discr_valuation}. The general case is implied by Proposition \ref{prop:HML_localizations}.\end{proof}

From now on in this section we fix a finite subset $S\subset\Spec_m(A)$ and an element $w\in S^{-1}A.$

Since $HML^{2n-1}(S^{-1}A)=0,$ it follows that we have $$HML^{II,1}(S^{-1}A,w)=0.$$
Further, by Proposition \ref{prop:product_on_HH_second_kind}, the $S^{-1}A$-algebra $HML^{II,0}(S^{-1}A,w)$ has an increasing exhausting filtration
$$F_0 HML^{II}(S^{-1}A,w)\subset F_1 HML^{II}(S^{-1}A,w)\subset\dots\subset HML^{II,0}(S^{-1}A,w)$$
by $S^{-1}A$-submodules such that $F_n\cdot F_m=F_{n+m}$ and we have an isomorphism of graded rings
$$\gr_{\bullet}^F HML^{II,0}(S^{-1}A,w)\cong \bigoplus\limits_{n\geq 0}HML^{2n}(S^{-1}A).$$

We put
$$Crit(w):=\{\rho\in\Spec_m (S^{-1}A)\mid w^{|k(\rho)|}-w\in\rho^2\}\subset \Spec_m(S^{-1}A).$$
We identify the set $\Spec(S^{-1}A)$ with an open subset of $\Spec(A).$ In particular, for any $\rho\in \Spec_m(S^{-1}A)$ we have the localization $(\rho\cup S)^{-1}A.$

We need one more lemma.

\begin{lemma}\label{lem:lemma_on_completions}Let $A,S$ and $w$ be as above, and $\rho\subset S^{-1}A$ a maximal ideal. 

1) We have quasi-isomorphisms of mixed complexes 
$$(\Hoch^{\bullet}(Q_{\bullet}(S^{-1}A),S^{-1}A),\delta_{[w]})\otimes_{S^{-1}A}\widehat{A}_{\rho}\stackrel{\sim}{\to} (\Hoch^{\bullet}(Q_{\bullet}(S^{-1}A),\widehat{A}_{\rho}),\delta_{[w]}),$$
\begin{equation}\label{eq:q_is_completions_limit}(\Hoch^{\bullet}(Q_{\bullet}(\widehat{A}_{\rho}),\widehat{A}_{\rho}),\delta_{[w]})\stackrel{\sim}{\to}(\Hoch^{\bullet}(Q_{\bullet}(S^{-1}A),\widehat{A}_{\rho}),\delta_{[w]}).\end{equation}

2) We have a quasi-isomorphism of mixed complexes
$$(\Hoch_{\bullet}(Q_{\bullet}(S^{-1}A),S^{-1}A),\delta_{[w]})\otimes_{S^{-1}A}\widehat{A}_{\rho}\stackrel{\sim}{\to} (\Hoch_{\bullet}(Q_{\bullet}(\widehat{A}_{\rho}),\widehat{A}_{\rho}),\delta_{[w]})$$\end{lemma}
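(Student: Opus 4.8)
The plan is to prove both statements by the same strategy: reduce everything to local computations at $\rho$ via the change-of-rings/completion results already established, then use the divided-power invariance of the mixed-complex structure to compare the differentials $\delta_{[w]}$. For part 1), I would first observe that the two maps in question are morphisms of \emph{mixed} complexes, so by Proposition \ref{prop:functoriality_of_Tot} and Proposition \ref{prop:functoriality_of_sp_seq} it suffices to check that the underlying maps of Hochschild cochain complexes (forgetting $\delta_{[w]}$) are quasi-isomorphisms; the compatibility with $\delta_{[w]}$ is automatic because both maps are induced by ring homomorphisms that fix $w$, so they commute with the extra differential on the nose.

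For the first quasi-isomorphism of part 1), the key point is that $\Hoch^{\bullet}(Q_{\bullet}(S^{-1}A),S^{-1}A)$ is a complex of $S^{-1}A$-modules, and since $\widehat{A}_{\rho}$ is flat over $S^{-1}A$, tensoring commutes with taking cohomology; thus the natural map
$$\Hoch^{\bullet}(Q_{\bullet}(S^{-1}A),S^{-1}A)\otimes_{S^{-1}A}\widehat{A}_{\rho}\to \Hoch^{\bullet}(Q_{\bullet}(S^{-1}A),\widehat{A}_{\rho})$$
is a quasi-isomorphism provided the Mac Lane cohomology groups are finitely generated (so that cohomology commutes with the completion). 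Here I would invoke Corollary \ref{cor:HML_for_loc_of_number_rings}, which shows the $HML^{n}(S^{-1}A)$ are finitely generated $S^{-1}A$-modules. For the second quasi-isomorphism \eqref{eq:q_is_completions_limit}, the claim is that changing the base ring from $S^{-1}A$ to its completion $\widehat{A}_{\rho}$ does not change the Mac Lane cohomology with coefficients in the $\m$-torsion-supported module $\widehat{A}_{\rho}$; this is precisely the content of Proposition \ref{prop:HML_completions} (applied with the ideal $\rho$), upgraded to the mixed-complex level, again using that the relevant map respects $\delta_{[w]}$ because $w$ maps to $w$ under $S^{-1}A\to\widehat{A}_{\rho}$.

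For part 2), the chain-complex analogue, I would run the dual argument: the Hochschild chain complex $\Hoch_{\bullet}(Q_{\bullet}(S^{-1}A),S^{-1}A)$ is again a bounded-above complex of $S^{-1}A$-modules with finitely generated homology by Corollary \ref{cor:HML_for_loc_of_number_rings}, so tensoring with the flat module $\widehat{A}_{\rho}$ computes the homology after base change, and then the identification $\Hoch_{\bullet}(Q_{\bullet}(S^{-1}A),\widehat{A}_{\rho})\simeq\Hoch_{\bullet}(Q_{\bullet}(\widehat{A}_{\rho}),\widehat{A}_{\rho})$ follows from the completion comparison exactly as above. The compatibility with $\delta_{[w]}$ is once more a formal consequence of the functoriality of the mixed-complex structure under the ring map fixing $w$.

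The main obstacle I anticipate is making the completion comparison genuinely functorial for mixed complexes rather than merely for the underlying complexes. Proposition \ref{prop:HML_completions} is phrased as an isomorphism on ordinary Mac Lane cohomology via the change-of-rings spectral sequence, so I would need to promote this to a statement in $D(\Mix_A)$, i.e. to verify that the isomorphism is realized by an honest $A_{\infty}$-morphism of mixed complexes compatible with $\delta_{[w]}$. The cleanest way to handle this is to note that the comparison maps are all induced by the ring homomorphism $S^{-1}A\to\widehat{A}_{\rho}$ (which sends $w$ to $w$), so they are literal morphisms of mixed complexes, and then appeal to Proposition \ref{prop:functoriality_of_sp_seq} to conclude that a quasi-isomorphism of underlying complexes is automatically a quasi-isomorphism of mixed complexes. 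The finite generation of the homology groups, needed so that completion commutes with cohomology, is the one technical hypothesis to verify carefully, and it is supplied by Corollary \ref{cor:HML_for_loc_of_number_rings}.
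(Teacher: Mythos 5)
Your reduction of the mixed-complex statement to a statement about the underlying complexes is fine: all three maps are strict morphisms of mixed complexes (they commute with $\delta_{[w]}$ on the nose, because the ring homomorphisms involved send $w$ to $w$), so it suffices that the underlying maps be quasi-isomorphisms; the ``main obstacle'' you worry about in your last paragraph is in fact a non-issue, and you resolve it yourself. The genuine gap is elsewhere, namely in your treatment of \eqref{eq:q_is_completions_limit}. You invoke Proposition \ref{prop:HML_completions} with coefficients $\widehat{A}_{\rho}$, which you describe as an ``$\m$-torsion-supported module''. But $\widehat{A}_{\rho}$ is torsion-free, not $\rho$-torsion, so the hypotheses of Proposition \ref{prop:HML_completions} (finitely generated \emph{$I$-torsion} coefficients; its proof begins by reducing to the case of an $R/I$-module) are simply not satisfied, and the proposition cannot be quoted as stated. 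The paper closes exactly this hole by ``passing to the homotopy limits'': one writes $\widehat{A}_{\rho}=\lim_n S^{-1}A/\rho^n$ in the coefficient slot, notes that $\Hoch^{\bullet}(Q_{\bullet}(R),\widehat{A}_{\rho})\cong\lim_n\Hoch^{\bullet}(Q_{\bullet}(R),S^{-1}A/\rho^n)$ for $R\in\{S^{-1}A,\widehat{A}_{\rho}\}$ (Hom into an inverse limit is the inverse limit of Homs), observes that the transition maps of these towers are surjective (the cochain groups are $\Hom_{\Z}$ out of free abelian groups), so the limits are homotopy limits, applies Proposition \ref{prop:HML_completions} levelwise to the finitely generated torsion modules $S^{-1}A/\rho^n$, and passes to the limit. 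Without this step, or some substitute for it, your proof of \eqref{eq:q_is_completions_limit} does not go through.

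A second, smaller problem concerns part 2). Your ``dual argument'' requires a chain-level analogue of the completion comparison, i.e.\ that $\Hoch_{\bullet}(Q_{\bullet}(S^{-1}A),\widehat{A}_{\rho})\to\Hoch_{\bullet}(Q_{\bullet}(\widehat{A}_{\rho}),\widehat{A}_{\rho})$ is a quasi-isomorphism; this is established nowhere in the paper, and the homotopy-limit device above does not dualize, since tensor products do not commute with inverse limits, so ``exactly as above'' is not available. The paper's own route for part 2) (and, in effect, for the first map of part 1)) is more pedestrian: by flatness of $\widehat{A}_{\rho}$ over $S^{-1}A$ the homology of the source is $HML_{\bullet}(S^{-1}A)\otimes_{S^{-1}A}\widehat{A}_{\rho}$, the homology of the target is $HML_{\bullet}(\widehat{A}_{\rho})$, and these are identified, compatibly with the natural map, by the explicit computations of Corollaries \ref{cor:HML_for_discr_valuation} and \ref{cor:HML_for_loc_of_number_rings} (for instance $(\cD_{S^{-1}A}^{-1}/kS^{-1}A)\otimes_{S^{-1}A}\widehat{A}_{\rho}\cong\cD_{\widehat{A}_{\rho}}^{-1}/k\widehat{A}_{\rho}$). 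Your flatness-plus-finite-generation argument for the first map of part 1) is essentially sound once fleshed out (using noetherianity and regularity of $S^{-1}A$ to replace the Mac Lane chain complex by a bounded-above complex of finitely generated projectives before dualizing), but the two points above must be repaired for the proof to stand.
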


\begin{proof}The quasi-isomorphism \eqref{eq:q_is_completions_limit} is implied by Proposition \ref{prop:HML_completions} (passing to the homotopy limits). The rest follow from the computations of Mac Lane (co)homology, see Corollaries \ref{cor:HML_for_discr_valuation} and \ref{cor:HML_for_loc_of_number_rings}.\end{proof}

\begin{theo}\label{th:HML^II_for_loc_of_number_rings}Let $A,S$ and $w$ be as above.

1) We have an isomorphism of filtered $S^{-1}A$-algebras
\begin{equation}\label{eq:tensor_product_decomp}HML^{II,0}(S^{-1}A,w)\cong\bigotimes_{\rho\in\Spec_m(S^{-1}A)}B(\rho),\end{equation}
where $B(\rho)$ is a filtered $S^{-1}A$-algebra with $F_0 B(\rho)=S^{-1}A,$ and 
$$\gr_n ^F B(\rho)\cong (\gr_n^F HML^{II,0}(S^{-1}A,w))_{\rho},\quad n>0.$$

2) If $\rho\not\in Crit(w),$ then we have a natural isomorphism of filtered $(S^{-1}A)_{\rho}$-algebras
$$B(\rho)=(\rho\cup S)^{-1} A,$$
where the filtration on the RHS is given by the formula \begin{equation}\label{eq:filtration_by_factorials}F_n(\rho\cup S)^{-1} A=(\frac{1}{n!}\cD_{S^{-1}A}^{-n})\cap (\rho\cup S)^{-1}A.\end{equation}

3) If $\rho\in\Crit(w)$ and $\rho\not\in\supp(S^{-1}A/2\cD_{S^{-1}A}),$ then there is a non-canonical isomorphism of filtered $S^{-1}A$-modules 
$$B(\rho)\cong S^{-1}A\oplus\bigoplus\limits_{n\geq 1}S^{-1}A/\rho^{\ord_p(n)}.$$
\end{theo}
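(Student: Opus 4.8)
The plan is to reduce the global computation of $HML^{II,0}(S^{-1}A,w)$ to the local computations already carried out in Section \ref{sec:HML_secomd_kind_discr_valuation} for complete discrete valuation rings, using the completions $\widehat{A}_\rho$ at the various maximal ideals $\rho$. The key organizing principle is the tensor-product decomposition over all $\rho$, so the main structural input is Lemma \ref{lem:lemma_on_completions}, which compares the mixed Hochschild cochain complex of $Q_\bullet(S^{-1}A)$ (localized or completed at $\rho$) with that of $Q_\bullet(\widehat{A}_\rho)$. For each fixed $\rho$ the completed ring $\widehat{A}_\rho=\cO_{L_\rho}$ is exactly a complete discrete valuation ring $R=\cO_L$ of the type treated in the previous section, and the criterion $w^{|k(\rho)|}-w\in\rho^2$ defining $Crit(w)$ corresponds precisely to the dichotomy $w\equiv w^q\bmod\m^2$ versus $w\not\equiv w^q\bmod\m^2$ there.

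First I would establish the tensor-product decomposition \eqref{eq:tensor_product_decomp}. The point is that $HML^{II,0}(S^{-1}A,w)$ carries the filtration whose associated graded is $\bigoplus_{n\ge0}HML^{2n}(S^{-1}A)$, and by Corollary \ref{cor:HML_for_loc_of_number_rings} each $HML^{2n}(S^{-1}A)$ for $n>0$ is a torsion $S^{-1}A$-module $\cD^{-1}_{S^{-1}A}/nS^{-1}A$, hence decomposes as a direct sum over its (finite) support of the localizations at the various $\rho$. This gives a factorization of the graded ring into local pieces; I would then upgrade this to a factorization of the filtered algebra itself, defining $B(\rho)$ as the local factor and verifying that its associated graded matches $(\gr^F_n HML^{II,0}(S^{-1}A,w))_\rho$ as claimed. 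Since the filtration is multiplicative ($F_n\cdot F_m\subset F_{n+m}$ by Proposition \ref{prop:product_on_HH_second_kind}) and the graded factors are supported on disjoint sets of primes, the cross-terms vanish and the algebra structure respects the decomposition.

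Next I would compute each local factor $B(\rho)$ by passing to the completion. Using Lemma \ref{lem:lemma_on_completions} to identify the $\rho$-local mixed complex with that of $\widehat{A}_\rho=\cO_{L_\rho}$, the computation reduces to Theorem \ref{th:HML^II_for_discr_valuation}. For $\rho\notin Crit(w)$, i.e. $w\not\equiv w^q\bmod\m^2$, part 1) of that theorem gives $HML^{II,0}(\widehat{A}_\rho,w)\cong L_\rho$ with filtration $F_n=\frac{1}{n!}\widehat{A}_\rho$; translating this filtration back to $S^{-1}A$-level and intersecting with $(\rho\cup S)^{-1}A$ yields the fractional-ideal filtration \eqref{eq:filtration_by_factorials} (here the inverse different $\cD^{-1}_{S^{-1}A}$ enters because the local normalization of the filtration involves $\cD_R$, via Corollary \ref{cor:HML_for_discr_valuation}). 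For $\rho\in Crit(w)$ with $\rho\notin\supp(S^{-1}A/2\cD_{S^{-1}A})$ — which guarantees both the non-ramified/tame hypothesis $\cD_R=R$ locally and the characteristic condition $p\ne2$ needed for part 2) of Theorem \ref{th:HML^II_for_discr_valuation} — the splitting \eqref{eq:splitting_for_HML^II_discr_valuation} gives the non-canonical isomorphism $B(\rho)\cong S^{-1}A\oplus\bigoplus_{n\ge1}S^{-1}A/\rho^{\ord_p(n)}$.

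The main obstacle I anticipate is assembling the three statements into a single coherent filtered-algebra structure: the decomposition \eqref{eq:tensor_product_decomp} is an infinite tensor product (over all $\rho\in\Spec_m(S^{-1}A)$), and one must check that almost all factors $B(\rho)$ equal $S^{-1}A$ in filtration degree $0$, so that the tensor product is well defined and the filtration assembles correctly. Concretely, for $\rho\notin Crit(w)$ and away from the support of $\cD_{S^{-1}A}$ the factor $B(\rho)$ should be the trivial filtered extension contributing nothing beyond $S^{-1}A$, and verifying this requires tracking the factorial filtration \eqref{eq:filtration_by_factorials} carefully: the fractional ideal $\frac1{n!}\cD^{-n}_{S^{-1}A}$ is $\rho$-locally trivial for all but finitely many $\rho$. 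The secondary technical point is the passage from the completed filtration $\frac1{n!}\widehat{A}_\rho$ back to the global filtration by intersection, which relies on the faithful flatness of completion and the compatibility of the duality/totalization with completion provided by Lemma \ref{lem:lemma_on_completions}. Once these bookkeeping points are settled, the three parts follow by direct citation of the discrete-valuation-ring results.
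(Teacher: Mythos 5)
Your overall strategy coincides with the paper's: part 1) is deduced from $F_0=S^{-1}A$ together with the fact that each $\gr_n^F HML^{II,0}(S^{-1}A,w)$ is a finitely generated torsion $S^{-1}A$-module (forcing a unique decomposition of the filtered algebra into local factors), and parts 2), 3) are obtained by identifying $B(\rho)\otimes_{S^{-1}A}\widehat{A}_{\rho}$ with $HML^{II,0}(\widehat{A}_{\rho},w)$ via Lemma \ref{lem:lemma_on_completions} and then quoting the complete-discrete-valuation-ring computations. Your discussion of why the infinite tensor product is well defined (for fixed $n$ only the finitely many $\rho\in\supp(S^{-1}A/n\cD_{S^{-1}A})$ contribute to $\gr_n$) and of recovering $B(\rho)$ from its completion fills in points the paper leaves implicit, and is correct.

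There is, however, one concrete gap in your part 2). You reduce every $\rho\notin Crit(w)$ to Theorem \ref{th:HML^II_for_discr_valuation} 1), but that theorem sits in Subsection \ref{ssec:nonramified} and carries the standing hypothesis that $L_{\rho}/\Q_p$ is non-ramified, i.e. $\cD_{\widehat{A}_{\rho}}=\widehat{A}_{\rho}$; its filtration $F_nL=\frac1{n!}\widehat{A}_{\rho}$ is correct only under that hypothesis. At ramified primes $\rho\notin Crit(w)$ --- which are precisely the primes where the factor $\cD_{S^{-1}A}^{-n}$ in \eqref{eq:filtration_by_factorials} is nontrivial --- you need Theorem \ref{th:HML^II_discr_valuation_ramified} instead, whose filtration is $F_nL=\frac1{n!}\m^{-nl}$ with $\cD_{\widehat{A}_{\rho}}=\m^l.$ Your parenthetical attributing the appearance of the different to ``the local normalization of the filtration, via Corollary \ref{cor:HML_for_discr_valuation}'' gestures at the right mechanism (the filtration is forced by its associated graded, by the inductive fractional-ideal argument), but that induction is exactly the content of Theorem \ref{th:HML^II_discr_valuation_ramified}, and the prior input that $HML^{II,0}(\widehat{A}_{\rho},w)\cong L_{\rho}$ at ramified primes rests on the vanishing result Theorem \ref{th:HML^II_R_k_ramified}, which is proved only in the ramified subsection and is not available from the non-ramified theorem you cite. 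The paper's proof invokes both Theorem \ref{th:HML^II_discr_valuation_ramified} and Theorem \ref{th:HML^II_for_discr_valuation} 1) for part 2) for precisely this reason; once your citation is corrected accordingly, your argument goes through.
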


\begin{proof}1) Note that we have $F_0 HML^{II,0}(S^{-1}A,w)=S^{-1}A,$ and $\gr_n^F HML^{II,0}(S^{-1}A,w)$ is a torsion finitely generated $S^{-1}A$-module for $n>0.$. This immediately implies the unique tensor product decomposition \eqref{eq:tensor_product_decomp} with required properties. 

2) and 3). We have an isomorphism 
\begin{equation}\label{eq:B(rho)_and_HML^II(A)} HML^{II,0}(S^{-1}A,w)\otimes_{S^{-1}A}\widehat{A}_{\rho}\cong B(\rho)\otimes_{S^{-1}A}\widehat{A}_{\rho}.\end{equation}
On the other hand, Lemma \ref{lem:lemma_on_completions} implies an isomorphism
\begin{equation}\label{eq:HML^II(A)_completion} HML^{II,0}(S^{-1}A,w)\otimes_{S^{-1}A}\widehat{A}_{\rho}\cong HML^{II,0}(\widehat{A}_{\rho},w).\end{equation}

Combining \eqref{eq:B(rho)_and_HML^II(A)} and \eqref{eq:HML^II(A)_completion}, we get an isomorphism of filtered $\widehat{A}_{\rho}$-algebras
$$B(\rho)\otimes_{S^{-1}A}\widehat{A}_{\rho}\cong HML^{II,0}(\widehat{A}_{\rho},w).$$

Then 2) is implied by Theorem \ref{th:HML^II_discr_valuation_ramified} and Theorem \ref{th:HML^II_for_discr_valuation} 1). Finally, 3) is implied by Theorem \ref{th:HML^II_for_discr_valuation} 2). This proves the theorem.
\end{proof}

\begin{remark}\label{remark:formula_for_HML^II_of _loc_of_number_rings}1) Note that  Theorem \ref{th:HML^II_for_loc_of_number_rings} does not give a description of $B(\rho)$ for those $\rho\in Crit(w)$ which are  contained in $\supp(S^{-1}A/2\cD_{S^{-1}A}).$

2) If we have $Crit(w)\cap \supp(S^{-1}A/2\cD_{S^{-1}A})=\emptyset,$ then Theorem \ref{th:HML^II_for_loc_of_number_rings} gives an isomorphism
\begin{equation}\label{eq:formula_for_HML^II_of _loc_of_number_rings}HML^{0,II}(S^{-1}A,w)=(T\cup S)^{-1}A\oplus\bigoplus\limits_{\substack{\rho\in Crit(w);\\ n>0}}S^{-1}A/\rho^{\ord_p(n)},\end{equation}
where we put $$T:=\Spec_m(S^{-1}A)\setminus Crit(w)\subset \Spec_m(S^{-1}A).$$\end{remark}

As an application, we describe the commutative ring structure on $HML^{\bullet}(A).$

\begin{theo}\label{th:ring_structure_on_HML_S^-1A} We have an isomorphism of graded rings \begin{equation}\label{eq:HML_equals_Gamma}HML^{\bullet}(A)\cong \Gamma_{A}(x)/(\cD_A\cdot x),\end{equation} where $\deg(x)=2.$\end{theo}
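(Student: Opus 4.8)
The plan is to reduce the computation of the graded ring $HML^{\bullet}(A)$ to the completions at all maximal ideals $\rho \subset A$, where the analogous local statements (Corollaries \ref{cor:product_on_HML_ramified} and \ref{cor:product_on_HML_non_ramified}) have already been established. First I would observe that by Corollary \ref{cor:HML_for_loc_of_number_rings} (with $S=\emptyset$) the Mac Lane cohomology $HML^n(A)$ vanishes in odd degrees, equals $A$ in degree zero, and equals $\cD_A^{-1}/kA$ in degree $2k>0$. The degree-zero part $HML^0(A)=A$ fixes the $A$-algebra structure, so the content of \eqref{eq:HML_equals_Gamma} is the identification of the multiplication on the positive-degree torsion pieces, together with the divided-power relations encoded by $\Gamma_A(x)/(\cD_A\cdot x)$.

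The key observation is that the candidate graded ring $\Gamma_A(x)/(\cD_A\cdot x)$ has as its degree-$2k$ component the $A$-module $(\cD_A^{-k}/A)\cdot\gamma_k(x)$, which matches $\cD_A^{-1}/kA$ up to the standard identification of the different; and all of its structure is local in the sense that the torsion is supported at the various primes $\rho$, with each $\rho$-primary part depending only on the completion $\widehat{A}_\rho$. Concretely, I would use that a finitely generated torsion $A$-module, together with a graded-algebra structure, is determined by its localizations (equivalently completions) at the finitely many maximal ideals in its support, since the module decomposes as a finite direct sum of its $\rho$-primary components and the product respects this decomposition. Thus it suffices to check the isomorphism \eqref{eq:HML_equals_Gamma} after completing at each $\rho$.

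After completing at $\rho$, the localization/completion results for Mac Lane cohomology give $HML^{\bullet}(A)\otimes_A\widehat{A}_\rho \cong HML^{\bullet}(\widehat{A}_\rho)$ as graded rings (via Proposition \ref{prop:HML_localizations} to pass to $A_\rho$ and then the argument underlying Lemma \ref{lem:lemma_on_completions} to pass to the completion, using that the relevant modules are finitely generated and torsion). The completion $\widehat{A}_\rho=\cO_{L}$ is the ring of integers in a finite extension $L/\Q_p$, and Corollaries \ref{cor:product_on_HML_ramified} and \ref{cor:product_on_HML_non_ramified} precisely compute $HML^{\bullet}(\widehat{A}_\rho)\cong\Gamma_{\widehat{A}_\rho}(x)/(\cD_{\widehat{A}_\rho}\cdot x)$, covering both the ramified and non-ramified cases. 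Since $\cD_A\otimes_A\widehat{A}_\rho=\cD_{\widehat{A}_\rho}$ and divided-power algebras commute with the (flat) completion, the right-hand side of \eqref{eq:HML_equals_Gamma} also base-changes correctly, so the local isomorphisms assemble into the global one.

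I expect the main obstacle to be bookkeeping the divided-power structure compatibly across primes — in particular verifying that the multiplication maps $HML^{2k}(A)\otimes HML^{2l}(A)\to HML^{2(k+l)}(A)$ globalize to the divided-power multiplication $\gamma_k(x)\cdot\gamma_l(x)=\binom{k+l}{k}\gamma_{k+l}(x)$ rather than merely matching the underlying $A$-modules degree by degree. This requires that the products computed locally in Corollaries \ref{cor:product_on_HML_ramified} and \ref{cor:product_on_HML_non_ramified} are the restrictions of a single global product, which follows from the naturality of the product on Mac Lane cohomology under the ring maps $A\to A_\rho\to\widehat{A}_\rho$ (Proposition \ref{prop:product_on_HH_second_kind} and the functoriality of $\Hoch^{\bullet}$), so that the isomorphism $\varphi$ of Corollary \ref{cor:product_on_HML_ramified} sending $\gamma_n(x)$ to $\pi^{-nl}/n!$ patches to a well-defined global map. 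Once naturality is invoked, the remaining verification that $\varphi$ is an isomorphism of graded rings is routine and can be checked after completion at each $\rho$.
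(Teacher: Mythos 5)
Your proposal is correct and takes essentially the same approach as the paper's proof: both exploit that $HML^{2n}(A)$ is torsion for $n>0$ to split the graded ring into its $\rho$-primary parts, identify each part after completion with $HML^{\bullet}(\widehat{A}_{\rho})$ via Lemma \ref{lem:lemma_on_completions}, and then invoke Corollaries \ref{cor:product_on_HML_ramified} and \ref{cor:product_on_HML_non_ramified}; the paper organizes this as a tensor product decomposition $HML^{\bullet}(A)\cong\bigotimes_{\rho\in\Spec_m(A)}C(\rho),$ which renders the patching you worry about automatic, since the local isomorphisms can be chosen independently at each prime with no compatibility condition. One minor slip: the degree-$2k$ component of $\Gamma_A(x)/(\cD_A\cdot x)$ is $(A/k\cD_A)\cdot\gamma_k(x)$ (because $x\cdot\gamma_{k-1}(x)=k\gamma_k(x)$), not $(\cD_A^{-k}/A)\cdot\gamma_k(x)$ — though it is indeed non-canonically isomorphic to $\cD_A^{-1}/kA,$ so nothing in your argument breaks.
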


\begin{proof} By Corollary \ref{cor:HML_for_loc_of_number_rings}, we have $HML^0(A)=A,$ and $HML^{2n}(A)$ is torsion for $n>0.$ This immediately implies the decomposition $$HML^{\bullet}(A)\cong\bigotimes\limits_{\rho\in\Spec_m(A)} C(\rho),$$
where $C(\rho)$ is a non-negative even graded algebra with $C(\rho)^0=A$ and $$C(\rho)^{2n}=HML^{2n}(A)_{\rho},\quad n>0.$$

By Lemma \ref{lem:lemma_on_completions}, we have an isomorphism of graded algebras \begin{equation}\label{eq:C_rho_otimes_completion}C(\rho)\otimes_A\widehat{A}_{\rho}\cong HML^{\bullet}(\widehat{A}_{\rho}).\end{equation} 
By Corollaries \ref{cor:product_on_HML_ramified} and \ref{cor:product_on_HML_non_ramified} we have an isomorphism
$$HML^{\bullet}(\widehat{A}_{\rho})\cong \Gamma_{\widehat{A}_{\rho}}(x)/(\cD_{\widehat{A}_{\rho}}\cdot x)\cong\widehat{A}_{\rho}\oplus (\Gamma_A^{>0}(x)/(\cD_A\cdot x))_{\rho}.$$
Combining this with \eqref{eq:C_rho_otimes_completion}, we obtain the isomorphism
$$C(\rho)\cong A\oplus (\Gamma_A^{>0}(x)/(\cD_A\cdot x))_{\rho}.$$
Taking the tensor product over all $\rho\in\Spec_m(A),$ we get \eqref{eq:HML_equals_Gamma}.
\end{proof}

\begin{theo}\label{th:HML^II_dual_of_loc_of_number_rings}We assume that the assumptions of Remark \ref{remark:formula_for_HML^II_of _loc_of_number_rings} 2) are satisfied. Assume that the set $T=\Spec_m(S^{-1}A)\setminus Crit(w)$ is non-empty. Then Mac Lane homology of the second kind of $(S^{-1}A,w)$ is given by
$$HML^{II}_0(S^{-1}A,w)=0$$
$$HML^{II}_1(S^{-1}A,w)\cong(\prod\limits_{\rho\in T}\widehat{A}_{\rho})/S^{-1}A\oplus \prod\limits_{\substack{\rho\in Crit(w);\\ n>0}}S^{-1}A/\rho^{\ord_p(n)}.$$\end{theo}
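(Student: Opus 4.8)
We assume that the assumptions of Remark 2) are satisfied, and that $T=\Spec_m(S^{-1}A)\setminus Crit(w)$ is non-empty. Then $HML^{II}_0(S^{-1}A,w)=0$ and
$$HML^{II}_1(S^{-1}A,w)\cong\Big(\prod_{\rho\in T}\widehat{A}_{\rho}\Big)/S^{-1}A\ \oplus\ \prod_{\substack{\rho\in Crit(w);\\ n>0}}S^{-1}A/\rho^{\ord_p(n)}.$$

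Let me sketch how I would prove this.

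The strategy is to obtain the homology of the second kind as the derived dual of the cohomology of the second kind computed in Theorem 11.8, exactly as the ramified and non-ramified discrete-valuation statements (Theorems 9.x) were deduced by duality. So the plan is to invoke Lemma 7.21 (duality for Mac Lane homology), whose hypotheses I must first check: the ring $S^{-1}A$ is noetherian and regular (it is a localization of a Dedekind domain, hence of global dimension $\le 1$), the coefficient complex is bounded, and the relevant $HML_n$ are finitely generated over $S^{-1}A$ — all of which follow from Corollary 10.3. Under these hypotheses Lemma 7.21(2) gives a natural isomorphism in $D(S^{-1}A[u^{\pm 1}])$ of the form $HML^{II}_{\bullet}(S^{-1}A,w)\cong \bR\Hom_{S^{-1}A}(HML^{II,\bullet}(S^{-1}A,w),S^{-1}A)$, so that $HML^{II}_0$ and $HML^{II}_1$ are governed by $\Hom$ and $\Ext^1$ applied to the explicit description of $HML^{II,0}(S^{-1}A,w)$ furnished by formula (11.x) in Remark 2).

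The computation then splits according to the two summands of $HML^{II,0}(S^{-1}A,w)=(T\cup S)^{-1}A\oplus\bigoplus_{\rho\in Crit(w),\,n>0}S^{-1}A/\rho^{\ord_p(n)}$. For the torsion part $\bigoplus_{\rho\in Crit(w),\,n>0}S^{-1}A/\rho^{\ord_p(n)}$, each finite-length module $S^{-1}A/\rho^{\ord_p(n)}$ has $\Hom_{S^{-1}A}(-,S^{-1}A)=0$ and, since $S^{-1}A$ is a one-dimensional Gorenstein (indeed regular) ring, $\Ext^1_{S^{-1}A}(S^{-1}A/\rho^{m},S^{-1}A)\cong S^{-1}A/\rho^{m}$; so this part contributes precisely $\prod_{\rho\in Crit(w),\,n>0}S^{-1}A/\rho^{\ord_p(n)}$ to $HML^{II}_1$ and nothing to $HML^{II}_0$. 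The attentive point is that the derived dual turns the direct sum into a direct product, which is exactly what appears in the statement; this is the duality between $\Tot^{\oplus}$ and $\Tot^{\Pi}$ already exploited in Lemma 7.21(2). For the free-rank-one part $(T\cup S)^{-1}A$, one has $\Hom_{S^{-1}A}((T\cup S)^{-1}A,S^{-1}A)$ and $\Ext^1_{S^{-1}A}((T\cup S)^{-1}A,S^{-1}A)$ to evaluate; here $(T\cup S)^{-1}A=\{x\in K\mid x\in A_\rho\text{ for }\rho\notin T\cup S\}$ is a (generally non-finitely-generated) $S^{-1}A$-submodule of $K$.

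I expect the main obstacle to be precisely this last $\Ext$ computation for the overring $(T\cup S)^{-1}A$, because it is not finitely generated and the naive form of Lemma 7.21 (which assumes finite generation) does not apply term-by-term to it. The right way around this is to localize at each $\rho\in T$ and pass to completions: one checks via Lemma 11.x (the completion comparison) that $\Hom$ vanishes because any $S^{-1}A$-linear map $(T\cup S)^{-1}A\to S^{-1}A$ extends to $K\to K$ and must land in $\bigcap_{\rho\in T}\widehat{A}_\rho$-incompatible fashion, forcing it to be zero, while $\Ext^1$ is computed from the injective resolution $0\to S^{-1}A\to K\to K/S^{-1}A\to 0$. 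Tracking the local contributions and reassembling them, the $\Ext^1$ of the overring yields the cokernel $\big(\prod_{\rho\in T}\widehat{A}_\rho\big)/S^{-1}A$, using that a global element of $S^{-1}A$ is determined by its collection of completions at the primes in $T$ (this is where $T\ne\emptyset$ is used, to guarantee $S^{-1}A\hookrightarrow\prod_{\rho\in T}\widehat{A}_\rho$ so that the quotient is the correct group). Combining the two parts gives the stated formula for $HML^{II}_1$ and the vanishing of $HML^{II}_0$.
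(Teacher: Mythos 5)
Your proposal is correct and is essentially the paper's own proof: both dualize the explicit formula for $HML^{II,0}(S^{-1}A,w)$ via Lemma \ref{lem:duality_for_homology}, identify $HML^{II}_0$ and $HML^{II}_1$ with $\Hom_{S^{-1}A}(-,S^{-1}A)$ and $\Ext^1_{S^{-1}A}(-,S^{-1}A)$ of that module, handle the torsion summands directly (with $\bigoplus$ dualizing to $\prod$), and compute $\Ext^1_{S^{-1}A}((T\cup S)^{-1}A,S^{-1}A)$ from the injective resolution $0\to S^{-1}A\to K\to\bigoplus_{\rho}K/(S^{-1}A)_{\rho}\to 0.$ The only cosmetic difference is that the paper organizes your ``local contributions'' step via the short exact sequence $0\to S^{-1}A\to(T\cup S)^{-1}A\to\bigoplus_{\rho\in T}K/(S^{-1}A)_{\rho}\to 0$ and the identification $\Ext^1_{S^{-1}A}(K/(S^{-1}A)_{\rho},S^{-1}A)\cong\widehat{A}_{\rho},$ rather than through the completion-comparison lemma.
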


\begin{proof}From \eqref{eq:formula_for_HML^II_of _loc_of_number_rings} and Lemma \ref{lem:duality_for_homology} 
we immediately obtain
$$HML^{II}_0(S^{-1}A,w)=\Hom_{S^{-1}A}((T\cup S)^{-1}A\oplus\bigoplus\limits_{\substack{\rho\in Crit(w);\\ n>0}}S^{-1}A/\rho^{\ord_p(n)},S^{-1}A)=0.$$
Further, we have
$$HML^{II}_1(S^{-1}A,w)=\Ext^1_{S^{-1}A}((T\cup S)^{-1}A\oplus\bigoplus\limits_{\substack{\rho\in Crit(w);\\ n>0}}S^{-1}A/\rho^{\ord_p(n)},S^{-1}A).$$
Obviously, we have
$$\Ext^1_{S^{-1}A}(\bigoplus\limits_{\substack{\rho\in Crit(w);\\ n>0}}S^{-1}A/\rho^{\ord_p(n)},S^{-1}A)\cong\prod\limits_{\substack{\rho\in Crit(w);\\ n>0}}S^{-1}A/\rho^{\ord_p(n)}.$$

It remains to show that
\begin{equation}
\label{eq:Ext_from_localization}
\Ext^1_{S^{-1}A}((T\cup S)^{-1}A,S^{-1}A)\cong (\prod\limits_{\rho\in T}\widehat{A}_{\rho})/S^{-1}A. 
\end{equation}
For this, consider the short exact sequence
\begin{equation}\label{eq:short_exact_for_S^-1A} 0\to S^{-1}A\to (T\cup S)^{-1}A\to \bigoplus\limits_{\rho\in T}K/(S^{-1}A)_{\rho}\to 0.\end{equation}
Using the injective resolution
$$0\to S^{-1}A\to K\to \bigoplus\limits_{\rho\in\Spec_m (S^{-1}A)}K/(S^{-1}A)_{\rho}\to 0,$$
we get
\begin{multline}\label{eq:computation_of_Ext^1}\Ext^1_{S^{-1}A}(K/(S^{-1}A)_{\rho},S^{-1}A)\cong\Hom_{S^{-1}A}(K/(S^{-1}A)_{\rho},\bigoplus\limits_{\rho'}K/(S^{-1}A)_{\rho'})\\
\cong \End_{S^{-1}A}(K/(S^{-1}A)_{\rho})\cong \widehat{A}_{\rho}.\end{multline}

Applying the functors $\Ext^{\bullet}_{S^{-1}A}(-,S^{-1}A)$ to the short exact sequence \eqref{eq:short_exact_for_S^-1A}, we obtain the short exact sequence
$$0\to S^{-1}A\to \Ext^1_{S^{-1}A}(\bigoplus\limits_{\rho\in T}K/(S^{-1}A)_{\rho},S^{-1}A)\to \Ext^1_{S^{-1}A}((T\cup S)^{-1}A,S^{-1}A)\to 0.$$
Combining this with \eqref{eq:computation_of_Ext^1}, we obtain the isomorphism \eqref{eq:Ext_from_localization}. This proves the theorem.
\end{proof}

\begin{cor}\label{cor:surjective_diff_sp_seq_number_rings}Let  $w\in A$ be an element. Denote by $\{(E_r^{\bullet},d_r)\}_{r\geq 1}$ the spectral sequence associated with the mixed complex $$(\Hoch_{\bullet}(Q_{\bullet}(A),A),\delta_{[w]})$$

1) Let $\rho\subset A$ be a maximal ideal, such that $\rho\not\in Crit(w).$  Then for all $n>0$ the localized differential $$(d_n)_{\rho}:(E_n^0)_{\rho}\to (E_n^{-2n+1})_{\rho}=(HML_{2n-1}(A))_{\rho}$$ is surjective.

2) Suppose that for some $n>0$ we have $Crit(w)\cap\supp(A/n\cD_A)=\emptyset.$ Then the differential
$$d_n:E_n^0\to E_n^{-2n+1}=HML_{2n-1}(A)$$ is surjective.\end{cor}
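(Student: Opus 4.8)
The plan is to reduce both parts to the discrete valuation ring computations of Subsections \ref{ssec:ramified} and \ref{ssec:nonramified}, passing between the global ring $A$ and the complete local rings $\widehat{A}_\rho$ by a combination of localization and completion, and then to globalize part 2) by a support argument.

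First I would treat part 1). Fix a maximal ideal $\rho\subset A$ with $\rho\notin Crit(w)$, i.e. $w\not\equiv w^{|k(\rho)|}\bmod\rho^2$. Since localization at $\rho$ is exact, it commutes with the formation of the $\Z$-graded spectral sequence, so $\{((E_r^\bullet)_\rho,d_r)\}$ is exactly the spectral sequence of the localized mixed complex $(\Hoch_\bullet(Q_\bullet(A),A),\delta_{[w]})\otimes_A A_\rho$. Completing further at $\rho$ and invoking Lemma \ref{lem:lemma_on_completions} 2) (with $S=\emptyset$) yields a quasi-isomorphism of mixed complexes between the completed complex and $(\Hoch_\bullet(Q_\bullet(\widehat{A}_\rho),\widehat{A}_\rho),\delta_{[w]})$; by Proposition \ref{prop:functoriality_of_sp_seq} the completed spectral sequence is therefore isomorphic to the one for the discrete valuation ring $\widehat{A}_\rho$. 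As $\rho\notin Crit(w)$ translates into $w\not\equiv w^q\bmod\m^2$ for the maximal ideal $\m$ of $\widehat{A}_\rho$, Corollaries \ref{cor:sp_seq_for_homology_ramified} and \ref{cor:sp_seq_for_homology_non_ramified} show that each differential $d_n\colon E_n^0\to HML_{2n-1}(\widehat{A}_\rho)$ is surjective.

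It then remains to descend this surjectivity from the completion to the localization. By Corollary \ref{cor:HML_for_loc_of_number_rings} the target $(E_n^{-2n+1})_\rho=(HML_{2n-1}(A))_\rho\cong\cD_{A_\rho}^{-1}/nA_\rho$ is a finitely generated $\rho$-primary torsion $A_\rho$-module, hence unchanged by completion and identified with $HML_{2n-1}(\widehat{A}_\rho)$; all pages $(E_r^\bullet)_\rho$ are finitely generated over the noetherian ring $A_\rho$, so completion is exact on them and carries $(d_n)_\rho$ to the surjective $\widehat{A}_\rho$-differential just discussed. Consequently the cokernel $C$ of $(d_n)_\rho$ satisfies $C\otimes_{A_\rho}\widehat{A}_\rho=0$, and since $A_\rho\to\widehat{A}_\rho$ is faithfully flat this forces $C=0$. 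This proves 1).

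For part 2) I would use the local-global principle for surjectivity. Again by Corollary \ref{cor:HML_for_loc_of_number_rings} the target $HML_{2n-1}(A)\cong\cD_A^{-1}/nA$ is finitely generated torsion, and for each maximal $\rho$ one has $(\cD_A^{-1}/nA)_\rho\neq 0$ precisely when $\ord_\rho(n)+\ord_\rho(\cD_A)>0$, so its support equals $\supp(A/n\cD_A)$. The cokernel $C$ of $d_n$ is a quotient of this target, whence $\supp(C)\subseteq\supp(A/n\cD_A)$. The hypothesis $Crit(w)\cap\supp(A/n\cD_A)=\emptyset$ means every $\rho\in\supp(C)$ lies outside $Crit(w)$, so part 1) gives that $(d_n)_\rho$ is surjective and hence $C_\rho=0$; at all remaining primes $C_\rho=0$ trivially. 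Therefore $C=0$ and $d_n$ is surjective. The main obstacle I expect is the localization–completion comparison in part 1): one must confirm that the target term is genuinely annihilated by a power of $\rho$, so that localization and completion agree on it and the differential over $\widehat{A}_\rho$ really is the completion of $(d_n)_\rho$, after which faithful flatness finishes the argument cleanly. Every other step is a direct application of the discrete valuation ring results together with exactness of localization.
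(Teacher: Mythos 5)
Your proof is correct and follows essentially the same route as the paper: both reduce to the complete discrete valuation ring case via Lemma \ref{lem:lemma_on_completions} together with Corollaries \ref{cor:sp_seq_for_homology_ramified} and \ref{cor:sp_seq_for_homology_non_ramified}, and then deduce 2) from 1) using that $HML_{2n-1}(A)$ is supported on $\supp(A/n\cD_A)$. Your explicit localization--completion comparison with faithful-flatness descent, and the cokernel-support phrasing of part 2), merely spell out steps the paper's terser proof leaves implicit.
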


\begin{proof}1) By Lemma \ref{lem:lemma_on_completions}, we have a natural quasi-isomorphism of mixed complexes
\begin{equation}\label{eq:q_is_for_completion}(\Hoch_{\bullet}(Q_{\bullet}(A),A),\delta_{[w]})\otimes_A\widehat{A}_{\rho}\stackrel{\sim}{\to}(\Hoch_{\bullet}(Q_{\bullet}(\widehat{A}_{\rho}),\widehat{A}_{\rho}),\delta_{[w]}).\end{equation}
It follows that we have an isomorphism of spectral sequences
$$\{(E_r^{\bullet},d_r)\}_{r\geq 1}\otimes_A\widehat{A}_{\rho}\cong \{(E_r'^{\bullet},d_r')\}_{r\geq 1}.$$
By Corollaries \ref{cor:sp_seq_for_homology_ramified} and \ref{cor:sp_seq_for_homology_non_ramified}, the differential
$$d_r':E_r'^0\to E_r'^{-2n+1}$$ is surjective. This proves 1).

2) By Corollary \ref{cor:HML_for_loc_of_number_rings}, we have a direct sum decomposition
$$HML_{2n-1}(A)\cong \bigoplus\limits_{\rho\in\supp(A/n\cD_A)}HML_{2n-1}(A)_{\rho}.$$
Thus, 2) is implied by 1).\end{proof}

\section{Adams operations on Mac Lane homology}
\label{sec:Adams_on_HML}

\subsection{Operations on simplicial sets}
\label{ssec:operations_on_simplicial_sets}

In this section we use our results to compute the Adams operations on Mac Lane homology. We follow \cite{McC} to define the Adams operations on the Mac Lane homology of a commutative ring.

Recall that $\Delta$ denotes the category of finite ordered sets. For $n\in\Z_{\geq 0},$ we denote by $[n]$ the ordered set $\{0<1<\dots<n\}.$ We tacitly identify $\Delta$ with its (equivalent) full subcategory, which consists of objects $[n],$ $n\geq 0.$

\begin{defi}1) For any $r\in\Z_{>0},$ we denote by $sd_r:\Delta\to\Delta$ the functor of $r$-th edgewise subdivision. On objects, it is defined by the formula $$sd_r([n-1])=[rn-1],\quad n\geq 1.$$ For a morphism $f:[n-1]\to [m-1]$ in $\Delta,$ the morphism $sd_r(f):[rn-1]\to [rm-1]$ is defined by the formula $$sd_r(f)(an+b)=am+f(b),\quad 0\leq a\leq r-1,\quad 0\leq b\leq n-1.$$

2) For any category $\cC$ and a simplicial object $X:\Delta^{op}\to\cC,$ we define the simplicial object $sd_r(X)$ as the composition $X\circ sd_r.$\end{defi}

\begin{remark}Note that we have a natural isomorphism of functors $sd_{rs}=sd_r\circ sd_s$ for $r,s\geq 1.$ In particular, for any category $\cC$ and a simplicial object $X_{\cdot}\in\cC^{\Delta^{op}}$ we have a natural isomorphism $sd_{rs}(X)\cong sd_s(sd_r(X)).$\end{remark}

Recall the standard $n$-dimensional simplex
$$\Delta^n=\{(x_0,\dots,x_n)\in\R^{n+1}\mid x_i\geq 0;\,\sum\limits_{i=0}^n x_i=1\}\subset\R^{n+1},\quad n\geq 0.$$
We denote by $d_r:\Delta^{n-1}\to\Delta^{rn-1}$ the affine map given by the formula $d_r(u)=(\frac{u}{r},\dots,\frac{u}{r}).$

\begin{lemma}(\cite{BHM}) Given a simplicial set $X_{\cdot},$ there is a homeomorphism of geometric realizations $D_r:|sd_r(X)|\to |X_{\cdot}|,$ defined by the maps $$\id\times d_r:X^{rn-1}\times\Delta^{n-1}\to X^{rn-1}\times\Delta^{rn-1},\quad n\geq 1.$$\end{lemma}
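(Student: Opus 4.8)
The plan is to realize $|X|$ as the coend $\int^{[m]\in\Delta}X_m\times\Delta^m$ (equivalently the quotient $\coprod_m X_m\times\Delta^m/\!\sim$), to check first that the maps $\id\times d_r$ assemble into a well-defined continuous map on realizations, and then to upgrade this to a homeomorphism.

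\textbf{Step 1 (well-definedness and continuity).} For a morphism $f\colon[n-1]\to[m-1]$ in $\Delta$ write $f_*\colon\Delta^{n-1}\to\Delta^{m-1}$ for the induced affine map of topological simplices. I would verify the naturality identity
\[
sd_r(f)_*\circ d_r \;=\; d_r\circ f_*\colon \Delta^{n-1}\to\Delta^{rm-1},
\]
which says exactly that $d_r$ is a morphism of cosimplicial spaces $\Delta^{\bullet}\to\Delta^{sd_r(\bullet)}$. Using the defining formula $sd_r(f)(an+b)=am+f(b)$, a direct coordinate computation shows that both sides send $u=(u_0,\dots,u_{n-1})$ to the point of $\Delta^{rm-1}$ whose $(am+k)$-th barycentric coordinate (for $0\le a\le r-1$, $0\le k\le m-1$) equals $\tfrac1r\sum_{j:\,f(j)=k}u_j$. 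Since the maps $\id\times d_r$ are thus compatible with every simplicial operator, they descend to a continuous map $D_r\colon|sd_r(X)|\to|X|$.

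\textbf{Step 2 (reduction to the compact case).} Both $sd_r$ and $|{-}|$ preserve filtered colimits and monomorphisms: $sd_r$ acts levelwise, while $|{-}|$ is a left adjoint carrying monomorphisms of simplicial sets to closed embeddings. Writing $X=\colim_{\alpha}X_{\alpha}$ as the filtered colimit of its finite simplicial subsets, the map $D_r$ is the colimit of the $D_r^{X_{\alpha}}$ along closed embeddings, so it suffices to treat finite $X$. For finite $X$ the spaces $|X|$ and $|sd_r(X)|$ are compact Hausdorff, so a continuous bijection is automatically a homeomorphism; the problem reduces to proving that $D_r$ is bijective.

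\textbf{Step 3 (bijectivity).} Since $|{-}|$ and $sd_r$ preserve the pushouts $\partial\Delta^m\hookrightarrow\Delta^m$ out of which finite simplicial sets are assembled, and $D_r$ is natural, an induction on cells reduces bijectivity to the representable case $X=\Delta^m$. There $|\Delta^m|=\Delta^m$, and $D_r$ identifies $|sd_r(\Delta^m)|$ with the $r$-fold edgewise subdivision of the topological simplex: the images under $d_r$ of the nondegenerate simplices of $sd_r(\Delta^m)$ are the closed cells of this subdivision, which cover $\Delta^m$ with pairwise disjoint interiors. The main obstacle I expect is precisely this geometric input — showing that those images tile $\Delta^m$, i.e. that every point of $\Delta^m$ has a unique representation as an $r$-periodic barycentric combination after collapsing degeneracies. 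Alternatively, one can bypass the reduction to $\Delta^m$ and argue directly on Milnor's normal form $|Y|=\coprod_k Y_k^{nd}\times\mathring\Delta^{k}$, matching nondegenerate simplices of $sd_r(X)$ with those of $X$ via the Eilenberg--Zilber decomposition; in either approach the dimension shift $(n-1)\mapsto(rn-1)$ makes this combinatorial bijection of cells the crux of the proof.
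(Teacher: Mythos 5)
The paper offers no proof of this lemma at all: it is quoted verbatim from \cite{BHM}, so there is no internal argument to compare yours against and your proposal must stand on its own. Your Step 1 is correct and complete: the identity $sd_r(f)_*\circ d_r=d_r\circ f_*$ does hold (both sides have $(am+k)$-th barycentric coordinate $\tfrac1r\sum_{j:\,f(j)=k}u_j$), and this is exactly what is needed for the maps $\id\times d_r$ to descend to a continuous $D_r.$ Step 2 is essentially fine, with one unverified point: for $|sd_r(X)|$ to be compact you need $sd_r(X)$ to have finitely many nondegenerate simplices when $X$ does, and since $sd_r(X)_k=X_{rk+r-1}$ reaches into high simplicial degrees of $X$ this is not automatic; it is true, but it needs an argument, e.g. via the combinatorial description of the nondegenerate simplices of $sd_r(\Delta^m)$ (cf. \cite{McC}, Lemma 3.2, quoted in this paper).

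The genuine gap is the one you flag yourself in Step 3: bijectivity of $D_r$ for the representable $X=\Delta^m,$ i.e. that the images $\epsilon_*(d_r(\Delta^{n-1})),$ as $\epsilon$ runs over the nondegenerate simplices of $sd_r(\Delta^m),$ tile $\Delta^m$ with pairwise disjoint interiors. This cannot be deferred as a ``geometric input'': it is the entire content of the lemma, because everything else in your proposal is formal and applies verbatim to natural families of maps $\Delta^{n-1}\to\Delta^{rn-1}$ that do \emph{not} induce homeomorphisms. Concretely, the first-block inclusions $\iota_n\colon[n-1]\to[rn-1],$ $\iota_n(b)=b,$ satisfy $sd_r(f)\circ\iota_n=\iota_m\circ f,$ so the affine maps $(\iota_n)_*$ are natural in exactly the sense of your Step 1, and the induced map $|sd_r(X)|\to|X_{\cdot}|$ passes all of your reductions; but this map is the realization of the simplicial map ``restrict along $\iota$,'' which already for $X=\Delta^1,$ $r=2$ collapses one half of the subdivided interval to a point and is not injective. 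Hence any correct proof must use the specific geometry of the averaging map $d_r.$ The standard way to fill the gap (this is in essence what \cite{BHM}, and Edelsbrunner--Grayson's ``Edgewise subdivision of a simplex,'' do) is: identify $\Delta^k$ with $\{0\le t_1\le\dots\le t_k\le1\}$ via partial sums of barycentric coordinates, observe that $d_r$ then sends $(t_1,\dots,t_{n-1})$ to the increasing rearrangement of the numbers $(a+t_b)/r,$ $0\le a\le r-1,$ and check that for $v\in\Delta^m$ with partial-sum coordinates $(s_k)$ the integer parts $\lfloor rs_k\rfloor$ together with the weak ordering of the fractional parts of the $rs_k$ determine a unique nondegenerate $\epsilon$ and a unique interior point $u$ with $\epsilon_*(d_r(u))=v;$ this normal form gives injectivity and surjectivity simultaneously. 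Until that argument (or a citation for it) is supplied, your text is a correct reduction of the lemma to its crux, not a proof of it.
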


\begin{defi}\label{defi:operations_on_sset} Let $X_{\cdot}$ be a pointed simplicial set. A collection of morphisms $\varphi^r:sd_r(X)\to X_{\cdot},$ $r\in\Z_{\geq 0},$ is called a natural system of operators if the following diagrams are commutative for $r,s\geq 1:$
$$
\xymatrixcolsep{5pc}
\xymatrix{
   sd_{rs}(X) \ar[r]^{sd_r(\varphi^s)} \ar[dr]_{\varphi^{rs}} & sd_r(X) \ar[d]^{\varphi^r}\\
   & X_{\cdot}
  }$$
Moreover, we assume that $\varphi^1=\id$ and $\varphi^0$ is the basepoint.
\end{defi}

From now on, given a pointed simplicial set $X_{\cdot}$ with a natural system of operators $\{\varphi^r\}_{r\geq 0},$ we denote by $\Phi^r$ the following composite selfmaps of realizations:
\begin{equation}\label{eq:maps_Phi^r}\Phi^r:|X_{\cdot}|\stackrel{D_r^{-1}}{\to} |sd_r(X)|\stackrel{|\varphi^r|}{\to} |X_{\cdot}|.\end{equation}

\subsection{The case of simplicial abelian groups}
\label{ssec:operations_on_simplicial_abelian_groups}

Now suppose that $X_{\cdot}$ is a simplicial abelian group. We denote by $C_{\bullet}(X_{\cdot})$ the (non-normalized) chain complex associated to $X_{\cdot}.$ We have natural isomorphisms
$$H_n(C_{\bullet}(X))\cong \pi_n(|X_{\cdot}|).$$ We are going to define a map of simplicial abelian groups
$$D(r):X_{\cdot}\to sd_r(X),$$ such that the induced map $|D(r)|$ on geometric realizations is the homotopy inverse of $D_r.$

Let us put
$$S(r,n):=\{(\sigma,\eta)\in\mS_n\times\Hom_{\Delta}([n-1],[r-1])\mid \sigma(i)>\sigma(i+1)\Rightarrow \eta(i-1)<\eta(i)\}.$$

Given $\mu=(\sigma,\eta)\in S(r,n),$ we put $\mu_{-1}=-1,$ $$\mu_i=\eta(i)(n+1)+\sigma(i+1)-1,\quad 0\leq i\leq n-1,$$
and $\mu_n=rn+r-1.$ We define $\epsilon_{\mu}\in\Hom_{\Delta}([rn+r-1],[n])$ by the formula
$$\epsilon_{\mu}^{-1}(j)=\{\mu_{j-1}+1,\dots,\mu_j\}.$$

\begin{lemma}(\cite{McC}, Lemma 3.2) There is a bijection between $S(r,n)$ and the set of $n$-dimensional non-degenerate simplices of $sd_r(\Hom_{\Delta}(-,[n])),$ given by $\mu\mapsto\epsilon_{\mu}.$ \end{lemma}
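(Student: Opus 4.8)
The plan is to make everything explicit in terms of the blocks coming from the edgewise subdivision. First I would unwind the definitions: by construction $sd_r(\Hom_\Delta(-,[n]))$ has $m$-simplices $\Hom_\Delta(sd_r[m],[n])=\Hom_\Delta([rm+r-1],[n])$, so an $n$-simplex is precisely an order-preserving map $g:[rn+r-1]\to[n]$. I would organize the domain $[rn+r-1]=\{0,\dots,r(n+1)-1\}$ into $r$ consecutive blocks $B_a=\{a(n+1),\dots,a(n+1)+n\}$, $0\le a\le r-1$, each of size $n+1$, matching the subdivision formula. The $i$-th degeneracy on $sd_r(\Hom_\Delta(-,[n]))$ is precomposition with $sd_r(\tau^i)$, where $\tau^i:[n]\to[n-1]$ is the codegeneracy merging $i$ and $i+1$; the subdivision formula gives $sd_r(\tau^i)(a(n+1)+b)=an+\tau^i(b)$, whose only non-trivial (two-element) fibres are $\{a(n+1)+i,\ a(n+1)+i+1\}$ for $0\le a\le r-1$. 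Hence $g$ is degenerate at $i$ iff $g(a(n+1)+i)=g(a(n+1)+i+1)$ for every $a$, and therefore $g$ is non-degenerate iff for each within-block position $i\in\{0,\dots,n-1\}$ there is at least one block $a$ with $g(a(n+1)+i)<g(a(n+1)+i+1)$.

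Next I would prove that the non-degenerate $n$-simplices are exactly the order-preserving surjections whose cut points hit each within-block position once. The non-degeneracy condition exhibits, for each $i\in\{0,\dots,n-1\}$, a strict jump of $g$ at a location of residue $i$ modulo $n+1$, so at $n$ distinct locations. Since $g$ takes values in $[n]$, its total increase $g(r(n+1)-1)-g(0)$ is at most $n$; as there are already $n$ strict jumps, each must have size exactly $1$, there can be no further jumps, and $g(0)=0$, $g(r(n+1)-1)=n$. Thus $g$ is automatically surjective, its $n$ cut points $c_0<\dots<c_{n-1}$ (defined by $g^{-1}(j)=\{c_{j-1}+1,\dots,c_j\}$ with $c_{-1}=-1$) have residues $c_j\bmod(n+1)$ forming a bijection onto $\{0,\dots,n-1\}$, and in particular no cut point sits at a between-block boundary (residue $n$).

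Now I can match this with $S(r,n)$. Writing each cut point in base $n+1$, I would read off $\eta(j):=\lfloor c_j/(n+1)\rfloor\in\{0,\dots,r-1\}$ and $\sigma(j+1):=(c_j\bmod(n+1))+1\in\{1,\dots,n\}$; by the previous paragraph $\sigma$ is a permutation. Comparing with $\mu_i=\eta(i)(n+1)+\sigma(i+1)-1$ shows that this reading-off is literally the inverse of $\mu\mapsto\epsilon_\mu$ at the level of cut points, so the two constructions are mutually inverse once the ranges are checked. The only genuine verification is that well-definedness of $\epsilon_\mu$, i.e. strict monotonicity $\mu_0<\dots<\mu_{n-1}$ (the boundary inequalities $-1<\mu_0$ and $\mu_{n-1}<r(n+1)-1$ holding automatically since $1\le\sigma\le n$), is equivalent to $(\sigma,\eta)\in S(r,n)$. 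For this I would expand $\mu_{i-1}<\mu_i$ as $(\eta(i)-\eta(i-1))(n+1)>\sigma(i)-\sigma(i+1)$ and use $|\sigma(i)-\sigma(i+1)|\le n-1<n+1$ together with the order-preservation of $\eta$: the inequality holds automatically when $\eta(i-1)<\eta(i)$, reduces to $\sigma(i)<\sigma(i+1)$ when $\eta(i-1)=\eta(i)$, and fails only when $\eta(i-1)=\eta(i)$ and $\sigma(i)>\sigma(i+1)$, which is exactly the case forbidden in the definition of $S(r,n)$.

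I expect the main obstacle to be the non-degeneracy analysis of the second step: correctly translating \emph{degenerate} into the block combinatorics and running the increase-counting argument that forces surjectivity, since all the bookkeeping with the two block sizes $n+1$ and $n$ and with the ``free'' boundary residue $n$ must be handled without off-by-one errors. Everything else is routine base-$(n+1)$ arithmetic.
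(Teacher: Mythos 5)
Your proof is correct. Note that the paper itself gives no argument for this lemma at all --- it is quoted verbatim from McCarthy (\cite{McC}, Lemma 3.2) --- so there is no ``paper proof'' to compare against; what can be judged is only whether your blind verification works, and it does. The identification of $n$-simplices of $sd_r(\Hom_{\Delta}(-,[n]))$ with order-preserving maps $g:[rn+r-1]\to[n]$, the computation that the $i$-th degeneracy is precomposition with $sd_r(\tau^i)$ whose only non-singleton fibres are $\{a(n+1)+i,\,a(n+1)+i+1\}$, and the resulting criterion (non-degenerate iff every within-block position $i\in\{0,\dots,n-1\}$ carries a strict jump in some block) are all right; the increase-counting argument then correctly forces exactly $n$ jumps, each of size one, hence surjectivity, $g(0)=0$, $g(r(n+1)-1)=n$, and cut points whose residues mod $n+1$ exhaust $\{0,\dots,n-1\}$. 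The base-$(n+1)$ reading of the cut points reproduces the formula $\mu_i=\eta(i)(n+1)+\sigma(i+1)-1$ exactly, and your case analysis showing that strict monotonicity $\mu_{i-1}<\mu_i$ is equivalent to the defining implication of $S(r,n)$ (using $\eta$ non-decreasing and $|\sigma(i)-\sigma(i+1)|\le n-1<n+1$) is the only non-trivial compatibility and is done correctly. Two points you leave implicit are immediate but worth stating for completeness: in the read-off direction, $\eta(j)=\lfloor c_j/(n+1)\rfloor$ is non-decreasing because the $c_j$ are strictly increasing, so $\eta$ genuinely lies in $\Hom_{\Delta}([n-1],[r-1])$; and non-degeneracy of $\epsilon_{\mu}$ for $\mu\in S(r,n)$ follows from your characterization because the residues of the $\mu_j$ are $\sigma(j+1)-1$, which fill out $\{0,\dots,n-1\}$ precisely because $\sigma$ is a permutation.
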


\begin{defi}\label{defi:maps_D_n}(\cite{McC}, Definition 3.3) We define the homomorphisms $\cD_n(r):C_n(X)\to C_n(sd_r(X))$ by the formula
$$\cD_n(r)=\sum\limits_{\mu\in S(r,n)}\sgn(\mu)\epsilon_{\mu}.$$
Here $\sgn(\mu)=\sgn(\sigma)$ for $\mu=(\sigma,\eta)\in S(r,n).$\end{defi}

\begin{prop}\label{prop:map_D_bullet}(\cite{McC}, Proposition 3.4) The maps $\cD_n(r)$ from Definition \ref{defi:maps_D_n} define the morphism of complexes $D_{\bullet}(r):C_{\bullet}(X)\to C_{\bullet}(sd_r(X)),$ which is functorial w.r.t. simplicial abelian group $X,$ and passes to the normalized chain complexes.\end{prop}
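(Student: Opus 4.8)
The plan is to reduce the entire statement to a single universal computation on the standard simplex, exploiting that, by construction, $\cD_n(r)=\sum_{\mu\in S(r,n)}\sgn(\mu)\,X(\epsilon_\mu)$ is a fixed $\Z$-linear combination of the simplicial operators induced by the fixed maps $\epsilon_\mu$ of $\Delta$. Functoriality in $X$ is then immediate: any morphism $f\colon X\to Y$ of simplicial abelian groups commutes with each $X(\epsilon_\mu)$, and the construction is manifestly compatible with the identification $C_n(sd_r(X))=X_{rn+r-1}$. Thus the real content is the chain-map identity together with the passage to normalized complexes.

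For the chain-map identity I would argue by naturality. Writing $\Delta[n]=\Hom_{\Delta}(-,[n])$ and $\Z\Delta[n]$ for the free simplicial abelian group it generates, one has $C_n(X)\cong\Hom(\Z\Delta[n],X)$ naturally, so any natural transformation of functors $C_n(-)\to C_{n-1}(sd_r(-))$ on simplicial abelian groups is determined by its value on the tautological simplex $\iota_n\in C_n(\Z\Delta[n])$. By the preceding bijection $\mu\mapsto\epsilon_\mu$, the element $\cD_n(r)(\iota_n)=\sum_{\mu}\sgn(\mu)\,\epsilon_\mu$ is precisely the signed sum of all nondegenerate top-dimensional simplices of the edgewise subdivision $sd_r(\Delta[n])$, i.e. the fundamental chain $[sd_r\Delta[n]]$. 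Since $\partial$ corresponds to the map $\Z\Delta[n-1]\to\Z\Delta[n]$ given by $\sum_j(-1)^j(\delta^j)_*$, evaluating the desired identity $\partial\circ\cD_n(r)=\cD_{n-1}(r)\circ\partial$ on $\iota_n$ turns it into the purely combinatorial relation
\[
\partial\,[sd_r\Delta[n]]=\sum_{j=0}^{n}(-1)^{j}(\delta^{j})_{*}\,[sd_r\Delta[n-1]]
\]
in $C_{n-1}(sd_r(\Z\Delta[n]))$, where $\delta^j$ is the $j$-th coface. This expresses that subdivision commutes with passage to faces: the interior $(n-1)$-cells of $sd_r(\Delta[n])$ occur as faces of exactly two adjacent top cells with opposite induced signs and hence cancel, while the boundary $(n-1)$-cells reassemble, with the signs $(-1)^j$, into the subdivided facets.

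The passage to normalized complexes is handled by the same universal principle: it suffices to check that $\cD_n(r)$ carries the degenerate subcomplex $D_\bullet(X)\subset C_\bullet(X)$ into the degenerate subcomplex of $C_\bullet(sd_r(X))$, and by naturality this reduces to analyzing, for each codegeneracy $\sigma\colon[n]\to[n-1]$, the composites appearing in $\cD_n(r)(X(\sigma)y)=\sum_\mu\sgn(\mu)\,X(\sigma\epsilon_\mu)(y)$. One shows that each $\sigma\circ\epsilon_\mu\colon[rn+r-1]\to[n-1]$ factors through $sd_r$ of a codegeneracy of $[n]$, so that every term is a degenerate simplex of $sd_r(X)$. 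This is again read off from the combinatorics of $S(r,n)$ and the formula for $\epsilon_\mu$ in Definition \ref{defi:maps_D_n}: a degeneracy on the unsubdivided simplex forces the subdivided picture to repeat a block of vertices and hence to collapse.

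The main obstacle is the sign bookkeeping in the interior-face cancellation. Concretely, I expect the bulk of the work to lie in verifying, directly from the formulas $\mu_i=\eta(i)(n+1)+\sigma(i+1)-1$ and $\epsilon_\mu^{-1}(j)=\{\mu_{j-1}+1,\dots,\mu_j\}$, that the two indices $\mu=(\sigma,\eta)$ and $\mu'=(\sigma',\eta')$ in $S(r,n)$ labelling the two top cells adjacent across a shared interior facet differ by a single adjacent transposition in the permutation part (so that $\sgn(\mu)=-\sgn(\mu')$) while producing the same face operator $\epsilon_\mu\circ sd_r(\delta^{i})=\epsilon_{\mu'}\circ sd_r(\delta^{i'})$. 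Once the fundamental-chain interpretation via the preceding lemma is set up cleanly, everything else is formal; this is why I would invest the effort in that interpretation and then carefully track orientations through the explicit description of the $\epsilon_\mu$.
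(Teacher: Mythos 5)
The paper itself offers no proof of this proposition: it is imported verbatim from \cite{McC}, Proposition 3.4, so your attempt has to be judged as a reconstruction of McCarthy's argument rather than against anything internal to the paper. Your formal architecture is sound. Functoriality is indeed immediate, since $\cD_n(r)$ is a fixed integral combination of simplicial operators. The Yoneda reduction is valid: both sides of $\partial\circ\cD_n(r)=\cD_{n-1}(r)\circ\partial$ are integral combinations of operators $X(\phi)$ with $\phi\in\Hom_\Delta([rn-1],[n]),$ and two such combinations agree on every simplicial abelian group if and only if they agree as elements of $\Z[\Hom_\Delta([rn-1],[n])],$ i.e. on the tautological simplex $\iota_n.$ Moreover your normalization claim, that each composite $\sigma^i\circ\epsilon_\mu$ factors through $sd_r$ of a codegeneracy, is in fact true and provable by a residue argument: from $\mu_l=\eta(l)(n+1)+\sigma(l+1)-1$ one gets $\mu_l\equiv\sigma(l+1)-1\pmod{n+1},$ and these residues are pairwise distinct; taking $j=\sigma(i+1)-1$ one checks that $\sigma^i\circ\epsilon_\mu$ is constant on every pair $\{a(n+1)+j,\,a(n+1)+j+1\},$ $0\le a\le r-1,$ hence factors through $sd_r(\sigma^j).$

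The gap sits in the only step carrying real mathematical content, the interior-facet cancellation, where the mechanism you propose to verify is wrong. It is false in general that two top cells sharing an interior facet differ by an adjacent transposition of the permutation part, and false that $\sgn(\mu)=-\sgn(\mu').$ Smallest counterexample: $n=1,$ $r=2.$ The two nondegenerate $1$-simplices of $sd_2\Delta[1]$ are $\epsilon_\mu=(0,0,0,1)$ and $\epsilon_{\mu'}=(0,1,1,1);$ they have the same (trivial) permutation part, so $\sgn(\mu)=\sgn(\mu')=+1,$ and they differ only in the $\eta$-component. They share the midpoint vertex $(0,1),$ which occurs as the $0$-th face of the former and the $1$-st face of the latter, so the cancellation comes entirely from the parity of the face indices, not from the permutation signs. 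In general the interior facets of $sd_r\Delta[n]$ are of two kinds: facets interior to a single ``box'' (same $\eta,$ permutations differing by an adjacent transposition, same face index) and facets lying on a wall between boxes (different $\eta,$ with the sign discrepancy absorbed by the face indices). The invariant statement one must prove is $\sgn(\mu)(-1)^i=-\sgn(\mu')(-1)^{i'}$ whenever $\epsilon_\mu\circ sd_r(\delta^i)=\epsilon_{\mu'}\circ sd_r(\delta^{i'})$ with $(\mu,i)\neq(\mu',i'),$ and it requires a separate analysis in the two cases. As written, your plan covers only the first case and would break on wall-crossing facets; carrying out that two-case bookkeeping (which is essentially what McCarthy's proof does) is what remains to close the argument.
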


\begin{prop}\label{prop:D(r)_inverse}(\cite{McC}, Corollary 3.7) For each $n\geq 0,$ the map $H_n(\cD_{\bullet}(r)):H_n(C_{\bullet}(X))\to H_n(C_{\bullet}(sd_r(X)))$ is inverse to $\pi_n(D_r):\pi_n(|sd_r(X)|)\to \pi_n(|X_{\cdot}|),$ where for any simplicial group $Y_{\cdot}$ we use the identification
$$H_{\bullet}(C_{\bullet}(Y))\cong \pi_{\bullet}(|Y_{\cdot}|).$$
In particular, the map $\cD_{\bullet}(r)$ is a quasi-isomorphism of complexes.\end{prop}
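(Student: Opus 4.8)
The plan is to reduce everything to a degree computation on a universal sphere, using the cited lemma of \cite{BHM} that $D_r$ is a homeomorphism. Since $D_r:|sd_r(X)|\to|X_{\cdot}|$ is a homeomorphism, $\pi_n(D_r)$ is an isomorphism for every $n$; hence any right inverse of $\pi_n(D_r)$ is automatically its two-sided inverse. It therefore suffices to prove that the composite $\pi_n(D_r)\circ H_n(\cD_{\bullet}(r))$ is the identity of $H_n(C_{\bullet}(X))\cong\pi_n(|X_{\cdot}|)$, and the ``quasi-isomorphism'' clause then follows at once.

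The first step is to reduce to a universal example by naturality. Both $\cD_{\bullet}(r)$ (by Proposition \ref{prop:map_D_bullet}) and the homeomorphism $D_r$ are natural in the simplicial abelian group $X$, so the composite is a natural endomorphism $\Theta_n$ of the functor $X\mapsto H_n(C_{\bullet}(X))=\pi_n(|X_{\cdot}|)=[S^n,|X_{\cdot}|]$. Any class $\alpha\in H_n(C_{\bullet}(X))$ is the image, under a simplicial map $f:\Z[S^n]\to X$ representing it, of the fundamental class $\sigma_n$ of the simplicial $n$-sphere $S^n=\Delta^n/\partial\Delta^n$; here I use the Dold--Thom identification $|\Z[S^n]|\simeq K(\Z,n)$ and the fact, from Proposition \ref{prop:map_D_bullet}, that $\cD_{\bullet}(r)$ passes to normalized (hence reduced) chains. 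By naturality $\Theta_n(\alpha)=f_*(\Theta_n(\sigma_n))$, so it is enough to check $\Theta_n(\sigma_n)=\sigma_n$, i.e. to treat $X=\Z[S^n]$.

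The heart of the matter is then a direct evaluation of $H_n(\cD_{\bullet}(r))(\sigma_n)$ from Definition \ref{defi:maps_D_n}. The fundamental cycle $\sigma_n$ is represented by the class of the top nondegenerate simplex $\iota_n\in C_n(\Z[S^n])$, and $\cD_n(r)(\iota_n)=\sum_{\mu\in S(r,n)}\sgn(\mu)\,\epsilon_{\mu}(\iota_n)$. By the bijection of \cite{McC}, Lemma 3.2, the operators $\epsilon_{\mu}$ run exactly over the nondegenerate $n$-simplices of $sd_r(\Delta^n)$, so $\cD_n(r)(\iota_n)$ is a signed sum of all top-dimensional simplices of the subdivided sphere $sd_r(S^n)$; because $\cD_{\bullet}(r)$ is a chain map this sum is automatically a cycle, and the key point is that the signs $\sgn(\mu)=\sgn(\sigma)$ are precisely those making it the coherently oriented fundamental cycle of $sd_r(S^n)$. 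Applying $\pi_n(D_r)$, the homeomorphism $D_r$ is assembled from the affine embeddings $d_r(u)=(u/r,\dots,u/r)$ onto the subsimplices, each an orientation-preserving diffeomorphism, so $D_r$ carries this fundamental cycle of $sd_r(S^n)$ to $\sigma_n$ with total degree $+1$. Hence $\Theta_n(\sigma_n)=\sigma_n$, finishing the argument.

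The main obstacle is exactly the orientation bookkeeping just invoked: one must verify that the combinatorial signs $\sgn(\mu)$ of Definition \ref{defi:maps_D_n} match the geometric orientations of the simplices of $sd_r(\Delta^n)$ under the bijection of \cite{McC}, Lemma 3.2, and that the maps $d_r$ assemble into a degree $+1$ homeomorphism; this is where the specific shape of the edgewise subdivision and of the $\epsilon_{\mu}$ is genuinely used, the rest being formal naturality. An alternative, more structural route avoiding explicit orientations is the method of acyclic models: the functor $C_{\bullet}(-)$ is free on the models $\{\Z[\Delta^m]\}_{m\geq0}$ since $C_n(X)=X_n=\Hom(\Z[\Delta^n],X)$, while $C_{\bullet}(sd_r(-))$ is acyclic on these models because $|sd_r(\Delta^m)|\cong|\Delta^m|$ is contractible, so a natural chain endomorphism of $C_{\bullet}(-)$ inducing the identity on $H_0$ is naturally chain homotopic to the identity; the whole statement then collapses to the trivial degree-zero check. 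The price of this route is producing a strict natural chain-level representative of $\pi_{\bullet}(D_r)$, which ultimately rests on the same subdivision data.
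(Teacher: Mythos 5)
First, a remark on the comparison: the paper does not actually prove this statement — it is imported wholesale from \cite{McC}, Corollary 3.7 (just as Definition \ref{defi:maps_D_n} and Proposition \ref{prop:map_D_bullet} are imported), so there is no internal proof to measure you against; your attempt has to stand on its own. Its skeleton is sound: since $D_r$ is a homeomorphism, $\pi_n(D_r)$ is an isomorphism and a one-sided inverse suffices; the reduction to the universal example $X=\widetilde{\Z}[S^n]$ is legitimate (simplicial abelian groups are Kan, every class in $\pi_n(X)\cong H_n(C_{\bullet}(X))$ is represented by a pointed simplicial map $S^n\to X$, which extends to a map of simplicial abelian groups by freeness, and $\cD_{\bullet}(r)$ is natural by Proposition \ref{prop:map_D_bullet}); and your identification of $\cD_n(r)(\iota_n)$ as a $\pm1$-signed sum over all nondegenerate top simplices of $sd_r(S^n)$, which is a cycle because $\cD_{\bullet}(r)$ is a chain map, is correct.

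The genuine gap is the step you yourself flag as ``the main obstacle'' and then do not carry out: verifying that the signs $\sgn(\mu)$ of Definition \ref{defi:maps_D_n} are exactly the orientation coefficients of the fundamental cycle of $sd_r(S^n)$, and that $D_r$, assembled from the affine maps $d_r$, carries that cycle to $+\sigma_n$ rather than $-\sigma_n$. This is not bookkeeping that can be waved at; it is the entire content of the statement. What your argument actually establishes without it is only this: a cycle in top degree supported on all top cells with coefficients $\pm1$ must be $\pm$ the fundamental cycle, so the composite $\pi_n(D_r)\circ H_n(\cD_{\bullet}(r))$ is multiplication by some undetermined sign $\pm 1$ on $H_n$, possibly depending on $n$ and $r$ — and multiplication by $-1$ would falsify the proposition as stated while being consistent with everything you have checked. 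Pinning down the sign (e.g.\ by evaluating one explicit $\mu\in S(r,n)$ with $\sgn(\mu)=+1$ and checking that $D_r$ restricted to the corresponding affine subsimplex is orientation-preserving) is precisely what McCarthy's proof does. Your alternative acyclic-models route has the same hole in a different place: acyclic models produces \emph{some} natural chain map $C_{\bullet}(sd_r(-))\to C_{\bullet}(-)$, unique up to natural homotopy, but identifying the induced map on homology with $\pi_{\bullet}(D_r)$ (as opposed to $\pi_{\bullet}(D_r)$ composed with a sign) is again the same orientation question, since $D_r$ is a homeomorphism and not a simplicial map. So the proposal is a reasonable strategy with a correctly executed reduction, but it is not a complete proof.
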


Suppose that we have a system of natural operators $\{\varphi^r\}_{r\geq 0}$ on a simplicial abelian group $X_{\cdot}.$ It follows from Proposition \ref{prop:D(r)_inverse} that the operations $$\psi^r=\pi_{\bullet}(\Phi^r):H_{n}(C_{\bullet}(X))\to H_{n}(C_{\bullet}(X)),\quad r\geq 1,\,n\geq 0,$$
are given by the chain maps
\begin{equation}\label{eq:formula_for_psi^r}\Psi^r=C_{\bullet}(\varphi^r)\circ\cD_{\bullet}(r):C_{\bullet}(X)\to C_{\bullet}(X).\end{equation}

\subsection{Computation of Adams operations on Mac Lane homology}
\label{ssec:computing_Adams_on_HML}

We now apply the results from the previous subsection to the Mac Lane homology of a commutative ring, following \cite{McC}, Section 6.

Recall that for any small category $\cC$ its nerve $N_{\cdot}(\cC)$ is a simplicial set with components
$$N_n(\cC)=\{X_0\stackrel{f_1}{\to}X_1\stackrel{f_2}{\to}\dots\stackrel{f_n}{\to}X_n\mid X_i\in Ob(\cC),\,f_i\in \Hom_{\cC}(X_{i-1},X_i)\}.$$
The face maps are defined by the formula
$$d_i:N_n(\cC)\to N_{n-1}(\cC),\quad d_i(f_1,\dots,f_n)=\begin{cases}(f_1,\dots,f_{i+1}f_i,\dots,f_n) & \text{for }1\leq i\leq n-1;\\
(f_2,\dots,f_n) & \text{for }i=0;\\
(f_1,\dots,f_{n-1}) & \text{for }i=n.\end{cases}$$
The degeneracy maps are defined by the formula
$$\sigma_i:N_n(\cC)\to N_{n+1}(\cC),\quad \sigma_i(f_1,\dots,f_n)=(f_1,\dots,f_i,{\bf{1}}_{X_i},f_{i+1},\dots,f_n),$$
where $0\leq i\leq n$

Let $R$ be an associative ring and $M$ an $R\otimes R^{op}$-module. Consider the simplicial abelian group $F_{\cdot}(R,M),$ where
$$F_n(R,M)=\bigoplus\limits_{P_0,\dots,P_n\in P(R)}\Z[\Hom_R(P_0,P_1)]\otimes\dots\Z[\Hom_R(P_{n-1},P_0)]\otimes \Hom_R(P_n,P_0\otimes_R M).$$

It is clear that there is a natural isomorphism of chain complexes
\begin{equation}\label{eq:C(F(R,M))=CML(R,M)}C_{\bullet}(F_{\cdot}(R,M))\cong \Hoch_{\bullet}(\Z[P(R)],M).\end{equation}

From now on we assume that $R$ is commutative and $M$ is a symmetric bimodule (that is, left and right multiplications by an element of $R$ are equal to each other). We introduce a natural system of operators $\{\varphi^r\}_{r\geq 0}$ on the simplicial abelian group $F_{\cdot}(R,M).$

Take some positive integer $r.$ Note that since $R$ is commutative, we have an additive functor
$$T^r:P(R)^{\times r}\to P(R),\quad T^r(P_1,\dots,P_r)=P_1\otimes_R P_2\otimes_R\dots\otimes_R P_n.$$
The abelian group $sd_r(F(R,M))_n$ is generated by the elements of the form
$$[f_1]\otimes \dots\otimes [f_{rn+r-1}]\otimes m,$$
where $f_i:P_{i-1}\to P_i,$ $1\leq i\leq rn+r-1,$ and $m\in\Hom_R(P_{rn+r-1},P_0\otimes_R M).$

We put $$\varphi^r([f_1]\otimes \dots\otimes [f_{rn+r-1}]\otimes m):=[g_1]\otimes\dots\otimes[g_n]\otimes m',$$
where \begin{multline*}g_i=T^r(f_i,f_{n+1+i},\dots,f_{(r-1)(n+1)+i}):\\T^r(P_{i-1},P_{n+i},\dots,P_{(r-1)(n+1)+i-1})\to T^r(P_i,P_{n+1+i},\dots,P_{(r-1)(n+1)+i}),\end{multline*} 
and \begin{multline*}m'=m\otimes f_{n+1}\otimes f_{2n+2}\otimes\dots\otimes f_{(r-1)(n+1)}\in\\
\Hom_R(T^r(P_n,P_{2n+1},\dots,P_{rn+r-1}),T^r(P_0,P_{n+1},\dots,P_{(r-1)(n+1)})\otimes_R M).\end{multline*}
It is easy to check that $\varphi^r$ is a morphism of simplicial abelian groups, and the collection $\{\varphi^r\}_{r\geq 0}$ is indeed a system of natural operators on $F_{\cdot}(R,M)$ (see also \cite{McC}, Section 6 for the case $M=R$).

It follows from the isomorphism \eqref{eq:C(F(R,M))=CML(R,M)} that we have a system of operations
$$\psi^r:HML_{\bullet}(R,M)\to HML_{\bullet}(R,M),$$
which are induced by the actual maps of complexes
\begin{equation}\label{eq:maps_Psi^r_on_CML}\Psi^r:\Hoch_{\bullet}(\Z[P(R)],M)\to \Hoch_{\bullet}(\Z[P(R)],M).\end{equation}
From now on we denote by $\Psi^r$ the morphisms introduced in \eqref{eq:maps_Psi^r_on_CML}

\begin{lemma}\label{lem:commutation_of_phi^r_and_delta_w} For any $w\in R$ we have an equality of morphisms of complexes
$$\Psi^r\circ\delta_{[w]}=r\cdot\delta_{[w]}\circ\Psi^r.$$\end{lemma}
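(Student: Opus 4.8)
The plan is to verify this strict chain-level identity by a direct computation on generators of $\Hoch_\bullet(\Z[P(R)],M)\cong C_\bullet(F_\cdot(R,M))$, using the explicit description $\Psi^r = C_\bullet(\varphi^r)\circ\cD_\bullet(r)$ from \eqref{eq:formula_for_psi^r} together with the insertion formula for $\delta_{[w]}$. I would write $\delta_{[w]} = \sum_{i=0}^n (-1)^i s_i^{[w]}$, where $s_i^{[w]}\colon\Hoch_n\to\Hoch_{n+1}$ inserts the basis element $[w_{X_i}]$ into the $i$-th gap of $m\otimes a_n\otimes\dots\otimes a_1$. The conceptual reason for the relation, which organizes the whole computation, is the interpretation used in the proof of Lemma \ref{lem:q_is_for_delta_w_1...w_k}: the central element $w$ makes $\Hoch_\bullet(\Z[P(R)],M)$ a module over $\Hoch_\bullet(\Z[x])$, and $\delta_{[w]}$ is multiplication by the weight-one Hochschild $1$-chain $1\otimes x$. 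Since $\Psi^r$ realizes the Adams operation $\psi^r$, which rescales a weight-one class by $r$, one expects precisely $\Psi^r\delta_{[w]} = r\,\delta_{[w]}\Psi^r$; the task is to promote this to a strict chain-level identity.

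The key structural input is the behavior of $w$ under the functor $T^r(P_1,\dots,P_r)=P_1\otimes_R\cdots\otimes_R P_r$ out of which $\varphi^r$ is built. Because $w$ is central in $R$, multiplication by $w$ in any single tensor factor of $T^r$ equals scalar multiplication by $w$ on the whole tensor product, independently of which of the $r$ factors carries it; because $w$ is central for $M$ we also have $mw = wm$, so the same holds in the bimodule slot. First I would use this to track how an inserted $[w]$ is transported through the subdivision $\cD_\bullet(r)$ and the collapse $\varphi^r$. In $\delta_{[w]}\circ\Psi^r$ the element $w$ is inserted after collapsing, into one of the $n+1$ gaps of $\Psi^r(c)$. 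In $\Psi^r\circ\delta_{[w]}$ it is inserted first, into the degree-$n$ chain, and the edgewise subdivision $sd_r$ then distributes it: recall that $\varphi^r$ regards the $rn+r-1$ morphisms as $r$ rows of length $n+1$ and forms $g_i = T^r(f_i, f_{n+1+i},\dots,f_{(r-1)(n+1)+i})$. An insertion of $w$ in gap $i$ appears, after applying $\cD_\bullet(r)$, as $r$ distinct subdivided terms, one per row $k=0,\dots,r-1$, each of which $\varphi^r$ sends—by the centrality just noted—to the same term $s_i^{[w]}(\Psi^r(c))$. This $r$-fold multiplicity is exactly the factor $r$.

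The remaining work, which I expect to be the main obstacle, is the precise bookkeeping that makes this $r$-to-$1$ correspondence exact, and it has two delicate points. First, the separator gaps—those positions $k(n+1)$, $1\le k\le r-1$, whose morphisms $\varphi^r$ feeds into the bimodule factor $m' = m\otimes f_{n+1}\otimes\cdots\otimes f_{(r-1)(n+1)}$ rather than into some $g_i$; here I would invoke $mw = wm$ to show that the corresponding $w$-insertions reorganize into the expected collapsed terms and produce no spurious contributions. Second, the signs: one must check that the alternating signs in $\delta_{[w]}$, the signs in $\cD_\bullet(r)=\sum_{\mu\in S(r,n)}\sgn(\mu)\,\epsilon_\mu$, and the ordering conventions in $\varphi^r$ combine so that all $r$ preimages of a given $\delta_{[w]}\circ\Psi^r$ term carry its sign. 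Concretely I would fix a generator $c$, expand the left side over $\mu\in S(r,n+1)$ and the right side over $\mu\in S(r,n)$, match the fibers of $\epsilon_\mu$ against the insertion position of $w$, and verify the sign agreement term by term; centrality of $w$ collapses the combinatorics enough that each left-hand term is identified with a unique right-hand term, with total multiplicity $r$, which yields the claimed equality.
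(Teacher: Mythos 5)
Your overall strategy is the right one, and it is the only one available: the paper's own proof of this lemma is literally ``This is straightforward checking,'' so a direct expansion on generators using $\Psi^r=C_\bullet(\varphi^r)\circ\cD_\bullet(r)$, with centrality of $w$ producing the factor $r$, is exactly what is intended. However, the mechanism you describe in your middle paragraph is not what actually happens, and if you followed it literally the computation would not close up. You claim that the insertion of $w$ in gap $i$, after subdivision and collapse, yields $r$ terms, one per row, \emph{all equal to} $s_i^{[w]}(\Psi^r(c))$. This is false, because the permutation part of $\mu'=(\sigma',\eta')\in S(r,n+1)$ moves the gap: since each $\epsilon_{\mu'}$ is a monotone surjection, $\epsilon_{\mu'}^*$ only inserts identities, and the $j$-th morphism of the chain lands in row $\eta'(j-1)$, column $\sigma'(j)$ of the array that $\varphi^r$ contracts; hence the inserted $w$, occupying position $i+1$ of the chain, reappears as a $w$-insertion in gap $\sigma'(i+1)-1$, not in gap $i$, multiplying the term of $\Psi^r(c)$ indexed by the datum obtained from $\mu'$ by deleting the entry carrying $w$. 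Concretely, take $n=1$, $r=2$, $c=[f_1]\otimes m$ and the gap-$1$ insertion $c_1=[f_1]\otimes[w]\otimes m$: the term $\mu'=\bigl((12),\eta'=(0,1)\bigr)\in S(2,2)$ places $f_1$ in row $0$, column $2$ and $w$ in row $1$, column $1$, so that $\varphi^2\epsilon_{\mu'}^*(c_1)=[w\cdot\mathrm{id}_{P_0\otimes P_1}]\otimes[f_1\otimes\mathrm{id}_{P_1}]\otimes(m\otimes\mathrm{id}_{P_1})$ — a gap-\emph{zero} insertion — carrying the sign $\sgn(\sigma')=-1$. So neither the gap indices nor the signs agree term by term in the way your plan assumes.

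What a correct verification requires, and what your final paragraph gestures at without supplying, is the following bookkeeping. The assignment $(i,\mu')\mapsto(j,\mu)$, with $j=\sigma'(i+1)-1$ and $\mu=(\sigma,\eta)$ obtained by deleting the value $j+1$ from $\sigma'$ and the entry $\eta'(i)$ from $\eta'$, is well defined (the descent condition for $\mu$ follows from that for $\mu'$). Over a fixed $(j,\mu)$ its fiber consists of the pairs $(i,k)$, $k=\eta'(i)$, with $\ell_i\le k\le u_i$, where $\ell_i$ equals $\eta(i-1)+1$ if $\sigma(i)>j$ and $\eta(i-1)$ otherwise (and $\ell_0=0$), while $u_i$ equals $\eta(i)-1$ if $\sigma(i+1)\le j$ and $\eta(i)$ otherwise (and $u_n=r-1$). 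One checks $\ell_{i+1}=u_i+1$ in all cases, so these intervals tile $\{0,\dots,r-1\}$ and the fiber has exactly $r$ elements; this is where the factor $r$ comes from, and note the $r$ preimages have \emph{varying} insertion position $i$, not a fixed $i$ with $r$ rows. Finally, the signs do not match automatically: inserting the value $j+1$ into $\sigma$ at position $i+1$ changes the number of inversions by $j-i+2A$ for some integer $A$, so $\sgn(\sigma')=(-1)^{i-j}\sgn(\sigma)$, and it is precisely this factor that cancels the mismatch between the insertion signs $(-1)^{n-i}$ and $(-1)^{n-j}$ coming from $\delta_{[w]}$. (Also, your worry about separator slots is moot: a morphism lands in a slot congruent to $\sigma'(j)\in\{1,\dots,n+1\}$ modulo $n+2$, never in a separator slot, so the separators always carry identities and centrality of $w$ on $M$ is never needed for this identity.)
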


\begin{proof}This is straightforward checking.\end{proof}



\begin{theo}\label{th:operations_psi^r_number_rings} Let $A$ be a global number ring. The operation $\psi^r$ on $HML_0(A)$ is equal to identity and we have
$$\psi^r(x)=r^nx\text{ for }x\in HML_{2n-1}(A),\,k\geq 1.$$\end{theo}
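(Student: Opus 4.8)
For a global number ring $A$, the Adams operation $\psi^r$ is the identity on $HML_0(A)$ and satisfies $\psi^r(x) = r^n x$ for $x \in HML_{2n-1}(A)$, $n \geq 1$.

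The plan is to exploit the interaction between the Adams operations $\psi^r$ and the second-kind differential $\delta_{[w]}$, encoded in Lemma \ref{lem:commutation_of_phi^r_and_delta_w}, which asserts $\Psi^r \circ \delta_{[w]} = r \cdot \delta_{[w]} \circ \Psi^r$ at the level of chain maps on $\Hoch_{\bullet}(\Z[P(A)],A)$. First I would treat $HML_0(A) = A$: since $\psi^r$ is induced by a map of simplicial abelian groups that is identity-like in degree zero (in degree $0$ the operator $\varphi^r$ reduces to the empty tensor factor applied to $m$, composed with the identity edgewise subdivision $sd_r([-1])=[r-1]$), one checks directly that $\Psi^r$ induces the identity on $HML_0(A)$. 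Alternatively, $\psi^1 = \id$ together with multiplicativity $\psi^{rs} = \psi^r \psi^s$ forces the degree-zero action to be a ring endomorphism of $A$ fixing $1$, hence the identity; this is the cleanest route.

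The substantive work is in odd degrees. Here I would use the second-kind spectral sequence for the homology, namely the $\Z$-graded spectral sequence $\{(E_r^{\bullet}, d_r)\}$ associated with the mixed complex $(\Hoch_{\bullet}(Q_{\bullet}(A), A), \delta_{[w]})$, and in particular the surjectivity statement of Corollary \ref{cor:surjective_diff_sp_seq_number_rings}. The idea is as follows. Fix $n \geq 1$ and pick $w \in A$ so that $Crit(w) \cap \supp(A/n\cD_A) = \emptyset$; by part 2) of Corollary \ref{cor:surjective_diff_sp_seq_number_rings} the differential $d_n : E_n^0 \to E_n^{-2n+1} = HML_{2n-1}(A)$ is then surjective. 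Any class $x \in HML_{2n-1}(A)$ thus lifts: there is $y \in E_n^0$ (represented, after lifting through the earlier pages, by a degree-zero class in $HML_0(A) = A$, on which $\psi^r$ is the identity) with $d_n(y) = x$. Now Lemma \ref{lem:commutation_of_phi^r_and_delta_w} says that on each page the operator induced by $\Psi^r$ intertwines $d_n$ with $r \cdot d_n$; more precisely, since $d_n$ is built from $n$ applications of $\delta_{[w]}$ (it is the $n$-th differential of the spectral sequence of the mixed complex, whose first differential is $\delta_{[w]}$), the commutation relation accumulates a factor $r$ at each of the $n$ stages. Tracking this carefully gives $\psi^r(d_n(y)) = r^n \, d_n(\psi^r(y)) = r^n \, d_n(y) = r^n x$, where the middle equality uses that $\psi^r$ acts as the identity on the source $E_n^0$ coming from $HML_0(A) = A$.

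The main obstacle, and the step requiring the most care, is making precise the claim that the factor $r$ in Lemma \ref{lem:commutation_of_phi^r_and_delta_w} propagates through the spectral sequence to produce exactly $r^n$ on the differential $d_n$, while simultaneously verifying that $\psi^r$ is compatible with the spectral sequence (i.e.\ that $\Psi^r$ is a morphism of mixed complexes up to the scaling, and so induces operators on each page). This requires checking that $\Psi^r$ preserves the relevant filtration \eqref{eq:filtration_on_Tot} so that it descends to the associated graded and each $E_n^{\bullet}$, and then that the induced map on $E_n^0$ is the identity (inherited from degree zero) while the induced relation on $d_n$ carries the cumulative factor $r^n$ — an inductive argument on the page number $n$ using functoriality of the spectral sequence (Proposition \ref{prop:functoriality_of_sp_seq}) applied to the rescaled $A_\infty$-morphism. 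Once the intertwining on pages is established and surjectivity of $d_n$ is invoked, the formula $\psi^r(x) = r^n x$ follows immediately for every $x$ in the image of $d_n$, which by Corollary \ref{cor:surjective_diff_sp_seq_number_rings} is all of $HML_{2n-1}(A)$.
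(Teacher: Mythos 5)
Your proposal matches the paper's proof in all essentials: the paper also chooses $w$ with $Crit(w)\cap\supp(A/n\cD_A)=\emptyset$, lifts an arbitrary class $\alpha\in HML_{2n-1}(A)$ through the surjective differential $d_n$ of Corollary \ref{cor:surjective_diff_sp_seq_number_rings}, and extracts the factor $r^n$ from Lemma \ref{lem:commutation_of_phi^r_and_delta_w}; the ``careful tracking'' step you flag as the main obstacle is carried out there by the explicit rescaled zigzag $\gamma_i'=r^{n+1-i}\Psi^r(\gamma_i)$, which satisfies $b(\gamma_{i+1}')=\delta_{[w]}(\gamma_i')$ and $\delta_{[w]}(\gamma_n')=\Psi^r(\delta_{[w]}(\gamma_n))$, so that $\psi^r(\alpha)=d_n(\overline{\gamma_1'})=r^n d_n(\overline{\gamma_1})=r^n\alpha$ (and the bridge between the two mixed complexes you invoke, on $\Z[P(A)]$ and on $Q_{\bullet}(A)$, is exactly Proposition \ref{prop:HML^II_via_Q^n}).

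One correction: your preferred ``cleanest route'' for the degree-zero statement is not valid. The relations $\psi^1=\id$ and $\psi^{rs}=\psi^r\psi^s$ do not by themselves make the degree-zero action a ring endomorphism, and even a unital ring endomorphism of a global number ring need not be the identity (complex conjugation on $\Z[i]$ is a counterexample); the argument would only work for $A=\Z$. Keep your first, direct argument --- that $\Psi^r$ induces the identity on $HML_0$ straight from the construction --- which is what the paper uses.
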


\begin{proof}The first statement follows immediately from the definition of $\psi^r.$

We now compute the operation $\psi^r$\ on $HML_{2n-1}(A),$ where $n>0.$ Take some $w\in A$ such that $Crit(w)\cap\supp(A/n\cD_A)=\emptyset.$ 

Denote by $\{(E_r^{\bullet},d_r)\}_{r\geq 1}$ the spectral sequence associated with the mixed complex $(\Hoch_{\bullet}(\Z[P(R)],M),\delta_{[w]}).$ By Proposition \ref{prop:HML^II_via_Q^n}, this mixed complex is naturally quasi-isomorphic to the mixed complex $(\Hoch_{\bullet}(Q_{\bullet}(A),A),\delta_{[w]})$,  hence the associated spectral sequences are isomorphic. 

By Corollary \ref{cor:surjective_diff_sp_seq_number_rings}, the differential
\begin{equation}\label{eq:surj_diff_d_n} d_n:E_n^0\to E_n^{-2n+1}=HML_{2n-1}(A)\end{equation}
is surjective. 

Take some class $\alpha\in HML_{2n-1}(A).$ By the surjectivity of \eqref{eq:surj_diff_d_n}, we can find a collection of chains $\gamma_1,\gamma_2\dots,\gamma_n\in\Hoch_{\bullet}(A),$ such that $\gamma_i\in \Hoch_{2i-2}(A),$ and
$$b(\gamma_{i+1})=\delta_{[w]}(\gamma_i),\quad 1\leq i\leq n-1,$$ and the cocycle $\delta_{[w]}(\gamma_n)\in\Hoch_{2n-1}(A)$ represents $\alpha.$ We have $\alpha=d_n(\bar{\gamma_1}).$ 

Put $$\gamma_i':=r^{n+1-i}\Psi^r(\gamma_i).$$
By Lemma \ref{lem:commutation_of_phi^r_and_delta_w}, for $1\leq i\leq n-1$ we have

$$b(\gamma_{i+1}')=r^{n-i}\Psi^r(b(\gamma_{i+1}))=r^{n-i}\Psi^r(\delta_{[w]}(\gamma_i))=r^{n+1-i}\delta_{[w]}(\Psi^r(\gamma_i))=\delta_{[w]}(\gamma_i').$$
We also have
$$\delta_{[w]}(\gamma_n')=r\delta_{[w]}(\Psi^r(\gamma_n))=\Psi^r(\delta_{[w]}(\gamma_n)).$$
Therefore, we have $$\psi^r(\alpha)=d_n(\bar{\gamma_1'})=r^nd_n(\bar{\gamma_1})=r^n\alpha.$$
This proves the theorem.
\end{proof}

\appendix

\section{$\Z$-graded spectral sequences}
\label{sec:appendix_on_sp_seq}

First we define the category of $\Z$-graded spectral sequences. Let $\cA$ be an abelian category.

\begin{defi}\label{defi:Z-graded_spectral_sequence}A $\Z$-graded spectral sequence in $\cA$ is the following data:

1) For some non-negative integer $r_0,$ a sequence $E=\{E_r^{\bullet}\}_{r\geq r_0},$ of $\Z$-graded objects in $\cA;$

2) A homogeneous differential $d_r:E_r^{\bullet}\to E_r^{\bullet}$ of degree $1-2r,$ for $r\geq r_0;$

3) For each $r\geq r_0,$ an isomorphism of $\Z$-graded objects \begin{equation}\label{eq:structure_isomorphisms}E_{r+1}^{\bullet}\cong H^{\bullet}(E_r^{\bullet},d_r).\end{equation}.

A morphism of spectral sequences $f:E\to E'$ (with the same $r_0$) is a sequence of homogeneous maps of degree zero $f_r:E_r^{\bullet}\to E_r^{\prime\bullet},$ $r\geq r_0,$ which commute with differentials $d_r$ and the isomorphisms \eqref{eq:structure_isomorphisms}.\end{defi}

The basic source of $\Z$-graded spectral sequence is the category of exact couples.

\begin{defi}\label{defi:exact_pair} A $\Z$-graded exact couple in $\cA$ is the following data:

1) $\Z$-graded objects $D^{\bullet}$ and $E^{\bullet}$ in $\cA;$

2) homogenous morphisms $i:D^{\bullet}\to D^{\bullet},$ $j:D^{\bullet}\to E^{\bullet},$ $k:E^{\bullet}\to D^{\bullet},$ such that $\deg(i)=2,$ $\deg(k)=-1$ and $\deg(j)=-2r$ for some $r\in\Z_{\geq 0}.$

These data must satisfy the following condition: the $3$-periodic sequence
$$\xymatrixcolsep{5pc}
\xymatrix{
   D^{\bullet} \ar[rr]^{i} & & D^{\bullet} \ar[dl]^{j}\\
   & E^{\bullet} \ar[ul]^{k}
  }$$
is exact.

Given two $\Z$-graded exact couples $(D_1^{\bullet},E_1^{\bullet},i_1,j_1,k_1)$ and $(D_2^{\bullet},E_2^{\bullet},i_2,j_2,k_2)$ (with the same $r$), a morphism  $$f:(D_1^{\bullet},E_1^{\bullet},i_1,j_1,k_1)\to (D_2^{\bullet},E_2^{\bullet},i_2,j_2,k_2)$$
is a pair of homogeneous maps of degree zero $f_D:D_1^{\bullet}\to D_2^{\bullet},$ $f_E:E_1^{\bullet}\to E_2^{\bullet},$ such that $i_2f_D=f_Di_1,$ $j_2f_D=f_Ej_1,$ $k_2f_E=f_Dk_1.$
\end{defi}

Given an exact couple $(D^{\bullet},E^{\bullet},i,j,k),$ we see that $jk:E^{\bullet}\to E^{\bullet}$ is a homogeneous odd differential. Let us put
$$D'^{\bullet}=\im(i),\quad E'^{\bullet}=H^{\bullet}(E^{\bullet},jk).$$
We define the morphisms $i':D'^{\bullet}\to D'^{\bullet}$ and
$k':E'^{\bullet}\to D'^{\bullet}$ to be induced by $i$ and $k$ respectively.
It is clear that the morphism $k'$ is well-defined. Further, the morphism $j':D'^{\bullet}\to E'^{\bullet}$ is defined by the formula
$$j'(x)=\overline{j(\widetilde{x})},$$
where $\widetilde{x}\in D^{\deg(x)-2}$ is any element such that $i(\widetilde{x})=x$ (it is clear that this definition makes sense for a general abelian category $\cA$).
It is easy to check that $j'$ is also well-defined.

\begin{prop}\label{prop:derived_exact_pair} The collection $(D'^{\bullet},E'^{\bullet},i',j',k')$ is an exact couple. The construction
$$(D^{\bullet},E^{\bullet},i,j,k)\mapsto (D'^{\bullet},E'^{\bullet},i',j',k')$$
is functorial.
\end{prop}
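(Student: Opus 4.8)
The plan is to carry out the classical ``derived couple'' verification, using only the three exactness relations packaged in Definition \ref{defi:exact_pair}: namely $\im(i)=\Ker(j)$, $\im(j)=\Ker(k)$, and $\im(k)=\Ker(i)$, which in particular yield the composite identities $ji=0$, $kj=0$, and $ik=0$. First I would record that $jk$ is genuinely a differential on $E^{\bullet}$: since $(jk)^2=j(kj)k$ and $kj=0$, we get $(jk)^2=0$, so $E'^{\bullet}=H^{\bullet}(E^{\bullet},jk)$ is well defined. The degrees then come out as claimed: $i'$ and $k'$ inherit $\deg(i')=2$ and $\deg(k')=-1$, while the lift appearing in $j'$ lowers degree by $2$, so $\deg(j')=-2r-2=-2(r+1)$, exactly matching the exact-couple format with $r$ replaced by $r+1$.

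Next I would check that $i',j',k'$ are well defined. For $i'$ this is immediate, since $i(\im(i))\subseteq\im(i)$. For $k'$: if $e$ is a $jk$-cycle then $j(ke)=0$, so $ke\in\Ker(j)=\im(i)=D'^{\bullet}$, and if $e=jk(y)$ is a boundary then $ke=(kj)k(y)=0$; hence $k$ descends to $k':E'^{\bullet}\to D'^{\bullet}$. The only delicate point is $j'$. Given $x=i(\widetilde{x})\in D'^{\bullet}$, the element $j(\widetilde{x})$ is a cycle because $(jk)(j\widetilde{x})=j(kj)\widetilde{x}=0$; and if $i(\widetilde{x})=i(\widetilde{x}')$ then $\widetilde{x}-\widetilde{x}'\in\Ker(i)=\im(k)$, say $\widetilde{x}-\widetilde{x}'=k(z)$, whence $j(\widetilde{x})-j(\widetilde{x}')=jk(z)$ is a boundary and vanishes in $E'^{\bullet}$. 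This shows $j'$ is well defined and independent of the chosen lift.

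The core of the argument is the exactness of the derived sequence at its three spots, which I would establish by six short diagram chases. For instance, $\im(i')\subseteq\Ker(j')$ follows by lifting $x=i^2(w)$ through $\widetilde{x}=i(w)$, so that $j'(x)=\overline{ji(w)}=0$; conversely, if $j'(x)=0$ then $j(\widetilde{x})=jk(z)$ for some $z$, so $\widetilde{x}-k(z)\in\Ker(j)=\im(i)$ and applying $i$ together with $ik=0$ exhibits $x$ in $\im(i')$. The remaining inclusions---$\im(j')=\Ker(k')$ and $\im(k')=\Ker(i')$---are proved by the same bookkeeping, each time invoking exactly one of the relations $kj=0$, $ik=0$, $\im(j)=\Ker(k)$, or $\im(k)=\Ker(i)$ and, where needed, the observation that an element of $\im(k)$ lying in $\im(i)=\Ker(j)$ automatically produces a $jk$-cycle.

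Finally, for functoriality I would take a morphism $(f_D,f_E)$ of exact couples and note that $f_D(\im(i_1))\subseteq\im(i_2)$ by the relation $f_Di_1=i_2f_D$, giving $f_{D'}:=f_D|_{D'}$, while $f_E(j_1k_1)=(j_2k_2)f_E$ shows $f_E$ is a chain map for the two $jk$-differentials and hence induces $f_{E'}$ on homology. Checking that $(f_{D'},f_{E'})$ commutes with $i',j',k'$ is then a direct consequence of the corresponding identities for $(f_D,f_E)$ together with the definition of $j'$ via lifts. The main obstacle is purely organizational: keeping the six exactness chases and the lift-independence of $j'$ straight while tracking which of the three relations is used at each step; there is no analytic difficulty, and everything reduces to the three composite identities $ji=0$, $kj=0$, $ik=0$ plus the three equalities $\im=\Ker$.
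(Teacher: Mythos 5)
Your proof is correct and is exactly the routine verification the paper has in mind: the paper's own proof is just ``This is straightforward,'' and your element chases (well-definedness of $i',j',k'$, the six exactness inclusions, the degree count $\deg(j')=\deg(j)-2$, and functoriality via $f_Di_1=i_2f_D$, $f_Ej_1=j_2f_D$, $f_Dk_1=k_2f_E$) fill in precisely that standard derived-couple argument, consistent with the paper's element-style definition of $j'$.
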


\begin{proof}This is straightforward.\end{proof}

\begin{defi}\label{defi:derived_exact_couple} Given an exact couple $(D^{\bullet},E^{\bullet},i,j,k),$ its derived exact couple is by definition $(D'^{\bullet},E'^{\bullet},i',j',k').$ Note that
$$\deg(i')=\deg(i),\quad \deg(j')=\deg(j)-2,\quad \deg(k')=\deg(k).$$

The $n$-th derived exact couple is denoted by $(D^{(n)\bullet},E^{(n)\bullet},i^{(n)},j^{(n)},k^{(n)}).$\end{defi}

\begin{prop}\label{prop:sp_seq_from_exact_pair} Let $(D^{\bullet},E^{\bullet},i,j,k)$ be an exact couple in $\cA,$ with $\deg(j)=-2r_0+2.$ Then we have a spectral sequence $$\{(E_r^{\bullet},d_r)\}_{r\geq r_0},\quad E_r^{\bullet}=(E^{(r-r_0)})^{\bullet},\quad d_r=j^{(r-r_0)}k^{(r-r_0)}.$$
Moreover, this construction is functorial.\end{prop}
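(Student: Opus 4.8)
The plan is to define the spectral sequence by iterating the derived exact couple construction of Proposition \ref{prop:derived_exact_pair} and then to verify, point by point, that the resulting data satisfy Definition \ref{defi:Z-graded_spectral_sequence}. First I would set $E_r^{\bullet}:=(E^{(r-r_0)})^{\bullet}$ and $d_r:=j^{(r-r_0)}k^{(r-r_0)}$ for $r\geq r_0,$ so that in particular $E_{r_0}^{\bullet}=E^{\bullet}$ and $d_{r_0}=jk.$ Since Proposition \ref{prop:derived_exact_pair} guarantees that the derived couple of an exact couple is again a $\Z$-graded exact couple, the $n$-th derived couple $(D^{(n)\bullet},E^{(n)\bullet},i^{(n)},j^{(n)},k^{(n)})$ is well-defined for all $n\geq 0,$ and each $E_r^{\bullet}$ is a $\Z$-graded object of $\cA,$ as required by part 1) of the definition.

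Next I would carry out the degree bookkeeping demanded by part 2). Using Definition \ref{defi:derived_exact_couple}, which records $\deg(i^{(n)})=\deg(i)=2,$ $\deg(k^{(n)})=\deg(k)=-1,$ and $\deg(j^{(n)})=\deg(j)-2n,$ together with the hypothesis $\deg(j)=-2r_0+2,$ I would compute $\deg(j^{(r-r_0)})=(-2r_0+2)-2(r-r_0)=2-2r,$ whence $\deg(d_r)=\deg(j^{(r-r_0)})+\deg(k^{(r-r_0)})=(2-2r)+(-1)=1-2r.$ This is exactly the degree prescribed by Definition \ref{defi:Z-graded_spectral_sequence}. To see that $d_r$ is genuinely a differential, I would invoke exactness of the derived couple at $E^{(n)\bullet}$ with $n=r-r_0$: since $\im(j^{(n)})=\Ker(k^{(n)})$ one has $k^{(n)}j^{(n)}=0,$ and therefore $d_r^2=j^{(n)}(k^{(n)}j^{(n)})k^{(n)}=0.$

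For part 3) the structure isomorphisms are essentially definitional: the defining property of the derived couple is $E^{(n+1)\bullet}=H^{\bullet}(E^{(n)\bullet},j^{(n)}k^{(n)}),$ so that $E_{r+1}^{\bullet}=E^{(r+1-r_0)\bullet}\cong H^{\bullet}(E_r^{\bullet},d_r)$ for every $r\geq r_0,$ which is precisely \eqref{eq:structure_isomorphisms}. Finally, functoriality is inherited directly from the functoriality asserted in Proposition \ref{prop:derived_exact_pair}: a morphism of exact couples induces compatible morphisms on all derived couples, and by construction these commute with the differentials $d_r$ and with the isomorphisms \eqref{eq:structure_isomorphisms}, hence assemble into a morphism of $\Z$-graded spectral sequences. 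I do not expect a genuine obstacle here; the whole content is routine degree tracking and repeated application of the already-established derived-couple construction, so the only real care needed is to keep the grading conventions of Definition \ref{defi:exact_pair} consistent throughout.
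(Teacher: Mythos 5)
Your proposal is correct and takes essentially the same route as the paper, whose proof simply states that the result follows immediately from the definitions; you have just made explicit the definitional checks (degree bookkeeping via $\deg(j^{(n)})=\deg(j)-2n,$ the identity $d_r^2=0$ from exactness at $E^{(n)\bullet},$ the structure isomorphisms from $E^{(n+1)\bullet}=H^{\bullet}(E^{(n)\bullet},j^{(n)}k^{(n)}),$ and functoriality inherited from Proposition \ref{prop:derived_exact_pair}). All of these verifications are accurate, so no further changes are needed.
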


\begin{proof}This follows immediately from the definitions.\end{proof}

\begin{prop}\label{prop:sp_seq_from_filtered} Let $(C^{\bullet},d)$ be a $\Z/2$-graded complex in $\cA.$ Suppose that we have an increasing filtration $F_{\bullet}C^{\bullet}$ on the $\Z/2$-graded object $C^{\bullet},$ such that $F_nC^{\bar{n}}=F_{n+1}C^{\bar{n}},$ $n\in\Z,$ and $d(F_nC^{\bullet})\subset F_{n+1}C^{\bullet}.$
Then we have a natural $\Z$-graded spectral sequence $\{(E_r^{\bullet},d_r)\}_{r\geq 1},$ where
$E_1^n$ is the $n$-th cohomology of the complex
\begin{equation}\label{eq:complex_F_k/F_k-2}\dots\to F_{k-1}C^{\bar{k}-1}/F_{k-3}C^{\bar{k}-1}\to F_kC^{\bar{k}}/F_{k-2}C^{\bar{k}}\to\dots,\end{equation}
and $d_1:E_1^n\to E_1^{n-1}$ is induced by $d.$\end{prop}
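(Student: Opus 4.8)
The plan is to manufacture a $\Z$-graded exact couple out of the filtered complex and then quote Proposition \ref{prop:sp_seq_from_exact_pair}. The one genuinely non-formal point is that, since the differential \emph{raises} the filtration (we only have $d(F_nC^\bullet)\subset F_{n+1}C^\bullet$), the subobjects $F_nC^\bullet$ are \emph{not} subcomplexes of $C^\bullet$, so the classical construction of the spectral sequence of a filtered complex does not apply verbatim. The observation that repairs this is that the parity-matched ``diagonal'' pieces nevertheless assemble into an honest complex: set
$$\widetilde{C}^n:=F_nC^{\bar n},\qquad n\in\Z,$$
with differential the restriction of $d$. Indeed $d$ raises the filtration index by one and the $\Z/2$-degree by one, so $d(F_nC^{\bar n})\subset F_{n+1}C^{\overline{n+1}}=\widetilde{C}^{n+1}$; together with $d^2=0$ this exhibits $(\widetilde{C}^\bullet,d)$ as a genuine $\Z$-graded cochain complex. (It is here that the hypothesis $F_nC^{\bar n}=F_{n+1}C^{\bar n}$ matters: it synchronizes the jumps of $F_\bullet C^{\bar n}$ with the parity of $n$, so that the diagonal complex loses no information.)

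Next I would record the inclusions. For every $n$ the inclusion $F_{n-2}\subset F_n$, together with $\overline{n-2}=\bar n$, gives $\iota\colon\widetilde{C}^{n-2}=F_{n-2}C^{\bar n}\hookrightarrow F_nC^{\bar n}=\widetilde{C}^n$, and $\iota$ visibly commutes with $d$. Thus $\iota$ is an injective chain endomorphism of $\widetilde{C}^\bullet$ of degree $2$, whose cokernel in degree $n$ is $F_nC^{\bar n}/F_{n-2}C^{\bar n}$ with differential induced by $d$; this cokernel complex is precisely the complex \eqref{eq:complex_F_k/F_k-2}, which I denote $\overline{C}^\bullet$. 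Hence there is a short exact sequence of $\Z$-graded complexes
$$0\to\widetilde{C}^{\bullet-2}\stackrel{\iota}{\to}\widetilde{C}^\bullet\stackrel{\pi}{\to}\overline{C}^\bullet\to0,$$
where $\widetilde{C}^{\bullet-2}$ denotes the complex with degree-$n$ term $\widetilde{C}^{n-2}$ and the usual shifted differential.

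Finally I would pass to the associated long exact sequence in cohomology and read it as an exact couple in the sense of Definition \ref{defi:exact_pair}. Put $D^\bullet:=H^\bullet(\widetilde{C}^\bullet)$ and $E^\bullet:=H^\bullet(\overline{C}^\bullet)$, and use $H^n(\widetilde{C}^{\bullet-2})=D^{n-2}$. Then the map $i\colon D^{n-2}\to D^n$ induced by $\iota$ has degree $2$, the map $j\colon D^n\to E^n$ induced by $\pi$ has degree $0$, and the connecting map $k\colon E^n\to D^{n-1}$ has degree $-1$; exactness of the couple is just exactness of the long exact sequence. Since $\deg(j)=0=-2r_0+2$ with $r_0=1$, Proposition \ref{prop:sp_seq_from_exact_pair} yields a $\Z$-graded spectral sequence $\{(E_r^\bullet,d_r)\}_{r\geq1}$ with $E_1^\bullet=E^\bullet=H^\bullet(\overline{C}^\bullet)$, i.e. $E_1^n$ is the $n$-th cohomology of \eqref{eq:complex_F_k/F_k-2}, and with $d_1=jk$ of degree $-1$, which is exactly the map induced by $d$. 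Every object and arrow above is manifestly functorial in $(C^\bullet,d,F_\bullet)$, so the spectral sequence is natural, which completes the proof. The only step demanding care is the first one — the passage to the parity-matched complex $\widetilde{C}^\bullet$ — since it is what lets one sidestep the failure of $F_nC^\bullet$ to be a subcomplex.
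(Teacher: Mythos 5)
Your proposal is correct and follows essentially the same route as the paper's own proof: you form the parity-matched diagonal complex $\widetilde{C}^n=F_nC^{\bar n}$ (the paper's $\cK^{\bullet}$), take the degree-$2$ inclusion and its cokernel to get the short exact sequence $0\to\widetilde{C}^{\bullet-2}\to\widetilde{C}^{\bullet}\to\overline{C}^{\bullet}\to 0$, and read the long exact cohomology sequence as an exact couple with $\deg(i)=2$, $\deg(j)=0$, $\deg(k)=-1$, feeding it into Proposition \ref{prop:sp_seq_from_exact_pair}. The only difference is expository: you spell out the role of the hypothesis $F_nC^{\bar n}=F_{n+1}C^{\bar n}$ and the degree bookkeeping, which the paper leaves implicit.
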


\begin{proof}Let us construct a functorial exact couple from $(C^{\bullet},F_{\bullet}).$ Consider the complex $\cK^{\bullet}$ with $\cK^n=F_nC^{\bar{n}},$ and the differential induced by $d.$ We have an obvious inclusion $\iota:\cK^{\bullet}[-2]\to \cK^{\bullet}.$ We put $\cL^{\bullet}=\coker(\iota),$ so that we have a short exact sequence of complexes
$$0\to \cK^{\bullet}[-2]\to \cK^{\bullet}\to\cL^{\bullet}\to 0.$$
It induces a long exact sequence in cohomology, which can be considered as an exact couple $(H^{\bullet}(\cK^{\bullet}),H^{\bullet}(\cL^{\bullet}),i,j,k),$ where the maps $i$ and $j$ are induced by the morphisms of complexes, and $k$ is the boundary map. Clearly, we have $\deg(i)=2,$ $\deg(j)=0$ and $\deg(k)=-1.$ Hence, by Proposition \ref{prop:sp_seq_from_exact_pair} we have a natural $\Z$-graded spectral sequence $\{(E_r^{\bullet},d_r)\}_{r\geq 1},$ starting with $(H^{\bullet}(\cL^{\bullet}),jk).$ It remains to note that the complex $\cL^{\bullet}$ is exactly the complex \eqref{eq:complex_F_k/F_k-2}, and the differential $jk$ is exactly the one induced by $d.$ This proves the proposition.\end{proof}

From now on we assume for simplicity that the category $\cA$ has small coproducts, small products and the small filtered colimits are exact in $\cA$ (for example, $\cA$ can be a category of modules over an associative ring).

\begin{cor}\label{cor:sp_seq_from_mixed_appendix} Let $(\cK^{\bullet},b,\delta)$ be a $\Z$-graded mixed complex in $\cA.$ Then we have a natural spectral sequence $\{(E_r^{\bullet},d_r)\}{r\geq 1},$ with
$$(E_1^{\bullet},d_1)=(H^{\bullet}(\cK^{\bullet},b),\delta).$$
Moreover, this spectral sequence is functorial under $A_{\infty}$-morphisms of mixed complexes.\end{cor}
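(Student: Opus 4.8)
The plan is to realize this spectral sequence as the one attached by Proposition~\ref{prop:sp_seq_from_filtered} to a suitably filtered $\Z/2$-graded complex, namely the Laurent-type totalization of the mixed complex. First I would form the $\Z/2$-graded complex $C^{\bullet}$ in $\cA$ which is the evident analogue of $\Tot^L(\cK^{\bullet},b,\delta)$, with
$$C^{\bar 0}=\colim_{k}\prod_{n\le k}\cK^{2n},\qquad C^{\bar 1}=\colim_{k}\prod_{n\le k}\cK^{2n+1},$$
(these exist in $\cA$ by the standing hypotheses on products and exact filtered colimits), equipped with differential $d=b+\delta$; the relation $b\delta+\delta b=0$ together with $b^2=\delta^2=0$ gives $d^2=0$. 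I would then equip $C^{\bullet}$ with the increasing filtration $F_{\bullet}$ of \eqref{eq:filtration_on_Tot}. One checks directly that $d(F_nC^{\bullet})\subset F_{n+1}C^{\bullet+1}$ and $F_nC^{\bar n}=F_{n+1}C^{\bar n}$, so that Proposition~\ref{prop:sp_seq_from_filtered} applies and produces a $\Z$-graded spectral sequence $\{(E_r^{\bullet},d_r)\}_{r\ge 1}$.

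Next I would identify the $E_1$-page. By Proposition~\ref{prop:sp_seq_from_filtered}, $E_1^{n}$ is the $n$-th cohomology of the associated graded complex \eqref{eq:complex_F_k/F_k-2}. A short computation with the definition of $F_{\bullet}$ gives a canonical isomorphism $F_kC^{\bar k}/F_{k-2}C^{\bar k}\cong \cK^{k}$ for every $k$, under which the induced differential $\cK^{k-1}\to\cK^{k}$ is exactly $b$: the $b$-component of $d$ raises the filtration index by one and survives in the quotient, whereas the $\delta$-component lands in $\cK^{k-2}\subset F_{k-2}$ and is therefore killed. Hence the complex \eqref{eq:complex_F_k/F_k-2} is $(\cK^{\bullet},b)$ and $E_1^{n}=H^{n}(\cK^{\bullet},b)$. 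Tracing the boundary map of the exact couple underlying Proposition~\ref{prop:sp_seq_from_filtered} then shows that $d_1$ is induced by the remaining ($\delta$-)part of $d$; since $b\delta+\delta b=0$, the map $\delta$ descends to cohomology, giving $(E_1^{\bullet},d_1)=(H^{\bullet}(\cK^{\bullet},b),\delta)$ as claimed.

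Finally I would establish functoriality. Given an $A_{\infty}$-morphism $f=\{f_n\}_{n\ge 1}$, I would assemble its components into a single map $F\colon C^{\bullet}\to C'^{\bullet}$; in the Laurent-series bookkeeping of the totalization underlying Lemma~\ref{lem:interpretation_of_totalization} this is $F=\sum_{n\ge 1}f_n u^{n-1}$ (interpreting multiplication by $u$ as the degree-shift built into $C^{\bullet}$), which is well defined in $\cA$ because each $f_n$ is multiplied by a non-negative power of $u$, so only finitely many terms contribute to each component and the $u$-valuation is never lowered. The defining $A_{\infty}$-relations $b'f_1=f_1b$ and $b'f_n+\delta'f_{n-1}=f_nb+f_{n-1}\delta$ are precisely the assertion that $F$ commutes with $b+u\delta$, and since $F$ only raises the $u$-valuation it preserves $F_{\bullet}$. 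Thus $f$ induces a morphism of filtered $\Z/2$-graded complexes, and functoriality of the exact couple construction (Propositions~\ref{prop:derived_exact_pair} and~\ref{prop:sp_seq_from_exact_pair}) yields an induced morphism of spectral sequences; on $E_1$ it is $H^{\bullet}(f_1)$, so an $A_{\infty}$-quasi-isomorphism induces an isomorphism on every page.

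I expect this last step to be the main technical point: converting the degree-shifting family $\{f_n\}$ into one honest filtration-preserving chain map on the totalization, and verifying that the $A_{\infty}$-relations reassemble exactly into the chain-map identity for $b+u\delta$. The construction of the filtered complex and the identification of $(E_1^{\bullet},d_1)$ are, by contrast, routine index bookkeeping once the isomorphisms $F_kC^{\bar k}/F_{k-2}C^{\bar k}\cong\cK^{k}$ are written out.
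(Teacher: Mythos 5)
Your proof is correct and takes essentially the same route as the paper: the paper's own argument likewise equips $\Tot^L(\cK^{\bullet},b,\delta)$ with the filtration \eqref{eq:filtration_on_Tot}, applies Proposition \ref{prop:sp_seq_from_filtered}, and identifies the complex \eqref{eq:complex_F_k/F_k-2} with $(\cK^{\bullet},b)$. Your explicit assembly of an $A_{\infty}$-morphism into the single filtration-preserving chain map $\sum_{n\geq 1}f_nu^{n-1}$ merely spells out the functoriality that the paper asserts tersely (and which is implicit in Lemma \ref{lem:interpretation_of_totalization} and Proposition \ref{prop:functoriality_of_Tot}), so it is a welcome filling-in of detail rather than a different method.
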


\begin{proof}This is a direct consequence of Proposition \ref{prop:sp_seq_from_filtered}. Indeed, let us take the $\Z/2$-graded complex $\Tot^L(\cK^{\bullet},b,\delta).$ We have a natural filtration defined by the formula
$$F_n\Tot^L(\cK^{\bullet})^0=\prod\limits_{2k\leq n}\cK^{2k},\quad F_n\Tot^L(\cK^{\bullet})^1=\prod\limits_{2k+1\leq n}\cK^{2k+1}.$$
This filtration satisfies the assumptions of Proposition \ref{prop:sp_seq_from_filtered}. The complex \eqref{eq:complex_F_k/F_k-2} is exactly the complex $(\cK^{\bullet},b).$ This gives us the desired spectral sequence.

Finally, note that our filtration is functorial under $A_{\infty}$-morphisms. Hence, so is the associated spectral sequence.\end{proof}

\begin{defi}\label{defi:convergence_of_sp_seq}Let $\{(E_r^{\bullet},d_r)\}_{r\geq r_0}$ be a $\Z$-graded spectral sequence in $\cA.$ We assume that $E_{r_0}^{\bullet}$ is bounded below (hence so are $E_r^{\bullet}$ for $r>r_0$).

1) The limiting $\Z$-graded object $E_{\infty}^{\bullet}$ is defined as follows. Taka some $n\in\Z.$ Let us choose $r_1\geq r_0$ such that $E_{r_0}^{n-2r+1}=0$ for $r\geq r_1.$ Then we have a natural surjective map $E_r^n\to E_{r+1}^n$ for $r\geq r_1.$ We put $E_{\infty}^n:=\colim\limits_{r\geq r_1} E_r^n.$

2) Let $E^{\bullet}$ be a $\Z/2$-graded object with an exhaustive filtration $F_{\bullet}E^{\bullet},$ satisfying $F_nE^{\bar{n}}=F_{n+1}E^{\bar{n}}.$ and $F_{<<0}E^{\bullet}=0.$ We say that the spectral sequence $\{(E_r^{\bullet},d_r)\}_{r\geq r_0}$ converges to the filtered object $(E^{\bullet},F_{\bullet})$ if we have isomorphisms
$$E_{\infty}^n\cong F_nE^{\bar{n}}/F_{n-2}E^{\bar{n}}.$$\end{defi}

\begin{prop}\label{prop:convergence_of_sp_seq_appendix} In the assumptions of Proposition \ref{prop:sp_seq_from_filtered}, suppose that $F_nC^{\bullet}=0$ for $n<<0,$ and the filtration $F_{\bullet}C^{\bullet}$ is exhaustive. Then the associated spectral sequence converges to $H^{\bullet}(C^{\bullet}).$\end{prop}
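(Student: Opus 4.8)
The plan is to deduce convergence directly from the $\Z$-graded exact couple $(D^\bullet,E^\bullet,i,j,k)$ built in the proof of Proposition \ref{prop:sp_seq_from_filtered}, where $D^\bullet=H^\bullet(\cK^\bullet)$ and $E^\bullet=H^\bullet(\cL^\bullet)$ for the complex $\cK^\bullet$ with $\cK^n=F_nC^{\bar n}$ and the short exact sequence $0\to\cK^\bullet[-2]\xrightarrow{\iota}\cK^\bullet\to\cL^\bullet\to 0$; here $\deg(i)=2$, $\deg(j)=0$, $\deg(k)=-1$, so by Proposition \ref{prop:sp_seq_from_exact_pair} the spectral sequence starts at $r_0=1$ with $E_1^\bullet=E^\bullet$ and $d_r$ of degree $1-2r$. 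Throughout I would use that, by the boundedness hypothesis $F_nC^\bullet=0$ for $n\ll 0$, we have $\cK^n=0$, and hence $D^n=E^n=0$, for $n\ll 0$.

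First I would identify the colimit of the directed system $D^n\xrightarrow{i}D^{n+2}\xrightarrow{i}\cdots$. Since $d$ raises the filtration by at most one, a direct computation gives $H^n(\cK^\bullet)=(Z^{\bar n}\cap F_nC^{\bar n})/d(F_{n-1}C^{\overline{n-1}})$, where $Z^\bullet$ denotes the cycles of $(C^\bullet,d)$, and $i$ is induced by the inclusions $F_{n-2}C^{\bar n}=F_{n-1}C^{\bar n}\hookrightarrow F_nC^{\bar n}$. Because the filtration is exhaustive and filtered colimits are exact in $\cA$, taking the colimit over $i$ replaces $F_nC^{\bar n}$ by $C^{\bar n}$ and $d(F_{n-1}C^{\overline{n-1}})$ by all boundaries, yielding a natural isomorphism $D_\infty^{\bar n}:=\colim(D^\bullet,i)\cong H^{\bar n}(C^\bullet)$. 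Writing $\phi_n\colon D^n\to D_\infty^{\bar n}$ for the structure maps, one checks that $\im(\phi_n)$ is exactly the filtration $F_nH^{\bar n}(C^\bullet)=\im\big(\ker(d\colon F_nC^{\bar n}\to F_{n+1}C^{\overline{n+1}})\to H^{\bar n}(C^\bullet)\big)$ of Definition \ref{defi:convergence_of_sp_seq} (the analogue of \eqref{eq:filtration_on_H_of_Tot}); the identity $F_mC^{\bar m}=F_{m+1}C^{\bar m}$ makes this filtration exhaustive and forces $F_nH^{\bar n}=F_{n+1}H^{\bar n}$ as well as $F_{n}H^{\bar n}=0$ for $n\ll 0$, so Definition \ref{defi:convergence_of_sp_seq} applies.

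Next I would record how boundedness below controls the pages. The outgoing differential $d_r\colon E_r^n\to E_r^{n+1-2r}$ has target in degree $n+1-2r$, which is $\ll 0$ for $r\gg 0$; hence it vanishes for large $r$, the cycles in degree $n$ stabilize, and the maps $E_r^n\to E_{r+1}^n$ are surjective, so that $E_\infty^n=\colim_{r}E_r^n$ is a colimit of quotients exactly as in Definition \ref{defi:convergence_of_sp_seq}. The incoming differentials do not vanish, so $E_\infty^n$ genuinely is this colimit, obtained by dividing out an increasing family of boundaries. Dually, exactness of the couple at $D^{n-2}$ gives $\ker(i\colon D^{n-2}\to D^n)=\im(k\colon E^{n-1}\to D^{n-2})=0$ for $n\ll 0$, so $i$ is injective in low degrees.

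Finally, the heart of the matter is the identification $E_\infty^n\cong F_nH^{\bar n}(C^\bullet)/F_{n-2}H^{\bar n}(C^\bullet)$. Using $\phi_{n-2}=\phi_n\circ i$ one has $\im(\phi_{n-2})=\phi_n(\im i)$, whence $F_n/F_{n-2}\cong D^n/(\im i+\ker\phi_n)$; since $\im i=\ker j$, this is a quotient of $\im(j\colon D^n\to E^n)$, which I would match with $E_\infty^n$ through the standard ``infinite cycles modulo infinite boundaries'' description of the limit of an exact couple, namely $E_\infty^n\cong k^{-1}\big(\bigcap_r\im i^r\big)/\,j\big(\bigcup_r\ker i^r\big)$. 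I expect this step to be the main obstacle: one must reconcile the colimit-of-surjections definition of $E_\infty^n$ in Definition \ref{defi:convergence_of_sp_seq} with this classical description, and carry the bookkeeping of the degree-$2$-periodic indexing (every application of $i$ shifts the $\Z$-grading by $2$), invoking the injectivity of $i$ in low degrees together with the exactness of filtered colimits to guarantee that the relevant intersections, unions and (co)limits are computed correctly. The remaining verifications are the routine diagram chases of exact-couple theory.
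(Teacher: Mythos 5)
Your plan is correct and is essentially the paper's own (much terser) proof: the paper simply introduces the same filtration $F_nH^{\bar n}(C^{\bullet})=\im\bigl(\ker(d\colon F_nC^{\bar n}\to F_{n+1}C^{\overline{n+1}})\to H^{\bar n}(C^{\bullet})\bigr)$, notes it is exhaustive with $F_nH^{\bar n}=F_{n+1}H^{\bar n}$, and asserts $F_nH^{\bar n}/F_{n-2}H^{\bar n}\cong E_{\infty}^n$ with no further detail. The step you flag as the main obstacle resolves exactly as you set it up: boundedness below gives $k^{-1}\bigl(\bigcap_r\im i^r\bigr)^n=\ker(k)^n=\im(j)^n$, exactness of filtered colimits gives $\ker\phi_n=\bigcup_r\ker(i^r)\cap D^n$, and hence both $E_{\infty}^n$ and $F_nH^{\bar n}(C^{\bullet})/F_{n-2}H^{\bar n}(C^{\bullet})$ are identified with $D^n/(\im i+\ker\phi_n)$.
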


\begin{proof}Consider the increasing filtration $F_{\bullet}H^{\bullet}(C^{\bullet}),$ such that $F_nH^k(C^{\bullet})$ is the image of the natural morphism from $$\ker(d:F_nC^k\to F_{n+1}C^{k+1})\to H^k(C^{\bullet}).$$ Clearly, this filtration is exhaustive. It is easy to see that $F_nH^{\bar{n}}(C^{\bullet})=F_{n+1}H^{\bar{n}}(C^{\bullet}),$ and $F_nH^{\bar{n}}(C^{\bullet})/F_{n-2}H^{\bar{n}}(C^{\bullet})\cong E_{\infty}^n.$ This proves the proposition.\end{proof}

\begin{cor}\label{cor:convergence_of_sp_seq_from_mixed} Let $(\cK^{\bullet},b,\delta)$ be a bounded below mixed complex in $\cA.$ Then the spectral sequence of Corollary \ref{cor:sp_seq_from_mixed_appendix} converges to $H^{\bullet}(\Tot^{\oplus}(\cK^{\bullet},b,\delta)).$\end{cor}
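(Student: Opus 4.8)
The plan is to obtain this as an immediate consequence of Proposition \ref{prop:convergence_of_sp_seq_appendix}, applied to the $\Z/2$-graded complex $C^{\bullet}:=\Tot^L(\cK^{\bullet},b,\delta)$ equipped with exactly the filtration $F_{\bullet}C^{\bullet}$ that appears in the proof of Corollary \ref{cor:sp_seq_from_mixed_appendix}. Recall from that proof that the spectral sequence of Corollary \ref{cor:sp_seq_from_mixed_appendix} is, by definition, the one produced by Proposition \ref{prop:sp_seq_from_filtered} out of the pair $(C^{\bullet},F_{\bullet}C^{\bullet})$, where
$$F_nC^0=\prod\limits_{2k\leq n}\cK^{2k},\quad F_nC^1=\prod\limits_{2k+1\leq n}\cK^{2k+1}.$$
So the content of the corollary reduces to checking that this filtered complex satisfies the hypotheses of Proposition \ref{prop:convergence_of_sp_seq_appendix}, after which its conclusion identifies the limit with $H^{\bullet}(C^{\bullet})$.

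First I would verify the two boundedness conditions. Since $(\cK^{\bullet},b,\delta)$ is bounded below, choose an integer $N$ with $\cK^m=0$ for $m<N$. Then for $n<N$ every index appearing in the defining products lies below $N$, so $F_nC^0=F_nC^1=0$; hence $F_nC^{\bullet}=0$ for $n\ll 0$. For exhaustiveness, I would invoke Definition \ref{defi:totalizations} 2), which gives $\Tot^L(\cK^{\bullet})=\Tot^{\oplus}(\cK^{\bullet})$ in the bounded below case; consequently each component of $C^{\bullet}$ is the direct sum $\bigoplus_k\cK^{2k}$ (resp. $\bigoplus_k\cK^{2k+1}$), so every element has only finitely many nonzero coordinates and therefore lies in some $F_nC^{\bullet}$. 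Thus $F_{\bullet}C^{\bullet}$ is exhaustive. I would also remark that the $E_1$-term $E_1^{\bullet}=H^{\bullet}(\cK^{\bullet},b)$ is bounded below, since the cohomology of a bounded below complex is again bounded below, so the convergence notion of Definition \ref{defi:convergence_of_sp_seq} indeed applies to this spectral sequence.

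With these checks in place, Proposition \ref{prop:convergence_of_sp_seq_appendix} yields that the associated spectral sequence converges to $H^{\bullet}(C^{\bullet})=H^{\bullet}(\Tot^L(\cK^{\bullet},b,\delta))$. Finally I would again use the identification $\Tot^L(\cK^{\bullet})=\Tot^{\oplus}(\cK^{\bullet})$ from Definition \ref{defi:totalizations} 2) to rewrite this limit as $H^{\bullet}(\Tot^{\oplus}(\cK^{\bullet},b,\delta))$, which is the desired statement.

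I do not expect any genuine obstacle here: the corollary is a packaging result, and all the real work (construction of the exact couple and the convergence argument for filtered $\Z/2$-graded complexes) is already carried out in Propositions \ref{prop:sp_seq_from_filtered} and \ref{prop:convergence_of_sp_seq_appendix}. The only point requiring a little care is the translation between the ``bounded below'' hypothesis on the mixed complex and the ``$F_nC^{\bullet}=0$ for $n\ll 0$ plus exhaustive'' hypotheses on the filtered complex, which is exactly the computation with the index $N$ above together with the collapse $\Tot^L=\Tot^{\oplus}$.
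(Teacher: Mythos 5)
Your proposal is correct and follows exactly the paper's own route: the paper's proof is simply the one-line remark that the statement follows from Proposition \ref{prop:convergence_of_sp_seq_appendix} applied to the filtered complex constructed in the proof of Corollary \ref{cor:sp_seq_from_mixed_appendix}, and your write-up just fills in the (routine) verifications of the boundedness and exhaustiveness hypotheses together with the identification $\Tot^L=\Tot^{\oplus}$ that the paper leaves implicit.
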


\begin{proof}Indeed, this follows immediately from Proposition \ref{prop:convergence_of_sp_seq_appendix} and the proof of Corollary \ref{cor:sp_seq_from_mixed_appendix}.\end{proof}

\begin{prop}\label{prop:sp_seq_of_DG_rings} Let $\cA$ be a DG ring, and $\delta:\cA\to\cA$ a derivation of degree $-1$ satisfying $\delta^2=0.$ Take the spectral sequence $\{(E_r^{\bullet},d_r)\}_{r\geq 1}$ associated with the mixed complex $(\cA,\delta).$ 

1) Each $E_r^{\bullet}$ carries a structure of a unital graded ring for which $d_r$ is a derivation. Moreover, the product on $E_1^{\bullet}=H^{\bullet}(\cA)$ is induced by that on $\cA,$ and the product on $E_{r+1}^{\bullet}$ is induced by that on $E_r^{\bullet}.$

2) Suppose that $\cA$ is bounded below. Then we have an isomorphism of graded algebras $$E_{\infty}^{\bullet}\cong\gr_{\bullet}^FH^{\bullet}(\Tot^L(\cA,\delta)).$$\end{prop}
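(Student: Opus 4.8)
The plan is to realize the whole spectral sequence as the one attached to a single filtered $\Z/2$-graded DG algebra, namely $\Tot^L(\cA,\delta)$ itself, and then to run the classical argument that a multiplicative filtration produces a multiplicative spectral sequence, every multiplicativity assertion being a formal consequence of the Leibniz rule. Write $b$ for the differential of the DG ring $\cA$, so that the relevant mixed complex is $(\cA,b,\delta)$.

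First I would record the algebra structure on the total complex. Since $\cA$ is a DG ring and $\delta$ is a derivation of degree $-1$ anticommuting with $b$, both $b$ and $\delta$ are $\Z/2$-graded derivations of the graded product induced by $\cA$, so $b+\delta$ is a derivation and $C^{\bullet}:=\Tot^L(\cA,\delta)$ is a $\Z/2$-graded DG algebra. The increasing filtration $F_{\bullet}C^{\bullet}$ of \eqref{eq:filtration_on_Tot} is multiplicative, $F_n\cdot F_m\subset F_{n+m}$, because the product carries $\cA^{2k}\otimes\cA^{2l}$ into $\cA^{2(k+l)}$ (and likewise in the odd part), and it satisfies $d(F_nC^{\bullet})\subset F_{n+1}C^{\bullet+1}$. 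This is precisely the filtered complex to which Proposition \ref{prop:sp_seq_from_filtered} (equivalently Corollary \ref{cor:sp_seq_from_mixed_appendix}) is applied to produce $\{(E_r^{\bullet},d_r)\}_{r\geq 1}$.

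Next I would trade the exact-couple description of the pages for the explicit subquotient description. For $r\geq 1$ set
$$Z_r^n=\{x\in F_nC^{\bar n}\mid dx\in F_{n-2r+1}C^{\bar n+1}\},$$
$$B_r^n=d\big(Z_{r-1}^{n+2r-3}\big)\cap F_nC^{\bar n}+F_{n-2}C^{\bar n}\cap Z_{r-1}^n,$$
with the convention $Z_0^n=F_nC^{\bar n}$; I have checked that for $r=1$ these recover the cocycles and coboundaries of the complex \eqref{eq:complex_F_k/F_k-2}, so that $E_1^n=H^n(\cL^{\bullet})$. One then has a natural identification $E_r^n\cong Z_r^n/B_r^n$ under which $d_r$ is induced by $d$; matching this presentation against the derived exact couple of Proposition \ref{prop:sp_seq_from_filtered} is the main bookkeeping step. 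Granting it, the inclusions
$$Z_r^n\cdot Z_r^m\subset Z_r^{n+m},\qquad Z_r^n\cdot B_r^m+B_r^n\cdot Z_r^m\subset B_r^{n+m}$$
follow directly from multiplicativity of $F_{\bullet}$ together with the Leibniz rule for $d$ (I verified these by filtration bookkeeping; the Koszul signs do not affect membership in the $F_\bullet$). Hence the product of $C^{\bullet}$ descends to $E_r^n\otimes E_r^m\to E_r^{n+m}$, making $E_r^{\bullet}$ a unital graded ring (the unit being the class of $1\in\cA^0\subset F_0C^0$), and the Leibniz rule for $d$ passes to the statement that $d_r$ is a derivation. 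That the product on $E_1^{\bullet}=H^{\bullet}(\cA,b)$ is induced by $\cA$ is the case $r=1$, and that the product on $E_{r+1}^{\bullet}$ is induced by that on $E_r^{\bullet}$ follows because the identification $E_{r+1}^{\bullet}\cong H^{\bullet}(E_r^{\bullet},d_r)$ is the canonical subquotient identification and $d_r$ is a derivation, so its cohomology inherits the product.

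For part 2), boundedness below gives $\Tot^L(\cA,\delta)=\Tot^{\oplus}(\cA,\delta)$, so by Corollary \ref{cor:convergence_of_sp_seq_from_mixed} (via Proposition \ref{prop:convergence_of_sp_seq_appendix}) the spectral sequence converges to $H^{\bullet}(\Tot^L(\cA,\delta))$ with the filtration \eqref{eq:filtration_on_H_of_Tot}, giving $E_{\infty}^n\cong\gr_n^FH^{\bar n}$. That filtration on $H^{\bullet}$ is again multiplicative, since the product of a cocycle in $F_n$ and a cocycle in $F_m$ is a cocycle in $F_{n+m}$, so $\gr_{\bullet}^FH^{\bullet}$ is a graded algebra. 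Finally, under the subquotient identifications $Z_{\infty}^n=\{x\in F_nC^{\bar n}\mid dx=0\}$ maps onto $F_nH^{\bar n}$, and both $E_{\infty}^n$ and $\gr_n^FH^{\bar n}$ are computed from the same product on $C^{\bullet}$, so the convergence isomorphism is one of graded algebras. The main obstacle throughout is purely organizational: verifying that the presentation $E_r^n\cong Z_r^n/B_r^n$ agrees with the exact-couple construction used to define the spectral sequence, and keeping the $\Z/2$-graded signs consistent; once that dictionary is fixed, each multiplicativity claim is a one-line consequence of the Leibniz rule and the inclusion $F_n\cdot F_m\subset F_{n+m}$.
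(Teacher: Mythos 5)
Your route is genuinely different from the paper's. The paper never introduces the $Z_r/B_r$ subquotient calculus at all: it proves part 1) by induction on $r$, using the fact that once $d_{r-1}$ is a derivation the product on $E_r^{\bullet}=H^{\bullet}(E_{r-1}^{\bullet},d_{r-1})$ is automatic, and then verifying the Leibniz rule for $d_r$ directly on zig-zag representatives — a class in $E_r^n$ is represented by cochains $x_1,\dots,x_r$ with $d(x_1)=0,$ $d(x_{i+1})=\delta(x_i),$ and $d_r[x_1]=[\delta(x_r)]$ — via the Cauchy-product chain $z_i=\sum_{j=1}^i x_jy_{i+1-j}$; part 2) is then read off from the convergence statement and the proof of part 1). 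This works hand-in-glove with the exact-couple definition of the spectral sequence used in the appendix, whereas your argument imports the classical theory of multiplicatively filtered complexes. That theory does apply here and would buy a more conceptual, package-style proof; but the price is exactly the step you declare you will grant.

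That granted step is, in this approach, the whole proof, and it is not cost-free in this setting: the paper's spectral sequence is \emph{defined} through the derived exact couples of Proposition \ref{prop:sp_seq_from_filtered}, and the filtration here is increasing with $d(F_nC^{\bullet})\subset F_{n+1}C^{\bullet+1}$ (the differential raises the filtration), so the textbook dictionary $E_r^n\cong Z_r^n/B_r^n$, the statement that $d_r$ is induced by $d$, and the compatibility of the structural isomorphisms $E_{r+1}^{\bullet}\cong H^{\bullet}(E_r^{\bullet},d_r)$ with the subquotient products all have to be re-derived rather than cited. Moreover your formula for $B_r^n$ is off: the "deeper cycles" term should be $F_{n-2}C^{\bar{n}}\cap Z_r^n$ (equivalently $Z_{r-1}^{n-2}$, i.e.\ elements of $F_{n-2}$ with $dx\in F_{n-2r+1}$), not $F_{n-2}C^{\bar{n}}\cap Z_{r-1}^n$ (which only requires $dx\in F_{n-2r+3}$). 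As written $B_r^n\not\subset Z_r^n$, so the presentation $E_r^n\cong Z_r^n/B_r^n$ does not literally parse; the quotient $Z_r^n/(Z_r^n\cap B_r^n)$ happens to be the correct page, but this is precisely the kind of indexing slip that the deferred bookkeeping is supposed to catch. So the strategy is viable, but to make it a proof you must either carry out the exact-couple/$Z_r$--$B_r$ comparison in this increasing-filtration setting (with the corrected $B_r$), or else argue as the paper does, where the only input beyond the definitions is the zig-zag description of representatives and the one-line computations $\delta(z_i)=d(z_{i+1})$ and $\delta(z_r)-\delta(x_r)y_1-(-1)^nx_1\delta(y_r)=d\bigl(\sum_{j=2}^r x_jy_{r+2-j}\bigr).$
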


\begin{proof}[Proof of Proposition \ref{prop:product_on_Tot_of_DGAs}] Statement 1) follows immediately from the definitions.

2) By Corollary \ref{cor:sp_seq_from_mixed_appendix}, the first term is $E_1^{\bullet}=H^{\bullet}(\cA),$ and $d_1$ is induced by $\delta.$ So, the statement 2) obviously holds for $r=1.$ Now we prove the general case by induction.

Suppose that the statement is proved for all $r\leq r_0,$ where $r_0\geq 1.$ We will show it for $r=r_0+1.$ First, since $d_{r-1}$ is a derivation, the product on $E_{r-1}^{\bullet}$ induces a product on $E_r^{\bullet}=H^{\bullet}(E_{r-1}^{\bullet},d_{r-1}).$ It remains to show that $d_r$ is a derivation.

Take some homogeneous elements $x\in E_r^n,$ $y\in E_r^m.$ Then we can find sequences of cochains $x_i\in\cA^{n-2i+2},$ $y_i\in \cA^{m-2i+2},$ $1\leq i\leq r$ such that
$$d(x_1)=d(y_1)=0,\quad d(x_{i+1})=\delta(x_i),\,d(y_{i+1})=\delta(y_i)\text{ for }1\leq i\leq r-1,$$
so that $x_1$ represents $x$ and $y_1$ represents $y.$ The element $d_r(x)$ (resp. $d_r(y)$) of $E_r^{\bullet}$ is represented by $\delta(x_r)$ (resp. $\delta(y_r)$).We need to show the equality
\begin{equation}
\label{eq:Leibniz_for_d_r} d_r(xy)=d_r(x)y+(-1)^n x d_r(y).
\end{equation}
Let us put $$z_i:=\sum\limits_{j=1}^i x_j y_{i+1-j}\in E_r^{n+m-2i+2},\quad 1\leq i\leq r.$$ By the inductive assumption, the cocycle $z_1$ represents $xy\in E_r^{n+m}.$
We claim that $d(z_{i+1})=\delta(z_i)$ for $1\leq i\leq r-1.$ Indeed, we have
\begin{multline*}\delta(z_i)=d(\sum\limits_{j=1}^i x_j y_{i+1-j})=\sum\limits_{j=1}^i \delta(x_j) y_{i+1-j}+(-1)^n\sum\limits_{j=1}^i x_j \delta(y_{i+1-j})\\
=\sum\limits_{j=1}^{i+1}d(x_j)y_{i+2-j}+(-1)^n\sum\limits_{j=1}^{i+1}x_j d(y_{i+2-j})=d(\sum\limits_{j=1}^{i+1}x_j y_{i+2-j})=d(z_{i+1}).\end{multline*}
To prove \eqref{eq:Leibniz_for_d_r}, it suffices to show that $(\delta(z_r)-\delta(x_r)y_1-(-1)^nx_1\delta(y_r))$ is a coboundary in $\cA.$ But we have \begin{multline*}\delta(z_r)-\delta(x_r)y_1-(-1)^nx_1\delta(y_r)=\sum\limits_{j=1}^{r-1}\delta(x_j)y_{r+1-j}+(-1)^n\sum\limits_{j=2}^r x_j\delta(y_{r+1-j})
\\=\sum\limits_{j=2}^{r}d(x_j)y_{r+2-j}+(-1)^n\sum\limits_{j=2}^r x_j d(y_{r+2-j})=d(\sum\limits_{j=2}^r x_j y_{r+2-j}).\end{multline*}
This proves \eqref{eq:Leibniz_for_d_r} and the statement 2).

3) By Corollary \ref{cor:convergence_of_sp_seq_from_mixed} we have an isomorphism of graded $A$-modules $E_{\infty}^{\bullet}$ and $\gr_{\bullet}^FH^{\bullet}(\Tot^L(\cA,\delta)).$ It is immediate from the proof of 2) that this isomorphism is compatible with the product and preserves units. This proves 3).
\end{proof}

\end{document}